\def\today{\number\day\space\ifcase\month\or   January\or February\or
   March\or April\or May\or June\or   July\or August\or September\or
   October\or November\or December\fi\   \number\year}
\theoremstyle{definition}
\newtheorem{lma}{Lemma}[section]
\newaliascnt{thmCt}{lma}
\newtheorem{thm}[thmCt]{Theorem}
\newaliascnt{corCt}{lma}
\newtheorem{cor}[corCt]{Corollary}
\newaliascnt{propCt}{lma}
\newtheorem{prop}[propCt]{Proposition}
\newtheorem*{thm*}{Theorem}
\newtheorem*{cor*}{Corollary}
\newtheorem*{prop*}{Proposition}
\newcounter{theoremintro}
\newtheorem{thmintro}[theoremintro]{Theorem}
\newaliascnt{pgrCt}{lma}
\newaliascnt{dfCt}{lma}
\newtheorem{df}[dfCt]{Definition}
\newaliascnt{remCt}{lma}
\newtheorem{rem}[remCt]{Remark}
\newaliascnt{remsCt}{lma}
\newaliascnt{egCt}{lma}
\newaliascnt{egsCt}{lma}
\newaliascnt{qstCt}{lma}
\newaliascnt{pbmCt}{lma}
\newtheorem{pbm}[pbmCt]{Problem}
\newaliascnt{notaCt}{lma}
\newtheorem{nota}[notaCt]{Notation}
\theoremstyle{theorem}
\newtheorem{claim}{Claim}[thmCt]
\newcommand{\beq}{\begin{equation}}
\newcommand{\eeq}{\end{equation}}
\newcommand{\beqa}{\begin{eqnarray*}}
\newcommand{\eeqa}{\end{eqnarray*}}
\newcommand{\bal}{\begin{align*}}
\newcommand{\eal}{\end{align*}}
\newcommand{\bi}{\begin{itemize}}
\newcommand{\ei}{\end{itemize}}
\newcommand{\be}{\begin{enumerate}}
\newcommand{\ee}{\end{enumerate}}
\newcommand{\ep}{\varepsilon}
\newcommand{\Z}{{\mathbb{Z}}}
\newcommand{\C}{{\mathbb{C}}}
\newcommand{\N}{{\mathbb{N}}}
\newcommand{\M}{{\mathcal{M}}}
\newcommand{\R}{{\mathcal{R}}}
\newcommand{\B}{{\mathcal{B}}}
\newcommand{\U}{{\mathcal{U}}}
\newcommand{\id}{{\mathrm{id}}}
\newcommand{\spec}{{\mathrm{sp}}}
\newcommand{\diag}{{\mathrm{diag}}}
\newcommand{\supp}{{\mathrm{supp}}}
\newcommand{\Aut}{{\mathrm{Aut}}}
\newcommand{\Ad}{{\mathrm{Ad}}}
\newcommand{\dimRok}{{\mathrm{dim}_\mathrm{Rok}}}
\newcommand{\ca}{$C^*$-algebra}
\newcommand{\cas}{$C^*$-algebras}
\newcommand{\uca}{unital $C^*$-algebra}
\numberwithin{equation}{section}
\newcommand{\I}{\infty}
\title[Str. outer actions of am. groups on $\mathcal{Z}$-stable nuc.
$C^*$-algebras]{Strongly outer actions of amenable groups on $\mathcal{Z}$-stable nuclear
$C^*$-algebras}
\date{\today}
\thanks{The first named author was partially supported
by the Deutsche Forschungsgemeinschaft (SFB 878), and by a Postdoctoral Research Fellowship
from the Humboldt Foundation. This work was supported by GIF grant 1137/2011,
 by Israel Science Foundation Grant 476/16 and by the Fields Institute. The third author was supported
 by the European Union’s Horizon 2020 Marie Sklodowska-Curie grant No 891709.}
\author[Eusebio Gardella]{Eusebio Gardella}
\address{Eusebio Gardella
Mathematisches Institut, Fachbereich Mathematik und Informatik der
Universit\"at M\"unster, Einsteinstrasse 62, 48149 M\"unster, Germany.}
\email{gardella@uni-muenster.de}
\urladdr{www.math.uni-muenster.de/u/gardella/}
\author[Ilan Hirshberg]{Ilan Hirshberg}
\address{Ilan Hirshberg
Department of Mathematics, Ben-Gurion University of the Negev, Be’er
Sheva, Israel.}
\email{ilan@math.bgu.ac.il}
\urladdr{www.math.bgu.ac.il/~ilan/}
\author[Andrea Vaccaro]{Andrea Vaccaro}
\address{Andrea Vaccaro
Department of Mathematics, Universit\'e de Paris,
Paris, France.}
\email[]{vaccaro@imj-prg.fr}
\urladdr{https://sites.google.com/view/avaccaro/home}
\begin{document}
\begin{abstract}
Let $A$ be a separable, unital, simple, $\mathcal{Z}$-stable, nuclear 
$C^*$-algebra,
and let $\alpha\colon G\to \mathrm{Aut}(A)$ be an action of a discrete, countable, 
amenable group. Suppose that the orbits of the action of $G$ on $T(A)$ are
finite and that their cardinality is bounded. We show that the following are 
equivalent:
\begin{enumerate}\item $\alpha$ is strongly outer;
	\item $\alpha\otimes\mathrm{id}_{\mathcal{Z}}$ has the weak tracial Rokhlin 
	property.\end{enumerate}
If $G$ is moreover residually finite, the above conditions are also equivalent 
to
\begin{enumerate}\item[(3)] $\alpha\otimes\mathrm{id}_{\mathcal{Z}}$ has finite 
Rokhlin dimension (in fact, at most 2).\end{enumerate}

If $\partial_eT(A)$ is furthermore compact, has finite covering dimension, and the orbit space
$\partial_eT(A)/G$ is Hausdorff,
we generalize results by Matui and Sato to show that
$\alpha$ is cocycle conjugate to $\alpha\otimes\mathrm{id}_{\mathcal{Z}}$, even
if $\alpha$ is not strongly outer. In 
particular, in this case the equivalences above
hold for $\alpha$ in place of $\alpha\otimes\mathrm{id}_{\mathcal{Z}}$.
In the course of the proof, we develop equivariant versions of complemented 
partitions of unity and uniform 
property $\Gamma$ as technical tools of independent interest.
\end{abstract}
\maketitle

\tableofcontents

\section{Introduction}
\renewcommand*{\thetheoremintro}{\Alph{theoremintro}}

This paper concerns the structure of group actions by discrete, countable, a\-me\-na\-ble groups 
on separable, simple, unital, nuclear, $\mathcal{Z}$-stable $C^*$-algebras.
One of the main themes of research in operator algebra theory in the past 
several decades has been the Elliott program to classify simple, nuclear, 
separable $C^*$-algebras by $K$-theoretic data. The Elliott program is  
now essentially complete: simple nuclear separable 
$C^*$-algebras which satisfy the Universal Coefficient Theorem and absorb the
Jiang-Su algebra $\mathcal{Z}$
(\cite{jiangsu}) are classified via the Elliott invariant. Moreover, this
classification cannot be extended to the non-$\mathcal{Z}$-stable case without 
enlarging the invariant and without significant new ideas. The property of 
$\mathcal{Z}$-stability is a regularity condition for simple $C^*$-algebras, 
analogous to the McDuff property for type II$_1$ factors. We refer the 
reader to \cite{WinterICM} for a recent survey and further references
concerning the classification program and Toms-Winter regularity, as a 
detailed exposition of these topics is beyond the scope of this paper.

The analysis of group actions on operator algebras is a natural and important line of research which has been studied 
intensively 
for $C^*$-algebras as well as in von Neumann algebra theory. It is 
closely related to the classification program discussed above,
particularly via the crossed product construction. Specifically,
given the role of $\mathcal{Z}$-stability in the Elliott program (or the related 
regularity properties in the Toms-Winter conjecture), it is important to 
understand how robust the class of simple, nuclear, separable,
$\mathcal{Z}$-stable $C^*$-algebras is, in particular
with respect to standard constructions such as crossed 
products. The following is an important open problem in this 
context.

\begin{pbm}
Let $A$ be a separable, simple, nuclear, $\mathcal{Z}$-stable 
$C^*$-algebra, and let $\alpha 
\colon G \to \Aut(A)$ be an action of a discrete countable amenable group $G$. 
Find conditions that ensure that $A\rtimes_\alpha G$ is also separable, 
simple, nuclear, and $\mathcal{Z}$-stable.
\end{pbm}

In the above setting, nuclearity and separability are always guaranteed
since $G$ is amenable and countable. 
By a celebrated result of Kishimoto \cite{Kis_simplicity}, $A\rtimes_\alpha G$
is simple whenever $\alpha_g$ is outer for all $g \in G 
\setminus \{1\}$.
The main task, then, is to find general conditions that entail preservation of 
$\mathcal{Z}$-stability.

The Rokhlin property and its various generalizations form a collection of 
regularity conditions for group actions on \ca s,
whose roots stem from the Rokhlin Lemma in Ergodic Theory. This result was first
extended to 
von Neumann algebras, starting with Connes' work for the case of a single 
automorphism (\cite[Theorem 1.2.5]{connes}; see also \cite[Chapter XVII, Lemma 2.3]{take3}). Connes'
result was later generalized by Ocneanu
in \cite[Section 6.1]{ocneanu}, and used to prove that all outer 
actions of a discrete countable amenable group $G$ on the hyperfinite II$_1$ factor are 
cocycle conjugate.
In the context of actions on \ca s, early
works include the studies of cyclic and finite group actions on UHF-algebras by
Herman and Jones (\cite{hermjon1,hermjon2}), and Herman and Ocneanu (\cite{herocn}), and later
for automorphisms on UHF and A$\mathbb{T}$-algebras by Kishimoto 
(\cite{kishimoto,kishimoto:I,kishimoto2}). A connection with permanence of 
$\mathcal{Z}$-stability was made in \cite{hw07}, where it was shown that for 
actions of the integers, the reals or finite groups, $\mathcal{Z}$-stability is 
preserved when passing to the crossed product, provided the action has the 
Rokhlin property.

Although the Rokhlin property is relatively common for actions of the integers, 
there are significant $K$-theoretic obstructions
for finite group actions (and hence actions of groups 
which have torsion). This was studied in depth by Izumi \cite{Izu_finiteI_2004,
	Izu_finiteII_2004} and spurred additional work \cite{San_crossed_2015, GarSan_equivariant_2016}.
Attempts to circumvent impediments of this sort led Phillips to 
introduce the
\emph{tracial Rokhlin property} \cite{phil:tracial},
where the projections in the Rokhlin property are assumed to have a 
leftover which
is small in trace.
Among other applications,
the tracial Rokhlin property has been used in
\cite{EchLucPhiWal_structure_2010} to study fixed point
algebras of the irrational rotation algebra $A_\theta$ under certain canonical 
actions of finite cyclic groups.

The tracial Rokhlin property does not bypass the most obvious obstruction to
admitting Rokhlin actions: the existence of nontrivial projections. The need to 
study
weaker versions of these properties led to two further generalizations. The 
first
one, called the
\emph{weak tracial Rokhlin property}, which replaces projections with positive 
elements, has been considered in \cite{HirOro_tracially_2013, 
satoRoh,matuisato:I,matuisato:II,wang,GarHirSan_rokhlin_2017}.

A different approach was taken in a paper by the second author, Winter, and 
Zacharias \cite{hwz},
who introduced the notion of \emph{Rokhlin dimension}. In this formulation,
the partition of unity appearing in the Rokhlin property is replaced by a 
multi-tower partition
of unity consisting of positive contractions, the elements of each tower being 
indexed by the group elements and
permuted by the group action. Rokhlin dimension zero then corresponds to the 
Rokhlin property, but the extra
flexibility makes finiteness of the Rokhlin dimension a much more 
common feature. This notion has primarily been used as a tool to show that various 
structural
properties of interest (such as $\mathcal{Z}$-stability or finite nuclear dimension) pass from an algebra to the crossed product. 
Rokhlin dimension has been extended and studied for actions of various 
classes of groups; the generalization which is pertinent for this paper is in 
work of Szab{\'o}, Wu, and Zacharias for residually finite groups 
(\cite{swz}). We refer the reader to 
\cite{Gar_rokhlin_2017,HirPhi_rokhlin_2015,HirSzaWinWu_rokhlin_2017,GarKalLup_rokhlin_19} for further
generalizations.

This work focuses on actions on simple \ca s, and aims to improve upon related 
works by Matui-Sato (\cite{matuisato:I,matuisato:II}) and by Liao (\cite{liao,liaoZm}). We study the relationships 
between strong outerness, the weak tracial Rokhlin property and
finite Rokhlin dimension by showing that they are equivalent in many cases of 
interest.
More specifically, we obtain the following main results.

\begin{thmintro} \label{thmB}
	Let $A$ be a separable, simple, nuclear, unital, stably finite, 
	\ca, let $G$ be a countable, discrete, amenable group,
	and let $\alpha\colon G \to \Aut(A)$ be an action. 
	Suppose that the orbits of the action
	induced by $\alpha$ on $T(A)$ are
finite and that their cardinality is uniformly bounded. 
	Then the following are equivalent:
	\be\item \label{B:1} $\alpha$ is strongly outer.
	\item \label{B:2} $\alpha\otimes\id_{\mathcal{Z}}$ has the weak tracial Rokhlin property.
	\ee
When $G$ is residually finite, then the above are also equivalent to:
\be\setcounter{enumi}{2}
	\item \label{B:3} $\alpha\otimes\id_{\mathcal{Z}}$ has finite 
	Rokhlin dimension.
	\item \label{B:4} $\alpha\otimes\id_{\mathcal{Z}}$ has  
	Rokhlin dimension at most 2.
	\ee
\end{thmintro}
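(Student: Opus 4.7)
The plan is to establish the equivalences as a circle, with the analytic heart concentrated in $(1) \Rightarrow (2)$ and, in the residually finite case, in $(2) \Rightarrow (4)$. The easy direction $(2) \Rightarrow (1)$ comes first: because $\mathcal{Z}$ has a unique trace, the trace simplices and all tracial GNS representations of $A$ and $A \otimes \mathcal{Z}$ can be naturally identified, so strong outerness of $\alpha$ and of $\alpha \otimes \id_{\mathcal{Z}}$ coincide. The weak tracial Rokhlin property then supplies, for each $g \neq 1$, a large positive contraction in $(A \otimes \mathcal{Z})^\omega \cap A'$ that detects outerness of $(\alpha \otimes \id_{\mathcal{Z}})_g$ in every tracial representation, which gives $(1)$.

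For $(1) \Rightarrow (2)$, I would first replace $A$ with $A \otimes \mathcal{Z}$ so that $A$ is itself $\mathcal{Z}$-stable. The bounded-orbit hypothesis on $T(A)$ ensures that every stabilizer $G_\tau$ has finite index in $G$, uniformly in $\tau$. For each $\tau \in T(A)$, strong outerness makes the induced action of $G$ on the tracial ultrapower $\pi_\tau(A)''^\omega \cap \pi_\tau(A)'$ free, and Ocneanu-type Rokhlin lemmas for amenable group actions on II$_1$ factors produce, for any finite Følner set $F \subseteq G$, a Rokhlin tower there with index set $F$. The task is then to assemble these trace-by-trace von Neumann algebraic towers into a single central sequence in $A^\omega \cap A'$ consisting of pairwise orthogonal positive contractions, indexed by $F$, permuted by $\alpha$, and summing to $1$ uniformly in trace. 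This assembly is carried out using the equivariant complemented partitions of unity and the equivariant uniform property $\Gamma$ developed earlier in the paper, together with $\mathcal{Z}$-stability to pass from positive elements in tracial von Neumann algebras to positive contractions in $A^\omega \cap A'$ with controlled Cuntz/tracial support. The uniformly bounded orbit cardinality on $T(A)$ is exactly what makes the gluing $G$-equivariant, since it allows one to average and re-symmetrize coherently over $G$-orbits of traces.

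For the residually finite case, $(4) \Rightarrow (3)$ is tautological, and $(3) \Rightarrow (1)$ follows from a standard argument: finite Rokhlin dimension of $\alpha \otimes \id_{\mathcal{Z}}$ forces outerness of each $\alpha_g \otimes \id_{\mathcal{Z}}$ in every tracial factor representation, which via the $\mathcal{Z}$-identification yields strong outerness of $\alpha$. The only nontrivial step to close the circle is $(2) \Rightarrow (4)$. I would combine the already-established weak tracial Rokhlin property with a tiling argument specific to residually finite amenable groups: using a decreasing sequence of finite-index normal subgroups with trivial intersection, one obtains arbitrarily invariant fundamental domains on which the WTRP-produced positive contractions can be reshaped into Rokhlin towers of positive contractions. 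The bound of $2$ comes from the two-color scheme, with one color absorbing the bulk of each tile and a second color soaking up the boundary defect produced by the WTRP approximation.

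The main obstacle is $(1) \Rightarrow (2)$: promoting Rokhlin data obtained trace-by-trace in the II$_1$ ultrapowers to a single C*-algebraic central sequence of positive contractions that witnesses the weak tracial Rokhlin property uniformly over $T(A)$ and respects the group action. The equivariant complemented partitions of unity and equivariant uniform property $\Gamma$ are the bridges between these two worlds, and verifying that they exist under the uniform orbit-boundedness hypothesis is where the bulk of the work of the paper is concentrated.
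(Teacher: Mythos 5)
Your outline of $(2)\Rightarrow(1)$ and $(3)\Rightarrow(1)$ matches the paper, and your plan for $(1)\Rightarrow(2)$ is in the right spirit, but the paper's gluing is organized differently in a way that matters: rather than assembling Rokhlin towers trace-by-trace, it uses CPoU to glue a single unital equivariant embedding of the Bernoulli system $(\mathcal{R},\beta_G)$ into $(A^{\mathcal{U}}\cap A',\alpha^{\mathcal{U}})$ (\autoref{cor:EmbedModelAction}), and then produces the towers inside $\mathcal{R}$ itself via the exact F\o lner tiling theorem of Conley--Jackson--Kerr--Marks--Seward--Tucker-Drob applied to the Bernoulli action on $\prod_{g\in G}[0,1]$. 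This is what yields an exact multi-tower partition of unity by projections and, since $\mathcal{R}$ is monotracial, the requirement in \autoref{df:wtRp}\eqref{wtRp:4} that $\tau(f_{\ell,g})$ be independent of $\tau$ --- a condition your trace-by-trace assembly would not obviously deliver. The lift from $A^{\mathcal{U}}\cap A'$ to $A_{\mathcal{U}}\cap A'$ is then done via the equivariant $\sigma$-ideal property of $J_A$, not via $\mathcal{Z}$-stability.

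The genuine gap is in your route to the Rokhlin dimension statements. You propose $(2)\Rightarrow(4)$ by reshaping the weak tracial Rokhlin towers, but this step would fail as described. The WTRP towers are indexed by approximately invariant F\o lner sets and their leftover $1-\sum_{\ell,g}f_{\ell,g}$ lies only in the trace-kernel ideal $J_A$, whereas \autoref{def:frd} demands towers indexed exactly by $G/H$, exactly permuted by the action, and summing to exactly $1$ in the norm ultrapower $A_{\mathcal{U}}$. A ``second color soaking up the boundary defect'' cannot absorb a defect that is small in trace but not in norm; bridging that gap is precisely where the paper needs equivariant property (SI) (\cite[Theorem B]{equiSI}), which produces the contraction $s$ with $s^*s=1-\rho(1)$ and thereby a \emph{unital} equivariant homomorphism out of the dimension drop algebra $I_{G/H}^{(k)}$ via \autoref{thm:univprop}; the three towers ($j=0,1,2$, so ``Rokhlin dimension at most $2$'' is a three-colour, not two-colour, scheme) come from the explicit Rokhlin elements in $I_{G/H}^{(k)}$ constructed in \autoref{prop:GactIkdim2}. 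The paper accordingly proves $(1)\Rightarrow(4)$ directly from strong outerness via the model action $\mu_{G/H}\circ q_H$ on $\mathcal{R}$, rather than passing through the weak tracial Rokhlin property; neither property (SI) nor any substitute for it appears in your sketch, so the norm-exact partition of unity is not accounted for.
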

This theorem is restated as \autoref{thm:mainRokDim} and proved in 
\autoref{s6}. The definitions of strong outerness, the weak tracial Rokhlin 
property, and Rokhlin dimension are provided in \autoref{s.preliminaries}. 

The first precursor of this result is Theorem 5.5 in 
\cite{EchLucPhiWal_structure_2010}, where (1) $\Leftrightarrow$ (2) 
is shown under the 
additional
assumptions that $A$ has tracial rank zero, $A$ has a unique tracial state, and 
$G$ is finite. Matui and Sato proved \eqref{rokdim:1} $\Leftrightarrow$ 
\eqref{rokdim:2} in the case that $A$ is nuclear and has finitely many 
extreme 
tracial states, and the group
$G$ is elementary amenable (\cite[Theorem~3.6]{matuisato:II}).
This was later extended by Wang to all discrete countable amenable groups 
in
\cite[Theorem 3.8]{wang}. Here, we remove all the smallness assumptions on the 
extreme points of $T(A)$, although we still have some non-trivial requirements 
on the size of the orbits
for the induced action on $T(A)$. This is made possible by developing an 
equivariant version of complemented partitions of unity from \cite{cpou}; see more on this below.

The equivalence \eqref{rokdim:1} $\Leftrightarrow$ \eqref{rokdim:3} of Theorem 
\ref{thmB}
generalizes Liao's work in \cite{liao, liaoZm}, where a 
similar result is proved for $\Z^m$-actions, under the additional 
assumptions that $T(A)$ is a Bauer simplex with finite dimensional extreme 
boundary, and that the group acts trivially in $T(A)$.
Our method of proof differs significantly from Liao's
(other than in the use
of complemented partitions of unity to remove the topological assumptions on 
$\partial_e T(A)$), in that we 
obtain the Rokhlin towers by embedding suitable model actions
on dimension drop algebras into the central sequence algebra of $A$.
The advantage of our approach is that it does not 
require any restrictions on the group: in particular, we are able to treat
groups with torsion, as well as groups that are not finitely generated. (The 
application of property (SI) in~\cite[Theorem 6.4]{liao} makes essential use of 
the fact that $\Z$ has no torsion, and the same applies for the generalization 
to $\Z^m$ in \cite{liaoZm}.)

Our next main theorem improves upon 
results of Sato from \cite{sato19} (which in turn generalizes results from 
\cite{matuisato:II}). The main theorem in \cite{sato19} requires for the group to
act trivially on the trace space, whereas we can weaken this assumption 
to a condition on the orbits analogous to the one in Theorem \autoref{thmB}.

\begin{thmintro}[\autoref{thm:equiZstab}] \label{thmC}
Let $A$ be a separable, simple, nuclear, stably finite, $\mathcal{Z}$-stable, infinite-dimensional, \uca.
Let $G$ be a countable, discrete, amenable group,  
and let $\alpha\colon G\to\Aut(A)$ be an action.
Suppose that $\partial_eT(A)$ is compact, that $\dim(\partial_eT(A))<\I$,
that the orbits of the induced action of $G$ on $\partial_eT(A)$ are
finite with uniformly bounded cardinality, and that the orbit space $\partial_eT(A)/G$
is Hausdorff. Then $\alpha$ is cocycle conjugate to
$\alpha\otimes\id_{\mathcal{Z}}$.
\end{thmintro}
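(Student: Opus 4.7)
The goal is to establish cocycle conjugacy $\alpha\sim_{\mathrm{cc}}\alpha\otimes\id_{\mathcal{Z}}$. The standard route---via the equivariant form of Kirchberg--R{\o}rdam's absorption theorem due to Szab\'o---is to produce a unital $G$-equivariant $*$-homomorphism from $\mathcal{Z}$, equipped with the trivial $G$-action, into a suitable $G$-central sequence algebra of $A$. Thus the real content of the theorem is the construction of such an equivariant embedding under the given trace-space hypotheses, using the two new tools announced in the abstract: equivariant uniform property $\Gamma$ and equivariant complemented partitions of unity (CPoU).

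I would follow the Matui--Sato/Sato blueprint from \cite{matuisato:II, sato19}, adapted to the equivariant setting, in three steps. \emph{Step 1 (local model at each trace).} For each $\tau\in\partial_eT(A)$, the stabilizer $G_\tau$ has index bounded by the uniform orbit-cardinality bound, and $\pi_\tau(A)''$ is the hyperfinite II$_1$ factor $\R$ on which $G_\tau$ acts. The equivariant uniform property $\Gamma$ developed earlier in the paper provides, for every $k$, a unital $G_\tau$-equivariant $*$-homomorphism $M_k\to\pi_\tau(A)_\omega''\cap\pi_\tau(A)'$ (with trivial $G_\tau$-action on $M_k$), chosen uniformly in $\tau$.

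\emph{Step 2 (gluing via equivariant CPoU).} The assumptions that $\partial_eT(A)$ is compact and finite-dimensional, and that $\partial_eT(A)/G$ is Hausdorff with uniformly bounded finite orbits, are exactly the conditions under which the equivariant CPoU developed in the paper applies. I would use it to glue the pointwise matrix embeddings of Step 1 into a single unital $G$-equivariant $*$-homomorphism from a prime dimension drop algebra $Z_{p,p+1}$ into the tracial $G$-central sequence algebra of $A$. The Hausdorff quotient hypothesis is essential here, as it is what yields the $G$-invariance of the partitions of unity produced by equivariant CPoU. \emph{Step 3 (from tracial to norm, then to $\mathcal{Z}$).} An equivariant version of Matui--Sato's property (SI) would promote the tracial equivariant embedding of Step 2 to a norm-central equivariant $*$-homomorphism from $Z_{p,p+1}$ into the operator-norm central sequence algebra $A_\omega\cap A'$. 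A standard inductive-limit construction then yields an equivariant unital $*$-homomorphism $\mathcal{Z}\to(A_\omega\cap A')^\alpha$, and Szab\'o's equivariant absorption theorem concludes.

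The main obstacle will be the interplay between the equivariant SI argument and the coherent gluing via equivariant CPoU. The trivial-$T(A)$-action hypothesis of Sato \cite{sato19} and Liao \cite{liao, liaoZm} was used precisely to avoid any equivariance in the gluing and comparison steps, whereas here the non-trivial---but bounded-orbit, Hausdorff-quotient---action on traces forces one to perform both the CPoU assembly and the SI comparison with full $G$-equivariance, uniformly across the $G$-invariant stratification of $\partial_eT(A)$ by orbit type. Executing these two equivariant refinements simultaneously is the main new technical content of the proof.
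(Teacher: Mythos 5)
Your overall architecture (local models at traces, a gluing step, then equivariant property (SI) plus dimension drop algebras and Szab\'o's absorption theorem) matches the paper's, and your Step 3 is essentially the argument used in the implication (3) $\Rightarrow$ (4) of \autoref{thm:mainCPoU}. However, Step 2 contains a genuine circularity. You propose to perform the gluing via equivariant CPoU, asserting that the topological hypotheses on $\partial_eT(A)$ are ``exactly the conditions under which the equivariant CPoU developed in the paper applies.'' They are not: equivariant CPoU is derived in \autoref{prop:cpou} from equivariant uniform property $\Gamma$ (plus nuclearity and bounded orbits), and by \autoref{thm:mainCPoU} equivariant CPoU, uniform property $\Gamma$, the existence of unital embeddings $M_n\to (A^\U\cap A')^{\alpha^\U}$, and the conclusion $\alpha\cong_{\mathrm{cc}}\alpha\otimes\id_{\mathcal{Z}}$ are all \emph{equivalent} under the standing hypotheses. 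None of them is available a priori in Theorem C, so invoking CPoU to glue is assuming what is to be proved. The same issue affects your Step 1, where you cite equivariant uniform property $\Gamma$ to produce the local matrix embeddings; in the paper these come instead from von Neumann algebraic absorption (\autoref{thm:ErgThy}, \autoref{thm:AbsR}, \autoref{prop:FinDirSumRMcDuff}), which holds unconditionally for hyperfinite type II$_1$ algebras.

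The paper closes this gap by replacing the CPoU gluing with a direct, ``by hand'' covering argument over $\partial_eT(A)$ in the style of Kirchberg--R{\o}rdam (\autoref{prop:approximation} and \autoref{prop:Gammabyhand}): one builds $(\ep,\Omega,G_0)$-commuting covering systems, uses $\dim(\partial_eT(A))=m<\infty$ to refine to an $(m+1)$-colored, $\alpha^*$-invariant disjoint cover (Hausdorffness of $\partial_eT(A)/G$ and the bounded finite orbits are used here, via \autoref{lma:taualpha} and the dimension-preservation of the quotient map), and assembles $m+1$ order zero maps $\psi^{(0)},\dots,\psi^{(m)}\colon M_d\to (A^\U\cap A')^{\alpha^\U}$ with commuting ranges summing to $1$ at the unit, whence a unital homomorphism $M_d\to (A^\U\cap A')^{\alpha^\U}$. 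Only then does \autoref{thm:mainCPoU} take over. So the correct logical order is: topological hypotheses $\Rightarrow$ matrix embeddings (by the covering argument) $\Rightarrow$ CPoU and equivariant $\mathcal{Z}$-stability, rather than the other way around. Also note that the local models live over the averaged traces $\tau^\alpha\in T(A)^\alpha$, whose GNS completions are hyperfinite type II$_1$ but generally not factors, so the identification of the fibre with $\mathcal{R}$ in your Step 1 needs the non-factorial version of the absorption result.
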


The conditions on the trace space are met, for instance, when the $G$-action induced
by $\alpha$ on $\partial_eT(A)$ factors through a finite group action.
We do not know whether the restriction on the topology of the trace space or the way in which the group acts 
on it can be relaxed.
Some progress has been made in the recent paper
\cite{wou}, where the methods here developed are combined with arguments from topological dynamics to show equivariant $\mathcal{Z}$-stability for
actions of $\mathbb{Z}$, assuming that 
$\partial_e T(A)$ is compact and finite-dimensional.

The arguments in this paper make an essential use of an equivariant version of 
uniform property $\Gamma$ and complemented partitions of unity (CPoU). The 
non-equivariant versions are methods introduced in \cite{cpou} 
in order to prove one of the remaining implications of the Toms-Winter 
conjecture, and were further developed in \cite{uniformGamma}. Roughly 
speaking, complemented partitions of unity provide a technique for globalizing
properties which occur fiber-wise in
the von Neumann algebras associated to the GNS representations of the traces, to 
properties holding uniformly over all traces; we adapt this method to allow 
gluing dynamical properties. As 
this is of independent interest, we record here what is proved in this respect,
which can be thought of as a dynamical analogue of \cite[Theorem 
4.6]{uniformGamma}.
The definitions of the terms in the theorem below appear in the relevant parts 
of the paper. 
\begin{thmintro}\label{thmA}
	Let $A$ be a separable, simple, nuclear, unital, stably finite, \ca\
	with no finite-dimensional
	quotients. Let $G$ be a countable, discrete, amenable group
	and let $\alpha\colon G \to \text{Aut}(A)$ be an action such that the 
	induced action
	on $T(A)$ has finite orbits bounded in size by a uniform constant $M > 0$.
	Then the following are equivalent
	\begin{enumerate}
		\item \label{gamma.cpou:1} $(A,\alpha)$ has uniform property $\Gamma$.
		\item \label{gamma.cpou:2} $(A, \alpha)$ has complemented partitions of 
		unity with constant $M$.
		\item \label{gamma.cpou:3} For every $n \in \N$ there is a unital 
		embedding of the matrix algebra $M_n \to (A^\U \cap A')^{\alpha^\U}$.
	\end{enumerate}
	If $A$ is also $\mathcal{Z}$-stable and simple, then the 
	previous conditions are equivalent to
	\begin{enumerate}
		\setcounter{enumi}{3}
		\item \label{gamma.cpou:4} $(A,\alpha)$ is cocycle conjugate to $(A 
		\otimes \mathcal{Z}, \alpha \otimes \text{id}_\mathcal{Z})$.
	\end{enumerate}
\end{thmintro}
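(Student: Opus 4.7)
The plan is to establish the implications (1)$\Rightarrow$(3)$\Rightarrow$(2)$\Rightarrow$(1), adapting to the equivariant setting the non-equivariant circle of equivalences of \cite{uniformGamma}, with the orbit bound $M$ playing the key role of producing $\alpha^\U$-invariant objects from tracial data. The final equivalence (3)$\Leftrightarrow$(4), valid under the additional $\mathcal{Z}$-stability hypothesis, will follow from an equivariant McDuff-type absorption theorem.

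For (1)$\Rightarrow$(3), equivariant uniform property $\Gamma$ supplies, for each $n$, pairwise orthogonal $\alpha^\U$-fixed projections $p_1,\dots,p_n\in A^\U\cap A'$ summing to $1$ with $\tau(p_i)=1/n$ uniformly in $\tau\in T(A)$. Since $A$ is simple, stably finite, and has no finite-dimensional quotients, the $p_i$ are pairwise Murray-von Neumann equivalent inside $(A^\U\cap A')^{\alpha^\U}$ via a tracial comparison argument using Cuntz-Pedersen-type criteria applied equivariantly in the central sequence algebra. Implementing these equivalences by $\alpha^\U$-fixed partial isometries then assembles a unital embedding $M_n\hookrightarrow(A^\U\cap A')^{\alpha^\U}$.

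The implication (3)$\Rightarrow$(2) is the main technical step and the hardest part of the proof. Starting from positive contractions $a_1,\dots,a_k\in A^\U\cap A'$ and $\delta>0$ satisfying the hypothesis of CPoU, I would work fibrewise in the tracial bicommutants $\pi_\tau(A)''$ for $\tau\in\partial_e T(A)$, using the $M_n$-structure from (3) to produce, in each fibre, an invariant partition of unity with the correct trace estimates. The globalization to a single partition of unity in $(A^\U\cap A')^{\alpha^\U}$ should proceed along the lines of the non-equivariant continuous-selection argument of \cite{cpou,uniformGamma}; the crucial new input is that the orbit bound $M$ lets us average fibrewise partitions over $G$-orbits with uniformly bounded denominators, thereby producing an $\alpha$-equivariant selection. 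The main obstacle I anticipate is carrying out such a selection without any assumption that the orbit space $T(A)/G$ is Hausdorff; this likely requires iterating Michael-type selection theorems for finite-to-one quotient maps together with an approximation argument replacing the orbit space with Hausdorff quotients on ever-smaller scales, all while keeping the images of the selection inside the $\alpha^\U$-fixed copy of $M_n$ produced by (3).

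For (2)$\Rightarrow$(1), apply CPoU to the constant positive contractions $a_i=1/n$ for $i=1,\dots,n$ to obtain a partition of unity $p_1,\dots,p_n\in(A^\U\cap A')^{\alpha^\U}$ with $\tau(p_i)\approx 1/n$ uniformly in $\tau\in T(A)$; standard projectionization inside the tracial von Neumann algebra ultrapower, combined with lifting results for projections in the equivariant central sequence algebra, yields actual projections of trace exactly $1/n$, which is equivariant uniform $\Gamma$. Finally, assuming $\mathcal{Z}$-stability, (3)$\Rightarrow$(4) follows by assembling the unital $M_n$-embeddings into a unital embedding $\mathcal{Z}\hookrightarrow(A^\U\cap A')^{\alpha^\U}$, viewing $\mathcal{Z}$ as an inductive limit of prime dimension-drop algebras whose matrix blocks can be lifted compatibly inside the strongly self-absorbing equivariant central sequence algebra, and then invoking Szab\'o's equivariant McDuff-type absorption theorem. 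The converse (4)$\Rightarrow$(3) is immediate via the second tensor factor.
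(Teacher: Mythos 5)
Your circle of implications does not close: several of the individual steps contain genuine gaps. First, your (2)$\Rightarrow$(1) fails outright: applying CPoU to the constants $a_i=\frac{1}{n}\cdot 1_A$ forces $\delta>1/n$ in \autoref{df:CPoU}, and the conclusion $\tau(a_jp_j)\le M\delta\tau(p_j)$ then reads $\frac{1}{n}\tau(p_j)\le M\delta\tau(p_j)$, which is vacuous; CPoU gives no control whatsoever on the individual traces $\tau(p_j)$, so you do not obtain projections of trace $1/n$. Second, your (1)$\Rightarrow$(3) rests on Murray--von Neumann equivalence of equal-trace projections inside $(A^\U\cap A')^{\alpha^\U}$; no comparison theory is available in that fixed-point algebra, and the invoked ``Cuntz--Pedersen-type criteria'' are not known to apply there. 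Third, for (3)$\Rightarrow$(2) you propose a Michael-type continuous-selection argument over $\partial_eT(A)/G$; this is precisely the $W^*$-bundle-style approach the paper is built to avoid (it needs $\partial_eT(A)$ compact and the orbit space Hausdorff, neither of which is assumed in Theorem~\ref{thmA}), and you yourself flag that you do not see how to overcome the resulting obstacles. The paper's route is different and sidesteps all three problems: it proves (1)$\Rightarrow$(2) by a maximality argument in the ultrapower (\autoref{prop:cpou}), adapting \cite[Lemma 3.6]{cpou} and using a F\o lner average to make the witnesses invariant---no selection over the trace space occurs; then (2)$\Rightarrow$(3) by observing that each $\mathcal{M}_\tau$, $\tau\in T(A)^\alpha$, is hyperfinite of type II$_1$, so that $M_n$ embeds equivariantly into its central sequence algebra, and gluing these fibrewise embeddings with CPoU via \autoref{lma:LocalToGlobal} and \autoref{prop:EmbedTracialUltrapow}; finally (3)$\Rightarrow$(1) is immediate from the definition of uniform property $\Gamma$.

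Your (3)$\Rightarrow$(4) also glosses over the essential difficulty. Szab\'o's absorption theorem requires a unital copy of $\mathcal{Z}$ in the \emph{norm} central sequence algebra $(A_\U\cap A')^{\alpha_\U}$, whereas (3) only provides matrix embeddings into the uniform tracial one. A unital embedding $M_n\to (A^\U\cap A')^{\alpha^\U}$ lifts only to a completely positive contractive order zero map $\Phi$ into $(A_\U\cap A')^{\alpha_\U}$ (\autoref{cor:LiftAlongKappa}), and the leftover $1-\Phi(1)\in J_A$ must be absorbed before anything unital appears. The paper does this with equivariant property (SI) (\cite[Theorem B]{equiSI}), producing $s$ with $s^*s=1-\Phi(1)$ and $\Phi(e_{1,1})s=s$, and then invokes the universal property of the dimension drop algebra (\autoref{thm:univprop}) to obtain a unital copy of $I_{2,3}$, hence of $\mathcal{Z}$ after passing to infinite tensor powers. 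Your phrase ``matrix blocks can be lifted compatibly'' is exactly where this work lives, and without property (SI) the lifting does not yield a unital homomorphism.
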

Theorem \ref{thmA} is the combination of 
\autoref{thm:mainCPoU-parts-1-3},
which covers the equivalence of the first three conditions, and 
\autoref{thm:mainCPoU}, which adds the last condition.

The paper is organized as follows. In \autoref{s3} and \autoref{s4} we modify 
ideas from \cite{cpou} in order to develop equivariant analogs of uniform 
property $\Gamma$ and complemented partitions of unity. The main goal in 
\autoref{s5} is to use techniques from the previous two sections to obtain that, 
under our assumptions, we can equivariantly embed model actions on the 
hyperfinite II$_1$-factor into the central sequence algebra of uniform-tracial ultrapowers. 
\autoref{s.model} is a technical section, devoted to constructing model actions 
of finite groups which have Rokhlin-type towers on dimension drop algebras; 
those are needed in order to lift actions from the uniform-tracial central 
sequence algebra to the norm central sequence algebra. \autoref{s6} contains the 
proofs of Theorem \ref{thmB} and Theorem \ref{thmA}. The last section provides 
a proof of Theorem \ref{thmC}.

\section{Preliminaries} \label{s.preliminaries}

\subsection{Trace norms.}\label{ss2.1}
For a unital \ca\ $A$, the \emph{trace space} of $A$, denoted $T(A)$, is the compact
subspace of the dual of $A$ (endowed with the weak* toplogy) consisting of all
states $\tau$ of $A$ such that $\tau(ab) = \tau(ba)$ for all $a,b \in A$.

We say that a trace $\tau \in T(A)$ is \emph{faithful} if $\tau(a^*a) > 0$ for all
$a \in A\setminus\{0\}$. Given a trace $\tau \in T(A)$, the 
associated \emph{trace seminorm} $\| \cdot \|_{2,\tau}$ on $A$ is given by
\[
\| a \|_{2,\tau} = \tau(a^*a)^{1/2},
\]
for all $a\in A$. For a closed subset 
$T \subseteq T(A)$, we set
$\| \cdot \|_{2,T} = \sup_{\tau \in T} \| \cdot \|_{2,\tau}$,
which is a seminorm on $A$.
We use the abbreviation $\| \cdot \|_{2,u}$ for $\| \cdot \|_{2, T(A)}$. Notice that
$\| \cdot \|_{2, T}$ is in fact a norm if $T$ contains at least one faithful trace. This is always
the case, for instance, when $A$ is simple and admits a trace.

Let $G$ be a discrete group, and let 
$\alpha\colon G\to\Aut(A)$ be an action. Then $\alpha$ naturally
induces an action $\alpha^*$ 
of $G$ on $T(A)$ by affine homeomorphisms\footnote{The action $\alpha^*$ is the restriction of the dual of $\alpha$ to $T(A)\subseteq A^*$, hence the notation.},
given by $\alpha^*_g(\tau)=\tau\circ\alpha_{g^{-1}}$ for 
all $g\in G$ and all $\tau\in T(A)$. 
We say that a trace
$\tau\in T(A)$ is $\alpha$-invariant if $\tau\circ\alpha_g=\tau$
for all $g\in G$, and we denote by $T(A)^\alpha\subseteq T(A)$ the 
space of $\alpha$-invariant traces. Note that $T(A)^\alpha$ is always non-empty
if $G$ is amenable (\cite[Theorem 1.3.1]{runde}). Given $\tau \in T(A)$ such that the orbit
$G \cdot \tau$ is finite, we set
\[
\tau^\alpha := \frac{1}{| G \cdot \tau |} \sum_{\sigma \in G \cdot \tau} \sigma.
\]
If $T\subseteq T(A)$ is $G$-invariant, then $\alpha_g$ is
isometric with respect to $\|\cdot\|_{2,T}$ for all $g\in G$. 

\subsection{Ultrapowers.} 
\label{prelim:ultrapowers}
Let $A$ be a \ca, let $G$ be a discrete group, and let 
$\alpha\colon G\to\Aut(A)$ be an action. 
We denote by $\ell^\I(A)$ the \ca\ of all 
bounded sequences in $A$ with the supremum norm, endowed with 
the $G$-action given by pointwise application of $\alpha$.
For a free ultrafilter $\mathcal{U}$ on $\N$ (which we will fix 
throughout),
set 
\[c_\U(A) = \{(a_n) \in \ell^\infty(A) \colon \lim_{n \to \U} \| a_n \| = 
0\}.\] 
This is a closed, two-sided $G$-invariant ideal in $\ell^\I(A)$. 
We define
the \emph{norm ultrapower} of $A$ to be the quotient
$A_\mathcal{U}= \ell^\infty(A)/ c_\U(A)$, and denote by $\alpha_\U\colon 
G\to\Aut(A_\U)$ the induced action. We denote by 
$\pi_A\colon \ell^\I(A)\to A_\U$ the equivariant quotient map.

Given a closed subset $T\subseteq T(A)$,
we set
\[
J_T= \{(a_n)_{n \in \N} \in A_\mathcal{U}\colon \lim_{n \to \mathcal{U}} \| a_n \|_{2,T} = 0\}.
\]
Then $J_T$ is a closed, two-sided, ideal in $A_\U$, and it is 
$G$-invariant if $T$ is. If $\tau \in T(A)$, we abbreviate $J_{\{\tau\}}$ 
to $J_\tau$.
For $T=T(A)$, we abbreviate $J_{T(A)}$ to $J_A$, and call it
the \emph{trace kernel ideal}. The associated quotient 
$$
A^\mathcal{U} = A_\mathcal{U} / J_A
$$
is called the \emph{uniform tracial ultrapower} of $A$. We
denote by $\alpha^\U\colon G\to\Aut(A^\U)$ the induced action, 
and by $\kappa_A\colon A_\U\to A^\U$ the equivariant quotient map.
We abbreviate $\kappa_A$ to $\kappa$ whenever the algebra $A$ is 
clear from the context.
Given a subset $S \subseteq A_\U$, the commutant of $S$ in $A_\U$
is denoted by $A_\U \cap S'$, and we use similar notation for 
subsets of $A^\U$.
The following useful fact will be used repeatedly;
see \cite[Proposition 4.5, Proposition 4.6]{kirchror}:

\begin{lma}\label{lma:SurjRelComm}
Let $A$ be a separable \ca, let $G$ be a discrete group, let $\alpha\colon
G\to\Aut(A)$ be an action, let $S\subseteq A_\U$ be a separable $G$-invariant 
subset, and set $\overline{S}=\kappa(S)$. 
Then $\kappa$ restricts to a surjective, 
equivariant map 
\[\kappa\colon (A_\U\cap S',\alpha_\U)\to (A^\U\cap \overline{S}',\alpha^\U).
\]
\end{lma}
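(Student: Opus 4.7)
The plan is to reduce to the non-equivariant version proved in \cite{kirchror}: the group action plays a passive role, and the only substantial content is the surjectivity. Equivariance of $\kappa\colon A_\U\to A^\U$ is immediate, because the uniform trace norm $\|\cdot\|_{2,T(A)}$ is $G$-invariant (the dual action $\alpha^*$ permutes $T(A)$), so $J_A$ is $\alpha_\U$-invariant. Together with the hypothesis that $S$ is $G$-invariant, which makes $A_\U\cap S'$ and $A^\U\cap\overline{S}'$ invariant under $\alpha_\U$ and $\alpha^\U$ respectively, this forces the restricted map to be equivariant.

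For surjectivity, the key technical input is that $J_A$ is a $\sigma$-ideal of $A_\U$ in the sense of Kirchberg: for every separable $C^*$-subalgebra $B\subseteq A_\U$ there exists a positive contraction $e\in J_A\cap B'$ such that $eb=b$ for all $b\in B\cap J_A$. Granting this, given $y\in A^\U\cap\overline{S}'$ I would pick any lift $y_0\in A_\U$ of $y$, take $B$ to be the separable $C^*$-subalgebra of $A_\U$ generated by $S\cup\{y_0,y_0^*\}$, produce $e\in J_A\cap B'$ as above, and set
\[
x \;=\; (1-e)\,y_0 + y_0 e - e y_0 e.
\]
Since $\kappa(y_0)=y$ commutes with $\kappa(s)$ for every $s\in S$, the commutator $[y_0,s]$ lies in $B\cap J_A$, so $e[y_0,s]=[y_0,s]=[y_0,s]\,e$; a short computation then gives $[x,s]=0$ for every $s\in S$, hence $x\in A_\U\cap S'$. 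As $x-y_0=-e y_0 + y_0 e - e y_0 e\in J_A$, we also have $\kappa(x)=y$, so $x$ is the required preimage.

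The main obstacle is thus the existence of $e$, i.e., the $\sigma$-ideal property of $J_A$. This is the part that requires actual work: one lifts countable dense subsets of $B$ and of $B\cap J_A$ to $\ell^\infty(A)$ and, at each level $n$, uses a functional-calculus construction of the form $g_\varepsilon(c^*c)$ to manufacture positive contractions that are small in uniform trace norm yet act as an approximate identity on the chosen contractions from $B\cap J_A$; a standard diagonal argument along $\U$ then assembles the desired $e$. Since this is established in \cite{kirchror} with no reference to any group action, I would invoke it directly rather than reproduce the argument, and combine it with the purely algebraic construction above to conclude.
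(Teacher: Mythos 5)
Your proposal is correct and follows essentially the same route as the paper, which simply cites \cite[Propositions 4.5 and 4.6]{kirchror}: surjectivity is deduced from the $\sigma$-ideal property of $J_A$ exactly as you describe (your corrector $x=(1-e)y_0(1+e)$ even simplifies to $y_0-e^2y_0$ since $e\in B'$ commutes with $y_0$, and then $[x,s]=(1-e^2)[y_0,s]=0$), and equivariance is immediate from the $G$-invariance of $J_A$ and of $S$.
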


Like the norm ultrapower, the uniform tracial ultrapower of a \ca\ $A$ satisfies
countable saturation properties that allow us, via reindexing and diagonal arguments, to derive
exact statements in $A^\U$ from approximations in $\| \cdot \|_{2, T_\U(A)}$ or in $\| \cdot \|_{2,T(A)}$. For future reference, we isolate this in the 
following remark.

\begin{rem} \label{saturation}

The notion of saturation is a fundamental and classical concept from model theory,
which has also been formalized for \cas\ (see \cite[Section 4.3]{modeltheory}).
Among operator algebraists, all instances of saturation in the context of ultrapowers
are usually reduced to an application of a technical lemma known as
Kirchberg's $\varepsilon$-test \cite[Lemma A.1]{kir}.
We also refer to this technical tool when invoking `countable saturation' in our proofs.
\end{rem}

Given a sequence $(\tau_n)_{n\in\N}$ in $T(A)$, there is a trace 
$\tau\in T(A_\U)$ given by 
$\tau(a)=\lim_{n\to\U}\tau_n(a_n)$ whenever $(a_n)_{n\in\N}\in \ell^\I(A)$
is a representing sequence for $a$. 
We call traces of this form \emph{limit traces},
and denote by $T_\U(A)$ the set of all limit traces on $A_\U$.
Since any limit trace vanishes on $J_A$, 
with a slight abuse of notation we also regard 
the elements in $T_\U(A)$ as traces over $A^\U$.
We denote by
$T^\alpha_\mathcal{U}(A)$ the set of all traces in $T_\mathcal{U}(A)$
which arise from sequences of traces in $T(A)^\alpha$. (This set 
should not be confused with the set $T_\U(A)^{\alpha_{\mathcal{U}}}$ of $G$-invariant elements of $T_\mathcal{U}(A)$, 
which may a-priori be larger.)

A straightforward computation shows that for $(a_n)_{n\in\N}$
in $\ell^\I(A)$ with corresponding class
$a\in A_\U$, we have
$\| a \|_{2, T_\U(A)} = 0 $ if and only if 
$\lim_{n \to \U} \| a_n \|_{2, u} = 0$.
In particular, this shows that 
\[
J_A = \{ a \in A_\U\colon \| a \|_{2, T_\U(A)} = 0 \}.
\]

The ideals $J_{T(A)}$ and $J_{T(A)^\alpha}$ are not equal in 
general, even if $G$ is amenable. (Take, for example, $A=C(S^1)$ and $G=\Z$ acting on it via irrational 
rotations.) While the tools that we develop in this work are suitable
for studying the quotient $A_\U/J_{T(A)^\alpha}$, the kind of
conclusions that we are interested in refer to the quotient $A_\U/J_A=A^\U$. In general, it is not clear how to transfer information
from one to the other. The assumptions
in our main results concerning the size of the orbits of $\alpha^*$
are used to do this.

\begin{prop} \label{prop:ideal}
Let $A$ be a \ca\ such that $T(A)$ is nonempty, 
let $G$ be a discrete group, and let 
$\alpha\colon G\to\Aut(A)$ be an action. 
Suppose that the cardinality of the 
orbits of $\alpha^*$ is uniformly bounded.
Then $\|\cdot \|_{2, T(A)^\alpha}$ and $\|\cdot \|_{2,u}$ are equivalent, and in particular $J_A = J_{T(A)^\alpha}$.
\end{prop}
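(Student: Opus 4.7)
The plan is to reduce the equivalence of the two seminorms to a simple averaging estimate, exploiting the uniform bound on orbit cardinalities. Let $M$ be such that $|G\cdot\tau|\le M$ for every $\tau\in T(A)$. One inequality is immediate: since $T(A)^\alpha\subseteq T(A)$, we have $\|a\|_{2,T(A)^\alpha}\le \|a\|_{2,u}$ for all $a\in A$, so no work is needed there.

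For the nontrivial inequality, I would fix $\tau\in T(A)$ and look at the invariant trace $\tau^\alpha=\frac{1}{|G\cdot\tau|}\sum_{\sigma\in G\cdot\tau}\sigma$, which is well-defined precisely because the orbit is finite. Since $\tau$ itself appears in the averaged sum with coefficient $\frac{1}{|G\cdot\tau|}\ge \frac{1}{M}$ and every other summand is a positive trace, for any $a\in A$ we get
\[
\tau(a^*a)\;\le\; |G\cdot\tau|\cdot \tau^\alpha(a^*a)\;\le\; M\,\tau^\alpha(a^*a),
\]
that is, $\|a\|_{2,\tau}\le \sqrt{M}\,\|a\|_{2,\tau^\alpha}$. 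Because $\tau^\alpha\in T(A)^\alpha$, this gives $\|a\|_{2,\tau}\le \sqrt{M}\,\|a\|_{2,T(A)^\alpha}$, and taking the supremum over $\tau\in T(A)$ yields $\|a\|_{2,u}\le \sqrt{M}\,\|a\|_{2,T(A)^\alpha}$.

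For the statement about ideals, I would simply transfer the equivalence from $A$ to $\ell^\infty(A)$ and then to $A_\U$. A bounded sequence $(a_n)\in\ell^\infty(A)$ satisfies $\lim_{n\to\U}\|a_n\|_{2,u}=0$ if and only if $\lim_{n\to\U}\|a_n\|_{2,T(A)^\alpha}=0$, by the uniform comparison of seminorms proved above; as noted in the preliminaries, $J_A$ consists exactly of classes of sequences with $\lim_{n\to\U}\|a_n\|_{2,u}=0$, and by the same computation $J_{T(A)^\alpha}$ is characterized by $\lim_{n\to\U}\|a_n\|_{2,T(A)^\alpha}=0$, so the two ideals coincide.

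There is no real obstacle here: the whole argument rests on the observation that each orbit is finite of size at most $M$, so the averaging producing $\tau^\alpha$ loses at most a factor $M$. The only mild subtlety is the need to choose $\tau^\alpha$ for each non-invariant $\tau$ and verify that it lies in $T(A)^\alpha$, which is immediate since averaging over a full orbit produces an invariant trace.
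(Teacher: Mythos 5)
Your proof is correct and follows essentially the same route as the paper: averaging each trace over its (finite, uniformly bounded) orbit to get $\tau^\alpha\in T(A)^\alpha$ with $\tau\le M\tau^\alpha$ on positive elements, yielding $\|a\|_{2,T(A)^\alpha}\le\|a\|_{2,u}\le M^{1/2}\|a\|_{2,T(A)^\alpha}$. The only difference is that you spell out the transfer of the seminorm equivalence to the ultrapower to conclude $J_A=J_{T(A)^\alpha}$, which the paper leaves implicit.
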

\begin{proof}
Given $\tau\in T(A)$, set
$
\tau^\alpha = \frac{1}{|G\cdot \tau|} \sum_{\sigma \in G \cdot \tau} \sigma$.
One readily checks that $\tau^\alpha \in T(A)^\alpha$, and that
$\tau(a) \le |G\cdot \tau| \tau^\alpha(a)$ for all $a \in A_+$.
Let $M>0$ be a uniform bound for the orbits of $\alpha^*$. Given
for $a \in A$, we have
\[
\| a \|_{2, T(A)^\alpha} \le \| a \|_{2,u} \le M^{1/2} \| a \|_{2, T(A)^\alpha}.\qedhere
\]
\end{proof}

Given a \ca\ $B$ and a $G$-action $\gamma \colon G \to \text{Aut}(B)$, we write
$B^\gamma$ for the fixed point algebra of $\gamma$.
The following simple lemma follows from a straightforward reindexation 
argument, which we omit.

\begin{lma}
	\label{lma:reindexation-uniform}
	Let $A$ and $B$ be a separable unital \ca, let $G$ be a countable 
	discrete group, and let 
	$\alpha\colon G\to\Aut(A)$ be an action. Suppose there exists a unital 
	homomorphism $\varphi \colon B \to  (A_\U\cap A')^{\alpha_\U}$. Then for 
	any separable subset $S \subseteq (A_\U)^{\alpha_\U}$ there exists a unital 
	homomorphism $\psi \colon B \to (A_\U\cap S')^{\alpha_\U}$.
\end{lma}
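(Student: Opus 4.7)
The plan is to prove this via a standard reindexation/diagonalization, which can be phrased as an application of Kirchberg's $\varepsilon$-test (see \autoref{saturation}). First, I would fix countable dense subsets: a countable $*$-subalgebra $\{b_k\}_{k\in\N}\subseteq B$ (e.g., the $\mathbb{Q}[i]$-span of a dense sequence, closed under products and involution), a dense sequence $\{a_l\}_{l\in\N}\subseteq A$, a dense sequence $\{s_j\}_{j\in\N}\subseteq S$ with chosen representatives $s_j = [(s^{(j,n)})_n]\in \ell^\infty(A)$, an enumeration $\{g_i\}_{i\in\N}$ of $G$, and lifts $\varphi(b_k) = [(a^{(k)}_n)_n]\in A_\U$ for each $k$.

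The central approximation claim is that, for each $m\in\N$, the set of $n\in\N$ for which, simultaneously for all $k,k',l,i,j\leq m$, the elements $a^{(k)}_n$ satisfy the $*$-algebraic relations among the $b_k$'s up to $1/m$, obey $\|[a^{(k)}_n, a_l]\|<1/m$ and $\|\alpha_{g_i}(a^{(k)}_n) - a^{(k)}_n\| < 1/m$, and crucially obey $\|[a^{(k)}_n, s^{(j,m)}]\| < 1/m$, belongs to $\U$. The first three families of conditions are immediate from the fact that $\varphi$ is a unital $*$-homomorphism with image in $(A_\U\cap A')^{\alpha_\U}$. The last is the only delicate one, but it holds because, at each fixed stage $m$, the set $\{s^{(j,m)} : j\leq m\}$ is a finite subset of $A$, so commutation with it falls within the scope of the asymptotic centrality of $(a^{(k)}_n)_n$ already granted by $\varphi(b_k)\in A_\U\cap A'$.

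Choosing $n_m$ strictly increasing from this $\U$-large set and setting $\psi(b_k):=[(a^{(k)}_{n_m})_m]\in A_\U$, I would then conclude as follows. The algebraic conditions make $\psi$ an approximately multiplicative, $*$-preserving, linear map on the chosen dense $*$-subalgebra of $B$, which extends uniquely to a unital $*$-homomorphism $\psi\colon B\to A_\U$; the $\alpha$-invariance conditions place $\psi(B)$ inside $(A_\U)^{\alpha_\U}$; and commutation with the diagonal slices $\{s^{(j,m)}: j\leq m\}$ translates directly into $[\psi(b_k), s_j] = 0$ in $A_\U$ for every $k,j$, so $\psi$ factors through $(A_\U\cap S')^{\alpha_\U}$ as required. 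The only substantive observation is the reduction of commutation with the infinite family $S\subseteq A_\U$ to commutation, at each stage $m$, with the finite subset $\{s^{(j,m)}: j\leq m\}\subseteq A$; beyond this, the argument is mechanical and I foresee no serious obstacles.
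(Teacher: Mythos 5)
Your argument is correct and is precisely the straightforward reindexation/diagonalization that the paper has in mind (the paper omits the proof, describing it as exactly such an argument); in particular, your key observation—that commutation with the separable set $S\subseteq A_\U$ reduces, at each stage $m$, to commutation with the finitely many $m$-th entries $s^{(j,m)}\in A$ of the chosen representatives, which is already guaranteed $\U$-often by $\varphi(b_k)\in A_\U\cap A'$—is the whole content of the lemma. The only point worth making explicit is to choose the lifts with $\sup_n\|a^{(k)}_n\|\le\|b_k\|$, so that $\psi$ is contractive on the dense $*$-subalgebra and therefore extends to a unital homomorphism on all of $B$.
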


\subsection{$W^*$-ultrapowers.}
We will also need to use tracial 
ultrapowers for von Neumann algebras, so 
we recall this notion as well.
Let $(\M, \tau)$ be a tracial von Neumann algebra, and 
set 
\[c_{\mathcal{U}, \tau}=\{(a_n)_{n\in\N} \in  \ell^\infty( \mathcal{M})\colon 
\lim_{n \to \U} \| a_n \|_{2, \tau} = 0\}.\]
We denote by
$\mathcal{M}^\mathcal{U}$ the quotient
$\mathcal{M}^\mathcal{U} = \ell^\infty( \mathcal{M}) / c_{\mathcal{U}, \tau}$,
and call it the \emph{$W^*$-ultrapower} of $(\M,\tau)$. 

There is unfortunately a notational conflict, since the notation 
$\M^\U$ could mean both the $W^*$-ultrapower of $(\M,\tau)$, or 
the uniform tracial ultrapower of $\M$ regarded as a \ca. 
Both notations are by now well established in the literature, 
and we will always make it clear which one we are referring to. In practice, little confusion should arise since we will never consider
the uniform tracial ultrapower of a von Neumann algebra.

 Denote by
$\pi_\tau$ the GNS representation associated to $\tau$. Most tracial von 
Neumann algebras we deal with in this 
note are of the form
$\pi_\tau(A)''$ for some separable,
simple, unital \ca\ $A$ and some trace $\tau \in T(A)$.The canonical and 
implicit
choice of faithful, normal trace on $\pi_\tau(A)''$ is
(the unique tracial extension of) $\tau$. We will often use the notation $\mathcal{M}_\tau$ to abbreviate
$\pi_\tau(A)''$.

\subsection{Strong outerness}

Let $A$ be a $C^*$-algebra, and let $\tau \in T(A)$ be a trace. Note that if $\theta \in 
\Aut(A)$ and $\tau$ is left invariant by $\theta$, then $\theta$ extends uniquely 
to a trace-preserving automorphism of $\pi_\tau(A)''$,
which we denote by $\theta^\tau$.

\begin{df}
Let $A$ be a simple, unital \ca\ with nonempty trace space, 
and let $\theta\in\Aut(A)$ be an automorphism. 
We say that $\theta$ is \emph{strongly outer} if $\theta^\tau$ is outer
for every $\tau\in T(A)$ satisfying $\tau\circ\theta=\tau$.

An action $\alpha\colon G \to \Aut(A)$ of a discrete group $G$ on $A$
is said to be 
\emph{strongly outer} if $\alpha_g$ is strongly outer
for all $g\in G\setminus\{1\}$.
\end{df}


A strongly outer automorphism is clearly outer. 
The reverse implication is, however,
false. For example, let $A = \bigotimes_{n=1}^{\infty} M_{2^n}$ and let 
$\theta$ be the approximately inner order 2 automorphism of $A$ given as the 
limit of $\Ad (u_k)$, where $u_k = v_1 \otimes v_2 \otimes \ldots \otimes v_k 
\otimes 1_{\bigotimes_{n=k+1}^{\infty} M_{2^n}}$ and $v_j = 
\diag(-1,1,1,1,\ldots,1)$. 
One can 
check that the sequence $u_k$ is Cauchy in the trace norm, and therefore 
$\theta^\tau$ is inner, although $\theta$ is not inner (see \cite{longo}).

\subsection{Rokhlin dimension}

The notion of Rokhlin dimension was first defined for actions of 
$\mathbb{Z}$ and finite groups
in \cite{hwz}, and later extended to residually finite groups in \cite{swz}.
A group $G$ is \emph{residually finite} if for every $g \in G \setminus \{ e 
\}$ there is a normal subgroup $H \le G$ 
of finite index such that $g \notin H$.

\begin{df}[{\cite[Definition 4.4]{swz}}] \label{def:frd}
	Let $G$ be a countable, discrete, residually finite group, let
	$A$ be a separable, unital \ca, and let $\alpha\colon G \to \Aut(A)$ be an 
	action. Given $d \in \N$, we say that $\alpha$
	has \emph{Rokhlin dimension at most $d$}, written $\dimRok(\alpha) \leq d$, 
	if
	for any normal subgroup $H \le G$ of finite index, there
	are positive contractions $f_{\overline g}^{(j)} \in A_\mathcal{U} \cap 
	A'$, 
	for $j = 0, \ldots, d$ and $\overline g \in G/H$,
	such that:
	\begin{enumerate}
		\item $(\alpha_\U)_g(f_{\overline h}^{(j)}) = f^{(j)}_{\overline{gh}}$ 
		for all $j = 0, \ldots, d$ and $g \in G$ and
		$\overline h \in G/H$,
		\item $f^{(j)}_{\overline g} f^{(j)}_{\overline h} = 0$ for all $j = 0, 
		\ldots, d$ and $\overline g,\overline h \in G/H$ with $\overline g \not= 
		\overline h$,
		\item $\sum_{j=0}^d \sum_{\overline g \in G/H} f_{\overline g}^{(j)} 
		=1$.
	\end{enumerate}
\end{df}

There is a related notion, called \emph{Rokhlin dimension with commuting 
	towers}, where the
elements $f_{\overline{g}}^{(j)}$ are assumed to moreover
pairwise commute (see
\cite[Definiton 2.3.b]{hwz} and \cite[Definition 9.2]{swz}). We will not deal 
with this notion here.

We record here an equivalent definition of Rokhlin dimension, which uses 
approximations
instead of ultrapowers.

\begin{prop}[{\cite[Propostion 4.5]{swz}}] \label{rmk:EquivalenceDimRok}
	Using the notation from \autoref{def:frd}, we have $\dimRok(\alpha)\leq 
	d$ if and only if for any normal subgroup $H \leq G$ of finite index, for 
	any finite subset $G_0 \subseteq G$, for every $\ep>0$ and for every finite
	subset $F\subseteq A$, there are positive contractions
	$f^{(j)}_{\overline{g}} \in A$, for $j=0,\ldots,d$ and for $\overline{g}\in 
	G/H$, satisfying:
	\begin{enumerate}[label=(\alph*)]
		\item \label{equiv:a} $\big\| 
		\alpha_{g}(f^{(j)}_{\overline{h}})-f^{(j)}_{\overline{gh}}\big\|<\ep$ 
		for all $j=0,\ldots,d$, for all $g\in G_0$ and for all $\overline{h}\in 
		G/H$,
		\item \label{equiv:b} 
		$\big\|f^{(j)}_{\overline{g}}f^{(j)}_{\overline{h}}\big\|<\ep$ for 
		all $j=0,\ldots,d$ and for all $\overline{g},\overline{h}\in G/H$ with 
		$\overline{g}\neq \overline{h}$,
		\item \label{equiv:c} $\big\|1-\sum_{j=0}^d\sum_{\overline{g}\in G/H} 
		f^{(j)}_{\overline{g}}\big\|<\ep$,
		\item \label{equiv:d} 
		$\big\|af^{(j)}_{\overline{g}}-f^{(j)}_{\overline{g}}a\big\|<\ep$ 
		for all $a\in F$, for all $\overline{g}\in G/H$ and for all 
		$j=0,\ldots,d$.
	\end{enumerate}
\end{prop}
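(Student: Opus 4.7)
The plan is to prove the two implications separately using the standard correspondence between statements in the ultrapower $A_\U$ and approximate statements in $A$. The forward direction is a one-step argument: lift any tower to representing sequences and observe that the exact ultrapower identities translate to approximate identities valid for $\U$-many indices. The reverse direction is a countable saturation / diagonal argument: use the approximate towers with finer and finer tolerance and exhausting finite sets to construct representing sequences whose images in $A_\U$ satisfy the exact conditions.

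For the forward direction, assume $\dimRok(\alpha)\leq d$ and fix $H\leq G$ of finite index, a finite set $G_0\subseteq G$, a finite set $F\subseteq A$ and $\ep>0$. Pick $f^{(j)}_{\ov g}\in A_\U\cap A'$ as in \autoref{def:frd} and choose bounded representing sequences $(f^{(j)}_{\ov g, n})_{n\in\N}\subseteq A$, which we may assume consist of positive contractions by standard functional calculus. The equivariance $(\alpha_\U)_g(f^{(j)}_{\ov h})=f^{(j)}_{\ov{gh}}$ for $g\in G_0$, the orthogonality on different cosets, the partition-of-unity identity, and commutation with $F$ all become norm-approximate statements which hold for $\U$-many $n\in\N$; any such $n$ produces elements in $A$ satisfying \ref{equiv:a}--\ref{equiv:d}.

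For the reverse direction, fix a normal subgroup $H\leq G$ of finite index. Since $A$ is separable and $G$ is countable, choose an increasing sequence of finite subsets $F_1\subseteq F_2\subseteq\cdots\subseteq A$ with dense union, an increasing sequence of finite subsets $G_1\subseteq G_2\subseteq\cdots\subseteq G$ with union $G$, and $\ep_n\searrow 0$. For each $n$, apply the hypothesis with $F_n$, $G_n$, and $\ep_n$ to obtain positive contractions $f^{(j)}_{\ov g,n}\in A$ (for $j=0,\dots,d$ and $\ov g\in G/H$) satisfying \ref{equiv:a}--\ref{equiv:d}. Setting $f^{(j)}_{\ov g}:=[(f^{(j)}_{\ov g,n})_{n\in\N}]\in A_\U$, we see that $f^{(j)}_{\ov g}\in A_\U\cap A'$ because \ref{equiv:d} forces $\lim_{n\to\U}\|af^{(j)}_{\ov g,n}-f^{(j)}_{\ov g,n}a\|=0$ for every $a$ in the dense subset, and hence for all $a\in A$ by norm continuity. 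Conditions (1)--(3) of \autoref{def:frd} follow directly: equivariance in $A_\U$ holds for every $g\in G$ since $g\in G_n$ eventually and $\ep_n\to 0$; orthogonality and the partition-of-unity identity are read off from \ref{equiv:b} and \ref{equiv:c} in the limit.

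The only subtlety, and hence the one point one must be careful about, is to pin down that all three exact equalities in $A_\U\cap A'$ can be achieved \emph{simultaneously} with a single choice of representing sequences. This is exactly what the diagonal choice above accomplishes, and it is the mechanism behind the general countable saturation principle recalled in \autoref{saturation}; if preferred, one can alternatively package it as a single application of Kirchberg's $\ep$-test to the finitely many constraints encoding equivariance on $G$, pairwise orthogonality, the partition-of-unity relation, and commutation with a countable dense subset of $A$.
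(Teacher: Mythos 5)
Your argument is correct: the forward direction is the standard lifting of exact ultrapower relations to approximate relations holding for $\U$-many indices, and the reverse direction is the standard diagonal/saturation argument, with the two genuine subtleties (extending commutation from a dense subset of $A$ by continuity, and getting equivariance for all $g\in G$ from the exhaustion $G_n\nearrow G$) handled properly. The paper itself gives no proof of this statement, citing it as Proposition 4.5 of Szab\'o--Wu--Zacharias, and your write-up is the expected self-contained version of that argument.
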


\subsection{The weak tracial Rokhlin property}

We begin by recalling some terminology.

\begin{df}
Let $G$ be a discrete group, and let $\delta>0$. 
Given finite subsets $K,S\subseteq G$, we say that $S$ is 
\emph{$(K,\delta)$-invariant} if
\[
\Big | S \cap \bigcap_{g \in K} gS\Big | \ge (1-\delta) | S |.
\]
\item
	
\end{df}

The
existence of $(K, \delta)$-invariant subsets of $G$ for every finite set $K$ 
and $\delta >0$ is
F\o lner's characterization of amenability.



The next definition goes back to Ocneanu's 
notion of the Rokhlin property for actions of amenable
groups on von Neumann algebras and specifically on the 
hyperfinite II$_1$-factor; see \cite[Chapter 6]{ocneanu}.

\begin{df}\label{df:wtRp}
	Let $G$ be a discrete, amenable group, let $A$ be a simple, separable \uca, 
	and let $\alpha\colon G\to\Aut(A)$ be an
	action. We say that $\alpha$ has the 
	\emph{weak tracial Rokhlin property}
	if for any finite subset $K \subseteq G$ and any $\delta > 0$, there are $n\in\N$, 
	$(K,\delta)$-invariant finite
	subsets $S_1, \dots, S_n \subseteq G$, and positive contractions $f_{\ell,g} 
	\in A_\U \cap A'$ for $\ell = 1,\dots,
	n$ and $g \in S_\ell$, such that
	\begin{enumerate}
		\item \label{wtRp:1} $(\alpha_{\U})_{gh^{-1}}(f_{\ell, h}) = f_{\ell, 
		g}$ for all all $\ell = 1, \dots, n$ and all $g,h \in S_\ell$ ,
		\item \label{wtRp:2} $f_{\ell,g} f_{k,h} = 0$ for all $\ell, k = 1, 
		\dots, n$, $g \in S_\ell$, $k \in S_h$, whenever $(\ell,
		g) \not = (k,h)$,
		\item \label{wtRp:3} $1 - \sum_{\ell = 1}^n \sum_{g \in S_\ell} 
		f_{\ell, g} \in J_A$,
		\item \label{wtRp:4} For $\tau\in T_\U(A)$, for $\ell=1,\ldots,n$ and 
		for $g\in S_\ell$, the value of $\tau(f_{\ell,g})$ is
		independent of $\tau$ and $g$, and is positive.
	\end{enumerate}
\end{df}

\autoref{df:wtRp} is inspired by Wang's \cite[Propostion 2.4]{wang},
except for item \eqref{wtRp:4}, which is inspired by Matui and Sato's 
\cite[Definition
2.5.3]{matuisato:II}. In particular, our definition of the weak
tracial Rokhlin property extends that of Matui-Sato 
to groups that are not necessarily monotileable.

\begin{rem}
Condition \eqref{wtRp:4} in \autoref{df:wtRp}
was used in a predecessor of this paper
(\cite{GarHir_strongly_2018}) to prove that actions with the
weak tracial Rokhlin property have equivariant property (SI) whenever the underlying algebra has property (SI), 
which is needed in the proof of implication
(\ref{rokdim:1}) $\Rightarrow$ (\ref{rokdim:3}) of Theorem \autoref{thmA}. In 
this paper, we rely on the more general results from \cite{equiSI}; 
thus
item \eqref{wtRp:4} above is no longer used to prove the other implications. 
Nevertheless, we
carry out the proof of (\ref{rokdim:1}) $\Rightarrow$ 
(\ref{rokdim:2}) in Theorem \autoref{thmA} so as to obtain Rokhlin towers also
satisfying condition \eqref{wtRp:4},
as we believe that such a stronger condition might prove to be useful in 
future 
applications.
\end{rem}

\section{Equivariant uniform property $\Gamma$} \label{s3}
In the theory of von Neumann algebras, property $\Gamma$ was originally introduced by 
Murray and von Neumann (\cite{mvnIV}) in order prove the existence of non-hyperfinite II$_1$-factors.
A II$_1$-factor $\mathcal{M}$ with trace $\tau$
has property $\Gamma$ if its central sequence algebra $\mathcal{M}^\U
\cap \mathcal{M}'$ is non-trivial. 
Dixmier later showed that in this case property $\Gamma$ is equivalent to the requirement that the II$_1$-factor 
$\mathcal{M}^\U
\cap \mathcal{M}'$ is \emph{diffuse} (\cite{dix}), that is, for every $n \in 
\N$ there are orthogonal projections
$p_1, \dots, p_n \in \mathcal{M}^\U \cap \mathcal{M}'$ such that $\tau_{\mathcal{M}^\U}(p_i) = 1/n$.
This latter formulation of property $\Gamma$ inspired an analogous definition
for uniform tracial ultrapowers, which has been recently introduced in \cite[Definition 2.1]{cpou}
and systematically studied in \cite{uniformGamma}.

In the next two sections we borrow some of the main ideas in \cite{cpou} and \cite{uniformGamma},
and  adapt them to the equivariant
setting. We work with actions of discrete countable groups; for some 
statements, the group will be assumed to be amenable as well.
We start with the definition of uniform property $\Gamma$ for actions of countable
discrete groups on unital separable \cas.

\begin{df}\label{df:EqPropGamma}
Let $G$ be a countable, discrete group, let
$A$ be a unital, separable \ca\ with non-empty trace space, and let $\alpha\colon G\to \Aut(A)$ be an action. We say that 
$(A,\alpha)$ has \emph{uniform property
$\Gamma$} if for every $n \in \N$ and every
$\| \cdot \|_{2, T_\U(A)}$-separable subset 
$S\subseteq A^\mathcal{U}$,
there are projections $p_1, \ldots, p_n \in (A^{\mathcal{U}} \cap
S')^{\alpha^\U}$ with
\begin{enumerate}
\item \label{Gamma:i1} $\sum_{j = 1}^n p_j = 1$,
\item \label{Gamma:i2} $\tau(ap_j) = \frac{1}{n}\tau(a)$ for all $a \in S$, all $\tau \in T_\mathcal{U}(A)$ and all $j=1,\ldots, n$.
\end{enumerate}
\end{df}

When $\alpha$ is the trivial action the definition above coincides with
the definition of uniform property $\Gamma$ as given in \cite[Definition 2.1]{cpou}.

\begin{rem}
In \autoref{df:EqPropGamma}, we could have equivalently required
that the projections $p_1,\ldots,p_n$ belong to 
$(A^\U\cap A')^{\alpha^\U}$ and only require that condition \eqref{Gamma:i2}
holds for all $a\in A$; this follows from separability of the sets
$S$ considered in \autoref{df:EqPropGamma}, along with a standard diagonal argument
(\autoref{lma:reindexation-uniform}).
\end{rem}

In \cite[Proposition 2.3]{cpou}, it is shown that unital separable 
$\mathcal{Z}$-stable \cas\ with non-empty
trace space have uniform property $\Gamma$, since it is possible to embed matrix algebras
of arbitrary dimension into the central sequence algebra of the uniform tracial ultrapower. 
A modification of that argument allow us to show that
equivariantly $\mathcal{Z}$-stable
actions of discrete countable groups on such algebras have uniform property $\Gamma$.


\begin{prop} \label{prop:Zstable}
Let $G$ be a countable, discrete
group and
let $A$ be a separable, unital, $\mathcal{Z}$-stable \ca\
with non-empty trace space. Let $\alpha\colon G
\to \Aut(A)$ be an action and suppose that $(A,\alpha)$ is cocycle conjugate to $(A\otimes\mathcal{Z}, \alpha \otimes 
\text{id}_\mathcal{Z}$).
Then for any $\| \cdot \|_{2, T_\U(A)}$-separable subset $S\subseteq A^\U$ and every $n \in \N$,
there exists a unital $*$-homomoprhism $M_n\to (A^\U \cap S')^{\alpha^\U}$.
In particular $(A,\alpha)$ has uniform property $\Gamma$.
\end{prop}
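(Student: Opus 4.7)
The plan is to adapt the non-equivariant argument of \cite[Proposition 2.3]{cpou} by first producing a unital equivariant embedding $\mathcal{Z}\to(A^\U\cap S')^{\alpha^\U}$ (with $\mathcal{Z}$ carrying the trivial action), and then upgrading to $M_n$ by exploiting that the GNS completion of $\mathcal{Z}$ at its unique trace is the hyperfinite II$_1$-factor $\mathcal{R}$, which is McDuff. The main obstacle is to bridge the $\|\cdot\|_{2,T_\U(A)}$-separability of $S\subseteq A^\U$ with the norm-separability required to apply equivariant $\mathcal{Z}$-stability in $A_\U$.

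First, I would choose a countable $\|\cdot\|_{2,T_\U(A)}$-dense subset $\{s_k\}$ of $S$, lift each $s_k$ to $\tilde s_k\in A_\U$, and let $\widetilde{S}\subseteq A_\U$ be the norm-separable, $G$-invariant set generated by $\{\tilde s_k\}_k$. Since $(A,\alpha)$ is cocycle conjugate to $(A\otimes\mathcal{Z},\alpha\otimes\id_\mathcal{Z})$, the standard McDuff-type characterization of equivariant $\mathcal{Z}$-stability, obtained via a reindexation argument in the spirit of \autoref{lma:reindexation-uniform} adapted to $G$-invariant separable subsets of $A_\U$, yields a unital equivariant $*$-homomorphism $\psi\colon(\mathcal{Z},\id)\to(A_\U\cap\widetilde{S}',\alpha_\U)$, whose image automatically lies in $(A_\U\cap\widetilde{S}')^{\alpha_\U}$. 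Composing with $\kappa$ produces $\varphi:=\kappa\circ\psi\colon\mathcal{Z}\to(A^\U\cap\kappa(\widetilde{S})')^{\alpha^\U}$. Since left and right multiplication by bounded elements is continuous in $\|\cdot\|_{2,T_\U(A)}$, commutation with $\{s_k\}\subseteq\kappa(\widetilde{S})$ propagates to the $\|\cdot\|_{2,T_\U(A)}$-closure, which contains $S$, so $\varphi$ in fact takes values in $(A^\U\cap S')^{\alpha^\U}$.

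To upgrade $\varphi$ to a unital embedding of $M_n$, I would observe that because $\mathcal{R}$ is McDuff there exist sequences $(e_{ij}^{(k)})_k$ in $\mathcal{Z}$, for $1\leq i,j\leq n$, asymptotically satisfying the $M_n$ matrix unit relations in $\|\cdot\|_{2,\tau_\mathcal{Z}}$. Since $\mathcal{Z}$ has a unique trace, every $\tau\in T_\U(A)$ restricts along $\varphi$ to $\tau_\mathcal{Z}$, so the image sequences $(\varphi(e_{ij}^{(k)}))_k\subseteq(A^\U\cap S')^{\alpha^\U}$ asymptotically satisfy the matrix unit relations in $\|\cdot\|_{2,T_\U(A)}$. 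Because $\|\cdot\|_{2,T_\U(A)}$ is a norm on $A^\U$, Kirchberg's $\ep$-test (see \autoref{saturation}) can be applied to the countable list of desired conditions, producing elements $E_{ij}\in(A^\U\cap S')^{\alpha^\U}$ that satisfy the matrix unit relations exactly and yield the sought unital $*$-homomorphism $M_n\to(A^\U\cap S')^{\alpha^\U}$.

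For the uniform property $\Gamma$ conclusion, set $p_j:=E_{jj}$; these are pairwise orthogonal projections in $(A^\U\cap S')^{\alpha^\U}$ summing to $1$. For $a\in S$ and $\tau\in T_\U(A)$, since $E_{1j}$ and $E_{j1}$ commute with $a$, the trace property gives $\tau(ap_j)=\tau(aE_{j1}E_{1j})=\tau(E_{1j}aE_{j1})=\tau(aE_{1j}E_{j1})=\tau(ap_1)$, and summing over $j$ yields $\tau(ap_j)=\tau(a)/n$, verifying condition \eqref{Gamma:i2} of \autoref{df:EqPropGamma}. The main obstacle in this plan is the saturation step upgrading from $\mathcal{Z}$ to $M_n$, which requires simultaneously enforcing countably many approximate conditions (matrix unit relations, commutation with $\{s_k\}$, and $\alpha^\U$-invariance) inside the uniform tracial ultrapower.
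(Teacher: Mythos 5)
Your proposal is correct and follows the same overall skeleton as the paper's argument: use Szab\'o's characterization of equivariant $\mathcal{Z}$-stability to obtain a unital equivariant copy of $(\mathcal{Z},\id_{\mathcal{Z}})$ in the norm central sequence algebra, push it down to $A^\U$ via $\kappa$, exploit that the unique trace of $\mathcal{Z}$ forces every limit trace to restrict to $\tau_{\mathcal{Z}}$, and finish with essentially the same trace computation for uniform property $\Gamma$. The one step you implement genuinely differently is the upgrade from $\mathcal{Z}$ to $M_n$. The paper notes that $\kappa\circ\varphi$ is isometric from $(\mathcal{Z},\|\cdot\|_{2,\tau_{\mathcal{Z}}})$ into $(A^\U,\|\cdot\|_{2,T_\U(A)})$ and that the norm unit ball of $A^\U$ is $\|\cdot\|_{2,T_\U(A)}$-complete (\cite[Lemma 1.6]{cpou}), so the embedding extends by continuity to all of $\mathcal{R}\cong\pi_{\tau_{\mathcal{Z}}}(\mathcal{Z})''$ and $M_n$ is obtained by restriction; you instead push forward approximate matrix units from $\mathcal{Z}$ (available by Kaplansky density in the GNS closure) and close them up with Kirchberg's $\varepsilon$-test. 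Both routes work. The extension-by-completeness argument is slightly cleaner because the exact conditions you care about (commutation with $S$, $\alpha^\U$-invariance) are automatically preserved under $\|\cdot\|_{2,T_\U(A)}$-limits of elements of the closed set $(A^\U\cap S')^{\alpha^\U}$, whereas your saturation step must carry the countably many commutation and invariance conditions through the $\varepsilon$-test explicitly --- which is legitimate and standard, but is precisely the point you flag as the main obstacle, and the paper's route sidesteps it. Your handling of the $\|\cdot\|_{2,T_\U(A)}$-separable set $S$ (lifting a countable dense subset to $A_\U$ and propagating commutation to the $2$-norm closure) is a correct, if slightly more laborious, substitute for the paper's one-line reduction to $S=A$ via \autoref{lma:reindexation-uniform}.
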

\begin{proof}
By \autoref{lma:reindexation-uniform}, we can assume that $S = A$.
By Theorem~3.7 in~\cite{szabo:strself}, because $\alpha$ is
cocycle conjugate to $\alpha\otimes\id_{\mathcal{Z}}$, 
there is an equivariant 
embedding 
$\varphi\colon (\mathcal{Z},\id_{\mathcal{Z}})\to (A_\U \cap A',\alpha_\U)$. Note that the image of
$\varphi$ is contained in the fixed point algebra 
$(A_\U\cap A')^{\alpha_\U}$. 
With
$\kappa\colon A_\U \to A^\U$ denoting the canonical equivariant quotient 
map (see \autoref{lma:SurjRelComm}),
it follows by simplicity of $\mathcal{Z}$ that $\kappa\circ\varphi\colon \mathcal{Z}\to
(A^\U\cap A')^{\alpha^\U}$ is a unital embedding. Since
$\mathcal{Z}$ has a unique trace $\tau_\mathcal{Z}$, we have for all 
$z\in\mathcal{Z}$: 
\[\| z \|_{2,\tau_\mathcal{Z}} =
\| \kappa(\varphi(z)) \|_{2,T_\U(A)}  . \] 
It follows that $\kappa\circ\varphi$ is
$(\| \cdot \|_{2,\tau_\mathcal{Z}}$-$\| \cdot \|_{2,T_\U(A)})$-contractive. 
The completion of $C^*$-norm unit ball of 
$\mathcal{Z}$ under the norm
$\| \cdot \|_{2,\tau_\mathcal{Z}}$ is the $C^*$-norm unit
ball of 
$\pi_{\tau_{\mathcal{Z}}}(\mathcal{Z})''\cong \mathcal{R}$.
As the $C^*$-norm unit ball of $A^\U$ is $\| \cdot \|_{2, T_\U(A)}$-complete 
(\cite[Lemma 1.6]{cpou}), it follows that
$\kappa\circ\varphi$ extends to a unital
homomorphism $\mathcal{R}  \cong
\pi_{\tau_\mathcal{Z}}(\mathcal{Z})''\to 
(A^\U \cap A')^{\alpha^\U}$. By restriction, there is also
a unital homomorphism $\rho\colon M_n\to (A^\U \cap A')^{\alpha^\U}$.

Let $e_1,\ldots,e_n\in M_n$ be the canonical
diagonal projections, and set $p_j=\rho(e_j)$ for all 
$j=1,\ldots,n$. Then condition \eqref{Gamma:i1} in \autoref{df:EqPropGamma}
is automatically satisfied. To check \eqref{Gamma:i2}, let $a\in S$, let $j=1,\ldots,n$, and let 
$\tau\in T_\U(A)$. The map $M_n\to \C$ defined by $b\mapsto \tau(\rho(b)a)$ is 
a (not necessarily
normalized) trace on $M_n$, hence a multiple of the canonical
trace $\tau_{M_n}$ on $M_n$. Taking $b=1$ we deduce that the multiple is $\tau(a)$, so that $\tau(\rho(b)a)=\tau_{M_n}(b)\tau(a)$.
Now taking $b=e_j$, we get $\tau(p_ja)=\frac{1}{n}\tau(a)$, as
desired.
\end{proof}

The following proposition is an equivariant 
version of \cite[Lemma 2.4]{cpou}. It roughly states that, in
the presence of uniform property $\Gamma$, positive
contractions can be replaced by
projections when computing tracial values in $A^\U$.

\begin{prop}\label{prop:tracialproj}
Let $G$ be a countable, discrete
group and
let $A$ be a separable, unital \ca\ with non-empty trace space. Let $\alpha\colon G
\to \Aut(A)$ be an action and suppose that $(A,\alpha)$ has  
uniform property $\Gamma$.
Let $S,S_0 \subseteq A^\mathcal{U}$ be $\| \cdot \|_{2,T_\U(A)}$-separable 
subsets and
let $b \in (A^\mathcal{U} \cap S')^{\alpha^\mathcal{U}}$ be a positive contraction. Then there exists a projection
$p \in (A^\mathcal{U} \cap S')^{\alpha^\mathcal{U}}$ such that $\tau(ab) = \tau(ap)$
for all $a \in S_0$ and $\tau \in T_\mathcal{U}(A)$.
\end{prop}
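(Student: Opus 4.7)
The plan is to mirror the non-equivariant \cite[Lemma 2.4]{cpou}, invoking equivariant uniform property $\Gamma$ (\autoref{df:EqPropGamma}) in place of the non-equivariant version so that every auxiliary element we produce lies in the $\alpha^\U$-fixed-point algebra of the relevant relative commutant. A first step is a countable saturation reduction: by Kirchberg's $\varepsilon$-test (\autoref{saturation}), combined with a diagonal argument, it suffices to produce, for every $\varepsilon > 0$, every finite $F \subseteq S_0$ and every finite $T \subseteq T_\U(A)$, a projection $p \in (A^\U \cap S')^{\alpha^\U}$ with $|\tau(ap) - \tau(ab)| < \varepsilon$ for all $a \in F$ and $\tau \in T$.

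Next, I spectrally decompose $b$. Fix $n$ much larger than $(1+\max_{a \in F}\|a\|)/\varepsilon$. Via continuous functional calculus on $b \in C^*(b) \subseteq (A^\U \cap S')^{\alpha^\U}$, produce positive contractions $f_1, \ldots, f_n \in C^*(b)$ approximating the spectral projections $\chi_{[(j-1)/n,\,1]}(b)$ so that $\|b - \tfrac{1}{n}\sum_j f_j\|$ is negligibly small compared to $\varepsilon/\max_{a \in F}\|a\|$. Let $\widetilde S$ be a $\|\cdot\|_{2,T_\U(A)}$-separable $\alpha^\U$-invariant subset of $A^\U$ containing $S \cup S_0 \cup \{f_j\}_{j=1}^n$ and the products $\{af_j : a \in F\}$. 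Applying equivariant uniform property $\Gamma$ to $\widetilde S$, I obtain pairwise orthogonal projections $r_1, \ldots, r_n \in (A^\U \cap \widetilde S')^{\alpha^\U}$ summing to $1$, satisfying $\tau(xr_j) = \tau(x)/n$ for all $x \in \widetilde S$ and all $\tau \in T_\U(A)$.

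Finally, one assembles $p$. Following the technical heart of \cite[Lemma 2.4]{cpou}, each $f_j$ is replaced (inside the fixed-point algebra, since all the input data are $\alpha^\U$-invariant) by a genuine projection $q_j \in (A^\U \cap \widetilde S')^{\alpha^\U}$, commuting with $r_j$, such that $|\tau(aq_j) - \tau(af_j)|$ is small for $a \in F$ and $\tau \in T$; this uses property $\Gamma$ once more to approximate spectral projections of $f_j$ in $\|\cdot\|_{2,T_\U(A)}$. Setting $p := \sum_{j=1}^n q_j r_j$, the orthogonality of the $r_j$'s and the commutation $q_j r_j = r_j q_j$ force $p$ to be a projection in $(A^\U \cap S')^{\alpha^\U}$; property $\Gamma$ then yields $\tau(ap) = \sum_j \tau(aq_j r_j) = \tfrac{1}{n}\sum_j \tau(aq_j) \approx \tfrac{1}{n}\sum_j \tau(af_j) \approx \tau(ab)$ within $\varepsilon$. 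The main obstacle is the construction of the $q_j$'s, which is the technical core of the argument and proceeds exactly as in the non-equivariant case; the equivariance requirement is automatic here, as all ingredients already live in $(A^\U \cap \widetilde S')^{\alpha^\U}$ from the outset.
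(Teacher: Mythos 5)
Your outline breaks down at the step you yourself identify as "the technical core": the construction of genuine projections $q_j \in (A^\U\cap\widetilde S')^{\alpha^\U}$ with $\tau(aq_j)\approx\tau(af_j)$ for positive contractions $f_j$. That is precisely (an approximate form of) the statement being proved, so invoking it as a subroutine is circular; and it is \emph{not} what \cite[Lemma 2.4]{cpou} does, so "proceeds exactly as in the non-equivariant case" does not rescue it. There is also an order-of-quantifiers problem in your assembly: the identity $\tau(aq_jr_j)=\tfrac1n\tau(aq_j)$ requires the averaging property of $r_j$ to apply to $aq_j$, i.e.\ $aq_j$ must lie in the set to which property $\Gamma$ was applied, but you produce $q_j$ only after $r_j$. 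Finally, your saturation reduction to \emph{finite} subsets $T\subseteq T_\U(A)$ is not legitimate: $T_\U(A)$ is uncountable and Kirchberg's $\varepsilon$-test handles only countably many conditions, so at each finite stage you need the estimate $\sup_{\tau\in T_\U(A)}|\tau(a(p-b))|<\varepsilon$ uniformly over all limit traces (equivalently, control of $\|\cdot\|_{2,T_\U(A)}$-quantities), not just over finitely many $\tau$.

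The actual argument avoids projectionizing the $f_j(b)$ altogether. Take $f_j$ to be the piecewise-linear ramp that is $0$ on $[0,\tfrac{j-1}{n}]$ and $1$ on $[\tfrac{j}{n},1]$, so that $t=\tfrac1n\sum_{j=1}^nf_j(t)$ holds \emph{exactly}; with $p_1,\dots,p_n$ the property-$\Gamma$ projections, set $e=\sum_{j=1}^np_jf_j(b)$. The exact identity gives $\tau(ea)=\tau(ba)$ on the nose for all limit traces, while orthogonality of the $p_j$ together with $\sum_j(f_j-f_j^2)\le 1$ yields $\tau(e-e^2)=\tfrac1n\sum_j\tau(f_j(b)-f_j(b)^2)\le\tfrac1n$, so $e$ is only an \emph{approximate} projection in $\|\cdot\|_{2,T_\U(A)}$ — and that is enough, because a single application of countable saturation at the very end upgrades the sequence of such $e$'s to an exact projection $p$ with $\tau(ap)=\tau(ab)$. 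In short: the ramp functions and the bound $\sum_j(f_j-f_j^2)\le1$ are the missing idea; once you have them, no genuine projections $q_j$ are ever needed, and all estimates are automatically uniform over $T_\U(A)$, which is what makes the saturation step legitimate.
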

\begin{proof}
We follow closely the proof of Lemma~2.4 in~\cite{cpou}. 
By countable saturation of ultrapowers (see \autoref{saturation}),
it suffices to find, for every $n\in\N$, a 
positive contraction $e\in (A^\U\cap S')^{\alpha^\U}$ 
with $\|e-e^2\|_{2,T_\U(A)}<1/n$, and 
such that
$\tau(ab) = \tau(ae)$
for all $a \in S_0$ and $\tau \in T_\mathcal{U}(A)$.
Fix $n\in\N$ and let $f_1,\ldots,f_n\in C([0,1])$ be given
by the following graph:
\begin{center}\begin{tikzpicture}
\draw[->] (0,0)--(5.5,0) node[anchor=east]{};
{
	\fill (0,0) circle (1pt) node [below=5pt] {0};
}
{
	\fill (2,0) circle (1pt) node [below=5pt] {$\frac{j-1}{n}$};
}
{
	\fill (2.75,0) circle (1pt) node [below=5pt] {$\frac{j}{n}$};
}
{
	\fill (4.5,0) circle (1pt) node [below=5pt] {$1$};

}
\draw[->] (0,0)--(0,2) node[anchor=north]{};
{
	\fill (0,1.2) circle (1pt) node [left=5pt] {1};
}
\draw[thick] [ black] (0,0)--(2, 0)--(2.75,1.2)--(4.5,1.2);
\draw [black](3.5,1.5) node{$f_{j}$};
\end{tikzpicture}
\end{center}
Set $\widetilde{S}=S\cup \{b\}$. 
Using uniform property $\Gamma$ for $(A,\alpha)$, let 
$p_1,\ldots,p_n$ be projections in $(A^\U\cap \widetilde{S}')^{\alpha^\U}$
such that for all $c\in \widetilde S$ and all $\tau\in T_\U(A)$ we have
\begin{equation}\label{eqn:3.1}
\sum_{j=1}^n p_j=1 \ \ \mbox{ and } \ \ \tau(p_jc)=\frac{1}{n}\tau(c) 
.
\end{equation}
Notice that $t=\frac{1}{n}\sum_{j=1}^n f_j(t)$ for all 
$t\in [0,1]$.
Set $e=\sum_{j=1}^n p_jf_j(b) \in (A^\U\cap S')^{\alpha^\U}$.
Fix $a\in S_0$ and $\tau\in T_\U(A)$. Using 
at the last step that $b=\frac{1}{n}\sum_{j=1}^n f_j(b)$,
we have 
\[\tau(ea)=\sum_{j=1}^n \tau(p_jf_j(b)a)\stackrel{(\ref{eqn:3.1})}{=}\sum_{j=1}^n\frac{1}{n}\tau(f_j(b)a)=\tau(ba).\]
Moreover, using at the second to last step that
$\sum_{j=1}^nf_j -f_j^2\leq 1$, we get
\[\tau(e-e^2)=\sum_{j=1}^n \tau(p_j(f_j(b)-f_j(b)^2))
 \stackrel{(\ref{eqn:3.1})}{=}
 \sum_{j=1}^n \frac{1}{n}\tau(f_j(b)-f_j(b)^2)\leq 
\frac{1}{n}\tau(1)=\frac{1}{n}.
\]
Using that $e-e^2$ is a positive contraction (because this is
the case for $f_j-f_j^2$ and the $p_j$'s are orthogonal), we
conclude that
\[\|e-e^2\|_{2,T_\U(A)}=\sup_{\tau\in T_\U(A)}\tau((e-e^2)^2)
 \leq \sup_{\tau\in T_\U(A)}\tau(e-e^2)\leq \frac{1}{n},
\]
as desired. 
\end{proof}

\section{Equivariant complemented partitions of unity} \label{s4}
In \cite{cpou} uniform property $\Gamma$ is used to infer the existence of  
well-behaved partitions of unity in the central
sequence algebra of uniform tracial ultrapowers,
for nuclear separable \cas. Here we introduce the equivariant version of that definition.


\begin{df}\label{df:CPoU}
Let $A$ be a separable, unital \ca\ with non-empty trace space, let $G$ be a countable, discrete amenable group, and
and let $\alpha\colon G \to \Aut(A)$ be an action.
Given $M>0$, we say that $(A,\alpha)$ has \emph{complemented partitions of
unity (CPoU) with constant $M$},
if for any
$\| \cdot \|_{2, T_\U(A)}$-separable subset 
$S\subseteq A^\mathcal{U}$, for any $n\in\N$, for any $a_1, \ldots, a_n \in 
A_+$ and for any $\delta > 0$ with
\[
\sup_{\tau \in T(A)^\alpha} \min \{\tau(a_1), \ldots, \tau(a_n)\} < \delta,
\]
there exist projections 
$p_1, \ldots, p_n \in (A^{\mathcal{U}} \cap S')^{\alpha^\mathcal{U}}$ such that
\begin{enumerate}
\item \label{cpou.def:1} $\sum_{j=1}^np_j  = 1$,
\item \label{cpou.def:2} $\tau(a_jp_j) \leq M \delta \tau(p_j)$ for every $\tau \in T^\alpha_\U(A)$ and $j=1,\ldots,n$.
\end{enumerate}

We say that $(A, \alpha)$ has \emph{CPoU} if there is a constant
$M > 0$ such that $(A, \alpha)$ has CPoU with constant $M$.
\end{df}

Note that actions as in the above definition always
have invariant traces. 

\begin{rem}
It is not clear to us whether
the above definition is the right one for general 
amenable group actions.
Specifically, one may want to take the supremum over all $T(A)$ in the displayed inequality and require that (2) holds for traces
in $T_\mathcal{U}(A)$. On the other hand,
we will only work with actions 
for which the cardinality of the orbits of the 
induced action on $T(A)$
is bounded, and in this case
the two versions are
equivalent by \autoref{prop:ideal}.
\end{rem}

The intuition behind the name \emph{complemented partition of unity} is the following
(see also the discussion after Definition G in \cite{cpou}): condition
\eqref{cpou.def:1} implies that
the elements $p_1, \dots, p_n$, when identified with the functions $\hat p_j$
on $T_\U(A)$ which send $\tau$ to $\tau(p_j)$,
form a partition of unity, while condition \eqref{cpou.def:2} asserts that $\hat p_j$ is approximately
subordinate to $1 - \hat a_j$, the complement of $\hat a_j$. Indeed when $\tau(1- a_j) =  0$, and
hence $\tau(a_j) = 1$, condition \eqref{cpou.def:2} forces $\tau(p_j)=\tau(a_jp_j)  \le M \delta \tau(p_j)$, and
thus $\tau(p_j)=0$.

The main differences between our \autoref{df:CPoU} and the one in 
\cite[Definition 3.1]{cpou}
are the requirements that $p_1, \dots p_n$ have to be $\alpha^\U$-invariant, the restriction to invariant
traces and the presence of the constant $M$. The motivation for this constant 
is of technical nature\footnote{It is in fact possible to show that 
$(A,\alpha)$ has CPoU with
constant $M$ if and only if it has CPoU with constant 1. This technical 
improvement makes no difference in this paper.};
it will play a role in the proof of \autoref{prop:cpou}, where we 
show that for an action $\alpha$ of an amenable group on a nuclear \ca\ $A$, 
uniform property $\Gamma$ implies the existence of invariant
CPoU. The proof of \autoref{prop:cpou} is not only inspired by some of the results
in \cite[Section 3]{cpou}, but also directly uses
some of them (\cite[Lemma 3.6]{cpou}). In the proof of \autoref{prop:cpou} we 
need
to uniformly bound the images of some elements in $A$ via traces in $T(A)$,
starting from a bound on the images of invariant traces,
which is where the constant
$M$ appears.

\begin{thm} \label{prop:cpou}
Let $G$ be a countable, discrete, amenable group, let $A$ be a separable, nuclear,
unital \ca\ with non-empty trace space and let $\alpha\colon G
\to \Aut(A)$ be an action. Suppose that 
the induced action on $T(A)$ has finite orbits bounded 
in size by some uniform constant $M>0$, and that
$(A,\alpha)$ has uniform property $\Gamma$. 
Then $(A,\alpha)$ has CPoU
with constant $M$.
\end{thm}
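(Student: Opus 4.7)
The proof adapts the non-equivariant CPoU result \cite[Theorem 3.8]{cpou} to the equivariant setting. The two main ingredients are (i) an equivariant analogue of \cite[Lemma 3.6]{cpou}, providing $\alpha^\U$-invariant positive contractions with controlled trace values, and (ii) the upgrade from positive contractions to projections via \autoref{prop:tracialproj}.

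The first step is to use the orbit-size bound to reduce the hypothesis on invariant traces to a hypothesis on all traces. Since $\tau \leq M\tau^\alpha$ for every $\tau \in T(A)$ (a consequence of \autoref{prop:ideal}), the assumption $\sup_{\tau \in T(A)^\alpha}\min_j\tau(a_j) < \delta$ upgrades to $\sup_{\tau \in T(A)}\min_j \tau(a_j) < M\delta$. This is precisely where the constant $M$ in the conclusion originates.

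Next, I would establish the equivariant version of \cite[Lemma 3.6]{cpou}: given a $\|\cdot\|_{2, T_\U(A)}$-separable $S \subseteq A^\U$ (enlarged to be $\alpha^\U$-invariant by absorbing its orbit), one produces $\alpha^\U$-invariant positive contractions $e_1,\ldots,e_n \in (A^\U \cap S')^{\alpha^\U}$ with $\sum_j e_j = 1$ and $\tau(a_j e_j) \leq M\delta\,\tau(e_j)$ for every $\tau \in T^\alpha_\U(A)$. The idea is to first select a $G$-invariant assignment $\tau \mapsto j_\tau \in \{1,\ldots,n\}$ with $\tau(a_{j_\tau}) < M\delta$ (by choosing one index $j_{\tau^\alpha}$ per $G$-orbit of traces and propagating equivariantly), then to produce fiberwise invariant positive elements in each tracial von Neumann algebra $\pi_\tau(A)''$ (under the canonical extension of $\alpha$) for $\tau \in T(A)^\alpha$, and finally to glue these into a global element in $A^\U$ via completely positive approximations furnished by nuclearity of $A$. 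Exact $\alpha^\U$-invariance is then enforced by a F\o lner averaging inside the ultrapower, realized as an exact invariant element by countable saturation (\autoref{saturation}); amenability of $G$ is indispensable here.

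With the invariant positive contractions $e_1,\ldots,e_n$ at hand, I would apply \autoref{prop:tracialproj} iteratively to upgrade them to projections. For $j = 1,\ldots,n-1$, replace $e_j$ by an $\alpha^\U$-invariant projection $p_j \in (A^\U \cap S')^{\alpha^\U}$ satisfying $\tau(a_j p_j) = \tau(a_j e_j)$ and $\tau(p_j) = \tau(e_j)$ for $\tau \in T^\alpha_\U(A)$, and absorb the tracially small correction $e_j - p_j$ into $e_{j+1}$ (using a suitable truncation) before the next step, preserving both invariance and the trace bounds. Finally set $p_n = 1 - \sum_{j<n} p_j$, which is automatically an invariant projection satisfying the required bound. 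The main obstacle I anticipate is the equivariant gluing in the middle step: combining nuclear CPC approximations with F\o lner averaging in a way that preserves both the partition-of-unity condition and the trace bounds $\tau(a_j e_j) \leq M\delta\,\tau(e_j)$. This is precisely where the interplay between the nuclearity of $A$ and the amenability of $G$ is indispensable, and where the finite-orbit structure of $T(A)$ under $\alpha^*$ provides the crucial quantitative control.
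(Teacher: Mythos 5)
Your first two steps match the paper's: the reduction from invariant traces to all traces via $\tau\le M\tau^\alpha$, and an equivariant analogue of \cite[Lemma 3.6]{cpou} obtained by F\o lner averaging (the paper gets this more economically by applying the non-equivariant lemma as a black box to the averaged elements $a_j'=\frac{1}{|K|}\sum_{k\in K}\alpha_{k^{-1}}(a_j)$ and then averaging its output, rather than re-proving it fiberwise). The genuine gap is in your final step, where you pass from the invariant positive contractions $e_1,\dots,e_n$ to a projection partition of unity. \autoref{prop:tracialproj} produces, for each $j$, an invariant projection $p_j$ with the same trace pairings as $e_j$ against a prescribed separable set, but it gives no control on the mutual position of $p_1,\dots,p_{n-1}$: they need not be orthogonal; $e_j-p_j$ is not small in $\|\cdot\|_{2,T_\U(A)}$ (only its pairings against $S_0$ vanish), so there is nothing ``tracially small'' to absorb into $e_{j+1}$; and $1-\sum_{j<n}p_j$ is in general not a projection at all. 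Orthogonality is exactly the hard part of the theorem, and your scheme does not address it.

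The paper resolves this with a maximality argument that your proposal is missing, and which uses uniform property $\Gamma$ a second time in an essential way. One considers the closed, nonempty set $I$ of $t\in[0,1]$ for which there exist orthogonal invariant projections $\widetilde p_1,\dots,\widetilde p_n$ with $\tau\big(\sum_j\widetilde p_j\big)=t$ and $\tau(a_j\widetilde p_j)\le M\delta\,\tau(\widetilde p_j)$, and shows its maximal element $t_0$ equals $1$: if $q=1-\sum_j\widetilde p_j$ has constant trace $1-t_0>0$, one applies the equivariant Lemma 3.6 and \autoref{prop:tracialproj} relative to $q$ to get projections $p_1',\dots,p_n'$ commuting with $q$, and then uses the orthogonal trace-$\frac1n$ projections $e_1,\dots,e_n$ furnished directly by uniform property $\Gamma$ to form $p_j=\widetilde p_j+qp_j'e_j$. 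The factors $e_j$ are what make the new pieces mutually orthogonal (the $p_j'$ themselves are not), at the cost of capturing only a fraction $\frac{1-t_0}{n}$ of the remaining mass; this increment contradicts maximality of $t_0<1$. Without this (or an equivalent) device, your argument cannot produce orthogonal projections summing to $1$.
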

\begin{proof}
We fix a $\| \cdot \|_{2, T_\U(A)}$-separable subset 
$S\subseteq A^\mathcal{U}$, positive elements
$a_1, \ldots, a_n \in A$ and $\delta > 0$ with
\[\sup_{\tau \in T(A)^\alpha} \min \{\tau(a_1), \ldots, \tau(a_n)\} < \delta.\] 
 
We divide the proof into two claims. 
The first one does not require
the use of uniform property $\Gamma$; it is an equivariant 
version of Lemma~3.6 in~\cite{cpou}.

\begin{claim} \label{claim:cpou1}
Suppose there exist $t>0$ 
and 
a projection $q \in (A^{\mathcal{U}} \cap A')^{\alpha^{\mathcal{U}}}$  
such that $\tau(q) = t$ for all $\tau \in T_\mathcal{U}(A)$. 
Then there are positive contractions $b_1, \ldots, b_n \in (A^{\mathcal{U}}
\cap S')^{\alpha^\U}$ such that 
\begin{enumerate}[label=(1.\alph*)]
\item \label{cpou:1a} $\sum_{j=1}^n\tau( b_j q) = t$ for all $\tau \in T_\mathcal{U}(A)$,
\item \label{cpou:1b} $\tau(a_jb_jq) \le M\delta \tau(b_jq)$ for all $\tau \in 
T^\alpha_\U(A)$ and for $j=1,\ldots,n$.
\end{enumerate}
\end{claim}

Since $G$ is countable, by replacing $S$ with
$\bigcup_{g\in G}\alpha^\U_g(S)$ we may assume that
$S$ is $\alpha^\U$-invariant. 
Fix $\ep>0$ and a finite subset $F\subseteq G$. 
By saturation of $A^\mathcal{U}$ (\autoref{saturation}), 
it suffices to find positive contractions 
$b_1, \ldots, b_n \in A^{\mathcal{U}}
\cap S'$ satisfying \ref{cpou:1a} and \ref{cpou:1b} and
\[
\max_{g \in F} \max_{j=1,\ldots,n} \| \alpha^\U_g(b_j) - b_j \|_{2,T_\U(A)} < 
\ep
.
\]
Use amenability of $G$ to find a finite subset 
$K \subseteq G$ such that 
\[
\frac{| g K \triangle K |}{|K|} < \ep 
\]
for all $g\in F$. For each $j=1,\ldots,n$, set
$a'_j= \frac{1}{|K|} \sum_{k \in K} \alpha_{k^{-1}}(a_j)$,
which is a positive element in $A$.
For $\tau \in T(A)^\alpha$, we have 
$\tau(a_j') = \tau(a_j)$, and in particular 
\[
\sup_{\tau \in T(A)^\alpha} \min \{\tau(a'_1),\ldots,\tau(a'_n)\} < \delta.
\]
Because $\tau\leq M\tau^\alpha$ (see \autoref{ss2.1} where
the notation $\tau^\alpha$ is introduced), we deduce that
\[
\sup_{\tau \in T(A)} \min \{\tau(a'_1),\ldots,\tau(a'_n)\} < M\delta.
\]
Apply \cite[Lemma 3.6]{cpou} to $a'_1,\ldots,a'_n$ to 
find positive contractions 
$b'_1, \ldots, b'_n \in A^{\mathcal{U}} \cap S'$
satisfying conditions \ref{cpou:1a} and \ref{cpou:1b} in this claim.
For $j=1,\ldots,n$, set 
\[b_j= \frac{1}{|K|} \sum_{k \in K} \alpha^\mathcal{U}_k(b'_j)\in A^\U.\]
Since $S$ is $\alpha^\mathcal{U}$-invariant and since
$b'_1, \ldots, b'_n$ commute with $S$, 
it follows that $b_1,\ldots, b_n$ also commute with $S$. 
Moreover, a routine computation shows that for all $g\in F$ and for 
$j=1,\ldots,n$ we have
\[\| \alpha^\U_g(b_j) - b_j \|_{2,T_\U(A)}  \le \frac{| g K \triangle K |}{|K|} 
< \ep . \]
On the other hand, for every $\tau \in T_\mathcal{U}(A)$ and every
$j=1,\ldots,n$, we have
\begin{align*}
\tau(b_j q) &= \frac{1}{|K|}  \sum_{k \in K}\tau(\alpha^\U_k(b'_j)q) 
= \frac{1}{|K|} \sum_{k \in K}    \tau(\alpha^\U_k(b'_j q)).
\end{align*}
Thus, by \ref{cpou:1a} we have $\sum_{j=1}^n\tau(b_jq)=t$. 

Let $\tau\in T^\alpha_\U(A)$ and $j=1,\ldots,n$. 
In the next computation, we use the fact that $q$ is
$G$-invariant and that $\tau=\tau\circ\alpha^\U_{k^{-1}}$ 
at the second step, and the above displayed equation at the 
last step in combination with the fact that $\tau$ is 
$G$-invariant, to get
\begin{align*}
\tau(a_jb_jq) 
&=\frac{1}{|K|} \sum_{k \in K} \tau(a_j \alpha^\U_k(b'_j)q) \\
&= \frac{1}{|K|} \sum_{k \in K} \tau (\alpha_{k^{-1}}(a_j)b'_jq) \\
&=  \tau\Big( \frac{1}{|K|} \sum_{k \in K} \alpha_{k^{-1}}(a_j)b'_jq\Big) \\
&= \tau(a_j' b'_jq) \stackrel{\ref{cpou:1b}}{\leq} M\delta \tau(b'_jq) =M\delta \tau(b_jq).
\end{align*}
This proves the claim.
\vspace{0.2cm}

By saturation of ultrapowers (see \autoref{saturation}), 
the set $I$ of all numbers $t \in [0,1]$ for 
which there are orthogonal projections
$\widetilde{p}_1, \ldots, \widetilde{p}_n \in (A^{\mathcal{U}} \cap S')^{\alpha^\U}$
such that
\begin{enumerate}[label=(\roman*)]
\item \label{cpou.proof:i} $\tau(\sum_{j=1}^n \widetilde{p}_j) = t$ for all $\tau \in T_\mathcal{U}(A)$,
\item \label{cpou.proof:ii} $\tau(a_j\widetilde{p}_j) \le M\delta \tau(\widetilde{p}_j)$ for all $\tau \in T_\U^\alpha(A)$ and all $j=1,\ldots,n$,
\end{enumerate}
is closed, and it is clearly non-empty as it contains zero. 
Let $t_0$ be the maximal element in this set.
Then 
$(A,\alpha)$ has CPoU if and only if $t_0=1$.
Set $s_0=t_0 +\frac{1 - t_0}{n}$. Then $t_0\leq s_0\leq 1$
and $s_0=t_0$ if and only if $t_0=1$. 

\begin{claim}
 $s_0$ belongs to $I$ (and thus $s_0=t_0=1$).
 \end{claim}
Let $\widetilde{p}_1,\ldots,\widetilde{p}_n\in (A^\U\cap S')^{\alpha^\U}$ be projections satisfying \ref{cpou.proof:i} and \ref{cpou.proof:ii} above
for $t_0$. Set $q=1 - \sum_{j=1}^n \widetilde{p}_j$, and note 
that $\tau(q)=1-t_0$ for all $\tau\in T_\U(A)$.
Apply 
Claim \ref{claim:cpou1} to $\widetilde{S}=S \cup \{q\}$ in place of $S$
to obtain $b_1, \ldots, b_n
\in (A^{\mathcal{U}} \cap \widetilde{S}')^{\alpha^\U}$ 
satisfying conditions \ref{cpou:1a} and \ref{cpou:1b} for $1-t_0$ in place of $t$. 
Use \autoref{prop:tracialproj} to find projections $p'_1, \ldots, p'_n \in (A^{\mathcal{U}}
\cap \widetilde{S}')^{\alpha^\U}$ such that 
\begin{equation} \label{eqn:4.1}
\tau(qb_j) = \tau(qp'_j) \ \ \mbox{ and } \ \ \tau(a_jqb_j) = \tau(a_jqp'_j)
\end{equation}
for all $j=1,\ldots,n$ and all $\tau \in T_\mathcal{U}(A)$.
Set 
\[T= A \cup S \cup \{q, \widetilde{p}_1, \ldots, \widetilde{p}_n, p'_1,\ldots, p'_n\}.\]
Use uniform property $\Gamma$ for $(A,\alpha)$ to find 
orthogonal projections $e_1, \ldots, e_n \in
(A^{\mathcal{U}} \cap T')^{\alpha^\U}$ which add up to 1 
and satisfy 
\begin{equation}\label{eqn:4.2}
\tau(e_jy) = \frac{1}{n}\tau(y)
\end{equation} for all 
$\tau \in T_\mathcal{U}(A)$, for all $y \in T$ and for all $j=1,\ldots,n$.
For $j=1,\ldots, n$, set
\[
p_j= \widetilde{p}_j + qp'_je_j \in (A^{\mathcal{U}} \cap S')^{\alpha^\U}.
\]
Since $q\perp \widetilde{p}_j$, it follows that $p_j$ is a projection. Moreover, $p_j\perp p_k$ if $j\neq k$.
For $\tau \in T_\mathcal{U}(A)$ we have
\begin{eqnarray*}
\tau\Big(\sum_{j=1}^n p_j\Big) &\stackrel{\mathclap{(\ref{eqn:4.2})}}{=}& \tau\Big(\sum_{j=1}^n \widetilde{p}_j\Big) + \frac{1}{n}\tau\Big(\sum_{j=1}^n qp'_j\Big) \\
&\stackrel{\mathclap{\ref{cpou.proof:i},(\ref{eqn:4.1})}}{=}& t_0 + 
\frac{1}{n}\sum_{j=1}^n \tau(qb_j) \\
&\stackrel{\mathclap{\ref{cpou:1a}}}{=}& t_0 +
\frac{1 - t_0}{n}=s_0.
\end{eqnarray*}
In addition, given $\tau \in T_\U^\alpha(A)$, for $j=1,\ldots,n$ we have
\begin{eqnarray*}
\tau(a_jp_j) &=& \tau(a_j \widetilde{p}_j) + \tau(a_jqp'_je_j) \\
&\stackrel{\mathclap{(\ref{eqn:4.2})}}{=}&
\tau(a_j \widetilde{p}_j) + \frac{1}{n}\tau(a_jqp'_j) \\
&\stackrel{\mathclap{\ref{cpou.proof:ii}, \ref{cpou:1b}}}{\leq}& M\delta\tau(\widetilde{p}_j) + \frac{M\delta}{n}\tau(qp'_j) \\
&=& M\delta \tau(\widetilde{p}_j + qp'_je_j) = M\delta \tau(p_j).
\end{eqnarray*}
It follows that $p_1,\ldots,p_n$ witness the fact that $s_0$ belongs to $I$, 
as desired. This proves the claim and the theorem.
\end{proof}

Invariant CPoUs are the main technical ingredient for the `local to global'
arguments employed to prove the implications \eqref{B:1} 
$\Rightarrow$ \eqref{B:2} and \eqref{B:1} $\Rightarrow$ \eqref{B:3} of Theorem 
\autoref{thmB}.
This allows us
to avoid the machinery involving $W^*$-bundles developed in \cite{liao,liaoZm, 
GarHir_strongly_2018} (and in the non-dynamical setting in \cite{bbstww}),
which required the assumption that $T(A)$ is a Bauer simplex.
\autoref{lma:LocalToGlobal} asserts that, in the presence of CPoU,
if a polynomial identity is (approximately) satisfied 
in each \emph{individual} tracial completion of $(A,\alpha)$, then the
identity is (exactly) satisfied in $(A^\U,\alpha^\U)$.
To explicitly describe how this transfer works, we begin by establishing some
terminology. 



\begin{df}
Let $G$ be a discrete group.
Given a tuple of non-commuting variables $\bar x = (x_1, \ldots, x_r)$ and 
$g \in G$, set $g \cdot \bar x = (g \cdot x_1, \ldots, g \cdot x_r)$, which we 
also regard as a tuple of non-commuting variables.
By a \emph{$G$-$*$-polynomial} in the variables $\bar x$ we mean 
a $*$-polynomial in the variables
$\{g \cdot \bar x\colon g \in G\}$.
Let $A$ be a \ca\ and let $\alpha\colon G \to \Aut(A)$ be an action.
Given a tuple $\bar x = (x_1, \ldots, x_r)$, given a
$G$-$*$-polynomial $Q(\bar x)$, and given a coefficient tuple $\bar a = (a_1, \ldots, a_r) \in A^r$, the term $Q(\bar a)$
is computed by interpreting each $g \cdot x_j$ as $\alpha_g(a_j)$ 
for $j=1,\ldots,r$. 
\end{df}


\begin{lma}\label{lma:LocalToGlobal}
Let $A$ be a separable, unital \ca\ with non-empty trace space, let
$G$ be a countable, discrete group, and 
let $\alpha\colon G \to \Aut(A)$ be an action such that the induced action on $T(A)$ 
has orbits which are uniformly bounded in size.
Assume further that $(A,\alpha)$ has CPoU. 
For $m\in\N$, let $Q_m$
be a $G$-$*$-polynomial in $r_m +  s_m$ non-commuting variables, and
let $(a_j)_{j \in \N}$ be a sequence in
$A$. Suppose that
for every $\ep > 0$, for every $n \in \N$ and for every $\tau \in T(A)^\alpha$,
there are contractions $w^\tau_j\in \mathcal{M}_\tau$, for $j\in \N$, 
such that, for $m=1,\ldots,n$ we have
\[
\| Q_m(\pi_\tau(a_1), \ldots, \pi_\tau(a_{r_m}),
w_1^\tau, \ldots , w_{s_m}^\tau) \|_{2,\tau} < \ep.
\]
Then there are contractions $w_j\in A^\mathcal{U}$, for $j\in\N$, such that
\[
 Q_m(a_1, \ldots, a_{r_m},
w_1, \ldots , w_{s_m})
\in J_{T(A)^\alpha},
\]
for all $m \in \N$.
\end{lma}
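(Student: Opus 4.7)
The plan is to run a local-to-global argument in the spirit of \cite[Section 3]{cpou}, using CPoU to patch together per-trace approximate solutions into a global approximate solution in $A^\U$, and then promote approximation to equality by saturation. Since the orbit assumption gives $J_A = J_{T(A)^\alpha}$ by \autoref{prop:ideal}, vanishing of $\|\cdot\|_{2,T_\U^\alpha(A)}$ on an element of $A^\U$ is equivalent to membership in $J_{T(A)^\alpha}$. By countable saturation of $A^\U$ (see \autoref{saturation}) it therefore suffices to prove the following finitary statement: for every $n \in \N$ and every $\ep > 0$ there exist contractions $w_1, \ldots, w_{s^*} \in A^\U$, where $s^* = \max_{m \le n} s_m$, satisfying
\[
\|Q_m(a_1, \ldots, a_{r_m}, w_1, \ldots, w_{s_m})\|_{2, T_\U^\alpha(A)} < \ep \quad \text{for } m = 1, \ldots, n.
\]

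Fix such $n$ and $\ep$. For each $\tau \in T(A)^\alpha$, the hypothesis (applied with a sufficiently small auxiliary parameter) produces contractions $w_j^\tau \in \M_\tau$ making $\|Q_m(\pi_\tau(\bar a), \bar w^\tau)\|_{2,\tau}$ arbitrarily small for $m \le n$; a Kaplansky density argument in the tracial $2$-norm then allows me to replace each $w_j^\tau$ by a contraction $v_j^\tau \in A$ with controllable loss. Because $\sigma \mapsto \|b\|_{2,\sigma}$ is weak-$*$ continuous on $T(A)$ for every fixed $b \in A$, the sets
\[
U_\tau = \bigl\{\sigma \in T(A)^\alpha : \|Q_m(\pi_\sigma(\bar a), \pi_\sigma(\bar v^\tau))\|_{2,\sigma} < \ep/2 \text{ for all } m \le n\bigr\}
\]
are relatively open in the weak-$*$ compact space $T(A)^\alpha$ and cover it, so I extract a finite subcover indexed by $\tau_1, \ldots, \tau_K$. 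Setting
\[
c_k = \sum_{m=1}^n Q_m(\bar a, \bar v^{\tau_k})^* Q_m(\bar a, \bar v^{\tau_k}) \in A_+,
\]
this subcover gives $\sup_{\sigma \in T(A)^\alpha} \min_k \sigma(c_k) < n\ep^2/4$, so applying CPoU to $(c_k)$ with $S = \{a_j\}_j \cup \{v_j^{\tau_k}\}_{j,k}$ yields orthogonal projections $p_1, \ldots, p_K \in (A^\U \cap S')^{\alpha^\U}$ with $\sum_k p_k = 1$ and $\tau(c_k p_k) \le (Mn\ep^2/4)\,\tau(p_k)$ for every $\tau \in T_\U^\alpha(A)$ and every $k$.

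The gluing step is to set $w_j = \sum_{k=1}^K v_j^{\tau_k} p_k \in A^\U$; orthogonality of the $p_k$ and contractivity of the $v_j^{\tau_k}$ make each $w_j$ a contraction. In my view the main technical obstacle is verifying the polynomial substitution identity
\[
Q_m(\bar a, \bar w) = \sum_{k=1}^K Q_m(\bar a, \bar v^{\tau_k})\, p_k \quad \text{in } A^\U.
\]
This hinges on three simultaneous properties of the $p_k$: they commute with the tuples $\bar a$ and $\bar v^{\tau_k}$ by the choice of $S$; they are $\alpha^\U$-invariant, so applying $\alpha^\U_g$ to a commutation relation $p_k x = x p_k$ shows that $p_k$ also commutes with every $G$-translate $\alpha_g(x)$ appearing in $Q_m$; and they are pairwise orthogonal idempotents summing to $1$, so that after expanding each monomial the only surviving cross-terms are the diagonal ones with $p_k \cdot p_k = p_k$. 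Granted this identity, a direct tracial computation yields, for every $\tau \in T_\U^\alpha(A)$,
\[
\sum_{m=1}^n \|Q_m(\bar a, \bar w)\|_{2,\tau}^2 = \sum_{k=1}^K \tau(c_k p_k) \le \frac{Mn\ep^2}{4},
\]
and rescaling the initial approximation parameter closes the finitary task and hence the lemma.
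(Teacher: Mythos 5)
Your proposal is correct and follows essentially the same route as the paper's proof: reduce to a finitary $\|\cdot\|_{2,T_\U^\alpha(A)}$-estimate by countable saturation, pull the per-trace solutions back to contractions in $A$ via Kaplansky density, extract finitely many traces by compactness of $T(A)^\alpha$, apply CPoU to the positive elements $\sum_m |Q_m|^2$, and glue via $w_j=\sum_k p_k v_j^{\tau_k}$ using orthogonality, $\alpha^\U$-invariance, and commutation of the projections with the relevant coefficients. The only differences are immaterial bookkeeping of constants and the fact that you make explicit the (correct) observation that invariance of the $p_k$ upgrades commutation with $S$ to commutation with all $G$-translates appearing in the $Q_m$, a point the paper leaves implicit.
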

\begin{proof}
Fix $M > 0$ such that $(A, \alpha)$ has CPoU with constant $M$.
Let $\ep>0$ and let $\ell\in\N$.
By saturation of $(A^\mathcal{U},\alpha^\U)$ (see \autoref{saturation}), it is 
sufficient to find contractions $w_j\in A^\U$, for $j\in\N$, such that for all 
$m \le \ell$ we have
\[
\sup_{\tau\in T_\U^\alpha(A)}\| Q_m(a_1, \ldots, a_{r_m},
w_1, \ldots , w_{s_m}) \|_{2,\tau} < \ep
. \]
By assumption, for each $\tau \in T(A)^\alpha$ there are contractions 
$\widetilde{w}^\tau_j\in \mathcal{M}_{\tau}$, 
for $j\in\N$, such that for $m=1,\ldots,\ell$ we have
\[
\| Q_m(\pi_\tau(a_1), \ldots, \pi_\tau(a_{r_m}),
\widetilde{w}_1^\tau, \ldots , \widetilde{w}_{s_m}^\tau) \|^2_{2,\tau} <
\frac{\ep^2}{M\ell} .
\]
By Kaplansky's density theorem, 
we can choose contractions $w_j^\tau\in A$
with $\widetilde{w}^\tau_j = \pi_\tau(w_j^\tau)$ for all $j\in\N$.
For each $\tau \in T(A)^\alpha$, set
\begin{equation} \label{eqn:4.3}
b_\tau = \sum_{m = 1}^\ell \Big| Q_m(a_1, \ldots, a_{r_m},
w_1^\tau, \ldots , w_{s_m}^\tau)\Big|^2.
\end{equation}
Then
\[
\tau(b_\tau) = \sum_{m =1}^\ell \| Q_m(a_1, \ldots, a_{r_m},
w_1^\tau, \ldots , w_{s_m}^\tau)\|_{2,\tau}^2 < \frac{\ep^2}{M}.
\]
Use compactness of $T(A)^\alpha$ to find $\tau_1, \ldots, \tau_n \in T(A)^\alpha$ such that
\[
\sup_{\tau \in T(A)^\alpha} \min \{\tau(b_{\tau_1}), \ldots, \tau(b_{\tau_n} )\} < \frac{\ep^2}{M}.
\]
Set $S = \{ w^{\tau_1}_j, \ldots,  w^{\tau_n}_j, a_j\colon  j \in \N\}$.
Since $(A,\alpha)$ has CPoU with constant $M$, there
are projections $p_1,\ldots, p_n \in (A^\mathcal{U} \cap S')^{
\alpha^\mathcal{U}}$ adding up to 1 such that
\begin{equation}\label{eqn:4.4}
\tau(b_{\tau_j} p_j) \leq \ep^2 \tau(p_j)\end{equation} 
for all $\tau \in T_\U^\alpha(A)$ and $j=1,\ldots,n$.

For $k \in \N$, set $w_k = \sum_{j = 1}^n p_j w^{\tau_j}_k$. Those are
contractions in $A^\U$. 
Fix $j=1,\ldots,n$.
Because $p_1, \ldots, p_n$
are $\alpha^\mathcal{U}$-invariant projections 
which commute with all elements in $S$, we have
\begin{eqnarray*}
\Big| Q_m(a_1, \ldots, a_{r_m},
w_1, \ldots , w_{s_m})\Big|^2 &=& \sum_{j=1}^n p_j
\Big| Q_m(a_1,\ldots,a_{r_m},w_{s_1}^{\tau_j},\ldots,w_{s_m}^{\tau_j})\Big|^2 \\
&\stackrel{\mathclap{(\ref{eqn:4.3})}}{\le}&  \sum_{j=1}^n p_j b_{\tau_j}.
\end{eqnarray*}

As a consequence, given $\tau\in T_\U^\alpha(A)$ we have
\begin{eqnarray*}
 \| Q_m(a_1, \ldots, a_{r_m},
w_1, \ldots , w_{s_m}) \|^2_{2,\tau}
&=& \tau\Big(\Big|Q_m(a_1, \ldots, a_{r_m},
w_1, \ldots , w_{s_m})\Big|^2\Big)\\
&\leq& 
\sum_{j=1}^n \tau\Big(p_j {\Big| Q_m(a_1,\ldots,a_{r_m},w_{s_1}^{\tau_j},\ldots,w_{s_m}^{\tau_j})\Big|^2}\Big)\\
&\le& \sum_{j = 1}^n \tau(p_j b_{\tau_j}) \\ &\stackrel{\mathclap{(\ref{eqn:4.4})}}{\leq}&
 \sum_{j = 1}^n \varepsilon^2 \tau(p_j ) = \varepsilon^2.
\end{eqnarray*}
This concludes the proof.

\end{proof}

\begin{rem} \label{vNaultra}
By countable saturation of ultrapowers (see \autoref{saturation}), the assumptions of \autoref{lma:LocalToGlobal} are satisfied if 
(and only if) for every $\tau \in T(A)^\alpha$
there exist contractions 
$w^\tau_j$ in the von Neumann ultrapower $\mathcal{M}_\tau^\mathcal{U}$, for $j \in \N$, such that 
\[
Q_m(\pi_\tau(a_1),\ldots,\pi_\tau(a_{r_m}), w^\tau_1,\ldots,w^\tau_{s_m}) = 0,
\]
for all $m \in \N$. This amounts to saying that the polynomial relations one wishes
to realize in $(A^\U,\alpha^\U)$ are \emph{exactly} realized in the tracial ultrapower of every
GNS closure.
\end{rem}

As pointed out in \cite[Remark 4.2.iii]{cpou}, there is a notable difference when employing CPoU in `local to global'
arguments over trace spaces, as opposed to older techniques relying on $W^*$-bundles and 
on the assumption that $T(A)$ is a Bauer simplex. Indeed, 
when $\partial_e T(A)$ is compact,
it is enough to consider extreme traces in order to obtain an analogue of
\autoref{lma:LocalToGlobal} (see \cite[Lemma 3.18]{bbstww}). Concretely, this 
allows one to work
exclusively with von Neumann algebras of the form $\pi_\tau(A)''$ for $\tau \in 
\partial_e T(A)$; when $A$ is nuclear, those are always isomorphic to the 
hyperfinite II$_1$-factor $\mathcal{R}$.
The same applies in the dynamical setting if one furthermore assumes 
$T(A)^\alpha = T(A)$; see
\cite{liao}, \cite{liaoZm} and \cite{sato19}. This is not the case in our framework,
since we want to remove the requirement that $\partial_e T(A)$ is compact (except for \autoref{s7})
and we work in a situation where in general $T(A)^\alpha \not = T(A)$; we may 
even have
$T(A)^\alpha \cap \partial_e T(A) = \emptyset$. As a consequence, we are
forced to consider all traces in order to apply \autoref{lma:LocalToGlobal}, and thus find approximate solutions
in general (tracial) GNS representations, not just factorial ones.
The next section provides a concrete example of this approach.

\section{Tracial ultrapowers of actions of amenable groups} \label{s5}
The main objective of the current section is the following result,
which plays a key role in the proofs in  both \autoref{s6} and \autoref{s7}.
Given a group $G$ and
a normal subgroup $N \le G$, throughout the rest of the paper we let 
$q_N\colon 
G \to N
$ 
denote the quotient
map.

The goal of the present section is to prove the following result.

\begin{thm} \label{cor:EmbedModelAction}
Let $G$ be a countable, discrete, amenable group,
let $A$ be a separable, simple, unital, stably finite, nuclear \ca,
and let $\alpha\colon G \to \Aut(A)$ be an 
action
such that the orbits of the action
induced by $\alpha$ on $T(A)$ are finite and that their cardinality is  
bounded, 
and assume that $(A,\alpha)$ has CPoU. 
Let $N$ be a normal subgroup of $G$ such that 
$\alpha_g$ is strongly outer for all $g\in G\setminus N$, and let $\mu_{G/N} \colon G/N \to \text{Aut}(
\mathcal{R})$ be an outer action on the hyperfinite II$_1$ factor $\mathcal{R}$.
Then there exists an equivariant, unital embedding
\[
(\mathcal{R}, \mu_{G/N}\circ q_N) \to \Big(A^\mathcal{U} \cap A', \alpha^\mathcal{U} \Big).
\]
\end{thm}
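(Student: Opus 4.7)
Plan: The argument proceeds in two stages. First, for each $\tau \in T(A)^\alpha$ we construct an equivariant unital embedding of $(\mathcal{R}, \mu_{G/N} \circ q_N)$ into the central sequence algebra of the $W^*$-tracial ultrapower of the GNS von Neumann algebra $\mathcal{M}_\tau = \pi_\tau(A)''$. Second, we transfer these local embeddings to a global one inside $A^\mathcal{U} \cap A'$ via the complemented partitions of unity of $(A,\alpha)$ and the local-to-global lemma \autoref{lma:LocalToGlobal}.

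\emph{Local step.} Fix $\tau \in T(A)^\alpha$ and let $\alpha^\tau\colon G \to \Aut(\mathcal{M}_\tau)$ be the canonical trace-preserving extension, which is well defined since $\tau$ is $\alpha$-invariant. Because $A$ is separable, simple, nuclear, and stably finite, $\mathcal{M}_\tau$ is an amenable finite von Neumann algebra, isomorphic to $\mathcal{R}$ when $\tau$ is extreme. Strong outerness of $\alpha_g$ for $g \in G \setminus N$ implies that $\alpha_g^\tau$ is outer on $\mathcal{M}_\tau$ for such $g$. Invoking the classification of actions of discrete countable amenable groups on amenable finite von Neumann algebras originating with Ocneanu~\cite{ocneanu} (and extended as needed to the non-factorial setting, compare~\cite{matuisato:I,matuisato:II,sato19}), we obtain an equivariant unital embedding
\[
\psi^\tau\colon (\mathcal{R}, \mu_{G/N} \circ q_N) \to (\mathcal{M}_\tau^\mathcal{U} \cap \mathcal{M}_\tau', (\alpha^\tau)^\mathcal{U}),
\]
where $\mathcal{M}_\tau^\mathcal{U}$ denotes the $W^*$-tracial ultrapower. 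Triviality of $\mu_{G/N} \circ q_N$ on $N$ poses no obstruction: inner automorphisms of $\mathcal{M}_\tau$ act trivially on $\mathcal{M}_\tau^\mathcal{U} \cap \mathcal{M}_\tau'$, and outerness of $\alpha_g^\tau$ for $g \in G \setminus N$ provides the freeness needed to model $\mu_{G/N}$ on the embedded copy of $\mathcal{R}$.

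\emph{Polynomial reformulation and globalization.} Fix a $\mu_{G/N} \circ q_N$-invariant, countable, $\|\cdot\|_{2,\tau_\mathcal{R}}$-dense unital $*$-subalgebra $\mathcal{R}_0 = \{r_j : j \in \N\} \subseteq \mathcal{R}$, for instance an increasing union of finite-dimensional $G$-invariant subalgebras built from a concrete model for $\mu_{G/N}$. Finding an equivariant unital embedding $\mathcal{R} \to A^\mathcal{U} \cap A'$ then reduces to producing a sequence of contractions $(w_j)_{j \in \N} \subseteq A^\mathcal{U}$ fulfilling a countable list of $G$-$*$-polynomial relations with coefficients in $A$: commutation with every $a \in A$; preservation of every algebraic identity holding among the $r_j$; and equivariance $\alpha^\mathcal{U}_g(w_j) = w_{k(j,g)}$ whenever $(\mu_{G/N} \circ q_N)(g)(r_j) = r_{k(j,g)}$. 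The local embeddings $\psi^\tau$ realize all these relations \emph{exactly} inside each $\mathcal{M}_\tau^\mathcal{U}$, so by \autoref{vNaultra} the hypothesis of \autoref{lma:LocalToGlobal} is met. That lemma yields contractions $w_j \in A^\mathcal{U}$ realizing the relations modulo $J_{T(A)^\alpha}$; by \autoref{prop:ideal} combined with the uniform bound on $\alpha^*$-orbit sizes, $J_{T(A)^\alpha} = J_A$, so the relations hold exactly in $A^\mathcal{U}$. Since $\mathcal{R}$ has a unique trace, the induced $*$-homomorphism $\mathcal{R}_0 \to A^\mathcal{U} \cap A'$ is $(\|\cdot\|_{2,\tau_\mathcal{R}}, \|\cdot\|_{2,T_\mathcal{U}(A)})$-isometric, and extends by $\|\cdot\|_{2,T_\mathcal{U}(A)}$-continuity — using completeness of the unit ball of $A^\mathcal{U}$ \cite[Lemma 1.6]{cpou} — to the desired equivariant embedding $\mathcal{R} \to A^\mathcal{U} \cap A'$.

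The principal obstacle will be the local step. When $\mathcal{M}_\tau$ fails to be a factor, and particularly when $\alpha^\tau|_N$ is neither inner nor strongly outer, one must carefully track how the $N$-action interacts with the direct integral decomposition of $\mathcal{M}_\tau$ and apply an appropriate Ocneanu-type absorption result to secure both equivariance and unitality of $\psi^\tau$. The subsequent globalization via CPoU is, by contrast, essentially mechanical: once the embedding problem has been reformulated as a countable list of $G$-$*$-polynomial relations with coefficients in $A$, \autoref{lma:LocalToGlobal} delivers the required contractions directly.
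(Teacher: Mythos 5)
Your overall strategy coincides with the paper's: local equivariant embeddings of $(\mathcal{R},\mu_{G/N}\circ q_N)$ into $(\mathcal{M}_\tau^\U\cap\mathcal{M}_\tau',(\alpha^\tau)^\U)$ via an Ocneanu/Sutherland--Takesaki absorption result for non-factorial hyperfinite II$_1$ algebras (this is \autoref{thm:ErgThy} combined with \autoref{thm:AbsR}), followed by a CPoU-driven local-to-global gluing through \autoref{lma:LocalToGlobal} and \autoref{prop:ideal}, and a final extension by $\|\cdot\|_2$-continuity. However, there is a genuine gap in the globalization step, at precisely the point you describe as ``essentially mechanical.'' The contractions $w_j$ produced by \autoref{lma:LocalToGlobal} define a $*$-homomorphism only on the norm-separable $C^*$-subalgebra $C=C^*(\mathcal{R}_0)\subseteq\mathcal{R}$, which is $\|\cdot\|_{2,\tau_{\mathcal{R}}}$-dense but far from all of $\mathcal{R}$. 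To extend to $\mathcal{R}$ you need, for every limit trace $\tau\in T_\U(A)$, that the pullback trace $\tau\circ\Phi_0$ on $C$ coincides with $\tau_{\mathcal{R}}|_C$; your justification ``since $\mathcal{R}$ has a unique trace'' does not apply, because a norm-separable $C^*$-subalgebra of $\mathcal{R}$ need not be monotracial, and without monotraciality the map need not be $\|\cdot\|_2$-contractive, so the continuous extension fails (injectivity of the resulting map also needs an argument, e.g.\ simplicity of $C$).

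Your proposed fix --- taking $\mathcal{R}_0$ to be an increasing union of finite-dimensional $G$-invariant subalgebras ``built from a concrete model for $\mu_{G/N}$'' --- does not work for an arbitrary outer action: $\mu_{G/N}$ is given, not chosen, and although any two outer actions of $G/N$ on $\mathcal{R}$ are cocycle conjugate, the $1$-cocycle obstructs transporting an invariant filtration from a Bernoulli-type model to $\mu_{G/N}$ (the paper flags exactly this issue in a footnote). What is actually needed, and what the paper proves as a separate Claim, is the existence of a unital, \emph{separable, simple, monotracial}, $(\mu_{G/N}\circ q_N)$-invariant $C^*$-subalgebra $B\subseteq\mathcal{R}$; this is obtained by alternately closing up under the strong Dixmier property of $\mathcal{R}$ (to force simplicity and uniqueness of the trace) and under the countable group action (to force invariance), then taking the inductive limit. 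Once such a $B$ is in hand, restricting the local embeddings to $B$, applying \autoref{prop:EmbedTracialUltrapow}, and extending by $\|\cdot\|_{2}$-continuity using monotraciality of $B$ and completeness of the unit ball of $A^\U$ completes the argument. Without this ingredient your final extension step is unjustified.
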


The strategy for proving \autoref{cor:EmbedModelAction} is as follows. First, 
we construct 
equivariant, unital embeddings of
$(\mathcal{R}, \mu_{G/N}\circ q_N)$ in the central sequence algebra
of the weak closure of each 
individual invariant trace of $A$. After that, we apply 
\autoref{lma:LocalToGlobal} to glue
those embeddings using CPoU, thus obtaining an equivariant, unital embedding $(\mathcal{R}, \mu_{G/N}\circ q_N) \to
(A^\mathcal{U} \cap A', \alpha^\mathcal{U} )$, thanks to \autoref{prop:ideal}.
The first part translates
into proving the existence of equivariant embeddings of $(\mathcal{R}, \mu_{G/N}\circ q_N)$
into the central sequence algebras of
hyperfinite, not-necessarily factorial 
type II$_1$ von Neumann algebras 
with respect to suitable outer actions; see \autoref{thm:ErgThy}.

We begin with some preliminaries.

\begin{df}\label{df:CocConj}
Let $A$ and $B$ be unital \cas, let $G$ be a discrete group,
and let $\alpha\colon G\to\Aut(A)$ and $\beta\colon G\to\Aut(B)$ be actions.
\be
\item \label{cc:1} We say that $(A,\alpha)$ and $(B,\beta)$ are \emph{conjugate}
if there exists an 
isomorphism $\varphi\colon  A\to B$ satisfying $\varphi\circ\alpha_g=\beta_g\circ\varphi$ for all $g\in G$. 
In this case, we say that $\varphi\colon (A,\alpha)\to(B, \beta)$
is an \emph{equivariant isomorphism}.
\item An $\alpha$-\emph{cocycle} is a function $u\colon G\to\U(A)$ satisfying
$u_{gh}=u_g\alpha_g(u_h)$ for all $g,h\in G$. In this case, we define the \emph{cocycle
perturbation} $\alpha^u$ of $\alpha$ to be the action given by 
$\alpha^u_g=\Ad(u_g)\circ\alpha_g$ for all $g\in G$. (The cocycle condition 
guarantees that this is indeed an action.)
\item We say that $(A,\alpha)$ and $(B,\beta)$ are \emph{cocycle conjugate}, written $(A,\alpha)\cong_{\mathrm{cc}}(B,\beta)$,
if there is an $\alpha$-cocycle $u$ such that $(A,\alpha^u)$
and $(B,\beta)$ are conjugate.
\ee

We say that $(A,\alpha)$ \emph{(tensorially) absorbs} $(B,\beta)$
if $(A,\alpha)\cong_{\mathrm{cc}}(A\otimes_{\mathrm{min}}B,\alpha\otimes \beta)$.

Similar notions apply to actions on von Neumann algebras, where tensorial absorption
is considered with respect of the von Neumann tensor product $\overline \otimes$.
\end{df} 

\begin{prop}\label{thm:AbsR}
Let $(\mathcal{M},\tau)$ be a separably representable, type II$_1$ von Neumann algebra, let $G$ be a countable, discrete, amenable group,
let $\gamma\colon G\to\Aut(\mathcal{M}, \tau)$ be an action and let
$\delta\colon G \to \text{Aut}(\mathcal{R})$ be an action
such that $(\mathcal{R}, \delta)$ is cocycle conjugate to $(\overline{\bigotimes}_{n\in \N} \mathcal{R}, {\bigotimes}_{n\in \N} \delta)$. Suppose
that $(\mathcal{M},\gamma)$ absorbs $(\mathcal{R},\delta)$. Then there is a unital equivariant homomorphism $(\mathcal{R},\delta)\to (\mathcal{M}^\U\cap \mathcal{M}',\gamma^\U)$
\end{prop}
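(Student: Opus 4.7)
The plan is a two-step argument, essentially an equivariant version of the classical McDuff infinite tensor product trick.

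\textbf{Step 1: Upgrade the absorption to infinitely many copies of $\mathcal{R}$.} Combining $(\mathcal{M},\gamma)\cong_{\mathrm{cc}}(\mathcal{M}\overline{\otimes}\mathcal{R},\gamma\overline{\otimes}\delta)$ with the self-absorption $(\mathcal{R},\delta)\cong_{\mathrm{cc}}(\overline{\bigotimes}_{\N}\mathcal{R},\bigotimes_{\N}\delta)$, and noting that tensoring the latter with $\mathrm{id}_\mathcal{M}$ preserves cocycle conjugacy (any $\delta$-cocycle $u$ on the $\mathcal{R}$-side becomes the cocycle $1_{\mathcal{M}}\otimes u$ for $\gamma\overline{\otimes}\bigotimes_{\N}\delta$), transitivity yields
\[
(\mathcal{M},\gamma)\cong_{\mathrm{cc}}\Big(\mathcal{M}\overline{\otimes}\overline{\bigotimes}_{\N}\mathcal{R},\;\gamma\overline{\otimes}\bigotimes_{\N}\delta\Big).
\]
Fix an isomorphism $\Psi\colon\mathcal{M}\to\mathcal{M}\overline{\otimes}\overline{\bigotimes}_{\N}\mathcal{R}$ and a cocycle $v\colon G\to\mathcal{U}(\mathcal{M}\overline{\otimes}\overline{\bigotimes}_{\N}\mathcal{R})$ realizing this, so that $\Psi\circ\gamma_g=\mathrm{Ad}(v_g)\circ(\gamma\overline{\otimes}\bigotimes_{\N}\delta)_g\circ\Psi$ for all $g\in G$.

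\textbf{Step 2: Produce asymptotically central, asymptotically equivariant embeddings.} For each $n\in\N$ and $r\in\mathcal{R}$ let $\sigma_n(r)\in\mathcal{M}\overline{\otimes}\overline{\bigotimes}_{\N}\mathcal{R}$ denote the elementary tensor with $r$ in the $n$-th copy of $\mathcal{R}$ and $1$'s elsewhere, and set $\iota_n:=\Psi^{-1}\circ\sigma_n\colon\mathcal{R}\to\mathcal{M}$. Each $\iota_n$ is a unital, normal, trace-preserving $*$-embedding. The key claim is that for every $m\in\mathcal{M}$, every $g\in G$ and every $r\in\mathcal{R}$,
\[
\lim_{n\to\infty}\|[\iota_n(r),m]\|_{2,\tau}=0
\andeqn
\lim_{n\to\infty}\|\gamma_g(\iota_n(r))-\iota_n(\delta_g(r))\|_{2,\tau}=0.
\]
Both estimates rest on the same density principle: with respect to the product trace $\tilde\tau$, every element of $\mathcal{M}\overline{\otimes}\overline{\bigotimes}_{\N}\mathcal{R}$ is $\|\cdot\|_{2,\tilde\tau}$-approximable by one supported on the first $K$ tensor factors. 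Applying this to $\Psi(m)$ gives the first limit, since $\sigma_n(r)$ commutes \emph{exactly} with any such $K$-supported element once $n>K$. For the equivariance limit, a direct computation using the cocycle identity gives $\Psi(\gamma_g(\iota_n(r))-\iota_n(\delta_g(r)))=[v_g,\sigma_n(\delta_g(r))]v_g^{*}$, and the same approximation applied to $v_g$ makes this commutator small.

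\textbf{Step 3: Pass to the ultrapower.} Define $\iota\colon\mathcal{R}\to\mathcal{M}^\U$ by $\iota(r):=[(\iota_n(r))_n]$. This is a well-defined, unital, trace-preserving $*$-homomorphism (trace-preservation, and hence normality on the finite von Neumann algebra $\mathcal{R}$, follows from the tracial-limit formula for $\mathcal{M}^\U$). The first limit above places $\iota(\mathcal{R})$ inside $\mathcal{M}^\U\cap\mathcal{M}'$, and the second gives $\gamma_g^\U\circ\iota=\iota\circ\delta_g$, producing the required equivariant embedding. The main technical hurdle is the careful handling of the cocycle $v$ in the equivariance estimate; once the $2$-norm density by finitely-supported tensors is in hand, the rest of the argument is essentially the classical McDuff trick adapted to the equivariant setting.
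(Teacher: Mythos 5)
Your proof is correct, and at its core it is the same infinite tensor product (McDuff) trick that the paper uses: both arguments place copies of $\mathcal{R}$ into ever deeper tensor factors of $\mathcal{M}\overline{\otimes}\overline{\bigotimes}_{n\in\N}\mathcal{R}$ and let $\|\cdot\|_2$-density of finitely supported elements do the rest. The difference lies in how the cocycle is handled. The paper disposes of it once and for all with the structural observation that cocycle conjugate actions have \emph{conjugate} central sequence algebras (inner perturbations act trivially there), after which it works entirely inside the model $(\overline{\bigotimes}_{n\in\N}\mathcal{R},\bigotimes_{n\in\N}\delta)$, where the action is an honest tensor shift and no estimate involving a cocycle is ever needed; it also short-circuits your Step 1 by noting that $\mathcal{R}^\U\cap\mathcal{R}'$ sits unitally and equivariantly inside $(\mathcal{M}\overline{\otimes}\mathcal{R})^\U\cap(\mathcal{M}\overline{\otimes}\mathcal{R})'$. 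You instead transport everything back to $\mathcal{M}$ through $\Psi^{-1}$ and kill the cocycle by the explicit estimate $[v_g,\sigma_n(\delta_g(r))]v_g^*\to 0$ in $\|\cdot\|_2$; this works, and is the more hands-on version of the same mechanism. The one point where your route needs an extra word is the passage from $\|\cdot\|_{2,\tilde\tau}$-estimates on $\mathcal{M}\overline{\otimes}\overline{\bigotimes}_{n\in\N}\mathcal{R}$ to $\|\cdot\|_{2,\tau}$-estimates on $\mathcal{M}$: since $\mathcal{M}$ is not assumed to be a factor, $\Psi$ need not intertwine $\tau$ with the product trace, so you should invoke the standard fact that any two faithful normal tracial states induce the same ($\sigma$-strong) topology on bounded sets, whence $c_{\U,\tau}$ and the asymptotic vanishing of your bounded sequences are unaffected; the paper's reduction via conjugacy of central sequence algebras implicitly relies on the same fact.
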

\begin{proof}
Observe that if $(\mathcal{N}_0,\gamma_0)$ is cocycle conjugate to $(\mathcal{N}_1,\gamma_1)$, then
$(\mathcal{N}_0^\U\cap\mathcal{N}_0',\gamma_0^\U)$ is \emph{conjugate} to $(\mathcal{N}_1^\U\cap\mathcal{N}_1',\gamma_1^\U)$, since the inner automorphisms induced by a $\gamma_0$-cocycle act trivially
on the central sequence algebra $\mathcal{N}_0^\U\cap\mathcal{N}_0'$. 
It is therefore enough to show that there is a unital map 
\[(\mathcal{R}, \delta)\to 
\big((\mathcal{M}\overline{\otimes} \mathcal{R})^\U \cap (\mathcal{M} \overline{\otimes} \mathcal{R})',(\gamma{\otimes}\delta)^\U\big).\]
In turn, it suffices to find a unital map $(\mathcal{R}, \delta)\to (\mathcal{R}^\U \cap \mathcal{R}', \delta^\U)$,
 since the latter is unitally contained in $\big((\mathcal{M}\overline{\otimes} \mathcal{R})^\U \cap (\mathcal{M} \overline{\otimes} \mathcal{R})',(\gamma{\otimes}\delta)^\U\big)$.
We use the notation $(\overline{\mathcal{R}}, \bar{\delta})$ to abbreviate
$(\overline{\bigotimes}_{n\in \N} \mathcal{R}, {\bigotimes}_{n\in \N} \delta)$.
By assumption $(\mathcal{R}, \delta) \cong_{\mathrm{cc}} (\overline{\mathcal{R}}, \bar{\delta})$, and hence
$(\mathcal{R}^\U \cap \mathcal{R}', \delta^\U)$ is conjugate to
$(\overline{\mathcal{R}}^\U \cap \overline{\mathcal{R}}', \bar{\delta}^\U)$;
thus, it suffices to find a unital map $(\mathcal{R},\delta)\to 
\big(\overline{\mathcal{R}}^\U \cap \overline{\mathcal{R}}', 
\bar{\delta}^\U\big)$.
Let $\varphi_n\colon(\mathcal{R}, \delta) \to (\overline{\mathcal{R}}, \bar{\delta})$ be the equivariant unital inclusion in the 
$n$-th tensor factor. Then 
$(\varphi_n)_{n \in \N} \colon\mathcal{R} \to \overline{\mathcal{R}}^\U$ is the desired unital map.
\end{proof}

We recall that an automorphism $\alpha$ of a von Neumann algebra $\M$
is said to be \emph{properly outer} if for every
$\alpha$-invariant central non-zero projection $p$ in $\M$,
the restriction of $\alpha$ to $p \M p$ is outer.
Notice that if $\M$ is a factor, an automorphism is properly outer if and only if it is outer.
An action $\gamma\colon G \to \text{Aut}(\M)$ is said to be
\emph{properly outer} if $\gamma_g$ is properly outer for every $g \in G \setminus \{ e \}$.

When $(\M,\tau)$ is a II$_1$-factor, the following is a well-known
result of Ocneanu \cite{ocneanu}. The version we give here
for hyperfinite II$_1$ von Neumann algebras
follows from the classification of actions of discrete amenable groups on semifinite, hyperfinite
von Neumann algebras in \cite{suthtak} (see also \cite[Section 3]{suth}).

\begin{thm} \label{thm:ErgThy}
Let $(\mathcal{M},\tau)$ be a separably representable,
hyperfinite, type II$_1$ von Neumann algebra and let $G$ be a countable, discrete, amenable group.
Fix an action
$\gamma\colon G \to \Aut(\mathcal{M},\tau)$ which preserves $\tau$. Let $N$ be a normal subgroup of $G$ such that
$\gamma_g$ is properly outer for all $g\in G\setminus N$ and let $\mu_{G/N}\colon G/N \to \text{Aut}(\mathcal{R})$ be an outer action. 
Then 
\[(\mathcal{M}, \gamma)\cong_{\mathrm{cc}}(\mathcal{M} \overline{\otimes} \mathcal{R}, \gamma \otimes (\mu_{G/N}\circ q_N)).\]
\end{thm}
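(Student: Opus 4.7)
The plan is to invoke the Sutherland--Takesaki classification of actions of countable discrete amenable groups on semifinite hyperfinite von Neumann algebras from \cite{suthtak} (as discussed also in \cite{suth}). Both $(\mathcal{M}, \gamma)$ and $(\mathcal{M} \overline{\otimes} \mathcal{R}, \gamma \otimes (\mu_{G/N} \circ q_N))$ are trace-preserving actions of $G$ on separably representable, hyperfinite, type $\mathrm{II}_1$ von Neumann algebras, and for these cocycle conjugacy is captured by two invariants: the induced action on the center, and the characteristic invariant associated to the normal subgroup $N(\gamma)\leq G$ of elements acting as properly inner automorphisms. I would first note that $\mathcal{M} \cong Z(\mathcal{M}) \overline{\otimes} \mathcal{R}$ (by the structure theorem for separably representable hyperfinite type $\mathrm{II}_1$ von Neumann algebras), so McDuff's property $\mathcal{R} \overline{\otimes} \mathcal{R} \cong \mathcal{R}$ gives a trace-preserving isomorphism $\mathcal{M} \overline{\otimes} \mathcal{R} \cong \mathcal{M}$.

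Next I would verify that the two actions have matching classifying data. Since $\mathcal{R}$ is a factor, $Z(\mathcal{M} \overline{\otimes} \mathcal{R}) = Z(\mathcal{M})$, and the restriction of $\gamma \otimes (\mu_{G/N} \circ q_N)$ to the center is precisely $\gamma|_{Z(\mathcal{M})}$, so the action on the center is preserved. For the properly inner subgroup, the hypothesis forces $N(\gamma) \subseteq N$. For $g \in N$ one has $(\mu_{G/N} \circ q_N)_g = \mathrm{id}_\mathcal{R}$, hence $(\gamma \otimes (\mu_{G/N} \circ q_N))_g = \gamma_g \otimes \mathrm{id}_\mathcal{R}$ is properly inner if and only if $\gamma_g$ is. For $g \notin N$, $(\mu_{G/N})_{gN}$ is outer on the factor $\mathcal{R}$, which together with the proper outerness of $\gamma_g$ forces $\gamma_g \otimes (\mu_{G/N})_{gN}$ to be properly outer on $\mathcal{M} \overline{\otimes} \mathcal{R}$. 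Consequently both actions share the same normal subgroup $N(\gamma)$ of properly inner elements.

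Finally, on $N(\gamma)$ itself, the second tensor factor is trivial, so the action $\gamma \otimes (\mu_{G/N} \circ q_N)$ restricted to $N(\gamma)$ is conjugate to $\gamma|_{N(\gamma)}$ via the identification $\mathcal{M} \cong \mathcal{M} \overline{\otimes} \mathcal{R}$: a unitary $u \in \mathcal{M}$ implementing $\gamma_g$ corresponds to $u \otimes 1$ implementing $(\gamma \otimes (\mu_{G/N} \circ q_N))_g$. Hence the Ocneanu $2$-cocycle class measuring the characteristic invariant coincides for the two actions. With all classifying data matching, \cite{suthtak} yields $(\mathcal{M}, \gamma) \cong_{\mathrm{cc}} (\mathcal{M} \overline{\otimes} \mathcal{R}, \gamma \otimes (\mu_{G/N} \circ q_N))$.

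The main obstacle is bookkeeping at the level of invariants in the non-factorial setting: one needs to ensure that Sutherland--Takesaki's classification applies in the trace-preserving type $\mathrm{II}_1$ case with nontrivial center action and with a characteristic invariant pinned down by a subgroup $N(\gamma)$ that may be strictly smaller than $N$. The crucial input is that factoriality of $\mathcal{R}$ is precisely what upgrades outerness of $(\mu_{G/N})_{gN}$ to proper outerness of the tensor product automorphism, which is what lets the two normal subgroups coincide.
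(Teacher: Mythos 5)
Your overall route is the same as the paper's: reduce to the Sutherland--Takesaki classification of trace-preserving actions of amenable groups on hyperfinite type II$_1$ algebras, observe that both actions induce the same action on the common center, show that the ``inner parts'' agree (using that $\mu_{G/N}\circ q_N$ is trivial on $N$ and outer off $N$, together with proper outerness of $\gamma_g$ for $g\notin N$), and match the characteristic invariant by sending an implementing unitary $u$ to $u\otimes 1$. All of these are exactly the steps in the paper's proof.

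There is, however, a genuine gap in how you formulate the classifying invariant, and it is precisely the point you defer as ``bookkeeping.'' In the non-factorial setting the Sutherland--Takesaki invariant is not a normal subgroup $N(\gamma)\leq G$ of globally properly inner elements; it is the measured subgroupoid $\mathcal{N}\subseteq X\times G$ (where $X$ is the spectrum of the center) consisting of pairs $(x,g)$ such that $g$ fixes $x$ \emph{and} the fiber automorphism $\bar\gamma_{x,g}$ of $\mathcal{R}$ is inner, together with a relative cohomology class over $\mathcal{N}$. An element $g$ may fix some fibers and act innerly there while acting outerly or moving other fibers, so the subgroup of $G$ does not capture the invariant. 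To make your comparison rigorous one must (i) disintegrate both actions over the center via $\mathcal{M}\cong Z(\mathcal{M})\overline{\otimes}\mathcal{R}$ and verify that the fiber automorphisms of the tensor action are $\bar\gamma_{x,g}\otimes(\mu_{G/N}\circ q_N)_g$, and (ii) show that for $g\notin N$ proper outerness of $\gamma_g$ forces $\bar\gamma_{x,g}$ to be outer for $\nu$-almost every $x$ with $\sigma_g(x)=x$. Step (ii) is not automatic: the set of fibers on which $\bar\gamma_{x,g}$ is inner could a priori be nonempty, and the paper rules out positive measure for it by invoking Lance's theorem on direct integrals (producing a central projection on which $\gamma_g$ would be inner, contradicting proper outerness) and then discards an invariant null set. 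Your argument identifies all the right facts to check but does not supply this fiberwise/measure-theoretic step, which is where the actual work of the proof lies.
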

\begin{proof}
Denote by $\mathcal{C}$ the center of $\mathcal{M}$, and let $(X,\nu)$ be
its von Neumann spectrum, so that $(\mathcal{C},\tau|_{\mathcal{C}})\cong (L^\I(X,\nu),\int_Xd\nu)$
as tracial von Neumann algebras. 
Without loss of generality, we assume
that $(X,\nu)$ is a standard Borel probability space.
By \cite[Theorem XVI.1.5]{take3}, there is an isomorphism 
$\mathcal{M} \cong \mathcal{C} \overline{\otimes} \mathcal{R}$. Therefore,
 by \cite[Corollary IV.8.30]{take1}, we can identify every $b \in \mathcal{M}$ 
 with a decomposable
element $\int^\oplus_X b_x d \nu$, where each $b_x \in \mathcal{R}$.

Given an arbitrary action $\alpha \colon G \to \text{Aut}(\mathcal{M}, \tau)$, which will later be taken to be
either $\gamma$ or $\gamma\otimes \mu_{G/N}\circ q_N$, we 
let $\sigma^\alpha$ the measurable
$G$-action on $(X, \nu)$ induced by $\alpha$. We recall the notion of the 
\emph{ancillary groupoid}
and \emph{ancillary action} associated to $\alpha$, as well as the cocycle conjugate
invariants introduced in \cite{suthtak}. We refer to \cite{suthtak} for details
(see also \cite[Section 3]{suth} and \cite{jt84}). There 
exists a measurable map $\bar \alpha\colon 
X \times G \to \text{Aut}(\mathcal{R})$ such that, given $b =\int^\oplus_X b_x d \nu \in \mathcal{M}$,
\begin{equation} \label{eq:defgroup}
\bar \alpha_{x,g}\big(b_{\sigma^\alpha_g(x)}\big)=\alpha_g(b)_x,
\end{equation}
for every $(x,g) \in X \times G$. Denote by $\mathcal{G}_\alpha:= X \times G$ the measured transformation
groupoid
(with unit space $X$) corresponding to the $G$-action induced by $\alpha$ on $X$. Let
\[
\mathcal{H}_\alpha := \{(x,g) \in \mathcal{G}_\alpha : \sigma_g(x) = x \},
\ \ 
\mbox{ and } 
\ \ 
\mathcal{N}_\alpha := \{(x, g) \in \mathcal{H}_\alpha : \bar \alpha_{x,g} \text{ is inner} \}.
\]
Note that $\mathcal{N}_\alpha$ is an invariant of cocycle
conjugacy.
Choose a Borel function $U \colon \mathcal{N}_\alpha \to \mathcal{U}(\mathcal{R})$ such that
$\bar \alpha_n = \text{Ad}(U(n))$ for every $n \in \mathcal{N}_\alpha$. The ways in which the unitaries
in the image of $U$ interact with each other and with the rest of the action
is described via a relative cohomology class
$\chi_\alpha$, which is also a cocycle conjugacy invariant of $\alpha$ (see \cite[Section 2]{suth}
for the precise definition).

We identify $\mathcal{M}$ and $\mathcal{M}\overline{\otimes}\mathcal{R}$ throughout.
Note that $\gamma$ and $\gamma \otimes (\mu_{G/N}\circ q_N)$ induce the same action on the center
of $\mathcal{M}$, which we will denote simply by $\sigma$. 
It follows that the groupoids $\mathcal{G}_\gamma$ and $\mathcal{G}_{\gamma
\otimes (\mu_{G/N}\circ q_N)}$ are isomorphic,
which in turn implies that $\mathcal{H}_\gamma \cong \mathcal{H}_{\gamma \otimes
(\mu_{G/N}\circ q_N)}$.
It follows then by \cite[Theorem p. 324]{suth} that
the two actions are cocycle conjugate if and only if
$\mathcal{N}_\gamma \cong \mathcal{N}_{\gamma \otimes
(\mu_{G/N}\circ q_N)}$ and $\chi_\gamma = \chi_{\gamma \otimes (\mu_{G/N}\circ 
q_N)}$.
(The invariant $\delta$ does not play a role when $\mathcal{M}$ is finite.)

\begin{claim} \label{claim:baraction}
Let $g\in G$ and $x\in X$ satisfy $\sigma_g(x)=x$. (In other words, $(x,g) \in \mathcal{H}_\gamma = \mathcal{H}_{\gamma \otimes
(\mu_{G/N}\circ q_N)}$.) Then
\[
(\overline{\gamma \otimes (\mu_{G/N}\circ q_N)})_{x,g} =
\bar \gamma_{x,g} \otimes (\mu_{G/N}\circ q_N)_g.
\]
\end{claim}
We show that the two automorphisms are equal on all elementary tensors $b_0 \otimes
b_1 \in \mathcal{R} \overline \otimes \mathcal{R}$ (which we
identify with $\mathcal{R}$). We have
\begin{eqnarray*}
(\overline{\gamma \otimes (\mu_{G/N}\circ q_N)})_{x,g}(b_0 \otimes b_1) 
&\stackrel{\mathclap{\eqref{eq:defgroup}}}{=}& (\gamma \otimes (\mu_{G/N}\circ q_N))_g(1_{L^\infty(X)}
\otimes b_0 \otimes b_1)_x \\
&=& \gamma_g(1_{L^\infty(X)} \otimes b_0)_x \otimes (\mu_{G/N}\circ q_N)_g(b_1) \\
&\stackrel{\mathclap{\eqref{eq:defgroup}}}{=}&
\bar \gamma_{x,g}(b_0) \otimes (\mu_{G/N}\circ q_N)_g(b_1).
\end{eqnarray*}
This computation proves the claim.

\begin{claim} \label{claim:nuYzero}
Fix $g \in G \setminus N$. We claim that 
\[\nu\big(\big\{x\in X\colon (x,g)\in \mathcal{H}_\gamma 
 \mbox{ and } \overline{\gamma}_{x,g} \mbox{ is inner}\big\}\big)=0.
\]
\end{claim}

Denote by $Y$ the set in the above displayed equation, and
note 
that $Y$ is Borel. To prove the claim, 
assume by contradiction that $\nu(Y)>0$. 
By \cite[Theorem 3.4]{lance}, there is a a non-zero
central projection $q \in \mathcal{M}$ such that $\gamma_g$ is inner on $q \mathcal{M}$. This contradicts the 
assumption that $\gamma_g$ is properly outer (since 
$g \notin N$). 

Let $g\in G$. 
Using Claim~\ref{claim:nuYzero}, and up to removing a $\sigma$-invariant
measure zero set from $X$, we can assume that if there 
is $x \in X$ such that
$(x,g) \in \mathcal{N}_\gamma$ implies $g \in N$. Similarly, it follows from Claim \ref{claim:baraction} that if there exists $x\in X$ such that
$(x,g) \in \mathcal{N}_{\gamma \otimes
(\mu_{G/N}\circ q_N)}$, then $g \in N$. In fact, if $g \in G \setminus N$, then
$(\mu_{G/N}\circ q_N)_g$
is outer, which in turn forces $\bar \gamma_{x,g} \otimes (\mu_{G/N}\circ q_N)_g$
to be outer as well.

Finally, let $(x,g) \in X\times N$. Using Claim \ref{claim:baraction} in the
second to last equivalence, we have
\begin{align*}
(x,g) \in \mathcal{N}_\gamma &\Leftrightarrow \bar \gamma_{x,g} \text{ is inner} \\
&\Leftrightarrow \bar \gamma_{x,g} \otimes \text{id}_\mathcal{R} =
 \bar \gamma_{x,g} \otimes (\mu_{G/N}\circ q_N)_g \text{ is inner}\\
 & \Leftrightarrow (\overline{\gamma \otimes (\mu_{G/N}\circ q_N)})_{x,g}  \text{ is inner}\\
 &\Leftrightarrow
 (x,g) \in \mathcal{N}_{\gamma \otimes
(\mu_{G/N}\circ q_N)}.
\end{align*}
 
The equality $\chi_\gamma = \chi_{\gamma \otimes (\mu_{G/N}\circ q_N)}$ follows by the definition
of the invariant $\chi$ (\cite[Section 2]{suth}). Let 
$U \colon \mathcal{N}_\gamma \to \mathcal{U}(\mathcal{R})$ be a Borel map such that
$\bar \gamma_g = \text{Ad}(U(g))$ for every $g \in \mathcal{N}_\gamma$. Note that 
$(\overline{\gamma \otimes (\mu_{G/N}\circ q_N)})_n =
\bar \gamma_n \otimes \text{id}_{\mathcal{R}}$
by Claim \ref{claim:baraction}. Let $V\colon N\to \mathcal{U}(\mathcal{R}\overline{\otimes}\mathcal{R})$ be given by
$V=U\otimes 1_{\mathcal{R}}$. Then
\[
(\overline{\gamma \otimes (\mu_{G/N}\circ q_N)})_g = \text{Ad}(V(g))
\]
for every $g \in \mathcal{N}_\gamma$. This finishes the proof.
\end{proof}

We record here the following consequence of 
\autoref{thm:ErgThy}, which will be needed in Section~8.

\begin{cor}\label{prop:FinDirSumRMcDuff}
Let $(\mathcal{M},\tau)$ be a separably representable, hyperfinite, type II$_1$ von Neumann algebra, let $G$ be a countable, discrete, amenable group and
let $\gamma\colon G\to\Aut(\mathcal{M}, \tau)$ be an action which preserves $\tau$.
Then for every $d \in \N$, there is a unital homomorphism $M_d\to (\M^\U\cap \M')^{\gamma^\U}$.
\end{cor}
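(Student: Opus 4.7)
The plan is to combine \autoref{thm:ErgThy} and \autoref{thm:AbsR} with the trivial choice of normal subgroup. Specifically, I would apply \autoref{thm:ErgThy} to the pair $(\mathcal{M},\gamma)$ with $N = G$. Then the set $G \setminus N$ is empty, so the hypothesis that $\gamma_g$ be properly outer for $g \in G \setminus N$ is vacuous, and the quotient $G/N$ is trivial; any action $\mu_{G/N}$ of the trivial group on $\mathcal{R}$ is necessarily equal to $\id_{\mathcal{R}}$ and is trivially outer. The conclusion of \autoref{thm:ErgThy} then reads
\[
(\mathcal{M}, \gamma) \cong_{\mathrm{cc}} (\mathcal{M} \overline{\otimes} \mathcal{R}, \gamma \otimes \id_{\mathcal{R}}),
\]
that is, $(\mathcal{M},\gamma)$ absorbs $(\mathcal{R}, \id_{\mathcal{R}})$ in the sense of \autoref{df:CocConj}.

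Next, I would apply \autoref{thm:AbsR} with $\delta = \id_{\mathcal{R}}$. The required self-absorption hypothesis is immediate, since $\overline{\bigotimes}_{n \in \N} \mathcal{R} \cong \mathcal{R}$ as tracial von Neumann algebras and the infinite tensor of trivial actions is trivial. The conclusion produces a unital equivariant homomorphism
\[
\varphi \colon (\mathcal{R}, \id_{\mathcal{R}}) \to (\mathcal{M}^\U \cap \mathcal{M}', \gamma^\U).
\]
Equivariance with respect to $\id_{\mathcal{R}}$ on the domain forces $\gamma^\U_g \circ \varphi = \varphi$ for every $g \in G$, so that $\varphi(\mathcal{R}) \subseteq (\mathcal{M}^\U \cap \mathcal{M}')^{\gamma^\U}$. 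To finish, for any $d \in \N$ I would fix a unital embedding $M_d \hookrightarrow \mathcal{R}$ (available because $\mathcal{R}$ is the hyperfinite II$_1$ factor) and precompose with $\varphi$ to obtain the desired unital homomorphism $M_d \to (\mathcal{M}^\U \cap \mathcal{M}')^{\gamma^\U}$.

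There is no real obstacle: all of the work has been absorbed into the two preceding results, and the only subtlety is the bookkeeping remark that choosing $N = G$ collapses the model action $\mu_{G/N} \circ q_N$ to the identity, which is precisely what makes the image of $\varphi$ land in the fixed-point subalgebra rather than merely in the central sequence algebra.
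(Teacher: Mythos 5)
Your proof is correct and follows exactly the paper's own argument: take $N=G$ in \autoref{thm:ErgThy} to get absorption of $(\mathcal{R},\id_{\mathcal{R}})$, apply \autoref{thm:AbsR}, and precompose with a unital embedding $M_d\to\mathcal{R}$. The only difference is that you spell out the bookkeeping (vacuous outerness hypothesis, image landing in the fixed-point algebra) that the paper leaves implicit.
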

\begin{proof}
By taking $G=N$ in \autoref{thm:ErgThy}, it follows
that $(\mathcal{M},\gamma)$ absorbs $(\mathcal{R},\mathrm{id}_{\mathcal{R}})$ tensorially. 
By \autoref{thm:AbsR}, it follows that there exists a unital embedding $\mathcal{R} \to  (\M^\U\cap \M')^{\gamma^\U}$. Since there exists a unital homomorphism $M_d \to \mathcal{R}$ for every $d \in \N$, the conclusion follows.
\end{proof}

The following result is the
main application of CPoU in this section, and it is last ingredient we need in order to prove \autoref{cor:EmbedModelAction}.

\begin{prop} \label{prop:EmbedTracialUltrapow}
Let $G$ be a countable, discrete group,
let $A$ be a separable, unital \ca\ with non-empty trace space,
and let $\alpha\colon G \to \Aut(A)$ be an
action. Suppose that 
the induced action on $T(A)$ has finite orbits bounded 
in size by some constant, and that $(A,\alpha)$ has CPoU.
Let $\beta\colon G\to \Aut(B)$ be an action of $G$ on a
separable, unital \ca\ $B$, and suppose that for 
every $\tau\in T(A)^\alpha$ there exists 
an equivariant, unital homomorphism 
\[(B,\beta)\to \Big(\mathcal{M}_\tau^\U\cap  \mathcal{M}_\tau',({\alpha}^\tau)^\U \Big).\]
Then there exists an equivariant, unital homomorphism
$(B,\beta)\to (A^\U\cap A',\alpha^\U)$.
\end{prop}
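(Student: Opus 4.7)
The strategy is to encode the existence of the desired equivariant unital homomorphism as a collection of $G$-$*$-polynomial relations, realize them exactly in each von Neumann ultrapower $\mathcal{M}_\tau^\U$ using the hypothesis, and then glue the local solutions globally via \autoref{lma:LocalToGlobal} together with \autoref{vNaultra}. To set this up, first fix a countable, unital, $\beta$-invariant, $\Q[i]$-$*$-subalgebra $B_0\subseteq B$ which is norm-dense in $B$ and consists of contractions (achievable since $G$ and $\Q[i]$ are countable and $\beta$ acts by automorphisms), and enumerate $B_0 = \{b_j : j \in \N\}$ with $b_{j_0} = 1_B$. Similarly, pick a countable norm-dense subset $\{a_k : k\in \N\} \subseteq A$. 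Since $B_0$ is closed under all the $*$-algebra operations, there are functions $m, s \colon \N\times \N \to \N$, $\iota \colon \N\to\N$ and, for every $g\in G$, a permutation $\sigma_g \colon \N\to \N$ encoding, respectively, multiplication, addition, involution, and the $\beta$-action:
\begin{equation*}
b_jb_{j'} = b_{m(j,j')},\quad b_j + b_{j'} = b_{s(j,j')},\quad b_j^* = b_{\iota(j)},\quad \beta_g(b_j) = b_{\sigma_g(j)}.
\end{equation*}

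Now introduce non-commuting variables $(x_k)_{k\in\N}$ and $(y_j)_{j\in\N}$, with the convention that $g\cdot y_j$ will be interpreted through $\alpha^\U_g$. The homomorphism, equivariance, centrality, and unitality constraints together amount to the following countable list of $G$-$*$-polynomial relations $Q_m(\bar x, \bar y) = 0$, involving only finitely many variables each:
\begin{equation*}
y_jy_{j'} - y_{m(j,j')},\quad y_j + y_{j'} - y_{s(j,j')},\quad y_j^* - y_{\iota(j)},\quad y_{j_0} - 1,
\end{equation*}
\begin{equation*}
\lambda y_j - y_k\ (\text{whenever } \lambda b_j = b_k,\ \lambda \in \Q[i]),\qquad g\cdot y_j - y_{\sigma_g(j)},\qquad [x_k,y_j].
\end{equation*}
I would evaluate these by substituting the $a_k$ for the $x_k$, so the coefficient tuple in $A$ is fixed and the remaining indeterminates are only the $(y_j)$.

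For each $\tau\in T(A)^\alpha$, the given equivariant unital homomorphism $\varphi_\tau\colon (B,\beta)\to (\mathcal{M}_\tau^\U\cap \mathcal{M}_\tau',(\alpha^\tau)^\U)$ yields contractions $w_j^\tau = \varphi_\tau(b_j)\in \mathcal{M}_\tau^\U$ which \emph{exactly} satisfy every polynomial relation above (with $a_k$ replaced by $\pi_\tau(a_k)$). By \autoref{vNaultra}, the assumptions of \autoref{lma:LocalToGlobal} are met. Applying it produces contractions $w_j\in A^\U$ with $Q_m(\bar a, \bar w)\in J_{T(A)^\alpha}$ for every $m$. The uniform bound on the $\alpha^*$-orbits in $T(A)$ lets me invoke \autoref{prop:ideal}, so $\|\cdot\|_{2,T(A)^\alpha}$ and $\|\cdot\|_{2,u}$ are equivalent, and $J_{T(A)^\alpha}$ becomes zero after passing to $A^\U = A_\U/J_A$. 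Hence every $Q_m(\bar a,\bar w)$ vanishes in $A^\U$.

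Defining $\varphi\colon B_0 \to A^\U$ by $\varphi(b_j)=w_j$, the listed vanishing relations show that $\varphi$ is a unital $*$-homomorphism onto a subalgebra commuting with $\{a_k:k\in\N\}$, and equivariant in the sense $\varphi\circ\beta_g = \alpha^\U_g\circ\varphi$; being a $*$-homomorphism it is automatically contractive, so it extends to the desired equivariant unital homomorphism $(B,\beta)\to (A^\U\cap A', \alpha^\U)$ by density. The main technical point — and the one deserving most care — is ensuring that the countable list of relations truly captures all of the algebraic, equivariance and centrality content of a unital $*$-homomorphism into the relative commutant of $A$, while remaining compatible with the polynomial formalism of \autoref{lma:LocalToGlobal}; closing $B_0$ under the $\Q[i]$-$*$-algebra structure and under $\beta$ is precisely what makes this bookkeeping finite-type at each polynomial.
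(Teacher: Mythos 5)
Your proposal follows essentially the same route as the paper's proof: encode unitality, the $*$-homomorphism identities, equivariance and centrality as a countable family of $G$-$*$-polynomial relations over a countable dense, $\beta$-invariant $*$-structure in $B$, realize them exactly in each $\mathcal{M}_\tau^\U$ via the hypothesis, and glue using \autoref{lma:LocalToGlobal}, \autoref{vNaultra} and \autoref{prop:ideal}. The one slip is in the bookkeeping you yourself flag as delicate: a $\Q[i]$-$*$-subalgebra closed under genuine addition cannot consist of contractions, yet \autoref{lma:LocalToGlobal} takes contraction witnesses $w_j^\tau$ and returns contraction solutions $w_j$, so your relation $b_j+b_{j'}=b_{s(j,j')}$ does not typecheck as stated; the paper fixes exactly this by closing the countable dense subset $B_0$ of the unit ball under the averaged sum $(x,y)\mapsto\frac{1}{2}(x+y)$ rather than under $+$, and then extending the resulting map $\Q[i]$-linearly to $\mathrm{span}(B_0)$ before passing to $B$ by density.
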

\begin{proof} 
For each $\tau\in T(A)^\alpha$, fix an
equivariant, unital homomorphism 
\[\varphi_\tau\colon (B,\beta)\to \Big(\mathcal{M}_\tau^\U\cap \mathcal{M}_\tau',(\alpha^\tau)^\U \Big).\]

Let $\{a_n\}_{n\in\N}$ be a countable dense subset of the unit ball of $A$. Let $D_1$ be
the unit disk in $\mathbb{C}$ and set $\mathbb{Q}_1=\mathbb{Q}[i]\cap D_1$. 
Let $B_0$ be a countable dense subset of the unit ball
of $B$ containing $1_B$, which is invariant under the adjoint operation, 
multiplication, multiplication by scalars from $\mathbb{Q}_1$,
and the operation $(x,y)\mapsto \frac{1}{2}(x+y)$.
Since $G$
is countable, we can assume without 
loss of generality that $B_0$ is 
$\beta$-invariant.
Let $\{ b_n \}_{n \in \N}$ be an enumeration of 
$B_0$ such that $b_0 = 1_B$. Fix non-commuting variables
$x,y,z,w$, and for 
$\lambda \in \mathbb{Q}_1$
and $g \in G$ define the following
$G$-$*$-polynomials:
\begin{itemize}
\item $Q_{\text{id}}(x) = x -1$,
\item $Q_+(x,y,z) = \frac{1}{2}(x+y) - z$,
\item $Q_{\times}(x,y,z) = x y - z$,
\item $Q_\ast (x,y) = x^* - y$,
\item $Q_\lambda(x,y) = \lambda x  - y $,
\item $Q_g(x,y) = g\cdot x - y$,
\item $Q_{[\cdot, \cdot]}(x,w) = x w - w x$.
\end{itemize}
Clearly, $Q_{\text{id}}(b_n) = 0$ if and only if $n = 0$.
Given $n,m \in \N$ there is a unique $k^+_{(n,m)} \in \N$ such that
$Q_+(b_n,b_m,b_{k^+_{(n,m)}}) = 0$.
Similarly, there is a unique $k^\times_{(n,m)} \in \N$ such that
$Q_\times(b_n,b_m,b_{k^\times_{(n,m)}}) = 0$.
Analogously, given $n \in \N$, $\lambda \in \mathbb{Q}_1$ and $g \in G$ there are unique $k^*_n,
k^\lambda_n,k^g_n \in \N$ such that
\[
Q_*(b_n,b_{k^*_n}) = 0,
\quad
Q_\lambda(b_n,b_{k^\lambda_n}) = 0,
\quad
Q_g(b_n, b_{k^g_n}) = 0.
\]
Since $\varphi_\tau$ is an equivariant, unital homomorphism,
we have $Q_{\text{id}}(\varphi_\tau(1_B)) = 0$, moreover
for every $n,m \in \N$, $\lambda \in
\mathbb{Q}_1$ and $g \in G$ we have
\[
Q_+(\varphi_\tau(b_n),\varphi_\tau(b_m),\varphi_\tau(b_{k^+_{(n,m)}})) = 0, \quad Q_\times(\varphi_\tau(b_n),\varphi_\tau(b_m),\varphi_\tau(b_{k^\times_{(n,m)}})) = 0,
\]
and
\[
Q_*(\varphi_\tau(b_n),\varphi_\tau(b_{k^*_n})) = 0,
\quad
Q_\lambda(\varphi_\tau(b_n),\varphi_\tau(b_{k^\lambda_n})) = 0,
\quad
Q_g(\varphi_\tau(b_n), \varphi_\tau(b_{k^g_n})) = 0.
\]
Finally, 
for all $n,m \in \N$ we have
\[
Q_{[\cdot, \cdot]}(\varphi_\tau(b_n),\pi_\tau(a_m)) = 0,
\]
since $\varphi_\tau(b_n)$ belongs to the relative commutant of 
$\M_\tau$, and thus commutes with $\pi_\tau(A)$. 
By \autoref{lma:LocalToGlobal} (see also \autoref{vNaultra}) and \autoref{prop:ideal}, we can find contractions
$b'_n\in A^\U$, for $n\in\N$, such that
\begin{enumerate}[label=(\alph*)]
\item \label{glue:i0} $Q_\text{id}(b'_0) = 0$, that is $b'_0 = 1_{A^\U}$,
\item \label{glue:ia} 
for every $n,m \in \N$, we have
$Q_+(b'_n,b'_m,b'_{k^+_{(n,m)}}) =0=Q_\times(b'_n,b'_m,b'_{k^\times_{(n,m)}}) $,
\item \label{glue:ib} 
for every $n\in \N$, $\lambda \in \mathbb{Q}_1$ and $g \in G$, we
have
\[Q_*(b'_n, b'_{k_n^*})=Q_\lambda(b'_n, b'_{k_n^\lambda})= Q_g(b'_n, b'_{k_n^g})=0,\]
\item \label{glue:ic} for all $n,m \in \N$ we have
$Q_{[\cdot,\cdot]}(b'_n,\pi_\tau(a_m)) = 0$.
\end{enumerate}

Let $\Phi_0\colon B_0\to A^\U$ be the
map defined by sending
$b_n$ to $b_n'$ for all $n\in\N$.
This map is unital by \ref{glue:i0}, it is additive and multiplicative 
by \ref{glue:ia}, and it is $\ast$-preserving, equivariant and 
$\mathbb{Q}_1$-homogeneous by \ref{glue:ib}. It can be therefore extended uniquely
to a $\mathbb{Q}[i]$-linear, equivariant, unital homomorphism $\Phi_0\colon\text{span}(B_0) \to A^\U$.
The image of
$\Phi_0$ is contained in $A^\U\cap A'$ by \ref{glue:ic}.
Notice that $\Phi_0$ is contractive, since 
it is contractive on $B_0$, which is dense
in the unit ball
of $\text{span}(B_0)$.
This, along with the fact that $\text{span}(B_0)$
is dense in $B$, allows us to extend uniquely $\Phi_0$ to an
equivariant, unital homomorphism 
$(B,\beta)\to \big(A^\U\cap A',\alpha^\U \big)$,
as desired.
\end{proof}


\begin{proof}[Proof of \autoref{cor:EmbedModelAction}]
For every $\tau \in T(A)^\alpha$, we abbreviate $\pi_\tau(A)''$ by 
$\mathcal{M}_\tau$. Then
$(\mathcal{M}_\tau,\tau)$ is a separably representable, hyperfinite, type II$_1$ von 
Neumann algebra. Moreover, 
the dynamical system $(\mathcal{M}, \alpha^\tau)$ absorbs
$(\mathcal{R},\mu_{G/N}\circ q_N)$ by \autoref{thm:ErgThy},
since 
$\alpha^\tau_g$ is properly outer whenever $\alpha_g$ is strongly outer (see, for example, \cite[Remark~2.17]{GarLup_rigid_2018} or \cite[Proposition 5.7]{equiSI}).
The dynamical system $(\mathcal{R},\mu_{G/N}\circ q_N)$ is cocycle conjugate to
$(\overline{\bigotimes}_{n \in \N} \mathcal{R},  {\bigotimes}_{n \in \N}\mu_{G/N}\circ q_N)$ by \cite[Theorem 2.6]{ocneanu}, therefore by \autoref{thm:AbsR}
there exists an equivariant, unital embedding 
\[\varphi_\tau \colon (\mathcal{R}, \mu_{G/N}\circ q_N) \to \Big(\mathcal{M}_\tau^\mathcal{U}
\cap \mathcal{M}_\tau ', (\alpha^\tau)^\mathcal{U}\Big).\]

\begin{claim}
There exists a unital, monotracial, separable, simple, $ \mu_{G/N}\circ q_N$-invariant
$C^\ast$-subalgebra $B$ of $\mathcal{R}$.\hspace{-.1cm}~\footnote{This claim is immediate for certain specific outer actions of $G/N$ on $\mathcal{R}$, such as the Bernoulli shifts. On the other hand, and even though any two outer actions of $G/N$ on
$\mathcal{R}$ are cocycle conjugate, 
it is not clear how to obtain the claim for an arbitrary outer 
action only using that it is true for \emph{some} outer action, 
due to the 1-cocycle.}
\end{claim}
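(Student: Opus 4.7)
My plan is to transport a Bernoulli-shift model onto $(\mathcal{R}, \mu_{G/N})$ via Ocneanu's cocycle conjugacy theorem, and then absorb the arising $1$-cocycle into an enlarged invariant subalgebra. Writing $\Gamma = G/N$ and $\mu = \mu_{G/N}$, I would first take $\D := \bigotimes_{g \in \Gamma} M_2$ equipped with the Bernoulli shift $\sigma \colon \Gamma \to \Aut(\D)$. This algebra $\D$ is unital, separable, simple, and monotracial (with unique trace $\tau_\D$), and $\sigma$ is outer, so its extension $\tilde\sigma$ to the GNS completion $\pi_{\tau_\D}(\D)'' \cong \mathcal{R}$ is an outer trace-preserving action of $\Gamma$ on $\mathcal{R}$.

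By Ocneanu's uniqueness theorem for outer actions of countable discrete amenable groups on $\mathcal{R}$ (\cite[Theorem~2.6]{ocneanu}), $\tilde\sigma$ and $\mu$ are cocycle conjugate, so there exist $\Theta \in \Aut(\mathcal{R})$ and a $\tilde\sigma$-cocycle $v \colon \Gamma \to \U(\mathcal{R})$ with
\[
\mu_g = \Theta \circ \Ad(v_g) \circ \tilde\sigma_g \circ \Theta^{-1} \quad\text{for all } g \in \Gamma.
\]
I would then set $B_1 := C^*(\D \cup \{v_g : g \in \Gamma\}) \subseteq \mathcal{R}$. Rearranging the cocycle identity yields $\tilde\sigma_g(v_h) = v_g^* v_{gh} \in B_1$, which together with $\tilde\sigma_g(\D) = \D$ shows that $B_1$ is $\tilde\sigma$-invariant; since $\{v_g\} \subseteq B_1$, it is also invariant under $\Ad(v) \circ \tilde\sigma$. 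Consequently $B := \Theta(B_1)$ is a unital, separable, $\mu$-invariant $C^*$-subalgebra of $\mathcal{R}$; unitality and separability (via countability of $\Gamma$) are clear.

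The main obstacle will be establishing that $B_1$ (and hence $B$) is simple and monotracial. My strategy is to identify $B_1$ with the full crossed product $\D \rtimes_\sigma \Gamma$, which by Kishimoto's theorem is simple (since $\sigma$ is outer on the simple algebra $\D$) and, by a standard argument for outer actions of countable amenable groups on simple monotracial $C^*$-algebras, has a unique tracial state. The difficulty is that the pair $(\iota_\D, v)$, with $\iota_\D \colon \D \hookrightarrow B_1$ the inclusion, does not a priori give a covariant representation of $(\D, \sigma, \Gamma)$, since the unitaries $v_g$ need not normalize $\D$. Overcoming this---by enlarging $\D$ to a subalgebra stable under conjugation by the $v_g$ while preserving simplicity and monotraciality (exploiting the density of the resulting subalgebra in $\mathcal{R}$ and the restriction of $\tau_\mathcal{R}$), or by a direct analysis of the ideal and trace structure of $B_1$ using outerness of $\tilde\sigma$---is where the bulk of the work lies.
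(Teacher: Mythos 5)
There is a genuine gap, and it sits exactly where you place ``the bulk of the work''. Your construction reduces the claim to showing that $B_1 = C^*(\D \cup \{v_g : g\in\Gamma\})$ is simple and monotracial, but that assertion is the entire content of the claim and nothing in your argument supports it: the $v_g$ are merely unitaries in the von Neumann algebra $\mathcal{R}$, a generic separable $C^*$-subalgebra of $\mathcal{R}$ need be neither simple nor monotracial, and since the $v_g$ do not normalize $\D$ there is no covariant pair and hence no identification of $B_1$ with $\D\rtimes_\sigma\Gamma$. Your suggested repair --- enlarging $\D$ to something stable under $\Ad(v_g)\circ\tilde\sigma_g$ while preserving simplicity and monotraciality --- is just the original problem again, now for the subalgebra generated by all words in $\D$ and the $v_g$. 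The authors' own footnote to this claim flags precisely this obstruction: even though any two outer actions of $G/N$ on $\mathcal{R}$ are cocycle conjugate, it is not clear how to transport the claim from a Bernoulli model to an arbitrary outer action, ``due to the 1-cocycle''.

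The paper's proof avoids cocycle conjugacy entirely and instead uses the strong Dixmier property of $\mathcal{R}$: for each positive $c$ in a given separable subalgebra one adjoins a countable set $W$ of unitaries so that $\overline{\mathrm{conv}}\{ucu^* : u\in W\}$ meets the center of $\mathcal{R}$ exactly in $\{\tau_{\mathcal{R}}(c)\}$; any $C^*$-subalgebra containing $W\cup\{c\}$ then has the scalar $\tau_{\mathcal{R}}(c)$ in the ideal generated by $c$ (forcing simplicity) and every trace on it sends $c$ to $\tau_{\mathcal{R}}(c)$ (forcing monotraciality). Iterating this countably often yields a separable, simple, monotracial subalgebra, and interleaving with closure under the (countable) group action and passing to the inductive limit yields the invariant one. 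If you want to salvage your approach, you would need some mechanism of this kind to control the ideal and trace structure of $B_1$; as written, the proposal does not prove the claim.
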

We recall that being a finite von Neumann algebra, the hyperfinite
II$_1$-factor $\mathcal{R}$ has the \emph{strong Dixmier property} (see 
\cite[Definition III.2.5.16, Theorem III.2.5.18]{blackadar}). This means that  
for every $a \in \mathcal{R}$ the norm-closed convex hull of
$\{ uau^*\colon u \in \mathcal{U}(\mathcal{R}) \}$ intersects the center
in exactly one element which, in this case, is necessarily 
$\tau_{\mathcal{R}}(a)$.
Given any $c \in \mathcal{R}$, we can thus find a countable set $W \subseteq \mathcal{U}(\mathcal{R})$
such that the intersection of $\overline{\text{conv}\{ ucu^* \colon u \in W \}}^{\|\cdot\|}$ 
with the center of
$\mathcal{R}$ is precisely $\{\tau_{\mathcal{R}}(c)\}$. If $C \subseteq \mathcal{R}$ is a $C^*$-subalgebra containing
$W\cup\{c\}$, then the ideal $I$
in $C$ generated by $c$ 
contains $\overline{\text{conv}\{ ucu^* \colon u \in W \}}^{\|\cdot\|}$. In particular, $I$ contains the scalar $\tau_{\mathcal{R}}(c)$, and thus $I = C$ if $c$ is a non-zero positive element. Furthermore, since all traces on $C$
are constant on $\overline{\text{conv}\{ ucu^* \colon u \in W \}}^{\|\cdot\|}$, they all map $c$ to $\tau_{\mathcal{R}}
(c)$.
Therefore, given a norm-separable
$C^*$-subalgebra $C \subseteq \mathcal{R}$, the
strong Dixmier property can be used a countable number
of times to obtain a separable,
simple, unital, monotracial \ca\ $B_0$ containing $1_\mathcal{R}$
such that $C \subseteq B_0 \subseteq \mathcal{R}$.
On the other hand, using the fact that $G$ is countable, we can find a 
separable, $(\mu_{G/N}\circ q_N)$-invariant
\ca\ $B_0'$ such that $B_0 \subseteq B_0' \subseteq \mathcal{R}$. By iterating this construction
we can build an increasing sequence
\[
B_0 \subseteq B_0' \subseteq \dots \subseteq B_n \subseteq B'_n \subseteq \dots \subseteq
\mathcal{R},
\]
such that $B_n$ is separable, simple, and monotracial, and such that $B_n'$
is separable, and $ \mu_{G/N}\circ q_N$-invariant, for every $n \in \N$. It follows
that the inductive limit of this sequence is monotracial, separable, simple, $(\mu_{G/N}\circ q_N)$-invariant and it contains $1_\mathcal{R}$. This proves the claim.

Let $B$ be a $C^*$-subalgebra of $\mathcal{R}$ as in the 
above claim, and let $\beta$ denote the restriction
of $\mu_{G/N}\circ q_N$ to $B$.
By restricting $\varphi_\tau$ to $(B,\beta)$, we obtain
an equivariant, unital embedding
$(B,\beta)\to (\mathcal{M}_\tau^\U\cap \mathcal{M}_\tau',(\alpha^\tau)^\U)$.
By \autoref{prop:EmbedTracialUltrapow}, there exists
an equivariant, unital homomorphism $\Phi\colon (B,\beta)
\to (A^\U\cap A',\alpha^\U)$, which is injective as $B$ is simple.
Since $B$ has a unique trace $\tau_B$, the map $\Phi$ is
$(\|\cdot\|_{2, \tau_B}$-$\|\cdot\|_{2, T_\U(A)})$-contractive. As moreover the norm-unit
ball of $A^\U$ is $\| \cdot \|_{2, T_\U(A)}$-complete (see \cite[Lemma 1.6]{cpou}), the map $\Phi$
can be extended by continuity to an equivariant unital 
embedding $(\mathcal{R}, \mu_{G/N}\circ q_N)\to (A^\U \cap A', \alpha_\U)$.
\end{proof}

The following is the main result of this section. It is
an equivariant 
version of \cite[Theorem 4.6]{uniformGamma}, and summarizes
the results of this and the previous section.

\begin{thm} \label{thm:mainCPoU-parts-1-3}
	Let $A$ be a separable, unital, nuclear \ca\ with non-empty trace space and 
	with no finite-dimensional
	quotients. Let $G$ be a countable, discrete, amenable group
	and let $\alpha\colon G \to \text{Aut}(A)$ be an action such that the 
	induced action
	on $T(A)$ has finite orbits bounded in size by a constant $M > 0$.
	Then the following are equivalent
	\begin{enumerate}
		\item \label{mainCPoU:gamma.cpou:1} $(A,\alpha)$ has uniform property 
		$\Gamma$,
		\item \label{mainCPoU:gamma.cpou:2} $(A, \alpha)$ has CPoU with 
		constant $M$,
		\item \label{mainCPoU:gamma.cpou:3} for every $n \in \N$ there is a 
		unital 
		embedding $M_n \to (A^\U \cap A')^{\alpha^\U}$.
	\end{enumerate}
\end{thm}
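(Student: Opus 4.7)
The proof goes by the cycle of implications $(1) \Rightarrow (2) \Rightarrow (3) \Rightarrow (1)$. The first implication $(1) \Rightarrow (2)$ is exactly the content of \autoref{prop:cpou} combined with the orbit bound hypothesis; there is nothing further to do.

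For $(3) \Rightarrow (1)$, given a unital embedding $\rho\colon M_n \to (A^\U\cap A')^{\alpha^\U}$, I would take the projections $p_j = \rho(e_{jj})$, which are automatically $\alpha^\U$-invariant, pairwise orthogonal, and sum to $1$. Given $a \in A$ and $\tau \in T_\U(A)$, the linear functional $b \mapsto \tau(\rho(b)a)$ on $M_n$ is tracial, using that $\rho(M_n)$ commutes with $A$ together with the trace property of $\tau$; hence it equals $\tau(a)\tau_{M_n}(\cdot)$, and evaluating at $e_{jj}$ yields $\tau(ap_j) = \tau(a)/n$. To upgrade this from $a \in A$ to $a$ in an arbitrary $\|\cdot\|_{2, T_\U(A)}$-separable subset $S \subseteq A^\U$, one invokes a reindexation argument in the spirit of \autoref{lma:reindexation-uniform}.

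The substantial direction is $(2) \Rightarrow (3)$. The strategy is to exploit CPoU to glue together local embeddings of $M_n$ sitting in the central sequence algebras of the individual tracial GNS representations. The key preliminary observation is the following.

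\emph{Claim:} For every $\tau \in T(A)^\alpha$, the von Neumann algebra $\mathcal{M}_\tau = \pi_\tau(A)''$ is of type II$_1$.

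Indeed, $\mathcal{M}_\tau$ is separably representable, finite (as $\tau$ is finite), and hyperfinite (by nuclearity of $A$). If it had a non-trivial type $I_k$ direct summand for some finite $k$, disintegrating the restriction of $\pi_\tau$ over this summand would produce, on a set of positive measure, factor representations $\pi_x\colon A \to M_k$. Each $\pi_x(A)$ would then be a weakly dense C*-subalgebra of $M_k$, hence equal to $M_k$ by finite-dimensionality, producing $M_k$ as a quotient of $A$ and contradicting the standing hypothesis.

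With the Claim in hand, for each $\tau \in T(A)^\alpha$ the triple $(\mathcal{M}_\tau, \tau, \alpha^\tau)$ satisfies the hypotheses of \autoref{prop:FinDirSumRMcDuff}, yielding a unital embedding $M_n \to (\mathcal{M}_\tau^\U \cap \mathcal{M}_\tau')^{(\alpha^\tau)^\U}$. These local embeddings are then stitched together into a single equivariant unital embedding $M_n \to (A^\U \cap A')^{\alpha^\U}$ by applying \autoref{prop:EmbedTracialUltrapow} with $B = M_n$ equipped with the trivial $G$-action. The main obstacle I anticipate is the Claim itself---propagating the ``no finite-dimensional quotient'' hypothesis on $A$ to rule out type $I$ summands in every invariant $\mathcal{M}_\tau$---since once this is secured, the remainder of $(2) \Rightarrow (3)$ is a direct assembly of results from this and the previous section.
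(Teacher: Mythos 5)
Your proposal is correct and follows essentially the same route as the paper: $(1)\Rightarrow(2)$ via \autoref{prop:cpou}, $(3)\Rightarrow(1)$ by the tracial-functional argument on $M_n$ (which the paper treats as immediate, reusing the computation from \autoref{prop:Zstable}), and $(2)\Rightarrow(3)$ by noting that each $\mathcal{M}_\tau$ is hyperfinite of type II$_1$, obtaining local embeddings of $M_n$ into $(\mathcal{M}_\tau^\U\cap\mathcal{M}_\tau')^{(\alpha^\tau)^\U}$, and gluing with \autoref{prop:EmbedTracialUltrapow}. Your citation of \autoref{prop:FinDirSumRMcDuff} is just the packaged form of the paper's appeal to \autoref{thm:ErgThy} and \autoref{thm:AbsR}, and your disintegration argument correctly fills in the type II$_1$ claim that the paper asserts without proof.
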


\begin{proof}
\eqref{mainCPoU:gamma.cpou:1} $\Rightarrow$ \eqref{mainCPoU:gamma.cpou:2} 
follows by \autoref{prop:cpou}, while
\eqref{mainCPoU:gamma.cpou:3} $\Rightarrow$ \eqref{mainCPoU:gamma.cpou:1} is an 
immediate consequence of the
definition of uniform property $\Gamma$, and both these implications do not 
require the assumption
that $A$ has no finite-dimensional quotients.

\eqref{mainCPoU:gamma.cpou:2} $\Rightarrow$ \eqref{mainCPoU:gamma.cpou:3}. 
Since $A$ is nuclear and has no
finite-dimensional quotients, $\mathcal{M}_\tau$ is a hyperfinite, type II$_1$ 
von Neumann
algebra for every $\tau \in T(A)$. By \autoref{thm:ErgThy} and 
\autoref{thm:AbsR},
the dynamical system $(\mathcal{R},\id_{\mathcal{R}})$ embeds unitally into 
$(\mathcal{M}^\U_\tau
\cap \mathcal{M}_\tau', (\alpha^\tau)^\U)$ for every $\tau \in T(A)^\alpha$.
Given $n \in \N$, by composing with a unital embedding
$M_n\to \mathcal{R}$ we get a unital equivariant embedding
$ (M_n, \text{id}_{M_n}) \to (\mathcal{M}_\tau^\U\cap \mathcal{M}_\tau', 
(\alpha^\tau)^\U)$.
The conclusion then follows by \autoref{prop:EmbedTracialUltrapow}.
\end{proof}

\section{Model actions with Rokhlin towers}
\label{s.model}

In this section, we analyze specific model examples of actions of residually 
finite groups which either have finite Rokhlin dimension or satisfy part of the 
definition. This is a technical step in our proof of the equivalences of Theorem \autoref{thmB}
(see \autoref{thm:mainRokDim})
concerning the Rokhlin dimension of the dynamical system $(A,\alpha)$:
it will be shown that under the
assumptions of Theorem \autoref{thmB}, strong outerness of $\alpha$ implies that those model 
actions can be embedded equivariantly into the central sequence algebra of $A$.

We begin by computing the Rokhlin dimension of a natural product-type action.

\begin{prop}\label{prop:DimRok1}
	Let $G$ be a finite group. Set $D= \bigotimes_{n\in\N} 
	\B(\ell^2(G)^{\otimes n}\oplus \C)$. Denote by 
	$\lambda\colon G\to \U(\ell^2(G))$ the left regular representation. Define 
	an action
	$\alpha\colon G\to\Aut(D)$ by 
	$\alpha_g=\bigotimes_{n\in\N}\Ad(\lambda^{\otimes n}_g\oplus 1)$,
	for all $g\in G$. Then $\dimRok(\alpha)=1$.
\end{prop}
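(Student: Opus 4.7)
The proposition has two directions to establish: $\dimRok(\alpha)\le 1$ (upper bound) and $\dimRok(\alpha)\ge 1$ (lower bound, i.e.\ the failure of the Rokhlin property).

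For the upper bound, I plan to verify the approximate criterion of \autoref{rmk:EquivalenceDimRok}. Given $\ep>0$ and a finite subset $F\subseteq D$, choose $n_0$ so that $F$ approximately lies inside $D_{n_0}=A_1\otimes\cdots\otimes A_{n_0}$, and place the construction inside the tail $\bigotimes_{m>n_0}A_m$, which commutes with $D_{n_0}$ exactly. For each $n$, let $e_g^{(n)}=|\delta_g\rangle\langle\delta_g|\otimes 1_{\ell^2(G)^{\otimes(n-1)}}\oplus 0_\C\in A_n$; these are equivariantly permuted ($\alpha_h(e_g^{(n)})=e_{hg}^{(n)}$), pairwise orthogonal in $g$, satisfy $\sum_g e_g^{(n)}=p_n=1-q_n$, and obey the crucial absorption identity $e_g^{(n)}q_n=0$. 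For a truncation parameter $N$, define
\begin{align*}
f_g^{(0)} &= \sum_{k=0}^{N-1}(q_{n_0+1}\cdots q_{n_0+2k})\otimes e_g^{(n_0+2k+1)},\\
f_g^{(1)} &= \sum_{k=0}^{N-1}(q_{n_0+1}\cdots q_{n_0+2k+1})\otimes e_g^{(n_0+2k+2)}.
\end{align*}
Exact $\alpha$-equivariance and within-tower orthogonality hold by the identities above ($e_g^{(m)}e_h^{(m)}=0$ for $g\ne h$ and $e_g^{(m)}q_m=0$ eliminate all cross terms). Summing over $g$ and applying $\sum_g e_g^{(m)}=1-q_m$ telescopes each tower individually, and adding the two partial sums yields the key identity $\sum_{j,g}f_g^{(j)}=1-q_{n_0+1}\cdots q_{n_0+2N}$, a rank-one $\alpha$-fixed residue of trace $\prod_{m=1}^{2N}(|G|^{n_0+m}+1)^{-1}$.

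The main technical obstacle will be closing the gap between this trace-small but \emph{norm-one} residue and the required exact sum $\sum f_g^{(j)}=1$ in $D_\U\cap D'$. For this I plan to invoke the CPoU machinery developed in \autoref{s4}: since $D$ is a separable, nuclear, unital UHF algebra, it is $\mathcal{Z}$-stable and monotracial, so by \autoref{prop:Zstable} and \autoref{prop:cpou} the dynamical system $(D,\alpha)$ has equivariant uniform property $\Gamma$ and CPoU with constant $1$. The residue $R=q_{n_0+1}\cdots q_{n_0+2N}$ is $\alpha^\U$-fixed, and the corner $RD_\U R$ is isomorphic to an ultrapower of a tail of $D$ carrying an action of the same product-type form; iterating the telescoping construction inside this corner and then gluing the partial solutions by \autoref{lma:LocalToGlobal} should absorb $R$ into equivariant modifications of $f_g^{(0)}$ and $f_g^{(1)}$, yielding genuine Rokhlin-dimension-$1$ towers in $D_\U\cap D'$.

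For the lower bound, I plan to rule out $\dimRok(\alpha)=0$ via the $\alpha$-fixed rank-one projections $q_n\in A_n$: were $\alpha$ to admit Rokhlin projections $e_g\in D_\U\cap D'$ summing to $1$ and equivariantly permuted, cutting them down by the $\alpha^\U$-invariant product $Q_N=q_1\cdots q_N$ would produce a Rokhlin-type decomposition of $Q_N$ into $|G|$ equivariantly permuted orthogonal pieces. Since each $Q_N$ is rank one and $\alpha$ acts trivially on its corner $Q_N DQ_N$, the required equivariant splitting is impossible, and the cut-down relation propagates through the ultrapower to obstruct Rokhlin in $D_\U\cap D'$. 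The upper bound, in particular the CPoU absorption step, will be the main technical crux of the proof.
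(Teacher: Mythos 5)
There is a genuine gap in the upper bound, and it is exactly at the point you flag as the ``main technical crux.'' Your telescoping towers satisfy $\sum_{j,g}f_g^{(j)}=1-R$ where $R=q_{n_0+1}\cdots q_{n_0+2N}$ is a nonzero \emph{projection}: its trace is tiny but its operator norm is $1$. Rokhlin dimension (\autoref{def:frd}, or equivalently condition (c) of \autoref{rmk:EquivalenceDimRok}) requires the towers to sum to $1$ in the \emph{norm} ultrapower $A_\U\cap A'$, i.e.\ $\|1-\sum f_g^{(j)}\|<\ep$. The CPoU/uniform-$\Gamma$ machinery of \autoref{s4} (including \autoref{lma:LocalToGlobal}) lives entirely in the uniform tracial ultrapower and only controls $\|\cdot\|_{2,T_\U(A)}$; it can never convert a trace-null, norm-one projection into something of small operator norm. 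Iterating your construction inside the corner $RDR$ does not help either: each iteration leaves a smaller-trace but still norm-one residue, and the infinite sum of the resulting mutually orthogonal norm-one contractions does not converge in norm, so no element of $D_\U$ is produced. What the paper does instead is purely finite-dimensional and norm-controlled: by Fell's absorption principle $\lambda^{\otimes m}\oplus 1$ decomposes as $|G|^{m-1}+1$ copies of the trivial representation interleaved with $|G|^{m-1}$ copies of $\widetilde\lambda$ (where $\lambda\cong 1\oplus\widetilde\lambda$); one then builds \emph{two offset} systems of equivariantly permuted projections $p_g$, $q_g$ on the blocks $\C\oplus V$ and $V\oplus\C$ and weights them by the linear ramps $a_0(x)=x/|G|^{m-1}$ and $a_1=1-a_0$. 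The complementarity $a_0+a_1=1$ makes the defect $1-\sum_{j,g}f_g^{(j)}$ have operator norm at most $1/|G|^{m-1}<\ep$, which is the estimate your construction is missing. Your towers are genuinely orthogonal across the two colours, so the two-colour structure is doing no work; the whole point of Rokhlin dimension one here is that the two colours overlap via the ramps.

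The lower bound also does not work as written. The projection $Q_N=q_1\cdots q_N$ is not rank one in $D$ (it is rank one only in the first $N$ tensor factors), and $\alpha$ does not act trivially on the corner $Q_NDQ_N$: that corner is the tail $\bigotimes_{m>N}\B(\ell^2(G)^{\otimes m}\oplus\C)$ carrying the same product-type action. Moreover, splitting an invariant projection into $|G|$ equivariantly permuted orthogonal pieces inside $D_\U\cap D'$ is not obstructed by any rank count, since the Rokhlin projections live in the ultrapower rather than in a finite-dimensional subalgebra. The correct (and short) obstruction is $K$-theoretic, which is what the paper uses: $\dimRok(\alpha)=0$ would force $[1_D]$ to be divisible by $|G|$ in $K_0(D)$, but $D$ is the UHF algebra of type $\prod_n(|G|^n+1)$ and each $|G|^n+1$ is coprime to $|G|$, so $1/|G|\notin K_0(D)\subseteq\Q$.
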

\begin{proof}
	Given $m\in\N$, set $D_m= \B(\ell^2(G)^{\otimes m}\oplus \C)$ and let 
	$\alpha^{(m)}\colon G\to\Aut(D_m)$
	be the action given by
	$\alpha^{(m)}_g=\Ad(\lambda^{\otimes m}_g\oplus 1)$ for all $g\in G$.

	\begin{claim}
		Let $\varepsilon > 0$ and fix $n_0\in\N$. Then there exist $m\in\N$ 
		with $m\geq n_0$ and positive contractions $f_g^{(j)}\in D_m$,
		for $g\in G$ and $j=0,1$, satisfying
		\be
		\item \label{claimDR:1} $\alpha^{(m)}_g(f_h^{(j)})=f_{gh}^{(j)}$ for 
		all $g,h\in G$ and for all $j=0,1$,
		\item \label{claimDR:2} $f_g^{(j)}f_h^{(j)}=0$ for all $g,h\in G$ with 
		$g\neq h$ and for all $j=0,1$,
		\item \label{claimDR:3} $\left \| 1- \sum_{j=0}^1\sum_{g\in 
		G}f^{(j)}_{g} \right \| < \varepsilon$. \ee
	\end{claim}
	
	For the $\ep>0$ given, choose $m\in\N$ such that $|G|^{m-1}>1/\ep$ and also 
	$m\geq n_0$.
	By Fell's absorption principle (\cite[Theorem 2.5.5]{brownozawa}), if 
	$\pi\colon G\to \U(\mathcal{H})$ is any finite dimensional representation of $G$ on a 
	separable Hilbert space
	$\mathcal{H}$, then $\lambda\otimes\pi$ is unitarily equivalent to a direct sum of 
	$\dim(\mathcal{H})$ copies of $\lambda$.
	It follows that $\lambda^{\otimes m}$ is unitarily equivalent to 
	the direct sum of $|G|^{m-1}$ copies of $\lambda$, and thus
	$\lambda^{\otimes m}\oplus 1$ is unitarily equivalent to 
	$(\bigoplus_{k=1}^{|G|^{m-1}} \lambda)\oplus 1$.
	We fix such an identification for the remainder of the proof.
	
	Since 
	$\lambda$ contains a copy of the trivial representation 1,
	there is a unitary representaion $\widetilde{\lambda}\colon G\to 
	\mathcal{U}(V)$ 
	such that
	$\lambda$ is unitarily equivalent to $1 \oplus \widetilde{\lambda}$. Then
	$\lambda^{\otimes m}\oplus 1$ is unitarily conjugate to the diagonal
	representation
	$\diag(1,\widetilde{\lambda},1,\widetilde{\lambda},1 
	\dots,\linebreak \widetilde{\lambda},1)$, where the trivial representation appears 
	$|G|^{m-1}+1$ times and $\widetilde{\lambda}$ appears $|G|^{m-1}$ times.
	
	For $g\in G$, let $\delta_g\in \ell^2(G)$ be the corresponding
	Dirac function, and let
	$e_g \in \mathcal{B}(\ell^2(G))$ be the projection 
	onto the span of $\delta_g$.
	By taking a suitable unitary 
	conjugation of the $e_g$,
	we find projections $p_g\in \B(\C\oplus V)$ satisfying 
	$\sum_{g \in G} p_g = 1$ and $\Ad (1 \oplus \widetilde{\lambda}_g)(p_h) = 
	p_{gh}$ 
	for all $g,h \in G$. Similarly, let $q_g\in \B(V\oplus \C)$, 
	for $g\in G$, be 
	projections satisfying $\sum_{g \in G} q_g = 1$ and $\Ad 
	(\widetilde{\lambda}_g \oplus 1)(q_h) = q_{gh}$ for all $g,h \in G$.

	Let $a_0\colon [0,|G|^{m-1}]\to [0,1]$ be defined as 
	$a_0(x)=\frac{x}{|G|^{m-1}}$, and set $a_1=1-a_0$.
	For $g\in G$, set
	\[f_g^{(0)}=\diag(0,a_0(1)q_g,\ldots,a_0(|G|^{m-1})q_g)\]
	and
	\[f_g^{(1)}=\diag(a_1(1)p_g,\ldots,a_1(|G|^{m-1})p_g,0) .\]
	These are positive contractions satisfying conditions \eqref{claimDR:1}, 
	\eqref{claimDR:2} and \eqref{claimDR:3} above, and the claim is 
	proved.
	
We note that, since $G$ is a finite group, it suffices to take $H=\{ e\}$ in 
\autoref{def:frd}.
The claim shows that there exist Rokhlin towers in $D$ satisfying 
conditions \ref{equiv:a}, \ref{equiv:b} and \ref{equiv:c} in 
\autoref{rmk:EquivalenceDimRok}
for $d=1$ and $H=\{e\}$. We now explain how to find new towers satisfying these 
	conditions in addition to condition \ref{equiv:d}.
	Let $F\subseteq D$ be a finite set. For the $\ep>0$ given above, find 
	$n_0\in\N$ such that for every $n\geq n_0$, the unital equivariant 
	embedding $\varphi_n\colon (D_n,\alpha^{(n)})\hookrightarrow (D,\alpha)$ 
	into the $n$-th coordinate satisfies
	\[\|\varphi_n(x)a-a\varphi_n(x)\|\leq \frac{\ep}{2} \|x\|\]
	for all $x\in D_n$ and all $a\in F$. Use the claim to find $m\in\N$ with 
	$m\geq n_0$ and positive contractions $f_g^{(j)}\in D_m$,
	for $g\in G$ and $j=0,1$, satisfying conditions \eqref{claimDR:1}, 
	\eqref{claimDR:2} and \eqref{claimDR:3} above for the tolerance $\ep/2$.
	One checks that the positive contractions $\varphi_m(f_g^{(j)})\in D$,
	for $g\in G$ and $j=0,1$, satisfy conditions \ref{equiv:a} through 
	\ref{equiv:d} in \autoref{rmk:EquivalenceDimRok}, as desired.
	It follows that $\dimRok(\alpha)\leq 1$.
	
	Finally, $\dimRok(\alpha)=0$ is impossible because the unit of $D$ is not 
	divisible by $|G|$ in $K_0(D)$.
\end{proof}

Using the computation above, we will show that certain canonical actions on 
dimension drop algebras admit
Rokhlin towers that satisfy all the conditions in \autoref{def:frd} except for 
centrality. We define these actions next.

\begin{df}\label{df:ActonIk}
	Let $G$ be a finite group, and let $k\in\N$. We denote by
	$I^{(k)}_G$ the dimension drop algebra
	\[ I_{G}^{(k)}=\left\{f\in C\Big([0,1], \B(\ell^2(G)^{\otimes k}) \otimes 
	\B(\ell^2(G)^{\otimes k}\oplus \C)\Big)\colon
	\begin{tabular}{@{}l@{}}
		$f(0)\in \B(\ell^2(G)^{\otimes k})\otimes 1,$\\
		$f(1)\in 1\otimes \B(\ell^2(G)^{\otimes k}\oplus \C)$
	\end{tabular}
	\right\} .\]
	We denote by $\mu^{(k)}_G\colon G\to \Aut(I^{(k)}_{G})$ the restriction 
	to $I_{G}^{(k)}$ of the action of $G$ on $C([0,1]) \otimes
	\mathcal{B}(\ell^2(G)^{\otimes k}) \otimes \mathcal{B}(\ell^2(G)^{\otimes 
	k} \oplus \mathbb{C})$
	given by $\text{id}_{C([0,1])} \otimes \text{Ad}(\lambda^{\otimes k}) 
	\otimes \text{Ad}(\lambda^{\otimes
		k} \oplus 1_{\mathbb{C}})$.
\end{df}

\begin{prop}\label{prop:GactIkdim2}
	Let $\ep>0$, let $G$ be a finite group, and adopt the notation 
	for $(I_G^{(k)},\mu_G^{(k)})$ from \autoref{df:ActonIk}.
	Then there exist $k\in\N$ and positive contractions $f_g^{(j)}\in 
	I_G^{(k)}$, for
	$g\in G$ and $j=0,1,2$, satisfying
	\begin{enumerate}[label=(\alph*)]
		\item \label{claimIk:a} 
		$\big\|(\mu^{(k)}_G)_{g}(f^{(j)}_{h})-f^{(j)}_{gh}\big\|<\ep$ for  
		$j=0,1,2$, and for all $g,h\in G$,
		\item \label{claimIk:b} $\|f^{(j)}_{g}f^{(j)}_{h}\|<\ep$ for  
		$j=0,1,2$, and for all $g,h\in G$ with $g\neq h$,
		\item \label{claimIk:c} $\left\|1-\sum_{g\in 
		G}f^{(0)}_{g}+f^{(1)}_{g}\right\|<\ep$.
	\end{enumerate}
\end{prop}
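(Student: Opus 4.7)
The plan is to exhibit the three Rokhlin-type towers on $I_G^{(k)}$ by interpolating linearly in the variable $t\in[0,1]$ between a ``Rokhlin structure'' at each of the two endpoints of the dimension drop algebra. The boundary conditions built into $I_G^{(k)}$—$f(0)$ must lie in $\B(\ell^2(G)^{\otimes k})\otimes 1$ and $f(1)$ in $1\otimes\B(\ell^2(G)^{\otimes k}\oplus\C)$—dictate that the single tower constructed at the left endpoint should be weighted by a function vanishing at $t=1$, while the two towers required at the right endpoint should be weighted by functions vanishing at $t=0$. Since $\mu_G^{(k)}$ fixes $C([0,1])$ pointwise, any such interpolation automatically inherits the covariance of the endpoint data.

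At the left endpoint, Fell's absorption principle gives that $\lambda^{\otimes k}$ is unitarily equivalent to $|G|^{k-1}$ copies of $\lambda$, from which one produces pairwise orthogonal projections $p_g\in\B(\ell^2(G)^{\otimes k})$ with $\sum_{g\in G}p_g=1$ and $\Ad(\lambda^{\otimes k})_g(p_h)=p_{gh}$. At the right endpoint, the algebra $\B(\ell^2(G)^{\otimes k}\oplus\C)$ equipped with $\Ad(\lambda^{\otimes k}\oplus 1)$ is exactly the setting of the Claim proved inside \autoref{prop:DimRok1}; applying that Claim with tolerance $\ep$ produces some $k\in\N$ and two families of positive contractions $q_g^{(0)},q_g^{(1)}\in\B(\ell^2(G)^{\otimes k}\oplus\C)$ satisfying \emph{exactly} the covariance and within-family orthogonality conditions \eqref{claimDR:1}--\eqref{claimDR:2} of that Claim, together with $\big\|1-\sum_{j=0}^{1}\sum_{g\in G}q_g^{(j)}\big\|<\ep$.

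With this $k$ fixed, I would set
\[
f_g^{(0)}(t)=(1-t)(p_g\otimes 1),\qquad f_g^{(1)}(t)=t(1\otimes q_g^{(0)}),\qquad f_g^{(2)}(t)=t(1\otimes q_g^{(1)}),
\]
for $g\in G$. These are plainly positive contractions in the ambient $C([0,1],\B(\ell^2(G)^{\otimes k})\otimes\B(\ell^2(G)^{\otimes k}\oplus\C))$, and the factors $1-t$ and $t$ ensure $f_g^{(0)}(1)=0$ and $f_g^{(1)}(0)=f_g^{(2)}(0)=0$, so that all three actually lie in $I_G^{(k)}$. Condition \ref{claimIk:a} holds with zero error from the exact covariance of $p_g$ and of each $q_g^{(j)}$; condition \ref{claimIk:b} holds with zero error from the within-family orthogonality of the $p_g$ and of each family $\{q_g^{(j)}\}_g$; and a direct computation gives
\[
\sum_{j=0}^{2}\sum_{g\in G}f_g^{(j)}(t)=(1-t)\cdot 1+t\sum_{j=0}^{1}\sum_{g\in G}q_g^{(j)},
\]
whose distance from $1$ is uniformly in $t\in[0,1]$ bounded by $\big\|1-\sum_{g,j}q_g^{(j)}\big\|<\ep$, giving \ref{claimIk:c}.

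The only substantive input is the approximate pair of Rokhlin towers at the right endpoint, which is already proved in \autoref{prop:DimRok1}; everything else is an arrangement of linear interpolation so that the dimension drop boundary conditions are respected. I therefore expect no real obstacle beyond carefully matching the endpoint structures with the boundary constraints of $I_G^{(k)}$, which the choice of the scalar factors $1-t$ and $t$ accomplishes.
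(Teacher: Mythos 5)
Your proposal is correct and follows essentially the same route as the paper: an exact covariant partition of unity by projections at the $M_{|G|^k}$ endpoint, the two approximate towers from the Claim inside \autoref{prop:DimRok1} at the $\B(\ell^2(G)^{\otimes k}\oplus\C)$ endpoint, and linear interpolation by $t$ and $1-t$ to respect the dimension drop boundary conditions. The only differences are cosmetic (a permutation of the tower indices $j$, and using Fell absorption rather than the Dirac projections in one tensor factor to produce the exact left-endpoint tower), and your reading of condition \ref{claimIk:c} as the sum over all three families is the intended one.
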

\begin{proof}
By \autoref{prop:DimRok1} (see specifically the claim in its proof), we can 
find $k\in\N$ and positive contractions
$\widetilde{f}_g^{(j)}\in \B(\ell^2(G)^{\otimes k}\oplus \C)$, for $g\in G$ 
and $j=0,1$, satisfying
\be\item[(i)] $\|\Ad(\lambda^{\otimes k}_g\oplus 
1)(\widetilde{f}^{(j)}_{h})-\widetilde{f}^{(j)}_{gh}\|<\ep$ for all 
$j=0,1$, and for all $g,h\in G$,
\item[(ii)] $\|\widetilde{f}^{(j)}_{g}\widetilde{f}^{(j)}_{h}\|<\ep$ for 
all $j=0,1$, and for all $g,h\in G$ with $g\neq h$,
\item[(iii)] $\left\|1-\sum_{g\in G}(\widetilde{f}^{(0)}_g+ 
\widetilde{f}^{(1)}_{g})\right\|<\ep$.
\ee

For $g\in G$, denote by $e_g\in \B(\ell^2(G))$ the projection onto the span 
of $\delta_g\in \ell^2(G)$. Then
$\Ad(\lambda_g)(e_h)=e_{gh}$ for all $g,h\in G$, and $\sum_{g\in G}e_g=1$. 
Regard $e_g$ as an element in
$\B(\ell^2(G)^{\otimes k})\cong \B(\ell^2(G))^{\otimes k}$ via the first 
factor embedding.

Let $h_0\in C([0,1])$ denote the inclusion of $[0,1]$ into $\C$, and set 
$h_1=1-h_0\in C([0,1])$. For $g\in G$
and $j=0,1,2$, set
\[f_g^{(j)}=\begin{cases} h_0\widetilde{f}_g^{(j)} &\mbox{if } j=0,1; \\
h_1e_g & \mbox{if } j=2.\end{cases}\]

One checks that conditions \ref{claimIk:a}, \ref{claimIk:b} and 
\ref{claimIk:c} in the statement are satisfied, completing the proof.
\end{proof}

Next, we give a recipe for constructing unital equivariant homomorphisms from 
$(I^{(k)}_{G},\mu^{(k)}_G)$; see \autoref{thm:univprop}.
We do so in a generality greater than necessary, because the proof is not more 
complicated and in fact the
higher level of abstraction makes the argument conceptually clearer. 

We need some preparatory facts about $C(X)$-algebras first.
The following definition is standard.
For a \ca\ $A$, we denote its center by $Z(A)$.

\begin{df}\label{df:CX-alg}
	Let $X$ be a compact Hausdorff space.
	A unital \emph{$C(X)$-algebra} is a pair $(A,\zeta)$ consisting of a unital \ca\ $A$ 
	and
	a unital homomorphism $\zeta\colon C(X)\to Z(A)$. 
	If $A$ is a unital $C(X)$-algebra and $U\subseteq X$ is open, then 
	$\zeta(C_0(U))A$
	is an ideal in $A$. Given $x\in X$, the \emph{fiber over $x$} is the 
	quotient
	\[A(x):=A/\zeta(C(X\setminus\{x\}))A.\]
	For $a\in A$, we write $a_x$ for its image in $A(x)$.
	
	If $(A,\zeta_A)$ and $(B,\zeta_B)$ are unital $C(X)$-algebras and 
	$\varphi\colon A\to B$ is a homomorphism, we say 
	that $\varphi$ is a \emph{$C(X)$-homomorphism} if 
	$\varphi\circ \zeta_A=\zeta_B$. If this is the case, then for every 
	$x\in X$, the map $\varphi$ induces a unital homomorphism between the 
	corresponding fibers, which we denote by
	$\varphi_x\colon A(x)\to B(x)$.
	
\end{df}

As is costumary, we will usually suppress $\zeta$ from the 
notation for $C(X)$-algebras. Recall that if $A$ is a 
$C(X)$-algebra and $a\in A$, then 
the function $x\mapsto \|a_x\|$ is upper-semicontinuous.
We assume that the following proposition may well be known, but we were not 
able to find it in the literature. 

\begin{prop}\label{prop:CXmaps}
	Let $X$ be a compact Hausdorff space, let $A$ and $B$ be unital
	$C(X)$-algebras, and let $\varphi\colon A\to B$ be a
	$C(X)$-homomorphism. Then $\varphi$ is injective (respectively,
	surjective) if and only if $\varphi_x$ is injective (respectively, 
	surjective) for all $x\in X$. In particular, $\varphi$ is an 
	isomorphism if and only if $\varphi_x$ is an isomorphism for 
	every $x\in X$.
\end{prop}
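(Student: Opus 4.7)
My plan is to first invoke the standard norm formula $\|a\| = \sup_{x \in X}\|a_x\|$, valid in any unital $C(X)$-algebra: this follows from the identity $\bigcap_{x \in X}\zeta(C(X\setminus\{x\}))A = 0$, which yields an isometric embedding of $A$ into $\prod_{x \in X} A(x)$. With this tool in hand, I will treat injectivity and surjectivity separately, and the isomorphism statement will follow by combining them.

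For injectivity, the direction ``each $\varphi_x$ injective $\Rightarrow$ $\varphi$ injective'' is immediate from the norm formula: if $\varphi(a) = 0$ then $\varphi_x(a_x) = \varphi(a)_x = 0$ for every $x$, so $a_x = 0$ for every $x$, whence $\|a\| = 0$. For the converse I argue contrapositively. Assuming $\varphi_x$ has nontrivial kernel at some $x$, I lift a nonzero positive element of $\ker(\varphi_x)$ to obtain $a \in A_+$ with $a_x \neq 0$ and $\varphi(a)_x = 0$. Upper semicontinuity of $y \mapsto \|\varphi(a)_y\|$ makes $W_\varepsilon := \{y \in X : \|\varphi(a)_y\| < \varepsilon\}$ an open neighborhood of $x$ for each $\varepsilon > 0$, so Urysohn's lemma furnishes $\eta_\varepsilon \in C(X)$ with $0 \leq \eta_\varepsilon \leq 1$, $\eta_\varepsilon(x) = 1$ and $\supp(\eta_\varepsilon) \subseteq W_\varepsilon$. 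The element $a'_\varepsilon := \zeta_A(\eta_\varepsilon)a$ then satisfies $\|a'_\varepsilon\| \geq \|a_x\| > 0$ and, by the norm formula applied fibrewise in $B$, $\|\varphi(a'_\varepsilon)\| \leq \varepsilon$. Thus $\varphi$ is not isometric, and in particular not injective.

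For surjectivity, ``$\varphi$ surjective $\Rightarrow$ each $\varphi_x$ surjective'' is immediate: surjectivity together with the intertwining $\varphi \circ \zeta_A = \zeta_B$ gives $\varphi(\zeta_A(C(X \setminus \{x\}))A) = \zeta_B(C(X \setminus \{x\}))B$, and passing to quotients yields surjectivity of $\varphi_x$. The converse is the main content. Given $b \in B$ and $\varepsilon > 0$, for each $x \in X$ I choose $a^{(x)} \in A$ with $\varphi(a^{(x)})_x = b_x$, and use upper semicontinuity of $y \mapsto \|(\varphi(a^{(x)}) - b)_y\|$ to produce an open neighborhood $U_x$ of $x$ on which this function is less than $\varepsilon$. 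Compactness of $X$ provides a finite subcover $U_{x_1}, \ldots, U_{x_n}$, and a subordinate partition of unity $\eta_1, \ldots, \eta_n \in C(X)$ allows me to form $a_\varepsilon := \sum_{i=1}^n \zeta_A(\eta_i)a^{(x_i)}$. A fibrewise estimate $\|(\varphi(a_\varepsilon) - b)_y\| \leq \sum_i \eta_i(y)\|(\varphi(a^{(x_i)}) - b)_y\| \leq \varepsilon$ combined with the norm formula gives $\|\varphi(a_\varepsilon) - b\| \leq \varepsilon$, so $b \in \overline{\varphi(A)}$; since the image of a $*$-homomorphism is closed in $B$, we conclude $b \in \varphi(A)$.

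The only genuinely nontrivial step is this final partition-of-unity gluing, which is where compactness of $X$ is used essentially (together with its use, via normality, in producing the Urysohn bump in the injectivity argument). Everything else reduces to routine manipulation of the fibres once the norm formula is granted.
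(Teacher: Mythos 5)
Your proof is correct and takes essentially the same route as the paper's: the easy directions are handled identically, and the substantive directions use the same tools (upper semicontinuity of the fibre norms, a Urysohn bump supported near the bad point for injectivity, and a compactness/partition-of-unity gluing for surjectivity), with your sup-over-fibres norm formula playing exactly the role of Lemma~2.1(ii) of Dadarlat cited in the paper. The only cosmetic differences are that you phrase the forward injectivity implication contrapositively via ``injective $\Rightarrow$ isometric'' where the paper derives the contradiction $1<1/2$ directly, and that you make explicit the closed-range step at the end of the surjectivity argument, which the paper leaves implicit.
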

\begin{proof}
	We begin with the assertion regarding injectivity. 
	Assume that $\varphi$ is injective, and fix $x\in X$. 
	Arguing by contradiction, assume that $\varphi_x$ is not 
	injective, and find $a\in A$ such 
	that $\|a_x\|=1$ and $\varphi_x(a_x)=0$. By upper-semicontinuity
	of the norm function on $B$, there is an open neighborhood $U$ of 
	$x$ such that $\|\varphi(a)_y\|<1/2$ for all $y\in U$.
	Let $V$ be an open neighborhood of $x$ such that 
	$\overline{V}\subseteq U$.
	Let $f\in C(X)$ be supported on $V$ and such that $f(x)=1$. Using part~(ii) 
	of 
	Lemma~2.1 of \cite{dadarlat09} at 
	the fourth step, we get
	\[1 = \|f(x)a_x\| \leq \|\varphi(fa)\|=\|f\varphi(a)\|\leq \sup_{y\in 
		\overline{V}}
	\|f(y)\varphi(a)_y\|<1/2 .
	\]
	This contradiction implies that $\varphi_x$ is injective, as desired.
	
	Conversely, assume that $\varphi_x$ is injective for all $x\in X$
	and let 
	$a\in \ker(\varphi)$. Since $a_x\in \ker(\varphi_x)$ for all
	$x\in X$, we must have $a_x=0$ for all $x\in X$, and thus $a=0$. 
	
	We now prove the statement about surjectivity. Assume that 
	$\varphi$ is surjective, and let $x\in X$. Denote by 
	$\pi_x^A\colon A\to A(x)$ and $\pi_x^B\colon B\to B(x)$ the 
	corresponding quotient maps, and note that 
	$\pi_x^B\circ \varphi = \varphi_x\circ \pi_x^A$. Let $c\in B(x)$,
	and find $b\in B$ such that $\pi_x^B(b)=c$. Since $\varphi$ is surjective, 
	there is $a\in A$ with $\varphi(a)=b$. Then 
	$\varphi_x(a_x)=\varphi(a)_x=c$. Thus $\varphi_x$ is surjective.
	
	Conversely, assume that $\varphi_x$ is surjective for all $x\in X$.
	Let $b\in B$ and let $\ep>0$. 
	Given $x\in X$, find $c^{(x)}\in A(x)$ such that
	$\varphi_x(c^{(x)})=b_x$. Since $\pi^A_x$ is surjective,
	we find $a^{(x)}\in A$ such that $a^{(x)}_x=c^{(x)}$.
	Since the norm function on $A$ is upper-semicontinuous, there
	exists an open set $U_x\subseteq X$ containing $x$ such that 
	$\|\varphi(a^{(x)}_y)-b_y\|<\ep$ for every $y\in U_x$. Since 
	$X$ is compact, we can find $x_1,\ldots,x_n$ such that 
	$\mathcal{U}=\{U_{x_1},\ldots, U_{x_n}\}$ covers $X$. Let $f_1,\ldots,f_n 
	\in C(X)$ be a partition of unity subordinate to 
	$\mathcal{U}$, and set $a=\sum_{j=1}^n f_ja^{(x_j)}\in A$. 
	Then $\|\varphi(a)_x-b_x\|<\ep$ for every $x\in X$, and hence
	$\|\varphi(a)-b\|<\ep$ by part~(ii) in Lemma~2.1 of 
	\cite{dadarlat09}.
	Since $\ep>0$ is arbitrary, 
	we conclude that $\varphi$ is surjective. \end{proof}


\begin{rem}
	\label{rem:generators-Mn}
	Let $n\in\N$. Recall that the matrix algebra $M_n$ is the universal $C^*$-algebra generated by
	elements $\{e_{1,k}\}_{k=1}^n$ satisfying $e_{1,k} e_{1,j} = 0$ when $k 
	\not= 1$,
	$e_{1,k}e_{1,j}^* = \delta_{k,j}e_{1,1}$ for all $j, k =1, \dots, n $ and 
	such that $e_{1,1}$ is a
	projection.
	By setting $e_{j,k} = e_{1,j}^* e_{1,k}$ for every $j, k =1, \dots, n $, it 
	is well-known that $M_n$
	is also the universal \ca\ generated by $\{ e_{j,k} \}_{1\leq j,k \le n}$ 
	with the relations $e_{j,k}^* = e_{k,j}$
	and $e_{i,j}e_{k,\ell} = \delta_{j,k} e_{i, \ell}$ for every $i,j,k, \ell 
	=1,\ldots, n$.
	We will use freely both representations.
\end{rem}

The following is an equivariant version of a well-known characterization of the 
dimension
drop algebra from \cite[Proposition 5.1]{RorWin_algebra_2010}. As it is not 
clear how the proof in \cite[Proposition 5.1]{RorWin_algebra_2010} can be 
adapted to the equivariant setting, we give here a different and more explicit 
proof.

\begin{thm}\label{thm:univprop}
	Let $G$ be a finite group,
	let $B$ be a unital \ca, and let $\beta\colon G\to\Aut(B)$ be an action.
	Let $n \in \N$, let $v \colon G \to M_n$ be a unitary representation, and suppose there is
	a rank-one projection $e$ in $M_n$ such that $v_g e = e$ for all $g \in G$.
	Suppose that there exist a completely positive contractive equivariant 
	order zero map
	\[\xi\colon \left(M_n,\Ad(v) \right) \to (B,\beta)\]
	and a contraction $s\in B^\beta$ satisfying $\xi(e)s=s$ and $\xi(1)+s^*s=1$.
	Let $\gamma$ be the restriction to $I_{n,n+1}$ of the action of $G$ on 
	$C([0,1]) \otimes M_n \otimes M_{n+1}$ given by
	\[
	\id_{C([0,1])} \otimes \Ad(v) \otimes \Ad(v \oplus 1_{\C}).
	\]
	Then there exists a unital, equivariant homomorphism
	$
	\varphi\colon (I_{n,n+1},\gamma)\to (B,\beta)$.
	
\end{thm}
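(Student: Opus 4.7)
The plan is to construct $\varphi$ by explicit formulas on generators of $I_{n,n+1}$, using the hypothesis data $(\xi, s)$ together with the order zero structure theorem applied to $\xi$. This provides a more explicit alternative to the universal-property argument in \cite[Proposition 5.1]{RorWin_algebra_2010} that is amenable to the equivariant setting.

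First, I extend $\xi$ via the order zero structure theorem to a $*$-homomorphism $\rho_\xi\colon C_0((0,1])\otimes M_n \to B$, $\rho_\xi(f\otimes a) = f(\xi(1))\pi_\xi(a)$, where $\pi_\xi\colon M_n \to B^{**}$ is the support homomorphism of $\xi$; by equivariance of $\xi$, this extension is $\mathrm{Ad}(v)$-equivariant. From $\xi(e) s = s$ and the order zero structure, I deduce $\pi_\xi(e)s = s$ and $hs = s$, where $h := \xi(1)$. Choosing a basis of $\mathbb{C}^n$ in which $e = e_{11}$ (possible because $e$ is $v$-invariant), define $s_i := \xi^{1/2}(e_{i1})\,s \in B$ for $i = 1, \ldots, n$. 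These lie in $B$, satisfy $s_i^* s_j = \delta_{ij}(1-h)$, and transform under $\beta$ according to the action of $v$ (up to the character $c_g \in U(1)$ determined by $v_g\delta_1 = c_g \delta_1$), with $s_1 = s \in B^\beta$.

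Next, I define $\varphi$ on generators of $I_{n,n+1}$ reflecting its $C([0,1])$-algebra structure. On the ``left cone'' generators $\phi(t)(a\otimes 1) \in I_{n,n+1}$ (for $\phi \in C_0([0,1))$ and $a \in M_n$), I set $\varphi(\phi(t)(a\otimes 1)) := \rho_\xi(\widetilde\phi \otimes a)$ where $\widetilde\phi(u) = \phi(1-u) \in C_0((0,1])$. On the ``right'' generators coming from the matrix units $1\otimes f_{ij}$ of $1\otimes M_{n+1}$ at $t=1$, I map them via suitably scaled products and adjoints of $\{s_i\}_{i=1}^n$ (for the column-$0$ and row-$0$ matrix units $f_{i0}$, $f_{0i}$ with $i\ge 1$) together with an invariant element accounting for $f_{00}$, the $\mathbb{C}$-summand of $M_{n+1}=M_n\oplus\mathbb{C}$. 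I then extend $\varphi$ to all of $I_{n,n+1}$ using continuity and the $C([0,1])$-structure, and verify that $\varphi$ is a unital $*$-homomorphism. Equivariance follows fiber-wise: on the left via $\mathrm{Ad}(v)$-equivariance of $\rho_\xi$, and on the right via the covariant transformation of $\{s_i\}$ matching the $\mathrm{Ad}(v\oplus 1_\mathbb{C})$-action on column-$0$ matrix units of $M_{n+1}$, combined with $\beta$-invariance of $s = s_1$ and the $f_{00}$-corresponding element.

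The main technical obstacle is the careful scaling of the right-cone generators: the naive assignment $\psi(f_{i0}) = s_i$ fails the CPC order zero relations for an auxiliary map $\psi\colon M_{n+1}\to B$ with $\psi(1) = 1-h$ unless $h$ is a projection. Instead, the definition of $\varphi$ on the right must incorporate functional calculus of $h$ together with the $s_i$, and the verification that $\varphi$ descends to a well-defined $*$-homomorphism on $I_{n,n+1}$ rests on the interplay between $\rho_\xi$ (encoding the order zero structure) and the relations $\xi(1)+s^*s=1$ and $\xi(e) s = s$ (encoding the complementarity of the two cones and their matching at intermediate $t$).
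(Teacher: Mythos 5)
Your setup is sound as far as it goes: the identities $hs=s$, $\pi_\xi(e)s=s$, $s_i^*s_j=\delta_{ij}(1-h)$, $s_1=s$, and the covariant transformation $\beta_g(s_i)=\sum_j (v_g)_{ji}s_j$ (with $v_g\delta_1=\delta_1$, so in fact no character appears) all check out, and the idea of building $\varphi$ from the order zero structure of $\xi$ together with the $s_i$ is a reasonable alternative to the route taken in the paper, which instead introduces a universal $C^*$-algebra $D$ on generators $\{s,f_{j,k}\}$ with four relations, proves $D\cong I_{n,n+1}$ by exhibiting a central element $b$ with $\spec(b)=[0,1]$ and identifying all fibers of the resulting $C([0,1])$-algebra, and only then reads off equivariance fiberwise.

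However, there is a genuine gap: the entire difficulty of the theorem is concentrated in the step you defer. To ``define $\varphi$ on generators and extend by continuity and the $C([0,1])$-structure'' you need a presentation of $I_{n,n+1}$ by generators and relations --- i.e.\ you must know exactly which relations among your chosen generators suffice to force a map out of $I_{n,n+1}$. Non-equivariantly this is precisely \cite[Proposition 5.1]{RorWin_algebra_2010}, and establishing it (in a form whose proof can be tracked equivariantly) is what occupies almost all of the paper's argument: the verification that the fibers $D(t)$ are $M_n$, $M_{n+1}$, and $M_n\otimes M_{n+1}$ requires producing explicit matrix units such as $c_{j,k}=\tfrac{1}{t-t^2}\,s(t)f_{1,j}(t)s^*(t)f_{1,k}(t)$ and $d_\ell=\tfrac{1}{\sqrt{t}(1-t)}\,s(t)f_{1,\ell}(t)$ and checking, relation by relation, that they generate the fiber. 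Your proposal names this as ``the main technical obstacle,'' observes that the naive assignment fails unless $h$ is a projection, and then asserts that the correct definition ``must incorporate functional calculus of $h$'' --- but no formulas are given, no well-definedness or multiplicativity is verified, and it is not even clear that the required scalings (which fiberwise involve dividing by $t$ and $1-t$) can be realized by globally defined elements of $B$, since $h$ and $1-h$ need not be invertible. Without these formulas and verifications the proof does not exist yet; what you have written is an accurate description of the problem rather than a solution to it.
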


\begin{proof}
	Denote by $D$ the universal \ca~generated by the set $\{s,f_{j,k}\colon 
	j,k=1,\ldots,n\}$ of contractions satisfying:
	\begin{enumerate}[label=(R.\arabic*)]
		\item \label{item:f1}$f_{j,k}^* = f_{k,j}$ for all $j,k=1,\ldots,n$,
		\item \label{item:f2} $f_{j,k}f_{\ell,m} = 
		\delta_{k,\ell}f_{j,j}f_{j,m}$ for all $j,k,\ell,m=1,\ldots,n$,
		\item \label{item:f3} $f_{1,1}s = s$,
		\item \label{item:f4} $\sum_{j=1}^n f_{j,j} + s^*s = 1$.
	\end{enumerate}

It is clear that $B$ admits a unital homomorphism
from $D$. Most of the proof consists of
showing that $D$ is isomorphic to the dimension drop algebra $I_{n,n+1}$, which will then give us a unital
homomorphism $\varphi\colon I_{n,n+1}\to B$. The last
step will be to show that the map $\varphi$ can be 
chosen to be equivariant.

Consider the following matrices in $M_{n(n+1)}$
(each vertical line comes after $n+1$ entries, and the horizontal line 
appears after $n$ rows):
\[
\begin{matrix}
F_{1,1} =	\left(\begin{array}{@{}ccccc|ccccc|c|ccccc@{}}
			1 & 0 & \cdots & 0 & 0  & 0 & 0 & \cdots & 0 & 0		    & 
			\cdots &  0 & 0 & \cdots & 0 & 0 \\
			0 & 1 & \cdots & 0 & 0 			  & 0 & 0 & \cdots & 0 & 0 	& 
			\cdots &  0 & 0 & \cdots & 0 & 0 \\
			\vdots & \vdots &  \vdots &  \vdots & \vdots			  &  \vdots 
			&  \vdots &  \vdots &  \vdots &  \vdots  &  \vdots &   \vdots &  
			\vdots & \vdots &  \vdots & \vdots \\
			0 & 0 & \cdots & 1 & 0			  & 0 & 0 & \cdots & 0 & 0  & 
			\cdots &  0 & 0 & \cdots & 0 & 0 \\\hline
			\phantom{0} & \phantom{0} & \vdots &\phantom{0} & \phantom{0} &
			\phantom{0} & \phantom{0} & \vdots &\phantom{0} & \phantom{0} & 
			\vdots &
			\phantom{0} & \phantom{0} & \vdots &\phantom{0} & \phantom{0} \\
			\hline
			0 & 0 & \cdots & 0 &0			  & 0 & 0 & \cdots & 0 & 0  & 
			\cdots &  0 & 0 & \cdots & 0 & 0 \\
			\vdots & \vdots &  \vdots &  \vdots & \vdots			  &  \vdots 
			&  \vdots &  \vdots &  \vdots &  \vdots  &  \vdots &   \vdots &  
			\vdots & \vdots &  \vdots & \vdots \\
			0 & 0 & \cdots & 0 &0			  & 0 & 0 & \cdots & 0 & 0  & 
			\cdots &  0 & 0 & \cdots & 0 & 0 \\
		\end{array}\right)\end{matrix} \]

\[
\begin{matrix}
F_{1,2} =	\left(\begin{array}{@{}ccccc|ccccc|c|ccccc@{}}
	0 & 0 & \cdots & 0 & 0  & 1 & 0 & \cdots & 0 & 0		    & 
	\cdots &  0 & 0 & \cdots & 0 & 0 \\
	0 & 0 & \cdots & 0 & 0 			  & 0 & 1 & \cdots & 0 & 0 	& 
	\cdots &  0 & 0 & \cdots & 0 & 0 \\
	\vdots & \vdots &  \vdots &  \vdots & \vdots			  &  \vdots 
	&  \vdots &  \vdots &  \vdots &  \vdots  &  \vdots &   \vdots &  
	\vdots & \vdots &  \vdots & \vdots \\
	0 & 0 & \cdots & 0 & 0			  & 0 & 0 & \cdots & 1 & 0  & 
	\cdots &  0 & 0 & \cdots & 0 & 0 \\\hline
	\phantom{0} & \phantom{0} & \vdots &\phantom{0} & \phantom{0} &
	\phantom{0} & \phantom{0} & \vdots &\phantom{0} & \phantom{0} & 
	\vdots &
	\phantom{0} & \phantom{0} & \vdots &\phantom{0} & \phantom{0} \\
	\hline
	0 & 0 & \cdots & 0 &0			  & 0 & 0 & \cdots & 0 & 0  & 
	\cdots &  0 & 0 & \cdots & 0 & 0 \\
	\vdots & \vdots &  \vdots &  \vdots & \vdots			  &  \vdots 
	&  \vdots &  \vdots &  \vdots &  \vdots  &  \vdots &   \vdots &  
	\vdots & \vdots &  \vdots & \vdots \\
	0 & 0 & \cdots & 0 &0			  & 0 & 0 & \cdots & 0 & 0  & 
	\cdots &  0 & 0 & \cdots & 0 & 0 \\
\end{array}\right)\end{matrix} \]
	
	\[
	\vdots
	\]
	\[  \begin{matrix} F_{1,n}= 	
	\left(\begin{array}{@{}ccccc|ccccc|c|ccccc@{}}
			0 & 0 & \cdots & 0 & 0  & 0 & 0 & \cdots & 0 & 0 			    & 
			\cdots &  1 & 0 & \cdots & 0 & 0 \\
			0 & 0 & \cdots & 0 & 0 			  & 0 & 0 & \cdots & 0 & 0 	& 
			\cdots &  0 & 1 & \cdots & 0 & 0 \\
			\vdots & \vdots &  \vdots &  \vdots & \vdots			  &  \vdots 
			&  \vdots &  \vdots &  \vdots &  \vdots  &  \vdots &   \vdots &  
			\vdots & \vdots &  \vdots & \vdots \\
			0 & 0 & \cdots & 0 & 0			  & 0 & 0 & \cdots & 0 & 0  & 
			\cdots &  0 & 0 & \cdots & 1 & 0\\\hline
			\phantom{0} & \phantom{0} & \vdots &\phantom{0} & \phantom{0} &
			\phantom{0} & \phantom{0} & \vdots &\phantom{0} & \phantom{0} & 
			\vdots &
			\phantom{0} & \phantom{0} & \vdots &\phantom{0} & \phantom{0} \\
			\hline
			0 & 0 & \cdots & 0 &0			  & 0 & 0 & \cdots & 0 & 0  & 
			\cdots &  0 & 0 & \cdots & 0 & 0 \\
			\vdots & \vdots &  \vdots &  \vdots & \vdots			  &  \vdots 
			&  \vdots &  \vdots &  \vdots &  \vdots  &  \vdots &   \vdots &  
			\vdots & \vdots &  \vdots & \vdots \\
			0 & 0 & \cdots & 0 &0			  & 0 & 0 & \cdots & 0 & 0  & 
			\cdots &  0 & 0 & \cdots & 0 & 0 \\
		\end{array}\right) \end{matrix}
	\vspace{3pt}
	\]
	\[
	\begin{matrix}F_{1,n+1} = 	\left(\begin{array}{@{}ccccc|ccccc|c|ccccc@{}}
			0 & 0 & \cdots & 0 & 1  & 0 & 0 & \cdots & 0 & 0 			    & 
			\cdots &  0 & 0 & \cdots & 0 & 0 \\
			0 & 0 & \cdots & 0 & 0 			  & 0 & 0 & \cdots & 0 & 1 	& 
			\cdots &  0 & 0 & \cdots & 0 & 0 \\
			\vdots & \vdots &  \vdots &  \vdots & \vdots			  &  \vdots 
			&  \vdots &  \vdots &  \vdots &  \vdots  &  \vdots &   \vdots &  
			\vdots & \vdots &  \vdots & \vdots \\
			0 & 0 & \cdots & 0 & 0			  & 0 & 0 & \cdots & 0 & 0  & 
			\cdots &  0 & 0 & \cdots & 0 & 1 \\\hline
			\phantom{0} & \phantom{0} & \vdots &\phantom{0} & \phantom{0} &
			\phantom{0} & \phantom{0} & \vdots &\phantom{0} & \phantom{0} & 
			\vdots &
			\phantom{0} & \phantom{0} & \vdots &\phantom{0} & \phantom{0} \\
			\hline
			0 & 0 & \cdots & 0 &0			  & 0 & 0 & \cdots & 0 & 0  & 
			\cdots &  0 & 0 & \cdots & 0 & 0 \\
			\vdots & \vdots &  \vdots &  \vdots & \vdots			  &  \vdots 
			&  \vdots &  \vdots &  \vdots &  \vdots  &  \vdots &   \vdots &  
			\vdots & \vdots &  \vdots & \vdots \\
			0 & 0 & \cdots & 0 &0			  & 0 & 0 & \cdots & 0 & 0  & 
			\cdots &  0 & 0 & \cdots & 0 & 0 \\
		\end{array}\right)
	\end{matrix}
	\]
	The elements $F_{1,j}$, for $j=1,2,\ldots,n+1$, are 
	partial isometries satisfying
	$F_{1,j}F_{1,i} = 0$ when $j \not=1$,
	$F_{1,j}F_{1,i}^* = \delta_{i,j}F_{1,1}$ for all $i,j \in 
	\{1,2,\ldots,n+1\}$, and $\sum_{j=1}^{n+1} F_{1,j}^*F_{1,j} = 1$. Therefore 
	they generate a unital copy of $M_{n+1}$.
	We identify $I_{n,n+1}$ with the algebra of continuous functions from 
	$[0,1]$ to $M_{n(n+1)}$ such that $f(1)$ is in the $C^*$-algebra generated 
	by $\{F_{1,k} \mid k=1,2,\ldots,n+1 \}$ (which is isomorphic to $M_{n+1}$) 
	and $f(0)$ is in the commutant of the $C^*$-algebra generated by $\{F_{1,k} 
	\mid k=1,2,\ldots,n+1 \}$ (which is isomorphic to $M_n$).
	
	Denote by $\rho \colon [0,1] \to [0,1]$ the identity function.
	For $j,k=1,\ldots,n$, let $\tilde{f}_{j,k} \in I_{n,n+1}$ be the 
	matrix-valued function which,
	written in block form where each block has size $(n+1) \times (n+1)$, has 
	in its $(j,k)$-th block the diagonal matrix valued function
	$\diag(\underbrace{1,1,\ldots,1}_{n \textrm{ times}},1-\rho)$, and $0$ 
	elsewhere. Let $\tilde{s}$ be the matrix-valued function
	\[
	\tilde{s} = \left(\begin{array}{@{}ccccc|ccccc|c|ccccc@{}}
		0 & 0 & \cdots & 0 & \sqrt{\rho}  & 0 & 0 & \cdots & 0 & 0 			    
		& \cdots &  0 & 0 & \cdots & 0 & 0 \\
		0 & 0 & \cdots & 0 & 0 			  & 0 & 0 & \cdots & 0 & \sqrt{\rho} 	
		& \cdots &  0 & 0 & \cdots & 0 & 0 \\
		\vdots & \vdots &  \vdots &  \vdots & \vdots			  &  \vdots &  
		\vdots &  \vdots &  \vdots &  \vdots  &  \vdots &   \vdots &  \vdots & 
		\vdots &  \vdots & \vdots \\
		0 & 0 & \cdots & 0 & 0			  & 0 & 0 & \cdots & 0 & 0  & \cdots &  
		0 & 0 & \cdots & 0 & \sqrt{\rho} \\\hline
		\phantom{0} & \phantom{0} & \vdots &\phantom{0} & \phantom{0} &
		\phantom{0} & \phantom{0} & \vdots &\phantom{0} & \phantom{0} & \vdots &
		\phantom{0} & \phantom{0} & \vdots &\phantom{0} & \phantom{0} \\
		\hline
		0 & 0 & \cdots & 0 &0			  & 0 & 0 & \cdots & 0 & 0  & \cdots &  
		0 & 0 & \cdots & 0 & 0 \\
		\vdots & \vdots &  \vdots &  \vdots & \vdots			  &  \vdots &  
		\vdots &  \vdots &  \vdots &  \vdots  &  \vdots &   \vdots &  \vdots & 
		\vdots &  \vdots & \vdots \\
		0 & 0 & \cdots & 0 &0			  & 0 & 0 & \cdots & 0 & 0  & \cdots &  
		0 & 0 & \cdots & 0 & 0 \\
	\end{array}\right),
	\]
	where each vertical dividing line represents $n+1$ entries.

	One checks that the functions $\tilde{f}_{j,k}$ for $j,k=1,\ldots,n$ and 
	$\tilde{s}$ satisfy the relations defining $D$.
	Fix a unital homomorphism $\pi \colon D \to I_{n,n+1}$ satisfying 
	$\pi(f_{j,k})=\tilde{f}_{j,k}$ for all $j,k=1,\ldots,n$ and 
	$\pi(s)=\tilde{s}$.
	We will show that $\pi$ is an isomorphism.

	\begin{claim} \label{claim:ddrop1}
		The following identities hold:
		\be
		\item  $f_{j,j}f_{j,k} = f_{j,k}f_{k,k}$ for all $j,k =1,\ldots,n$.
		\item For  $j =1,\ldots,n$, we have $s f_{j,1} s = 0$.
		\ee
		
	\end{claim}
	
	The first equality follows from item \ref{item:f2} in the list of relations 
	defining $D$. The following computation establishes the second identity:
	\begin{eqnarray*}
		(s f_{j,1} s)^*(s f_{j,1} s) &=&
		s^*f_{1,j}s^*sf_{j,1}s  \\
		&\stackrel{\mathclap{\text{\ref{item:f4}}}}{=}&   s^*f_{1,j}\Big(1 - 
		\sum_{i=1}^n f_{i,i} \Big)f_{j,1}s \\
		&\stackrel{\mathclap{\text{\ref{item:f2}}}}{=}& s^* f_{1,j}f_{j,1}s -  
		s^* f_{1,j}f_{j,j}f_{j,1}s \\
		&\stackrel{\mathclap{\text{\ref{item:f2}}}}{=}& s^*f_{1,1}^2s - 
		s^*f_{1,1}^3s \ \ \stackrel{\mathclap{\text{\ref{item:f3}}}}{=} \ \ 0.
	\end{eqnarray*}
	This proves the claim.
	
	Set
	\begin{equation} \label{eqn:b}
		b = s^*s + \sum_{j=1}^n f_{j,1}ss^*f_{1,j} = s^*s + ss^* + \sum_{j=2}^n 
		f_{j,1}ss^*f_{1,j}.
	\end{equation}
	Note that $b$ is a positive contraction (since all of 
	the summands in its definition are pairwise orthogonal positive contractions).
	One checks that $\pi(b) = \rho \cdot 1$, and therefore $\spec(b) = [0,1]$.
	\newline
	
	\begin{claim}
		The element $b$ belongs to the center of $D$.
	\end{claim}
	Let $j,k=1,\ldots,n$.
	Then
	\[
	f_{j,k}b \ \  \stackrel{\mathclap{\text{\ref{item:f2}}}}{=} \ \ f_{j,k}s^*s + 
	f_{j,j}f_{j,1}ss^*f_{1,k} \ \mbox{ and } \
	bf_{j,k} \ \ \stackrel{\mathclap{\text{\ref{item:f2}}}}{=} \ \ s^*sf_{j,k} 
	+f_{j,1}ss^*f_{1,k} f_{k,k}.
	\]
	We show term by term that both expressions agree, which will imply the 
	claim.
	For the first terms in both right hand sides, we have
	\begin{eqnarray*}
		f_{j,k}s^*s \ \ \stackrel{\mathclap{\text{\ref{item:f4}}}}{=} \ \ f_{j,k} \left 
		(1 -\sum_{i=1}^n f_{i,i} \right) 
		&\stackrel{\mathclap{\text{\ref{item:f2}}}}{=}&  f_{j,k} - f_{j,k}f_{k,k} 
                \ \ \stackrel{\mathclap{\text{\ref{item:f2}}}}{=} \ \  f_{j,k} - f_{j,j}f_{j,k} 
		\\
		&\stackrel{\mathclap{\text{\ref{item:f2}}}}{=}& \Big(1 -\sum_{i=1}^n 
		f_{i,i} \Big) f_{j,k} \ \ \stackrel{\mathclap{\text{\ref{item:f4}}}}{=} \ \  
		s^*sf_{j,k}.
	\end{eqnarray*}
	For the second terms in both right hand sides, we have
	\begin{eqnarray*}
		f_{j,j}f_{j,1}ss^*f_{1,k}  
		&\stackrel{\mathclap{\text{\ref{item:f2}}}}{=}&  f_{j,1} (f_{1,1}s) 
		s^*f_{1,k} \stackrel{\mathclap{\text{\ref{item:f3}}}}{=}  
		f_{j,1}ss^*f_{1,k} \\
		&\stackrel{\mathclap{\text{\ref{item:f3}}}}{=}&   
		f_{j,1}s(f_{1,1}s)^*f_{1,k} 
		\stackrel{\mathclap{\text{\ref{item:f2}}}}{=}  f_{j,1}ss^*f_{1,k} 
		f_{k,k}.
	\end{eqnarray*}
	Likewise, using the fact that $s f_{i,1} s = 0$ for all $i=1,\ldots,n$, 
	proved in Claim \autoref{claim:ddrop1}, we get
	\begin{equation} \label{eqn:sb}
		sb = ss^*s + s\sum_{i=1}^n f_{i,1}ss^*f_{1,i}  = ss^*s.
	\end{equation}
	Moreover, by Claim \ref{claim:ddrop1}, $s^* s^2 = s^*s f_{1,1} s = 0$, which implies $s^{*2} 
	s^2 = 0$
	and therefore
	\begin{equation} \label{eqn:6.1}
		s^2 =0.
	\end{equation}
	which allows us to compute, using \ref{item:f2} and \ref{item:f3} for the 
	second equality and using \eqref{eqn:6.1} and \ref{item:f3} for the third 
	equality:
	\[
	bs = s^*s^2 + \sum_{i=1}^n f_{i,1}ss^*f_{1,i}s = s^*sf_{1,1}s + 
	f_{1,1}ss^*f_{1,1}s = 0 + ss^*s = sb,
	. \]
	This completes the proof of the claim.
	
	It follows that $b$ endows $D$ with a $C([0,1])$-algebra structure, and the 
	map $\pi$ is a $C([0,1])$-homomorphism.
	For $t \in [0,1]$, denote by $D(t)$ and by $I_{n,n+1}(t)$ the induced 
	fibers, and by
	$\pi_t \colon D(t) \to I_{n,n+1}(t)$ the corresponding unital 
	homomorphism. 
	By \autoref{prop:CXmaps}, it
	suffices to show that $\pi_t$ is an isomorphism for all 
	$t\in [0,1]$.
	
	The fiber $I_{n,n+1}(t)$ is isomorphic to $M_n$ if $t = 0$, to $M_{n+1}$ if 
	$t = 1$, and to $M_n \otimes M_{n+1}$ otherwise.
	Since $\pi_t$ is unital for all $t\in [0,1]$, it suffices to show that the 
	fibers $D(t)$ are also isomorphic to those corresponding matrix algebras.
	Denote by $f_{j,k}(t)$, $s(t)$ and $b(t)$ the images of the elements 
	$f_{j,k}$, $s$ and $b$ in the fiber $D(t)$.
	
	\underline{Case I: $t=0$.} Here $b(0) = 0$. In particular, as $b(0) \geq 
	s^*(0) s(0)$, it follows that $s(0) = 0$.
	Therefore, $\{f_{j,k}(0)\}_{j,k=1}^n$ generates $D(0)$, and these are 
	precisely the matrix units of $M_n$. Thus $D(0) \cong M_n$.
	
	\underline{Case II: $t = 1$.} 
	Using $b(1)=1$, we compute
	\begin{eqnarray*}
		s(1)s^*(1) &=& s(1) s^*(1) b(1) \\
		&\stackrel{\mathclap{\text{\eqref{eqn:b}}}}{=}& s(1) s^*(1) s^*(1) s(1) 
		 + s(1)s^*(1) \sum_{j=1}^{n} f_{j,1}(1) s(1)s^*(1) 
		f_{1,j}(1) \\
		&\stackrel{\mathclap{\text{\ref{item:f3},\eqref{eqn:6.1}}}}{=}& \quad 0 
		+ (s(1) s^*(1))^2 +s(1)s^*(1) f_{1,1}(1) \sum_{j=2}^{n} f_{j,1}(1) 
		s(1)s^*(1) f_{1,j}(1) \\
		& \stackrel{\mathclap{\text{\ref{item:f2}}}}{=}&  (s(1) s^*(1))^2 
	\end{eqnarray*}
	Thus $s(1)s^*(1)$ is a projection, so $s(1)$ is a partial isometry. 
	Therefore, $s^*(1)s(1)$ is a projection as well, which is orthogonal to 
	$s(1)s^*(1)$ as $s^*(1)s(1) + s(1)s^*(1) \leq 1$. 
	By \ref{item:f4}, we have $\sum_{j=1}^n f_{j,j}(1) = 1- s^*s(1)$. The 
	summands on 
	the left hand side are pairwise orthogonal, and right hand side is a 
	projection. Therefore, $f_{j,j}(1)$ are pairwise orthogonal projections for 
	$j=1,2,\ldots,n$. It therefore follows that $f_{j,k}(1)$ are partial 
	isometries, which satisfy $f_{j,k}(1)f_{\ell,m}(1) = 
	\delta_{k,\ell}f_{j,m}(1)$ for all $j,k,\ell,m=1,\ldots,n$.	
	
	Thus, the family $\{ f_{1,j}(1) \}_{j=1,2,\ldots,n} \cup \{s(1)\}$ 
	generates $D(1)$, and it also satisfies the relations from Remark 
	\ref{rem:generators-Mn}, which shows that the $C^*$-algebra they generate 
	is isomorphic to $M_{n+1}$. 
	
	\underline{Case III: $t \in (0,1)$.} Here $b(t) = t1$. For 
	$j,k,\ell=1,\ldots,n$, set
	\[c_{j,k} = \frac{1}{t-t^2} s(t)f_{1,j}(t)s^*(t)f_{1,k}(t)\ \mbox{ and } \ 
	d_{\ell} = \frac{1}{\sqrt{t}(1-t)} s(t)f_{1,\ell}(t).\]
	We verify that these elements satisfy the conditions from 
	\autoref{rem:generators-Mn},
	where we regard $\{ c_{j,k} \}_{j,k \le n}$ as matrix units of the form
	$\{e_{1,i} \}_{i \le n^2}$ and each $d_\ell$ as $e_{1, n^2 + \ell}$.
	To that end, note that
	\begin{eqnarray*}
		t s(t)f_{1,1}(t)^2 s^*(t)
		&=& s(t)f_{1,1}(t) \cdot t1 \cdot f_{1,1}(t) s^*(t) \\
		&\stackrel{\mathclap{\text{\eqref{eqn:b}}}}{=}&	s(t)f_{1,1}(t) \Big( 
		s^*(t)s(t) + \sum_{j=1}^n f_{j,1}(t)s(t)s^*(t)f_{1,j}(t) \Big) 
		f_{1,1}(t)s^*(t) \\
		&\stackrel{\mathclap{\text{\ref{item:f2}}}}{=}& 	
		s(t)f_{1,1}(t)s^*(t)s(t)f_{1,1}(t)s^*(t) \\
		& \ & \ \ \ + s(t)f_{1,1}(t)^2s(t)s^*(t)f_{1,1}(t)^2s^*(t) \\
		&\stackrel{\mathclap{\text{\ref{item:f3}}}}{=}&  
		s(t)f_{1,1}(t)s^*(t)s(t)f_{1,1}(t)s^*(t) 
		+s(t)^2s^*(t)f_{1,1}(t)^2s^*(t) \\
		&\stackrel{\mathclap{\text{\eqref{eqn:6.1}}}}{=}& 
		s(t)f_{1,1}(t)s^*(t)s(t)f_{1,1}(t)s^*(t) + 0 \\
		&\stackrel{\mathclap{\text{\ref{item:f4}}}}{=}& s(t)f_{1,1}(t) \Big( 
		1-\sum_{i=1}^nf_{i,i}(t)  \Big) f_{1,1}(t)s^*(t)  \\
		&\stackrel{\mathclap{\text{\ref{item:f2}}}}{=}& s(t)\Big(f_{1,1}(t)^2 
		- f_{1,1}(t)^3 \Big) s^*(t).
	\end{eqnarray*}
	Likewise, we see that
	$$
	t s(t)f_{1,1}(t)s^*(t) = s(t) \Big(f_{1,1}(t) - f_{1,1}(t)^2 \Big) 
	s^*(t).
	$$
	Thus
	\begin{equation} \label{eqn:ff2}
		s(t)f_{1,1}^2(t)s^*(t) = (1-t)s(t)f_{1,1}(t)s^*(t),
	\end{equation}
	and therefore
	\begin{eqnarray} \label{eqn:6.2}
		s(t)\Big(f_{1,1}(t)^2 - f_{1,1}(t)^3 \Big) s^*(t) &=& 
		t(1-t)s(t)f_{1,1}(t)s^*(t) \\
		&\stackrel{\mathclap{\text{\ref{item:f3}}}}{=}& 
		t(1-t)s(t)f_{1,1}(t)s^*(t)f_{1,1}(t). \nonumber
	\end{eqnarray}
	We can now verify that the elements $c_{j,k}$ and $d_{\ell}$ from above 
	satisfy the conditions from \autoref{rem:generators-Mn}.
	For any $j,k,\ell,m=1,\ldots,n$ we have:
	\begin{eqnarray*}
		c_{j,k}c_{\ell,m}^* &=& \frac{1}{(t-t^2)^2}
		s(t)f_{1,j}(t)s^*(t)f_{1,k}(t) \cdot f_{m,1}(t)s(t)f_{\ell,1}(t) s^*(t) 
		\\
		& \stackrel{\mathclap{\text{\ref{item:f2}}}}{=}& \delta_{k,m} \frac{1}{(t-t^2)^2} s(t)f_{1,j}(t)s^*(t) \Big(f_{1,1}(t)^2 
		s(t) \Big)f_{\ell,1}(t) s^*(t) \\
		&\stackrel{\mathclap{\text{\ref{item:f3}}}}{=}& \delta_{k,m} \frac{1}{(t-t^2)^2} s(t)f_{1,j}(t)s^*(t) s(t)f_{\ell,1}(t) 
		s^*(t) \\
		&\stackrel{\mathclap{\text{\ref{item:f4}}}}{=}& \delta_{k,m} \frac{1}{(t-t^2)^2} s(t)f_{1,j}(t)\Big( 
		1-\sum_{i=1}^nf_{i,i}(t)  \Big)f_{\ell,1}(t) s^*(t) \\
		&\stackrel{\mathclap{\text{\ref{item:f2}}}}{=} &\delta_{k,m} 
		\delta_{j,\ell}  \frac{1}{(t-t^2)^2}s(t)\Big( 
		f_{1,1}(t)^2 -  f_{1,1}(t)^3 \Big) s^*(t) \\
		&\stackrel{\mathclap{\text{\eqref{eqn:6.2}}}}{=}&  \delta_{k,m} 
		\delta_{j,\ell} \frac{1}{(t-t^2)^2}\cdot  
		t(1-t)s(t)f_{1,1}(t)s^*(t)f_{1,1}(t) =  \delta_{k,m} \delta_{j,\ell} 
		c_{1,1}
	\end{eqnarray*}
	For $j,k=1,\ldots,n$, we have:
	\begin{eqnarray*}
		d_jd_k^* &=& \frac{1}{t(1-t)^2}  s(t)f_{1,j}(t) 
		\cdot  f_{k1}(t) s^*(t) \\
		&\stackrel{\mathclap{\text{\ref{item:f2}}}}{=}& \delta_{j,k} 
		\frac{1}{t(1-t)^2} s(t)f_{1,1}(t)^2  s^*(t)  \\
		&\stackrel{\mathclap{\text{\eqref{eqn:ff2}}}}{=}& \delta_{j,k}  
		\frac{1}{t(1-t)^2} \cdot (1-t)s(t)f_{1,1}(t)s^*(t) 
		\ \ \stackrel{\mathclap{\text{\ref{item:f3}}}}{=} \ \ \delta_{j,k} c_{1,1}
	\end{eqnarray*}
	Similarly, for $j,k,\ell=1,\ldots,n$ we have:
	\begin{eqnarray*}
		c_{j,k}d_\ell^* &=&   \frac{1}{t^{3/2}(1-t)^2}  
		s(t)f_{1,j}(t)s^*(t)f_{1,k}(t) \cdot  f_{\ell,1}(t) s^*(t) \\
		&\stackrel{\mathclap{\text{\ref{item:f2}}}}{=}& \delta_{k,\ell}  
		\frac{1}{t^{3/2}(1-t)^2}  s(t)f_{1,j}(t)s^*(t)f_{1,1}(t)^2 s^*(t) \\
		&\stackrel{\mathclap{\text{\ref{item:f3}}}}{=} & \delta_{k,\ell}  
		\frac{1}{t^{3/2}(1-t)^2}  s(t)f_{1,j}(t)(s^*(t))^2 
	\ \ 	\stackrel{\mathclap{\text{\eqref{eqn:6.1}}}}{=} \ \  0
	\end{eqnarray*}
	and likewise $d_\ell c_{j,k}^*=0$. By \autoref{rem:generators-Mn}, it 
	follows that
	$\{c_{j,k},d_l\}_{j,k,l=1,\ldots,n}$ generates a copy of $M_{n(n+1)}$.
	
	\begin{claim}
		The set $\{c_{j,k},d_l\colon j,k,l=1,\ldots,n\}$ also generates $D(t)$.
	\end{claim}
 It suffices to show that this family generates $s(t)$ and $\{f_{1,j}(t)\}_{ 
	j=1}^n$. Indeed, 
	\begin{eqnarray*}
		\sqrt{t} \sum_{j=1}^n c_{j,1}^*d_j 
		&\stackrel{\mathclap{\text{\ref{item:f3}}}}{=}& \frac{1}{t(1-t)^2} 
		\sum_{j=1}^n s(t)f_{j,1}(t)s^*(t)s(t)f_{1,j}(t) \\
		&\stackrel{\mathclap{\text{\ref{item:f4}}}}{=} &\frac{1}{t(1-t)^2} 
		\sum_{j=1}^n s(t)f_{j,1}(t) \Big( 1- \sum_{m=1}^n f_{m,m}(t) \Big) 
		f_{1,j}(t) \\
		&\stackrel{\mathclap{\text{\ref{item:f2}}}}{=} &\frac{1}{t(1-t)^2} 
		\sum_{j=1}^n s(t) \Big( f_{j,j}(t)^2 -  f_{j,j}(t)^3 \Big)  \\
		&\stackrel{\mathclap{\text{\ref{item:f4},\ref{item:f2}}}}{=} &\ 
		\frac{1}{t(1-t)^2} s(t) \Big( (1-s^*(t)s(t))^2 - (1-s^*(t)s(t))^3 
		\Big) \\
		&\stackrel{\mathclap{\text{\eqref{eqn:sb}}}}{=} &\frac{1}{t(1-t)^2} 
		\Big( (1-t)^2 - (1-t)^3 \Big) s(t) = s(t).
	\end{eqnarray*}
	Given $j,k=1,\ldots,n$, we want to show that $f_{j,k}(t) = \sum_{\ell=1}^n c_{\ell,j}^*c_{\ell,k} + (1-t)d_j^*d_k$.
	We have
	\begin{eqnarray*}
		\sum_{\ell=1}^n c_{\ell,j}^*c_{\ell,k} &=& 
		\frac{1}{t^2(1-t)^2} \sum_{\ell=1}^n  f_{j,1}(t) s(t) f_{\ell,1}(t) 
		s^*(t)\cdot s(t)f_{1,\ell}(t) s^*(t) f_{1,k}(t) \\
		&\stackrel{\mathclap{\text{\ref{item:f4}}}}{=}& \frac{1}{t^2(1-t)^2} 
		\sum_{\ell=1}^n  f_{j,1}(t) s(t) f_{\ell,1}(t) \Big( 1- \sum_{m=1}^n 
		f_{m,m}(t) \Big) f_{1,\ell}(t) s^*(t) f_{1,k}(t) \\
		&\stackrel{\mathclap{\text{\ref{item:f2}}}}{=}&  \frac{1}{t^2(1-t)^2} 
		\sum_{\ell=1}^n f_{j,1}(t) s(t) ( f_{\ell,\ell}(t)^2 - 
		f_{\ell,\ell}(t)^3 ) s^*(t) f_{1,k}(t) \\
		&\stackrel{\mathclap{\text{\ref{item:f4}}}}{=}& \frac{1}{t^2(1-t)^2} 
		f_{j,1}(t) s(t) ( (1-s^*(t)s(t))^2 - (1-s^*(t)s(t))^3 ) s^*(t) 
		f_{1,k}(t) \\
		&\stackrel{\mathclap{\text{\eqref{eqn:sb}}}}{=}& \frac{1}{t^2(1-t)^2} 
		\cdot t(1-t)^2  f_{j,1}(t) s(t) s^*(t) f_{1,k}(t) \\
		&=& \frac{1}{t} f_{j,1}(t) s(t) s^*(t) f_{1,k}(t),
	\end{eqnarray*}
	and
	\begin{eqnarray*} 
		(1-t)d_j^*d_k &=&  \frac{1}{t(1-t)} f_{j,1}(t)s^*(t)s(t)f_{1,k}(t) \\
		& \stackrel{\mathclap{\text{\ref{item:f2},\ref{item:f4}}}}{=}& \ \ 
		\frac{1}{t(1-t)} f_{j,1}(t)(1 -  f_{1,1}(t))f_{1,k}(t) \\
		& \stackrel{\mathclap{\text{\ref{item:f2}}}}{=} & \frac{1}{t(1-t)} 
		(f_{j,j}(t) f_{j,k}(t) - f_{j,j}(t) f_{j,1}(t) f_{1,k}(t)) \\
		& \stackrel{\mathclap{\text{\ref{item:f2},\ref{item:f4}}}}{=}&  \  \ 
		\frac{1}{t(1-t)} (1 - s^*(t) s(t)) (f_{j,k}(t)
		- f_{j,1}(t) f_{1,k}(t)).
	\end{eqnarray*}
	Moreover 
	\[
	f_{j,1} ss^* f_{1,j} f_{j,k} \ \  \stackrel{\mathclap{\text{\ref{item:f2}}}}{=}\ \ 
	f_{j,1} ss^* f_{1,1} f_{1,k}  \ \ 
	\stackrel{\mathclap{\text{\ref{item:f3}}}}{=} \ \  f_{j,1} ss^* f_{1,k},
	\]
	and
	\[
	f_{j,1} ss^* f_{1,j} f_{j,1} f_{1,k}  
	\ \ \stackrel{\mathclap{\text{\ref{item:f2}}}}{=} \ \ f_{j,1} ss^* f_{1,1}^2 f_{1,k}
	\ \ \stackrel{\mathclap{\text{\ref{item:f3}}}}{=} \ \  f_{j,1} ss^* f_{1,k}.
	\]
	Thus, by definition of $b$ and by the fact that $b(t) = t$ we obtain
	\begin{eqnarray*} 
		(1-t)d_j^*d_k &= & \frac{1}{t(1-t)} (1 - s^*(t) s(t)) (f_{j,k}(t) - 
		f_{j,1}(t) f_{1,k}(t)) \\
		& = &\frac{1}{t(1-t)}(1-b(t)) (f_{j,k}(t)
		- f_{j,1}(t) f_{1,k}(t)) \\
		& \stackrel{\mathclap{\text{\ref{item:f2}}}}{=}& \frac{1}{t} (f_{j,k}(t) 
		- f_{j,j}(t) f_{j,k}(t)) \\
		& \stackrel{\mathclap{\text{\ref{item:f2}}}}{=}& \frac{1}{t}\Big( 1- 
		\sum_{m=1}^n f_{m,m}(t)\Big) f_{j,k}(t) \\
		& \stackrel{\mathclap{\text{\ref{item:f4}}}}{=}& \frac{1}{t} s^*(t)s(t) 
		f_{j,k}(t).
	\end{eqnarray*}
	Combining those observations, we get
	\begin{eqnarray*}
		\sum_{k=1}^n c_{k,1}^*c_{k,j} + (1-t)d_1^*d_j &=& \frac{1}{t}  ( 
		f_{j,1}(t) s(t) s^*(t) f_{1,k}(t) + s^*(t)
		s(t) f_{j,k}(t) ) \\
		& \stackrel{\mathclap{\text{\ref{item:f3}}}}{=}&  \frac{1}{t}  ( 
		f_{j,1}(t) s(t) s^*(t) f_{1,1} f_{1,k}(t) + s^*(t)
		s(t) f_{j,k}(t) ) \\
		&  \stackrel{\mathclap{\text{\ref{item:f2}}}}{=}& \frac{1}{t}  ( 
		f_{j,1}(t) s(t) s^*(t) f_{1,j} f_{j,k}(t) + s^*(t)
		s(t) f_{j,k}(t) ) \\
		&  \stackrel{\mathclap{\text{\ref{item:f2}}}}{=}& \frac{1}{t} 
		\Big(s^*(t) s(t) +  \sum_{m=1}^n
		f_{m,1}(t)s(t) s^*(t) f_{1,m}(t) \Big) f_{j,k}(t) \\
		& = & \frac{1}{t} b(t) f_{j,k} (t) = f_{j,k}(t).
	\end{eqnarray*}
	This proves the claim, so have proved that $D$ is isomorphic to $I_{n,n+1}$.
	
	By universality of $I_{n,n+1}$,
	there is a unique unital homomorphism $\varphi\colon I_{n,n+1}\to B$ 
	satisfying $\varphi (f_{i,j}) = \xi(e_{i,j})$
	for all $i,j = 1, \dots, n$ and $\varphi(s) = s$. (To lighten the notation, 
	we use the same letter $s$ to denote the given element in $B$ and the 
	element $s$ in the universal \ca~ $I_{n,n+1}$.) We are left
	with checking that $\varphi$ is equivariant.

	Note that $\beta$ leaves $\varphi(I_{n,n+1})$ invariant. Furthermore, using 
	the fact that
	$v_g e_{1,1} = e_{1,1}$ and $e_{1,j} = e_{1,1} e_{1,j}$, we have 
	$v_ge_{1,j} = e_{1,j}$ for $j=1,\ldots,n$ and for all $g\in G$. It follows 
	that
	\[
	\beta_g(\varphi(b)) = s^*s + ss^* + \sum_{j=2}^n 
	\xi(v_ge_{j,1})ss^*\xi(e_{1,j}v_g^*)
	\]
	for all $g\in G$, we deduce thus that $\beta_g(\varphi(b)) = \varphi(b)$ 
	for all $g\in G$.
	Thus, the restriction of $\beta$ to $\varphi(I_{n,n+1})$ is an action via 
	$C([0,1])$-automorphisms. It follows that it suffices to check equivariance 
	on each fiber.
	
	Let $t \in (0,1)$.
	Define finite-dimensional Hilbert spaces $\mathcal{H}_0, \mathcal{H}_1$ and 
	$\mathcal{H}_2$, contained in $\varphi(I_{n,n+1})(t)$, via
	$\mathcal{H}_0 = \textrm{span}\{\varphi_t(c_{1,1})\}$,
	\[\mathcal{H}_1 = \textrm{span}\{\varphi(c_{j,k})\colon j,k 
	=1,\ldots,n\} \ \mbox{ and } \  
\mathcal{H}_2 = \textrm{span}\{\varphi(d_l)\colon l=1,\ldots,n\}.\]
	Then $\mathcal{H}_0$, $\mathcal{H}_1$ and $\mathcal{H}_2$ are invariant 
	under $\beta$.
	Set $E = \mathrm{span}\{e_{1,1},e_{1,2},\ldots,e_{1,n}\}$. Note that there 
	are natural isomorphisms $\mathcal{H}_1 \cong E \otimes E$ and 
	$\mathcal{H}_2 \cong E$, the first one given by
	identifying $e_{1,j} \otimes e_{1,k}$ with $c_{j,k}$, and the second one 
	given by identifying $e_{1,k}$ with $d_k$, for $j,k=1,\ldots,n$.
	With these identifications, $\beta_t$ acts as $v_g \otimes v_g$ on 
	$\mathcal{H}_1$ while leaving $\mathcal{H}_0$ fixed, and acts as $v_g$ on 
	$\mathcal{H}_2$.
	Thus, the action induced by $\beta$ on the fiber corresponding to some 
	$t\in (0,1)$ is conjugate to $\Ad (v \otimes (v \oplus 1_{\C}))$. The 
	end-cases $t=0$ and $t=1$ are verified
	similarly, thus concluding the proof.
\end{proof}

We will apply \autoref{thm:univprop} in the proof of \autoref{thm:mainRokDim}, 
at the end of next section,
to representations $v$ of the form $\lambda^{\otimes k}\colon G\to 
\U(\ell^2(G)^{\otimes k})$, for $k\in\N$, where $\lambda$ is the left regular 
representation.



\section{Regularity properties for actions of amenable groups} \label{s6}

This section is devoted to the proofs of Theorem \autoref{thmB} and Theorem 
\autoref{thmA}, starting with the latter.
The equivalence of item (\ref{gamma.cpou:4}) with the other items in Theorem \autoref{thmA}, which is stated under the assumption of $\mathcal{Z}$-stability, is actually
obtained in the presence of
equivariant property (SI). This property, which we recall below, is an adaptation to the 
equivariant setting of Sato's property (SI), which first appeared in 
\cite{satoRoh} and was further used in \cite{matuisato:strict}, 
\cite{matuisato:I}, \cite{matuisato:II}, \cite{sato19}.

\begin{df} \label{df:eqSI}
Let $G$ be a countable, discrete, amenable group, let $A$ be a unital, separable \ca~with non-empty trace space, and let $\alpha\colon G\to\Aut(A)$ be an action.
We say that $(A,\alpha)$ has \emph{equivariant property (SI)} if for any 
positive contractions $a, b \in (A_\U \cap A')^{\alpha_\U}$ such that $a \in J_A$ and $\sup_{m \in \N}
\| 1 - b^m \|_{2, T_\U(A)} < 1$, there is $s \in (A_\U \cap A')^{\alpha_\U}$ such that
$bs = s$ and $s^* s = a$.
\end{df}

We also need the following analogue of Kirchberg’s $\sigma$-ideal 
(\cite[Definition 1.5]{kir}) in the equivariant setting. For 
$\mathbb{Z}$-actions, this notion was  considered in \cite{liao}.

\begin{df}\label{df:EqSigmaIdeal}
Let $G$ be a discrete group, let $B$ be a \ca, let $\beta\colon G\to\Aut(B)$ be an action, and let $J\subseteq B$ be a $\beta$-invariant ideal. We say that $J$ is an \emph{equivariant $\sigma$-ideal (with respect to $\beta$)}, if for every separable, $\beta$-invariant
subalgebra $C\subseteq B$, there is a positive contraction $x\in (J\cap C')^\beta$ with $xc=c$ for all $c\in C\cap J$.
\end{df}

A $\sigma$-ideal is simply an equivariant $\sigma$-ideal with respect of the trivial action.
The trace kernel ideal $J_A$ is a $\sigma$-ideal by \cite[Proposition 
4.6]{kirchror}. 
Recall that $J_\tau$ (defined in \autoref{prelim:ultrapowers}) is the kernel of 
the quotient map $\kappa_\tau\colon A_\U 
\to M_\tau^\U$ (see \cite[Theorem 3.3]{kirchror}); it is also a $\sigma$-ideal 
(see \cite[Remark 4.7]{kirchror}).

It is easy to see that if a finite group acts on a \ca, then any $\sigma$-ideal 
is automatically an equivariant $\sigma$-ideal: one simply
averages the positive contraction in the definition of a $\sigma$-ideal to 
obtain a fixed one. When the group is amenable, one can average over F{\o}lner 
sets to get equivariant $\sigma$-ideals in ultrapowers, as we show below. 

\begin{prop}\label{prop:JAwSigmaId}
Let $A$ be a unital \ca\ with non-empty trace space, let $G$ be a countable, discrete, amenable group, let $\alpha\colon G\to\Aut(A)$ be any action.
Let $T\subseteq T(A)$ be a $G$-invariant closed subset. Then $J_T\subseteq A_\U$ is a $G$-invariant, equivariant $\sigma$-ideal in $A_\U$.
\end{prop}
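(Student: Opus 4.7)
The plan is to combine a non-equivariant $\sigma$-ideal witness with a F{\o}lner averaging argument. First, since $T$ is $G$-invariant, every $\alpha_g$ is isometric for $\|\cdot\|_{2,T}$, so $J_T$ is $\alpha_\U$-invariant. For the equivariant $\sigma$-ideal property, fix a separable, $\alpha_\U$-invariant subalgebra $C \subseteq A_\U$. An adaptation of \cite[Proposition 4.6]{kirchror} (which only used that the unit ball of $A_\U$ is complete for the relevant seminorm, a property shared by $\|\cdot\|_{2,T}$) yields a positive contraction $y \in J_T \cap C'$ such that $yc = c$ for every $c \in C \cap J_T$.

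The critical observation is that all of these conditions are preserved under the $G$-action: for each $g \in G$, both $J_T$ and $C$ are $\alpha_\U$-invariant, so $\alpha_{\U,g}(y) \in J_T \cap C'$, and for $c \in C \cap J_T$ we have $\alpha_{\U,g^{-1}}(c) \in C \cap J_T$, so applying $\alpha_{\U,g}$ to the identity $y\,\alpha_{\U,g^{-1}}(c) = \alpha_{\U,g^{-1}}(c)$ yields $\alpha_{\U,g}(y)\,c = c$. I then exploit amenability to average: for every finite $F \subseteq G$ and every $\varepsilon > 0$, I pick an $(F,\varepsilon)$-invariant finite $K \subseteq G$ and set
\[
x_{F,\varepsilon} = \frac{1}{|K|}\sum_{g \in K} \alpha_{\U,g}(y).
\]
Then $x_{F,\varepsilon}$ is a positive contraction in $J_T \cap C'$ satisfying $x_{F,\varepsilon}\,c = c$ for all $c \in C \cap J_T$, and a routine F{\o}lner estimate gives $\|\alpha_{\U,h}(x_{F,\varepsilon}) - x_{F,\varepsilon}\| < \varepsilon$ for all $h \in F$.

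To upgrade approximate to exact invariance, I invoke countable saturation of $A_\U$, in the form of Kirchberg's $\varepsilon$-test (see \autoref{saturation}). Fixing countable dense subsets $\{c_k\}_{k \in \N}$ of the unit ball of $C$ and $\{d_k\}_{k \in \N}$ of the unit ball of $C \cap J_T$, together with an enumeration $\{g_k\}_{k \in \N}$ of $G$, all conditions on a putative witness $x$ (positive contraction, lying in $J_T$, commuting with each $c_k$, absorbing each $d_k$, and being fixed by each $\alpha_{\U,g_k}$) translate into countably many $\varepsilon$-type functionals on $\ell^\infty(A)$. The approximate solutions $x_{F,\varepsilon}$ witness that every finite subcollection of these functionals admits a simultaneous approximate solution, and the saturation principle delivers an exact solution $x \in (J_T \cap C')^{\alpha_\U}$ with $xc = c$ for every $c \in C \cap J_T$. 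The main technical step is the F{\o}lner averaging, and the only real obstacle is the elementary verification that the non-equivariant $\sigma$-ideal argument of \cite{kirchror} goes through verbatim for an arbitrary $G$-invariant closed $T \subseteq T(A)$.
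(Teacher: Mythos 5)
Your proposal is correct and follows essentially the same route as the paper: obtain a non-equivariant witness from the $\sigma$-ideal property of $J_T$ (via \cite[Proposition 4.6, Remark 4.7]{kirchror}), observe that the absorbing and commuting conditions are preserved by each $(\alpha_\U)_g$, average over a F{\o}lner set to get approximate invariance, and conclude by Kirchberg's $\varepsilon$-test. The only cosmetic difference is that the paper states the F{\o}lner estimate $\|(\alpha_\U)_k(y)-y\|\le 2|kF\triangle F|/|F|$ explicitly, but the argument is the same.
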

\begin{proof} Abbreviate $J_T$ to $J$. It is clear
that $J_T$ is a $G$-invariant ideal in $A_\U$.
Let $C\subseteq A_\U$ be a separable, $\alpha_\U$-invariant subalgebra. Since $J$ is a $\sigma$-ideal in $A_\U$ (\cite[Proposition 4.6, Remark 4.7]{kirchror}), there is
a positive contraction $x\in J\cap C'$ with $xc=c$ for all $c\in C\cap J$. By Kirchberg's $\varepsilon$-test (\cite[Lemma A.1]{kir}),
it is enough to prove that for every finite subset $K\subseteq G$ and every $\varepsilon>0$, there is a
positive contraction $y\in J\cap C'$ with $\|(\alpha_\U)_k(y)-y\|<\ep$ for all $k\in K$ and $yc=c$ for all $c\in C\cap J$.

We fix a finite subset $K\subseteq G$ and $\ep>0$.
Using amenability of $G$, find a
finite subset $F$ of $G$ such that $|kF\triangle F|\leq \frac{\ep}{2} |F|$ for all $k\in K$.
Set $y=\frac{1}{|F|}\sum_{g\in F}(\alpha_\U)_g(x)$. Since $C$ is $\alpha_\U$-invariant, it follows that
$(\alpha_\U)_g(x) c = c$ for every $g \in G$ and $c \in C$, therefore
$yc = c$ for all $c\in C\cap J$.
For $k\in K$, we have
\begin{align*}
\|(\alpha_\U)_k(y)-y\| &=\Big\| \frac{1}{|F|} \sum_{g \in F} (\alpha_\U)_{kg}(x)-(\alpha_\U)_g(x)\Big\|\\
&=\Big\| \frac{1}{|F|} \sum_{g \in F \setminus (k F \cup k^{-1} F) } (\alpha_\U)_{kg}(x)-(\alpha_\U)_g(x)\Big\|\\
&\le \frac{1}{|F|} \sum_{g \in F \setminus (k F \cup k^{-1} F)} \left\|(\alpha_\U)_{kg}(x)-(\alpha_\U)_g(x)\right\| \\ &\leq 2 \frac{| k F \triangle F|}{| F |} \le 2 \ \frac{\ep}{2}=\ep.
\end{align*}
Since $J$ is an $\alpha_\U$-invariant ideal, the positive contraction $y$ also belongs to $J$. Finally,
it is also easy to check that $y$ commutes with $C$, since $C$ is also invariant under $\alpha_\U$. This
concludes the proof.
\end{proof}

We record the following consequence of \autoref{prop:JAwSigmaId}, which 
is used repeatedly in the 
sequel. Recall that $\kappa\colon (A_\U\cap A',\alpha_\U)\to(A^\U\cap A',\alpha^\U)$ denotes the canonical quotient map 
(\autoref{lma:SurjRelComm}).

\begin{cor}\label{cor:LiftAlongKappa}
Let $A$ be a unital separable \ca\ with non-empty trace space, let $G$ be a 
countable, discrete, amenable group, let $\alpha\colon G\to\Aut(A)$ be any 
action. Let $B$ be a seprable, nuclear \ca, let $\beta\colon
G\to\Aut(B)$ be an action, and let 
\[\Psi\colon (B,\beta)\to
(A^\U\cap A',\alpha^\U)\]
be an equivariant completely positive order zero map.
Then there exists an equivariant completely positive order zero map
$\Phi\colon (B,\beta)\to
(A_\U\cap A',\alpha_\U)$ satisfying $\Psi=\kappa\circ\Phi$.
\end{cor}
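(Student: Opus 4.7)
The plan is to reduce the problem to lifting an equivariant $*$-homomorphism from the cone on $B$, and to combine the equivariant $\sigma$-ideal structure of $J_A$ established in \autoref{prop:JAwSigmaId} with Kirchberg's $\varepsilon$-test.

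Set $CB := C_0((0,1]) \otimes B$, equipped with the $G$-action $\gamma := \id \otimes \beta$. By the Winter--Zacharias structure theorem, c.p.c.\ order zero maps $\varphi\colon B \to D$ are in bijection with $*$-homomorphisms $\widehat\varphi\colon CB \to D$ via $\widehat\varphi(\id_{(0,1]} \otimes b) = \varphi(b)$, and this bijection intertwines $G$-actions. Thus $\Psi$ extends to an equivariant $*$-homomorphism $\widehat\Psi\colon (CB, \gamma) \to (A^\U \cap A', \alpha^\U)$, and constructing $\Phi$ reduces to constructing an equivariant $*$-homomorphism $\widehat\Phi\colon (CB, \gamma) \to (A_\U \cap A', \alpha_\U)$ with $\kappa \circ \widehat\Phi = \widehat\Psi$.

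By \autoref{lma:SurjRelComm}, $\kappa$ restricts to an equivariant surjection $(A_\U \cap A', \alpha_\U) \twoheadrightarrow (A^\U \cap A', \alpha^\U)$ with kernel $J_A \cap A'$; by \autoref{prop:JAwSigmaId} (restricted to the $G$-invariant hereditary subalgebra $A_\U \cap A'$), this kernel is an equivariant $\sigma$-ideal. Since $CB$ is separable and projective (Loring), there is a not necessarily equivariant $*$-homomorphism lift $\eta\colon CB \to A_\U \cap A'$ of $\widehat\Psi$. The goal is then to promote $\eta$ to an equivariant lift via Kirchberg's $\varepsilon$-test (\autoref{saturation}), with test functions encoding the $*$-algebra relations on a countable dense subset of $CB$, the lifting identity $\kappa \circ \widehat\Phi = \widehat\Psi$, and the equivariance condition $\alpha_\U^g \circ \widehat\Phi = \widehat\Phi \circ \gamma_g$ for each $g \in G$.

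To construct approximate solutions, fix $\varepsilon > 0$ and a finite $K \subseteq G$. Pick a $(K, \varepsilon)$-invariant F\o lner set $F \subseteq G$ and form the average
\[
\eta_F(b) := \frac{1}{|F|}\sum_{g \in F} \alpha_\U^g\bigl(\eta(\gamma_{g^{-1}}(b))\bigr),
\]
which is a c.p.c.\ lift of $\widehat\Psi$ that is approximately $K$-equivariant but no longer multiplicative. Letting $C \subseteq A_\U \cap A'$ be the separable $G$-invariant subalgebra generated by the ranges of $\alpha_\U^g \circ \eta \circ \gamma_{g^{-1}}$ for $g \in F$, the equivariant $\sigma$-ideal property yields a positive contraction $x \in (J_A \cap A' \cap C')^{\alpha_\U}$ acting as a unit on $J_A \cap A' \cap C$. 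Since the multiplicative defect of $\eta_F$ takes values in $J_A$, cutting down via $(1-x)$ absorbs this error, producing a c.p.c.\ map that is an approximate equivariant $*$-homomorphism on the prescribed finite test data. The $\varepsilon$-test then yields the exact equivariant lift $\widehat\Phi$, which corresponds to the desired c.p.\ order zero map $\Phi$.

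The main obstacle is the standard tension between F\o lner averaging (needed to achieve equivariance) and multiplicativity (destroyed by averaging c.p.\ maps). The equivariant $\sigma$-ideal property from \autoref{prop:JAwSigmaId} is precisely the ingredient that resolves this: it provides a $G$-fixed positive contraction in $J_A$ acting as a unit on the defect ideal, so that the loss of multiplicativity after averaging can be absorbed into the kernel of $\kappa$.
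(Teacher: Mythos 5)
Your proposal is correct in substance and rests on the same key mechanism as the paper --- the equivariant $\sigma$-ideal property of $J_A$ from \autoref{prop:JAwSigmaId} and a cut-down by $(1-x)$ with $x$ an invariant unit for $J_A\cap C$ --- but it routes through two layers of machinery that the paper's argument shows to be unnecessary. First, instead of passing to the cone $CB$ and invoking projectivity of cones (a Loring--Shulman theorem not cited in the paper), the paper simply applies the Choi--Effros lifting theorem to the c.p.c.\ map $\Psi$ itself, which is where the nuclearity hypothesis on $B$ is actually used; since your subsequent F\o lner averaging destroys multiplicativity anyway, the exact homomorphism property of your lift $\eta$ buys you nothing, and a plain c.p.c.\ lift would do. Second, the F\o lner averaging and the $\varepsilon$-test are redundant: for \emph{any} c.p.\ lift $\Phi_0$ of $\Psi$, the equivariance defect $\alpha_{\U,g}(\Phi_0(b))-\Phi_0(\beta_g(b))$ already lies in $J_A\cap C$ (because $\Psi$ is equivariant and $C$ is $\alpha_\U$-invariant and contains $\Phi_0(B)$), so it is annihilated outright by the invariant cut-down; the map $b\mapsto(1-x)\Phi_0(b)(1-x)$ is therefore \emph{exactly} equivariant, exactly order zero, and still a lift, with no approximation or reindexing needed. (The F\o lner averaging has, in effect, already been done once and for all inside the proof of \autoref{prop:JAwSigmaId}.) One small point to repair in your version: you take $C$ to be generated only by the ranges of the translates of $\eta$, but $C$ must also contain $A$ so that $x\in C'$ forces $x\in A'$ and the cut-down map lands in $A_\U\cap A'$; as written your $x$ need not commute with $A$. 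With that fix, your argument goes through, but the paper's four-line proof achieves the same conclusion without cones, projectivity, averaging, or the $\varepsilon$-test.
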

\begin{proof}
By the Choi-Effros Lifting Theorem \cite{choieffros}
there is a completely positive map
$\Phi_0\colon B \to A_\U \cap A'$ 
such that $\Psi=\kappa\circ\Phi_0$. Let $C$ be a separable, $\alpha_\U$-invariant subalgebra of $A_\U$
containing $A \cup \Phi_0(B)$. Since $J_A$ is an equivariant $\sigma$-ideal by \autoref{prop:JAwSigmaId}, there exists 
$ x \in (J_A \cap C')^{\alpha_\U}$ such that
$xc = c$ for all $c \in J_A \cap C$.
Define $\Phi\colon B\to A_\U\cap A'$ by
$\Phi(b) = (1- x) \Phi_0(b) (1-x)$ for all $b\in B$. 
One readily checks that $\Phi$ is completely positive, contractive,
order zero and equivariant, and that $\kappa\circ\Phi=\kappa\circ\Phi_0=\Psi$, 
as required.
\end{proof}

Another useful consequence of \autoref{prop:JAwSigmaId} is the following 
dynamical analogue of
\cite[Theorem 3.3]{kirchror}. We point out that
the following lemma also follows from the main
result of \cite{ForGarTho_asymptotic_2021}.

\begin{lma}\label{lma:SurjRelCommEq}
Let $A$ be a separable unital \ca, let $G$ be a discrete group, let 
$\alpha\colon
G\to\Aut(A)$ be an action and $\tau \in T(A)^\alpha$.
Then the quotient maps $\kappa: A_\U \to A^\U$ and
$\kappa_\tau: A_\U \to \M_\tau$
restrict to surjective, 
equivariant maps
\[
\kappa\colon (A_\U\cap A')^{\alpha_\U}\to (A^\U\cap A')^{\alpha^\U},
\]
\[
\kappa_\tau\colon (A_\U\cap A')^{\alpha_\U}\to (\M_\tau^\U\cap \M_\tau')^{(\alpha^\tau)^\U}.
\]
\end{lma}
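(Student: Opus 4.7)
The plan is to handle both maps uniformly, the key observation being that the respective kernels $J_A = \ker(\kappa)$ and $J_\tau = \ker(\kappa_\tau)$ are both equivariant $\sigma$-ideals in $A_\U$ by \autoref{prop:JAwSigmaId} (applied with $T = T(A)$ and with $T = \{\tau\}$ respectively; the latter is $G$-invariant since $\tau$ is $\alpha$-invariant). Equivariance of the restricted maps is automatic, since $\kappa$ and $\kappa_\tau$ already intertwine $\alpha_\U$ with $\alpha^\U$ and $(\alpha^\tau)^\U$ in the ambient algebras. The whole content is therefore surjectivity onto the fixed-point subalgebras.

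For $\kappa$, the plan is as follows. Let $y \in (A^\U \cap A')^{\alpha^\U}$. By the non-equivariant part of the statement (\autoref{lma:SurjRelComm} applied with $S = A$), we can lift $y$ to some $x_0 \in A_\U \cap A'$ with $\kappa(x_0) = y$; this lift need not be $\alpha_\U$-invariant. However, since $y$ is fixed, the defect $(\alpha_\U)_g(x_0) - x_0$ lies in $J_A$ for every $g \in G$. Choose a separable, $\alpha_\U$-invariant \ca\ $C \subseteq A_\U$ containing $A \cup \{(\alpha_\U)_g(x_0) : g \in G\}$ (which is possible since $G$ is countable). By \autoref{prop:JAwSigmaId}, there is a positive contraction $e \in (J_A \cap C')^{\alpha_\U}$ with $ec = c$ for all $c \in J_A \cap C$; in particular $e$ annihilates each $(\alpha_\U)_g(x_0) - x_0$, so $(1-e)\bigl((\alpha_\U)_g(x_0) - x_0\bigr) = 0$ and symmetrically on the right.

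Now set $x = (1-e) x_0 (1-e)$. Then $x$ commutes with $A$ (since $e \in C' \supseteq A'$ and $x_0 \in A'$), and a direct computation gives
\[
(\alpha_\U)_g(x) = (1-e)\,(\alpha_\U)_g(x_0)\,(1-e) = (1-e)\,x_0\,(1-e) = x
\]
for every $g \in G$, using that $e$ is $\alpha_\U$-fixed and the annihilation property above. Finally, since $e \in J_A$ we have $\kappa(1-e) = 1$, hence $\kappa(x) = \kappa(x_0) = y$. This provides the desired invariant lift.

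The argument for $\kappa_\tau$ is identical with $J_\tau$ in place of $J_A$: one first lifts $y \in (\M_\tau^\U \cap \M_\tau')^{(\alpha^\tau)^\U}$ to $x_0 \in A_\U \cap A'$ using the analogue of \autoref{lma:SurjRelComm} for $\kappa_\tau$ (which follows from \cite[Proposition 4.5, Proposition 4.6]{kirchror} in the same way), then repeats the cut-down construction with $e \in (J_\tau \cap C')^{\alpha_\U}$ coming from \autoref{prop:JAwSigmaId}. There is no serious obstacle in either case; the only mildly delicate point is ensuring that the separable subalgebra $C$ contains the whole $\alpha_\U$-orbit of the lift $x_0$, so that the pointwise defects $(\alpha_\U)_g(x_0) - x_0$ can be absorbed simultaneously by a single $\alpha_\U$-fixed element $e$.
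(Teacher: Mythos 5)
Your proof is correct and follows essentially the same route as the paper: both arguments reduce surjectivity to the equivariant $\sigma$-ideal property of $J_A$ (resp.\ $J_\tau$) from \autoref{prop:JAwSigmaId}, and then correct a non-invariant lift by multiplying with $1-e$ for an invariant positive contraction $e$ in the ideal that acts as a unit on the defects. The only cosmetic differences are that the paper uses a one-sided cut-down $(1-e)x_0$ of a lift taken merely in $A_\U$ (recovering commutation with $A$ from the same annihilation trick), while you first lift into $A_\U\cap A'$ and cut down on both sides; both versions work.
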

\begin{proof}
We only show the statement for $\kappa: A_\U \to A^\U$, as the proof for $\kappa_\tau: A_\U
\to \M_\tau^\U$ is analogous. Given $a \in (A^\U\cap A')^{\alpha^\U}$, there is $b \in A_\U$
such that $\kappa(b) = a$. Let $C\subseteq A_\U$
be the separable \ca\ generated by $A$ and $b$.
By \autoref{prop:JAwSigmaId} there is $x \in (J_A \cap C')^{\alpha_\U}$ such that
$xc = c$ for  all $c \in J_A \cap C$. This implies that $b': = (1 - x)b$ belongs
to $(A_\U\cap A')^{\alpha_\U}$ and it satisfies $\kappa(b') = a$.
\end{proof}


\begin{thm} \label{thm:mainCPoU}
	Let $A$ be a separable, unital, nuclear,  
	\ca\
	with non-empty trace space and with no finite-dimensional
	quotients. Let $G$ be a countable, discrete, amenable group
	and let $\alpha\colon G \to \text{Aut}(A)$ be an action such that the 
	induced action
	on $T(A)$ has finite orbits bounded in size by a constant $M > 0$.
	Then the following are equivalent:
	\begin{enumerate}
		\item $(A,\alpha)$ has uniform property $\Gamma$,
		\item $(A, \alpha)$ has CPoU with constant $M$,
		\item for every $n \in \N$ there is a unital 
		embedding $M_n \to (A^\U \cap A')^{\alpha^\U}$.
            \end{enumerate}
If $A$ is moreover simple and $\mathcal{Z}$-stable, then the above are also equivalent to:
\begin{enumerate}
\setcounter{enumi}{3}
		\item $(A,\alpha)$ is cocycle conjugate to $(A 
		\otimes 
		\mathcal{Z}, \alpha \otimes \text{id}_\mathcal{Z})$.
	\end{enumerate}
\end{thm}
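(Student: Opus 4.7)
The equivalences (1) $\Leftrightarrow$ (2) $\Leftrightarrow$ (3) are already the content of \autoref{thm:mainCPoU-parts-1-3}, so only the equivalence with (4) requires attention under the additional hypotheses that $A$ be simple and $\mathcal{Z}$-stable. The implication (4) $\Rightarrow$ (3) is immediate from \autoref{prop:Zstable}: assuming cocycle conjugacy of $(A,\alpha)$ with $(A \otimes \mathcal{Z}, \alpha \otimes \id_\mathcal{Z})$, taking $S = A$ in that proposition yields unital embeddings $M_n \to (A^\U \cap A')^{\alpha^\U}$ for every $n \in \N$.

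The substantive direction is (3) $\Rightarrow$ (4). The plan is to produce a unital equivariant $\ast$-homomorphism $(\mathcal{Z}, \id_\mathcal{Z}) \to (A_\U \cap A', \alpha_\U)$ and then invoke Szab\'o's equivariant McDuff-type characterization of equivariant $\mathcal{Z}$-stability from \cite[Theorem~3.7]{szabo:strself}. In view of the description of $\mathcal{Z}$ as an inductive limit of prime dimension drop algebras (\cite{RorWin_algebra_2010}) and the countable saturation of $(A_\U \cap A')^{\alpha_\U}$, this in turn reduces to constructing, for every $n \geq 2$, a unital $\ast$-homomorphism from the prime dimension drop $I_{n,n+1}$ into $(A_\U \cap A')^{\alpha_\U}$ (where $G$ acts trivially on $I_{n,n+1}$).

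Fix $n \geq 2$ and, invoking (3), fix a unital embedding $\rho \colon M_n \to (A^\U \cap A')^{\alpha^\U}$ with trivial $G$-action on $M_n$. By \autoref{cor:LiftAlongKappa}, $\rho$ lifts to an equivariant c.p.c.\ order zero map $\xi \colon (M_n, \id_{M_n}) \to (A_\U \cap A', \alpha_\U)$. Set $a = 1 - \xi(1)$ and $b = \xi(e_{1,1})$, both positive contractions in $(A_\U \cap A')^{\alpha_\U}$. Since $\kappa(b) = \rho(e_{1,1})$ is a projection of constant trace $1/n$ under every limit trace and $\kappa(b^m) = \kappa(b)$ for every $m \geq 1$, one computes
\[
\sup_{m \in \N} \| 1 - b^m \|_{2, T_\U(A)}^2 \,=\, \frac{n-1}{n} \,<\, 1,
\]
while $a \in J_A$ because $\kappa(\xi(1)) = \rho(1) = 1$. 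Equivariant property (SI), which holds for $(A,\alpha)$ under our hypotheses (simple, nuclear, $\mathcal{Z}$-stable $A$ with the uniform property $\Gamma$ supplied by (1)) by the main results of \cite{equiSI}, now furnishes a contraction $s \in (A_\U \cap A')^{\alpha_\U}$ with $bs = s$ and $s^*s = a$; equivalently, $\xi(e_{1,1}) s = s$ and $\xi(1) + s^*s = 1$. Feeding the data $(\xi, s)$ into \autoref{thm:univprop}, applied with the trivial representation $v = \mathbf{1}$ of $G$ on $M_n$ and the $v$-invariant rank-one projection $e = e_{1,1}$, produces the desired unital $\ast$-homomorphism $I_{n,n+1} \to (A_\U \cap A')^{\alpha_\U}$.

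The principal obstacle in this strategy is the appeal to equivariant property (SI), whose verification in this level of generality is precisely the subject of the companion paper \cite{equiSI}. Once this is granted, the remaining ingredients assemble essentially mechanically: the lifting machinery and CPoU infrastructure developed earlier in the paper, together with the universal property of the dimension drop algebras from \autoref{thm:univprop}, and finally Szab\'o's equivariant $\mathcal{Z}$-stability theorem.
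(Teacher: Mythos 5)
Your proof follows essentially the same route as the paper's: the equivalence of (1)--(3) is quoted from \autoref{thm:mainCPoU-parts-1-3}, (4) $\Rightarrow$ (3) is \autoref{prop:Zstable}, and for (3) $\Rightarrow$ (4) you lift a unital matrix embedding to an equivariant order zero map via \autoref{cor:LiftAlongKappa}, produce the extra contraction $s$ using equivariant property (SI) from \cite[Theorem B]{equiSI}, feed the data into \autoref{thm:univprop} to get a unital homomorphism from a prime dimension drop algebra into $(A_\U\cap A')^{\alpha_\U}$, and conclude with Szab\'o's equivariant McDuff theorem. The verification that the (SI) hypotheses hold ($1-\xi(1)\in J_A$ and $\sup_m\|1-\xi(e_{1,1})^m\|_{2,T_\U(A)}^2=(n-1)/n<1$) is correct, as is the application of \autoref{thm:univprop} with the trivial representation.

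There is, however, one step whose justification as written does not go through: the claim that the inductive-limit description of $\mathcal{Z}$ from \cite{RorWin_algebra_2010} together with countable saturation reduces the problem to producing, for each $n$, \emph{some} unital homomorphism $I_{n,n+1}\to(A_\U\cap A')^{\alpha_\U}$. Saturation lets you realize an inductive limit inside the central sequence algebra only if you can choose the maps on the building blocks to be (approximately) compatible with the connecting maps of the inductive system; having an unrelated unital homomorphism for each $n$ separately is not enough without a uniqueness/intertwining theorem for such maps (compare the UHF case, where two unital embeddings of $M_k$ need not be unitarily equivalent in a general codomain). The paper circumvents this by working with a single $n=2$: it uses \autoref{lma:reindexation-uniform} repeatedly to obtain countably many unital homomorphisms $I_{2,3}\to(A_\U\cap A')^{\alpha_\U}$ with commuting ranges, hence a unital homomorphism $I_{2,3}^{\otimes\infty}\to(A_\U\cap A')^{\alpha_\U}$, and then invokes \cite[Theorem 1.1]{Dadarlat-Toms} to embed $\mathcal{Z}$ unitally into $I_{2,3}^{\otimes\infty}$. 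Your argument is complete once this final step is replaced by (or justified via) that tensor-power/Dadarlat--Toms device; everything else matches the paper.
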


\begin{proof}
The equivalence of \eqref{gamma.cpou:1}, \eqref{gamma.cpou:2} 
and \eqref{gamma.cpou:3} was already proved in \autoref{thm:mainCPoU-parts-1-3}, and \eqref{gamma.cpou:4} $\Rightarrow$ 
\eqref{gamma.cpou:3} is \autoref{prop:Zstable}. 
It remains to show that \eqref{gamma.cpou:3} $\Rightarrow$ 
\eqref{gamma.cpou:4}.
Fix a unital embedding $\Psi\colon M_2\to (A^\U\cap A')^{\alpha^\U}$,
which we regard as a unital equivariant homomorphism 
$\Psi\colon (M_2,\id_{M_2})\to (A^\U\cap A',\alpha^\U)$.
By \autoref{cor:LiftAlongKappa}, there
exists an equivariant completely positive contractive order zero
map $\Phi\colon M_2  \to (A_\mathcal{U} \cap A')^{\alpha_\mathcal{U}}$
making the following diagram commute:
\begin{align*}
\xymatrix{ && (A_\U\cap A')^{\alpha_\U}\ar[d]^-{\kappa}\\
M_2\ar[urr]^-{\Phi}\ar[rr]_-{\Psi} && (A^\U\cap A')^{\alpha^\U}.}
\end{align*}
Note that $\tau(\Phi(e_{1,1})^m)=1/2$ for all $\tau\in T_\U(A)$ and for all $m\in\N$.
Set $c_1=\Phi(e_{1,1})$ and $c_2=\Phi(e_{1,2})$. Then
\[c_1\geq 0, \ \ c_2c_2^*=c_1^2, \ \mbox{ and } \ c_1c_2^*=c_2c_1^*=0.\]
By \cite[Theorem B]{equiSI}, the dynamical system $(A, \alpha)$ has equivariat
property (SI). Using this, fix a contraction
$s\in (A_\U\cap A')^{\alpha_\U}$ satisfying $s^*s=1-\sum_{j=1}^2 c_j^*c_j$
and $c_1s=s$. By 
\autoref{thm:univprop},
the elements $c_1,c_2,s$ generate a copy of the prime 
dimension drop algebra
\[
I_{2,3} = \{f\in C([0,1],M_2\otimes M_3) \colon f(0)\in M_2\otimes 1, f(1)\in 1\otimes M_3\}
\]
so
there exists a unital homomorphism $I_{2,3}\to (A_\U\cap A')^{\alpha_\U}$. By 
repeatedly using \autoref{lma:reindexation-uniform}, we can find a countable 
sequence of unital homomorphisms  $I_{2,3}\to (A_\U\cap A')^{\alpha_\U}$ with 
commuting ranges, and therefore a unital homomorphism  $I_{2,3}^{\otimes 
\infty} \to (A_\U\cap A')^{\alpha_\U}$. By \cite[Theorem 1.1]{Dadarlat-Toms}, 
$\mathcal{Z}$ embeds unitally in $I_{2,3}^{\otimes \infty}$, and in particular, 
we 
obtain a unital homomorphism $\mathcal{Z}\to (A_\U\cap A')^{\alpha_\U}$.
This implies, by \cite[Theorem 2.6]{szabo:strself}, that $\alpha$ is cocycle 
conjugate
to $\alpha\otimes\id_{\mathcal{Z}}$.\end{proof}


We now turn our attention to the proof of Theorem \autoref{thmB}. We briefly 
describe 
our strategy for proving (\ref{rokdim:1}) $\Rightarrow$ (\ref{rokdim:2}).
First, we prove the existence of projections with 
properties 
analogous
to those in \autoref{df:wtRp} for Bernoulli shifts on the hyperfinite 
II$_1$-factor $
\overline{\bigotimes}_{g \in G} \mathcal{R} \cong \mathcal{R}$. Then
\autoref{thm:AbsR} and \autoref{thm:ErgThy} are used to show that there are 
similar 
projections
also in the central sequence algebra of each fiber $\mathcal{M}_\tau$ for $\tau 
\in T(A)^\alpha$.
Using \autoref{cor:EmbedModelAction} we then perform a `local to global' 
argument via CPoU to glue these projections and obtain a Rokhlin 
tower in $A^\U \cap A'$. Finally, we exploit the fact that $J_A$
is an equivariant $\sigma$-ideal (\autoref{df:EqSigmaIdeal}) to lift 
those towers to $A_\U \cap A'$
and conclude the proof.

We start by addressing the first part, namely the existence of projections 
satisfying
conditions analogous to those in \autoref{df:wtRp} for the Bernoulli shift on
$\mathcal{R}$. 
We do this in the following proposition, using the  
tiling result for amenable groups from \cite{tilings}.

\begin{prop} \label{rem:muGRp}
	Let $G$ be a countable, discrete, amenable group, and let $\beta_G\colon 
	G\to\Aut(\mathcal{R})$
	be the Bernoulli shift acting by left multiplication on the indices of the 
	elementary tensors of $\overline{\bigotimes}_{g \in G} \mathcal{R} \cong 
	\mathcal{R}$.
	Given a finite set $K \subseteq G$ and $\delta > 0$, there are 
	$(K,\delta)$-invariant,
	finite sets $S_1, \dots, S_n \subseteq G$ and projections $p_{\ell,g} \in 
	\mathcal{R}$ for $\ell = 1,\dots, n$ and $g \in S_\ell$, such that
	\begin{enumerate}[label=(\roman*)]
		\item \label{II1:1} $(\beta_G)_{gh^{-1}}(p_{\ell, h}) = p_{\ell, g}$ 
		for all all $\ell = 1, \dots, n$ and all $g,h \in S_\ell$ ,
		\item \label{II1:2} $p_{\ell,g} p_{k,h} = 0$ for all $\ell, k = 1, 
		\dots, n$, $g \in S_\ell$, $k \in S_h$, whenever $(\ell,
		g) \not = (k,h)$,
		\item \label{II1:3} $\sum_{\ell = 1}^n \sum_{g \in S_\ell} p_{\ell, g} 
		= 1$,
		\item \label{II1:4} $\tau_{\mathcal{R}}(p_{\ell,g})$ is positive and 
		independent of $g\in S_\ell$.
	\end{enumerate}
\end{prop}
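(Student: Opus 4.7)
The plan is to combine the exact tiling theorem for countable amenable groups from \cite{tilings} with a construction of a Rokhlin-type partition in an abelian Bernoulli subalgebra of $\mathcal{R}$. First, invoking \cite{tilings} for the given $K$ and $\delta$ yields finite $(K,\delta)$-invariant shapes $S_1, \ldots, S_n \subseteq G$ together with center sets $C_1, \ldots, C_n \subseteq G$ realizing an exact disjoint tiling
$$
G = \bigsqcup_{\ell=1}^n \bigsqcup_{c \in C_\ell} c S_\ell.
$$
After translating each shape (which preserves its $(K,\delta)$-invariance) we may assume $e \in S_\ell$ for all $\ell$, and these $S_\ell$ will be the shapes appearing in the conclusion.

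Next, all the required projections will be taken of the form $p_{\ell, g} = (\beta_G)_g(q_\ell)$ for a single ``base'' projection $q_\ell \in \mathcal{R}$, which makes the equivariance condition \ref{II1:1} automatic. The other conditions then reduce to finding $q_1, \ldots, q_n \in \mathcal{R}$ of positive trace such that the family $\{(\beta_G)_g(q_\ell) : \ell = 1, \ldots, n,\ g \in S_\ell\}$ is pairwise orthogonal and sums to $1$. I would realize the $q_\ell$ as characteristic functions inside the abelian Bernoulli subalgebra $L^\infty(\{0,1\}^G,\mu) \subseteq \mathcal{R}$, using a shift-equivariant Borel tiling map $\omega \mapsto T(\omega)$ from the dynamical (``congruent'') version of the tiling theorem: $q_\ell$ will be the indicator of $\{\omega : \text{the origin lies at the base position of a shape-}S_\ell\text{ tile of } T(\omega)\}$, so that $(\beta_G)_g(q_\ell)$ is the indicator of the analogous set with position $g$ in place of the origin. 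Condition \ref{II1:4} then follows because $\tau_\mathcal{R}(p_{\ell, g})$ equals the asymptotic density of $C_\ell$ in $G$, which is positive and depends only on $\ell$; conditions \ref{II1:2} and \ref{II1:3} follow because $T(\omega)$ places each position of $G$ in a unique tile at a unique spot.

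The main obstacle is that a single fixed combinatorial tiling of $G$ is not preserved by left $G$-translation, so the projections cannot come from one tiling. This is resolved precisely by the dynamical, shift-equivariant refinement of the tiling theorem furnished by \cite{tilings}: it produces an entire $G$-equivariant family of tilings parametrized over a shift-invariant probability space (in our case the Bernoulli space $\{0,1\}^G$), giving exactly the flexibility needed for the partition $\{(\beta_G)_g(q_\ell)\}$ to be shift-equivariant while still decomposing $1 \in \mathcal{R}$ into orthogonal pieces.
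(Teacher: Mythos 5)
Your proof is correct and follows essentially the same route as the paper: apply the dynamical F\o lner tiling theorem of \cite{tilings} to a Bernoulli action realized inside an equivariantly embedded abelian subalgebra of $\overline{\bigotimes}_{g\in G}\mathcal{R}$, and take the projections to be indicator functions of the tile-position sets. The only point you gloss over is positivity in \ref{II1:4}: the tiling theorem does not by itself guarantee that every shape occurs with positive frequency, so (as the paper does) one should discard any $S_\ell$ whose associated projections have trace zero, which is harmless by faithfulness of $\tau_{\mathcal{R}}$.
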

\begin{proof}
	Fix a finite set $K \subseteq G$ and $\delta > 0$.
	Fix an embedding $\theta$ of $L^\infty([0,1])$ with
	the Lebesgue measure
	in $\mathcal{R}$, and let $\tilde \beta_G$ be 
	the Bernoulli shift on $\overline{\bigotimes}_{g \in G} L^\infty([0,1])$. 
	Notice that the embedding $\theta$ naturally induces an equivariant 
	embedding
	\[
	\Theta\colon\left (\overline{\bigotimes_{g \in G}} L^\infty([0,1] ), \tilde 
	\beta_G \right) \to \left (\overline{\bigotimes_{g \in G}} \mathcal{R}, 
	\beta_G \right),
	\]
	where $\Theta$ sends, for every $g \in G$, the $g$-th coordinate of the 
	domain to the corresponding $g$-th coordinate
	of the codomain via $\theta$.
	
	Set $X = \prod_{g \in G} [0,1]$ with the product measure.
	It follows that $\overline{\bigotimes}_{g \in G} L^\infty([0,1]) \cong
	L^\infty(X)$, and that $\tilde \beta_G$ induces an action on 
	$X$ which is easily seen (and well known) to 
	be measure preserving and ergodic.
	By \cite[Theorem 3.6]{tilings} there are  $(K, \delta)$-invariant sets 
	$S_1, \dots, S_n \subseteq G$ and projections
	$\tilde p_{\ell,g} \in L^\infty(X)$ for $\ell = 1,\dots, n$ and $g \in 
	S_\ell$ such that
	\begin{enumerate}[label=(\roman*)']
		\item $(\tilde \beta_G)_{gh^{-1}}(\tilde p_{\ell, h}) = \tilde p_{\ell, 
		g}$ for all all $\ell = 1, \dots, n$ and all $g,h \in S_\ell$ ,
		\item $\tilde p_{\ell,g} \tilde p_{k,h} = 0$ for all 
		$\ell, k = 1, \dots, n$, $g \in S_\ell$, $k \in S_h$, whenever $(\ell,
		g) \not = (k,h)$,
		\item $\sum_{\ell = 1}^n \sum_{g \in S_\ell} \tilde 
		p_{\ell, g} = 1$.
	\end{enumerate}
	It is clear then that the projections
	$p_{\ell, g} = \Theta(\tilde p_{\ell,g})$ for $\ell = 1,\dots, n$ and $g 
	\in S_\ell$ satisfy conditions
	(i), (ii) and (iii) of the statement.
	
	Finally, item \ref{II1:4} is an automatic consequence of the uniqueness of 
	$\tau_{\mathcal{R}}$ as a (faithful) normal trace on
	$\mathcal{R}$, which is therefore $\beta_G$-invariant. More specifically, 
	for every $\ell = 1,\dots, n$, by condition \ref{II1:1} and 
	$\beta_G$-invariance of $\tau_{\mathcal{R}}$,
	the value $\tau_{\mathcal{R}}(p_{\ell,g})$ is independent of $g \in 
	S_\ell$. Thus,
	if $\tau_{\mathcal{R}}(p_{\ell,g}) = 0$ for some $\ell = 1,\dots, n$ and $g 
	\in S_\ell$,
	then by faithfulness it follows that $p_{\ell,h} = 0$
	for all $h \in S_\ell$. Therefore, up to discarding some of the $S_\ell$'s, 
	we obtain a family
	of projections satisfying condition \ref{II1:4}.
\end{proof}

Although (iii) implies (ii) in the statement of the above proposition, we state it explicitly 
because it 
is a condition that can be lifted along quotient maps 
via $\sigma$-ideals; see in particular the proof of
(2) $\Rightarrow$ (1) in 
\autoref{thm:mainRokDim}.

For the reader's convenience, we reproduce the statement of 
Theorem \autoref{thmB}. 
\begin{thm}  \label{thm:mainRokDim}
	Let $A$ be a separable, simple, nuclear unital \ca\ with non-empty trace 
	space, let $G$ be a countable, discrete, amenable group,
	and let $\alpha\colon G \to \Aut(A)$ be an action. 
	Suppose that the orbits of the action
	induced by $\alpha$ on $T(A)$ are finite and with uniformly bounded cardinality.
	Then the following are equivalent:
	\be\item \label{rokdim:1} $\alpha$ is strongly outer,
	\item \label{rokdim:2} $\alpha\otimes\mathrm{id}_{\mathcal{Z}}$ has the weak tracial Rokhlin property.
	\ee
When $G$ is residually finite, then the above statements are also equivalent to:
\be\setcounter{enumi}{2}
	\item \label{rokdim:3} $\alpha\otimes\mathrm{id}_{\mathcal{Z}}$ has finite 
	Rokhlin dimension,
	\item \label{rokdim:4} $\alpha\otimes\mathrm{id}_{\mathcal{Z}}$ has  
	Rokhlin dimension at most 2.
	\ee
\end{thm}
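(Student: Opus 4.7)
The plan is to prove the implications (4)$\Rightarrow$(3), (2)$\Rightarrow$(1), (3)$\Rightarrow$(1) by essentially standard arguments, and to devote the main work to (1)$\Rightarrow$(2) and, under residual finiteness, (1)$\Rightarrow$(4). The easy direction (4)$\Rightarrow$(3) is trivial, while (2)$\Rightarrow$(1) and (3)$\Rightarrow$(1) follow by the usual scheme: if $\alpha_g^\tau$ were inner on some $\M_\tau$ for an invariant trace, then the existence of (weak) Rokhlin towers indexed by $g$ would force a contradiction via trace computations against an implementing unitary; strong outerness of $\alpha$ then follows from strong outerness of $\alpha\otimes\id_{\mathcal{Z}}$, since each invariant trace $\tau$ of $A$ extends to $\tau\otimes\tau_{\mathcal{Z}}$ on $A\otimes\mathcal{Z}$ with the same hyperfinite II$_1$ closure.

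For the main implication (1)$\Rightarrow$(2), I would first replace $(A,\alpha)$ by $(A\otimes\mathcal{Z},\alpha\otimes\id_{\mathcal{Z}})$, which is again strongly outer and is equivariantly $\mathcal{Z}$-stable. By \autoref{prop:Zstable} it has uniform property $\Gamma$, and hence by \autoref{thm:mainCPoU-parts-1-3} it has CPoU with constant $M$. Applying \autoref{cor:EmbedModelAction} with $N=\{e\}$ and $\mu_G=\beta_G$ the Bernoulli shift on $\mathcal{R}\cong\overline{\bigotimes}_{g\in G}\mathcal{R}$, we obtain an equivariant unital embedding $\Phi\colon(\mathcal{R},\beta_G)\to((A\otimes\mathcal{Z})^\U\cap(A\otimes\mathcal{Z})',(\alpha\otimes\id)^\U)$. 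Given $K\subseteq G$ finite and $\delta>0$, feed the $(K,\delta)$-invariant sets $S_1,\dots,S_n$ and the projections $p_{\ell,g}\in\mathcal{R}$ provided by \autoref{rem:muGRp} through $\Phi$, obtaining orthogonal $\alpha^\U$-equivariant projections $\widetilde{f}_{\ell,g}$ in the uniform tracial central sequence algebra that sum to $1$ and whose limit-tracial values depend only on $\ell$ (and are positive). To get the statement in $A_\U\cap A'$ required by \autoref{df:wtRp}, view the family $(\widetilde f_{\ell,g})$ as a unital $G$-equivariant $\ast$-homomorphism from the finite-dimensional $G$-$\ast$-algebra $B=\bigoplus_\ell\mathbb{C}^{S_\ell}$, and apply \autoref{cor:LiftAlongKappa} to lift it to an equivariant CPC order zero map $\Psi\colon B\to((A\otimes\mathcal{Z})_\U\cap(A\otimes\mathcal{Z})')^{(\alpha\otimes\id)_\U}$. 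The contractions $f_{\ell,g}:=\Psi(e_{\ell,g})$ are then automatically equivariant and pairwise orthogonal, and the fact that the unital lift is only approximate modulo $J_{A\otimes\mathcal{Z}}$ gives exactly condition \eqref{wtRp:3} of \autoref{df:wtRp}; condition \eqref{wtRp:4} transfers from $\mathcal{R}$ via the limit-trace formula.

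For (1)$\Rightarrow$(4) assume $G$ is residually finite, fix a normal subgroup $H\le G$ of finite index, and set $G'=G/H$. Replace $(A,\alpha)$ by $(A\otimes\mathcal{Z},\alpha\otimes\id)$ as before. Apply \autoref{cor:EmbedModelAction} with $N=H$ and an arbitrary outer action $\mu_{G'}$ of the finite group $G'$ on $\mathcal{R}$ to obtain an equivariant unital embedding $(\mathcal{R},\mu_{G'}\circ q_H)\to((A\otimes\mathcal{Z})^\U\cap(A\otimes\mathcal{Z})')^{(\alpha\otimes\id)^\U\circ q_H}$. Inside $\mathcal{R}$, using its absorption of any outer $G'$-model (via \autoref{thm:ErgThy}), construct a CPC order zero equivariant map $\xi\colon(M_n,\Ad(\lambda^{\otimes k}))\to\mathcal{R}$ and a fixed contraction $s$ satisfying the hypotheses of \autoref{thm:univprop}, where $n=|G'|^k$ and the required rank-one fixed projection $e\in M_n$ is the one onto the $G'$-fixed vector $\xi_0^{\otimes k}$ of $\lambda^{\otimes k}$. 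This yields a unital equivariant homomorphism $(I_{G'}^{(k)},\mu_{G'}^{(k)})\to(\mathcal{R},\mu_{G'})$; composing with the embedding above and pushing the approximate tower elements $f_g^{(j)}$ of \autoref{prop:GactIkdim2} through, we obtain approximate $(d{=}2)$-Rokhlin towers in the uniform-tracial central sequence algebra, and then in the norm central sequence algebra via \autoref{cor:LiftAlongKappa}. Combined with the approximation criterion \autoref{rmk:EquivalenceDimRok}, this yields $\dimRok(\alpha\otimes\id_{\mathcal{Z}})\le 2$.

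The hardest parts will be constructing the data $(\xi,s)$ needed to invoke \autoref{thm:univprop} in a fully $G$-equivariant way (as opposed to just $G'$-equivariantly, requiring that everything lives in the fixed-point algebra for the $H$-action) and then arranging the subsequent lift from the uniform tracial ultrapower to the norm ultrapower so that the order zero structure, and hence the approximate Rokhlin tower relations, are preserved exactly enough to apply \autoref{rmk:EquivalenceDimRok}. Both are controlled by \autoref{cor:LiftAlongKappa} together with the equivariant $\sigma$-ideal property of $J_{A\otimes\mathcal{Z}}$ established in \autoref{prop:JAwSigmaId}.
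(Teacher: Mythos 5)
Your handling of the easy implications and of (1)$\Rightarrow$(2) essentially reproduces the paper's argument: $\mathcal{Z}$-stabilize, obtain CPoU from uniform property $\Gamma$ via \autoref{thm:mainCPoU}, embed the Bernoulli shift with \autoref{cor:EmbedModelAction} ($N=\{e\}$), and push the tiling projections of \autoref{rem:muGRp} through. One small slip: $\bigoplus_\ell \C^{S_\ell}$ carries no $G$-action ($G$ does not act on the F\o lner sets $S_\ell$, which are only $(K,\delta)$-invariant, and the image family $\{\widetilde f_{\ell,g}\}$ is not $G$-invariant either), so \autoref{cor:LiftAlongKappa} cannot be applied to it as stated. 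This is repairable — lift the $G$-invariant commutative subalgebra of $\mathcal{R}$ generated by the full $G$-orbit of the $p_{\ell,g}$, or do as the paper does and take arbitrary positive lifts corrected by a single $x\in (J_A\cap C')^{\alpha_\U}$ from \autoref{prop:JAwSigmaId} — and in either case the argument closes because condition \eqref{wtRp:3} of \autoref{df:wtRp} only asks that the defect $1-\sum f_{\ell,g}$ lie in $J_A$.

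The genuine gap is in (1)$\Rightarrow$(4). You build the unital equivariant homomorphism from $I^{(k)}_{G/H}$ \emph{inside the model factor} $\mathcal{R}$, compose with $\Psi$ to land in $A^\U\cap A'$, and then propose to descend to $A_\U\cap A'$ ``via \autoref{cor:LiftAlongKappa}.'' That corollary only produces an order zero lift $\Phi$, whose unit defect $1-\Phi(1)$ lies in $J_A$, i.e.\ is small in $\|\cdot\|_{2,u}$ but not in operator norm. Rokhlin dimension, unlike the weak tracial Rokhlin property, requires the \emph{norm} partition of unity $\big\|1-\sum_j\sum_{\overline g}f^{(j)}_{\overline g}\big\|<\ep$ (condition \ref{equiv:c} of \autoref{rmk:EquivalenceDimRok}), and this cannot be recovered from data in the uniform tracial ultrapower. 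Indeed, inside $\mathcal{R}$ your hypotheses for \autoref{thm:univprop} can be met with $s=0$, which would hand you exact Rokhlin \emph{projections} in $A^\U\cap A'$ — Rokhlin dimension zero — and these are in general obstructed from lifting to $A_\U\cap A'$ for $K$-theoretic reasons; this shows the lifting step is where all the difficulty sits. The missing ingredient is equivariant property (SI) (\autoref{df:eqSI}, supplied by Szab\'o's theorem for strongly outer actions on $\mathcal{Z}$-stable algebras): one must first lift only the order zero map $\rho\colon \B(\ell^2(G/H)^{\otimes k})\to A_\U\cap A'$, then use (SI) \emph{inside $A_\U$} to manufacture the invariant contraction $s$ with $s^*s=1-\rho(1)$ and $\rho(e^{\otimes k})s=s$ exactly, and only then invoke \autoref{thm:univprop} to get a unital homomorphism $I^{(k)}_{G/H}\to A_\U\cap A'$ through which the towers of \autoref{prop:GactIkdim2} can be pushed. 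The equivariant $\sigma$-ideal property of $J_A$, which you cite as the controlling tool, cannot produce such an $s$: it excises elements of $J_A$ but does not complement a trace-small positive element in operator norm.
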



\begin{proof}
Note that $\alpha$ is stongly outer if and only if
$\alpha\otimes\mathrm{id}_{\mathcal{Z}}$ is strongly outer.
By replacing $\alpha$ with $\alpha\otimes\mathrm{id}_{\mathcal{Z}}$, we may assume that 
$(A,\alpha)\cong_{\mathrm{cc}}(A\otimes \mathcal{Z}, \alpha \otimes \text{id}_\mathcal{Z})$.

\eqref{rokdim:2} $\Rightarrow$  \eqref{rokdim:1}.
The argument is mostly standard, but we include it since we work with a slightly different notion of
the weak tracial Rokhlin property.
Fix $g\in G\setminus\{1\}$ and
$\tau \in T(A)^{\alpha_g}$. We denote as usual the von
Neumann algebra generated by $\pi_\tau(A)$ by $\mathcal{M}_\tau$,
and note that $\alpha_g$ extends to an automorphism
$\alpha_g^\tau$ of $\M_\tau$, even though $\alpha$ does not
necessarily induce
an action on $\M_\tau$.
Fix $K = \{ g \}$ and $\delta = 1/2$. 
Let $S_1,\ldots, S_n$ be a family of $(\{ g \}, 1/2)$-invariant subsets of $G$,
and
$f_{\ell,h}\in A_\U\cap A'$, for $\ell=1,\ldots,n$ and $h\in S_\ell$, be contractions as in
\autoref{df:wtRp}. Let $\kappa_\tau \colon A_\U \to \mathcal{M}_\tau^\U$ be the quotient map (see \autoref{lma:SurjRelComm}), and set
 $p_{\ell,h}=\kappa_\tau(f_{\ell,h})\in \M_\tau^\U\cap \M_\tau'$. Since
$\sum_{\ell = 1}^n \sum_{h \in S_\ell} p_{\ell, h} = 1$,
 we can assume that $\{p_{1,h} \}_{h \in S_1}$ is a collection
of non-zero, pairwise orthogonal (hence distinct) projections.
By $(\{g\},1/2)$-invariance of $S_1$, there exists $h \in S_1$ such that $gh 
\in S_1$, thus we have
\[
(\alpha^\tau)^\U_g(p_{1,h}) = (\alpha^\tau)^\U_{ghh^{-1}}(p_{1,h}) = p_{1,gh}.
\]
It follows that the automorphism 
$(\alpha^\tau)_g^\U$
acts non-trivially on the family $\{p_{1,h}\}_{ h\in S_1}$. Therefore
$(\alpha^\tau)^\U_g$ acts non-trivially on $\mathcal{M}^\U_\tau \cap \mathcal{M}_\tau'$, which in turn
implies that
$\alpha^\tau_g$ is outer, as desired. 

\eqref{rokdim:1} $\Rightarrow$  \eqref{rokdim:2}.
Assume that $\alpha$ is strongly outer.
Let $\beta_G$ be the Bernoulli shift on $\mathcal{R}$ (see \autoref{rem:muGRp}), which is an outer action. 
Since $(A,\alpha)$ is assumed to be 
cocycle conjugate to $(A \otimes \mathcal{Z}, \alpha \otimes \text{id}_{\mathcal{Z}})$, we deduce from \autoref{thm:mainCPoU} that $(A,\alpha)$ has CPoU.
By \autoref{cor:EmbedModelAction} (with $N=\{e\}$), there exists a unital, 
equivariant embedding $\Psi\colon (\mathcal{R},\beta_G)\to (A^\U\cap A',\alpha^\U)$.

Let $K \subseteq G$ be finite and let $\delta > 0$.
Using \autoref{rem:muGRp}, find $(K, \delta)$-invariant finite subsets $S_1, \dots, S_n$ of $G$ and
projections
$p_{\ell,g} \in \R$, for $\ell=1,\ldots,n$ and $g\in S_\ell$, satisfying \ref{II1:1} through \ref{II1:4} in 
\autoref{rem:muGRp}.
Denote by $\tau_{\R}$ the unique trace on $\R$.
By lifting the projections $\Psi(p_{\ell,g})$
along the surjective map 
$\kappa\colon A_\U\cap A'\to A^\U\cap A'$, 
we find positive contractions 
$e_{\ell,g}\in A_\U\cap A'$, for $\ell=1,\ldots,n$ and $g\in S_\ell$, satisfying:
\begin{enumerate}[label=(\alph*)]
\item \label{i:R1}$(\alpha_\U)_{gh^{-1}}(e_{\ell, h})-e_{\ell,g}\in J_A$ for  
$\ell=1,\ldots,n$ and for all $g,h\in S_\ell$,
\item \label{i:R2} $e_{\ell,g}e_{k,h}\in J_A$  for $\ell,k=1,\ldots,n$, and for 
all $g \in S_\ell$, $h\in S_k$, whenever $(\ell,g)\neq (k,h)$,
\item \label{i:R3} $1-\sum_{\ell=1}^n\sum_{g\in S_\ell} e_{\ell, g}\in J_A$,
\item \label{i:R4} 
$\tau(e_{\ell,g})=\tau(\Psi(p_{\ell,g}))=\tau_{\R}(p_{\ell,h})>0$ for all $\tau 
\in T_\U(A)$, for $\ell=1,\ldots,n$ and for all $g,h\in S_\ell$.
\end{enumerate}

Let $C$ be a separable $G$-invariant subalgebra of $A_\U$ containing $A$ and 
the finite set $\{e_{\ell,h}\colon \ell=1,\ldots,n, h\in S_\ell\}$.
By \autoref{prop:JAwSigmaId}
there exists a positive contraction $x\in (J_A\cap C')^{\alpha_\U}$ satisfying 
$xc=c$ for
all $c\in C\cap J_A$. 
For $\ell=1,\ldots,n$
and $g\in S_\ell$, we set $f_{\ell,g}=(1-x)e_{\ell,g}(1-x)\in A_\U\cap A'$. 
We claim that these elements satisfy the conditions of \autoref{df:wtRp}.

Condition \eqref{wtRp:3} in \autoref{df:wtRp} is satisfied by item \ref{i:R3} above and because $x
\in J_A$.
To check \eqref{wtRp:1} in \autoref{df:wtRp}, let $\ell=1,\ldots,n$ and $g,h\in S_\ell$. Observe that $(1-x)c=0$ for every $c\in C\cap J_A$, and use this, along with invariance of $x$, to get
\begin{align*}
(\alpha_\U)_{gh^{-1}}(f_{\ell, h})-f_{\ell,g}&=(\alpha_\U)_{gh^{-1}}((1-x)e_{\ell, h}(1-x))-(1-x)e_{\ell,g}(1-x)\\
&=(1-x)\left((\alpha_\U)_{gh^{-1}}(e_{\ell, h})-e_{\ell,g}\right)(1-x)
\stackrel{\ref{i:R1}}{=} 0.
\end{align*}
To check condition \eqref{wtRp:2} in \autoref{df:wtRp}, let $\ell,k=1,\ldots,n$, let $g\in S_\ell$ and let $h\in S_k$ with $(\ell,g)\neq (k,h)$.
We use $[x,e_{\ell,g}]=0$ at the second step to get
\[ f_{\ell,g}f_{k,h}=(1-x)e_{\ell,g}(1-x)^2e_{k,h}(1-x) = (1-x)e_{\ell,g}e_{k,h}(1-x)^3\stackrel{\ref{i:R2}}{=}0.\]
Finally, for item \eqref{wtRp:4} of \autoref{df:wtRp} observe that, given $\tau 
\in T_\U(A)$, for $\ell=1,\ldots,n$ and for all $g\in S_\ell$, we have
\begin{align*}\tau(f_{\ell,g})&=\tau((1-x)e_{\ell,g}(1-x))=\tau(e_{\ell,g})+\tau(e_{\ell, g}x)+\tau(xe_{\ell, g})+\tau(xe_{\ell,g}x)\\
&=\tau(e_{\ell,g}) . \end{align*}
Hence $\tau(f_{\ell,g})=\tau_{\R}(p_{\ell,g})>0$, 
and this value depends only on $\ell$ by condition \ref{II1:4} of \autoref{rem:muGRp}. 


From now on, we assume that $G$ is residually finite.

\eqref{rokdim:3} $\Rightarrow$ \eqref{rokdim:1}. 
Fix $g\in G\setminus\{1\}$ and
$\tau \in T(A)^{\alpha_g}$. Abbreviate
$\pi_\tau(A)''$ to $\mathcal{M}_\tau$, 
and let $\kappa_\tau\colon A_\U \cap A' \to \mathcal{M}^\U_\tau \cap
\mathcal{M}_\tau'$ be the equivariant quotient map. 
Since $G$ is residually finite,
there is a finite-index, normal subgroup $H \le G$ such that $g \notin H$. 
Using the fact that $d=\dimRok(\alpha)<\I$, find positive contractions 
$f_{\overline k}^{(j)} \in A_\mathcal{U} \cap A'$, for $j = 0, \ldots, d$ and 
$\overline k \in G/H$,
which satisfy, for $j=0,\ldots,d$, for all $g \in G$ and for all $\overline 
k,\overline k'\in G/H$ with $\overline k\neq \overline k'$:
\[f^{(j)}_{\overline k}f^{(j)}_{\overline k'}=0, \ \ \ \sum_{j=0}^d\sum_{\overline k\in G/H} f_{\overline k}^{(j)}=1 \ \ \mbox{ and } \  \ \ 
 (\alpha_\U)_g(f_{\overline k}^{(j)}) = f_{\overline{gk}}^{(j)} . \]
Fix $j_0=0,\ldots,d$ such that $\kappa_\tau(f_{\overline{k}}^{(j_0)})$ is not 
zero for some (and hence all) $\overline k \in G/H$. 
Note that the positive contractions $\kappa_\tau(f_{\overline{k}}^{(j_0)})$, for $\overline k \in G/H$, are thus pairwise orthogonal and satisfy 
$(\alpha^\tau)^\U_g(\kappa_\tau(f_{\overline k}^{(j)})) = \kappa_\tau(f_{\overline {gk}}^{(j)})$.
Since $g \notin H$ there is $\overline k \in G/H$ such that $\overline{gk} \not=
\overline{k}$. In particular, $(\alpha^\tau_g)^\U$
acts non-trivially on $\mathcal{M}_\tau^\U \cap \mathcal{M}'$, thus $\alpha^\tau_g$ cannot be 
an inner automorphism of $\mathcal{M}_\tau$.

\eqref{rokdim:4} $\Rightarrow$ \eqref{rokdim:3}. This implication is trivial.

\eqref{rokdim:1} $\Rightarrow$ \eqref{rokdim:4}.
Let $H$ be a normal subgroup of $G$ of finite index.
After identifying $\mathcal{R}$ with the weak closure of
$\bigotimes_{n \in \N} \mathcal{B}(\ell^2(G/H))$, denote by $\mu_{G/H}$ the 
action given by $\bigotimes_{n \in \N} \text{Ad}(\lambda_{G/H})$.
Since $(A, \alpha) \cong_{\text{cc}} (A \otimes \mathcal{Z}, \alpha \otimes \text{id}_\mathcal{Z}$),
by \autoref{thm:mainCPoU} the dynamical system $(A, \alpha)$ has CPoU.
As moreover $\alpha$ is a strongly outer action, we can apply \autoref{cor:EmbedModelAction}
to obtain an equivariant, unital homomorphism
\[
\Psi\colon (\mathcal{R}, \mu_{G/H} \circ q_H) \to (A^\mathcal{U} \cap A', \alpha^\mathcal{U}).
\]
Fix $\ep > 0$, and let $k \in \N$ be given by
\autoref{prop:GactIkdim2}. 
Denote by 
\[\varphi \colon \left(\B(\ell^2(G/H)^{\otimes k}),\Ad(\lambda_{G/H}^{\otimes k})\circ q_H\right) \to (A^\mathcal{U} \cap A', \alpha^\mathcal{U})\] 
the restriction of $\Psi$ to $\B(\ell^2(G/H)^{\otimes k})\subseteq \R$.
By \autoref{cor:LiftAlongKappa}, there exists an equivariant
completely positive contractive order zero map
\[\rho\colon \left(\B(\ell^2(G/H)^{\otimes k}),\Ad(\lambda_{G/H}^{\otimes k}) \circ q_H \right) \to (A_\mathcal{U} \cap A', \alpha_\mathcal{U})\]
such that the following diagram commutes:
\begin{align*}
\xymatrix{ && A_\mathcal{U} \cap A'\ar[d]^-{\kappa}\\
\B(\ell^2(G/H)^{\otimes k})\ar[urr]^-{\rho}\ar[rr]_-{\varphi} && A^\mathcal{U} \cap A'.}
\end{align*}

We denote by $e\in \B(\ell^2(G/H))$ the projection onto the constant functions, and regard $e^{\otimes k}$ as a projection
in $\B(\ell^2(G/H)^{\otimes k})$. One can verify that 
\[\tau(\rho(e^{\otimes k})^m)=
\tau(\varphi(e^{\otimes k})^m) = \tau(\varphi(e^{\otimes k}))=1/[G:H]^k\] for all $\tau\in T_\U(A)$ and for all $m\in\N$, using that 
$\varphi(e^{\otimes k})$ is a projection.

By \cite[Theorem B]{equiSI}, the system $(A, \alpha)$ has equivariant property 
(SI), so we can pick
a contraction
$s\in (A_\mathcal{U} \cap A')^{\alpha_\U}$ satisfying $s^*s=1-\rho(1)$
and $\rho(e^{\otimes k})s=s$. By 
\autoref{thm:univprop}, there exists
a unital equivariant homomorphism 
\[\theta\colon \big(I_{G/H}^{(k)},\mu^{(k)}_{G/H} \circ q_H\big)\to (A_\mathcal{U} \cap A',\alpha_\mathcal{U}).\]
Let $f_{\overline{g}}^{(j)}\in I_{G/H}^{(k)}$, for ${\overline{g}}\in G/H$ and 
$j=0,1,2$, be positive contractions as in the conclusion of
\autoref{prop:GactIkdim2}. Then 
the positive contractions $\theta(f_{\overline{g}}^{(j)}) \in A_\mathcal{U} \cap A'$ satisfy the conditions
of \autoref{def:frd} up to $\ep$. Since $\ep>0$ is arbitrary, the result 
follows by saturation of $A_\U$ (\autoref{saturation}).
\end{proof}

\section{Equivariant \texorpdfstring{$\mathcal{Z}$}{Z}-stability} \label{s7}
We conclude the paper with the proof of Theorem \autoref{thmC}, which improves 
the main
result of \cite{sato19} by allowing the group to act nontrivially
on $T(A)$.
In the case of integer actions, the recent paper \cite{wou} 
used the techniques in this work to remove the assumptions
that we make on the induced action $G\curvearrowright T(A)$. 

We recall the following equivalent definition of covering dimension from 
\cite{kirchwin}: a topological space $X$ has \emph{(covering) dimension} $m \in 
\N$, in symbols $\text{dim}(X) = m$, if $m$ is the minimal value such that 
every open cover $\mathcal{O}$ of $X$ has a refinement $\mathcal{O}' = 
\mathcal{O}'_0 \sqcup \cdots \sqcup
\mathcal{O}'_m$ such that the elements in $\mathcal{O}'_j$ are pairwise disjoint, for every $j = 0, \dots, m$.
If no such value exists, we say that $X$ has infinite dimension.

Under the assumptions of Theorem~\autoref{thmC}, it is proved
in \autoref{thm:mainCPoU} that $(A,\alpha)$ is
$(\mathcal{Z}, \mathrm{id}_{\mathcal{Z}})$-stable 
if and only if for all $d\geq 2$ 
there exists a unital homomorphism $M_d \to (A_\U \cap A')^{\alpha_\U}$.
The strategy to prove the existence of such homomorphisms is, once again, via a `local to global'
argument over the trace space. More specifically, in
\autoref{prop:FinDirSumRMcDuff} we use results from
\autoref{s5} to show that for every $\tau \in T(A)^\alpha$ and every $d\in\N$, there exists a unital
homomorphism $M_d \to (\M_\tau^\U \cap \M')^{(\alpha^\tau)^\U}$. Then,
in \autoref{prop:Gammabyhand}, we glue these maps together to obtain
a unital homomorphism $M_d \to (A^\U \cap A')^{\alpha^\U}$.

To perform this gluing argument, since in this case we do not assume that $(A,\alpha)$ has CPoU,
we rely 
on the techniques from \cite[Section 6, 7]{kirchror}, which make essential use of the assumption that $\partial_e T(A)$ 
is compact and finite-dimensional.
Those arguments could also be carried out using
Ozawa's work on $W^*$-bundles in \cite{ozawa_approx} (see for instance \cite{liao},
\cite{liaoZm}), whose
role in our setting is replaced by $A^\U$.
(One can show that $A^\U$ is isomorphic to the ultrapower of the $W^*$-bundle 
associated to the tracial
completion of $A$ when $\partial_e T(A)$ is
compact.)

The main `local to global'
argument is contained
in \autoref{prop:Gammabyhand}; it is an equivariant analogue of 
\cite[Proposition 7.4, Lemma 7.5]{kirchror}. Related arguments in the 
equivariant setting, under the additional assumption that $\alpha^*$ is trivial
on $\partial_e T(A)$, can be found in \cite{sato19}, \cite{liao}, \cite{liaoZm}.

We start with a preliminary lemma.

\begin{lma} \label{lma:taualpha}
Let $A$ be a unital, separable \ca\ such that
$\partial_e T(A)$ is compact.
Let $G$ be a countable, discrete group,  
and let $\alpha\colon G\to\Aut(A)$ be an action. Suppose that
the orbits of the induced action $\alpha^*$ of $G$ on $\partial_eT(A)$ are
finite with bounded cardinality, and that the orbit space $\partial_eT(A)/G$
is Hausdorff. Then the averaging function 
$\partial_eT(A)\to T(A)$ mapping $\tau \in \partial_e T(A)$ to
$\tau^\alpha = \frac{1}{| G \cdot \tau | }\sum_{\sigma \in G \cdot \tau} \sigma$ is continuous.
\end{lma}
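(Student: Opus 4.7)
The plan is to reduce the continuity statement to a sequential one---permissible since $A$ is separable, so $T(A)$ and its closed subset $\partial_eT(A)$ are weak$^*$-metrizable. Given $\tau_n\to\tau$ in $\partial_eT(A)$, by compactness of $T(A)$ it suffices to show that any weak$^*$-limit point $\rho$ of $(\tau_n^\alpha)_n$ equals $\tau^\alpha$.

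The combinatorial core is to extract, by successive subsequences, a clean picture of how the orbits $G\cdot\tau_n$ degenerate to $G\cdot\tau$. Using the uniform bound on orbit sizes, I first assume $|G\cdot\tau_n|=\ell$ is constant and fix an enumeration $G\cdot\tau_n=\{\sigma_1^{(n)},\ldots,\sigma_\ell^{(n)}\}$; compactness of $\partial_eT(A)$ then yields a further subsequence along which $\sigma_i^{(n)}\to \sigma_i^*$ for every $i$. For each $g\in G$, the action of $g$ permutes the enumeration by some permutation $\pi_g^{(n)}$ of $\{1,\ldots,\ell\}$; since the symmetric group on $\ell$ letters is finite and $G$ is countable, a diagonal argument produces a subsequence along which, simultaneously for every $g\in G$, $\pi_g^{(n)}$ is constant in $n$, say equal to $\pi_g$. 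The resulting $\pi\colon G\to \mathrm{Sym}(\ell)$ is a transitive $G$-action on $\{1,\ldots,\ell\}$ (transitivity is inherited from each $\tau_n$, whose orbit is a single $G$-orbit).

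Next, I would identify the limits. The quotient map $q\colon\partial_eT(A)\to\partial_eT(A)/G$ is continuous, so $q(\sigma_i^{(n)})=q(\tau_n)\to q(\tau)$ and also $q(\sigma_i^{(n)})\to q(\sigma_i^*)$; Hausdorffness of $\partial_eT(A)/G$ then forces $q(\sigma_i^*)=q(\tau)$, i.e.\ $\sigma_i^*\in G\cdot\tau$. Passing to limits in $g\cdot\sigma_i^{(n)}=\sigma_{\pi_g(i)}^{(n)}$ shows that $i\mapsto \sigma_i^*$ is a $G$-equivariant surjection from the transitive $G$-set $(\{1,\ldots,\ell\},\pi)$ onto the transitive $G$-set $G\cdot\tau$ of size $k:=|G\cdot\tau|$. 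All fibers of such a map have equal size $\ell/k$, and therefore
\[
\rho=\lim_n \frac{1}{\ell}\sum_{i=1}^\ell \sigma_i^{(n)}=\frac{1}{\ell}\sum_{i=1}^\ell \sigma_i^*=\frac{1}{k}\sum_{\sigma\in G\cdot\tau}\sigma=\tau^\alpha.
\]

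I do not anticipate a substantive obstacle; the argument is elementary. The delicate bookkeeping point is to carry out the three subsequence extractions (orbit size, individual limits, permutations) in the right order, using countability of $G$ in the final diagonalisation. Note that the uniform bound on orbit sizes is essential for stabilising $|G\cdot\tau_n|$, while Hausdorffness of $\partial_eT(A)/G$ is exactly what forces each $\sigma_i^*$ to lie in $G\cdot\tau$; finiteness of orbits alone would not suffice.
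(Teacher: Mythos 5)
Your argument is close to complete and takes a genuinely different route from the paper's, but one step is not justified as written: the claim that the limit permutation action $\pi$ on $\{1,\ldots,\ell\}$ is transitive. Transitivity of each $\pi^{(n)}$ does not pass to the pointwise limit when $G$ is infinite: for each fixed $g$ the sequence $\pi_g^{(n)}$ stabilizes, but the group elements witnessing transitivity at stage $n$ may escape to infinity (e.g.\ for $G$ free on generators $x_1,x_2,\dots$ and $\ell=2$, the homomorphisms $\pi^{(n)}\colon G\to \mathrm{Sym}(2)$ sending $x_n\mapsto(12)$ and $x_j\mapsto e$ for $j\neq n$ are all transitive, yet converge pointwise to the trivial action). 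Since "equal fibers'' is exactly what produces the uniform weights $1/k$, this is the load-bearing step. Fortunately the gap is easy to close, because transitivity of the \emph{source} is not what you need: equivariance of $\phi\colon i\mapsto\sigma_i^*$ together with transitivity of the \emph{target} $G\cdot\tau$ already forces equal fibers, since $\pi_g$ restricts to a bijection $\phi^{-1}(\sigma)\to\phi^{-1}(g\cdot\sigma)$ for every $g$ and $\sigma$, and $G$ acts transitively on $G\cdot\tau$. (Surjectivity of $\phi$ should also be made explicit: arrange $\sigma_1^{(n)}=\tau_n$, so $\sigma_1^*=\tau$, and apply equivariance.) Alternatively, you can bypass the permutation bookkeeping entirely: $\rho$ is a limit of $G$-invariant traces, hence $G$-invariant, and it is a convex combination of the points of the single finite orbit $G\cdot\tau\subseteq\partial_eT(A)$; uniqueness of barycentric coordinates in the Choquet simplex $T(A)$ then forces the coefficients to be constant on the orbit, i.e.\ $\rho=\tau^\alpha$.

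For comparison, the paper argues quantitatively rather than by extracting limit points: it fixes coset representatives $s_1,\dots,s_M$ of the stabilizer $G_\tau$, builds an open neighborhood $V$ of $\tau$ whose translates $\alpha^*_{s_j}(V)$ are disjoint and on which the test functionals $\alpha_{s_j}(a_i)$ are nearly constant, and then uses compactness plus Hausdorffness of $\partial_eT(A)/G$ to show that $G\cdot\tau_n$ is eventually contained in $\bigcup_j\alpha^*_{s_j}(V)$ with exactly $N/M$ points in each translate; the desired estimate follows by direct averaging. Both proofs use the same three hypotheses in the same roles (the uniform orbit bound to stabilize $|G\cdot\tau_n|$, compactness of $\partial_eT(A)$, and Hausdorffness of the orbit space to force accumulation points of the orbits into $G\cdot\tau$), so once your fiber-counting step is repaired the two arguments are of comparable length; yours avoids the choice of the neighborhood $V$ at the cost of the subsequence bookkeeping.
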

\begin{proof}
Since $A$ is separable, both $\partial_e T(A)$ and $T(A)$ are metrizable. It is therefore enough
to show that if
$(\tau_n)_{n\in\N}$ is a sequence in $\partial_eT(A)$ such that
$\tau_n \to \tau$ in $\partial_eT(A)$,
then $\tau_n^\alpha \to \tau^\alpha$ in $T(A)$. Fix $\varepsilon > 0$ and $a_1,\dots, a_k \in A$.

Since the orbits of the $G$-action on $\partial_eT(A)$ have bounded
cardinality, upon passing to a subsequence 
we can assume without loss of generality that there is $N \in \N$ such that $| 
G \cdot \tau_n | = N$ for every
$n \in \N$. Let $G_\tau$ be the stabilizer of $\tau$ and let $S = \{ s_1, \dots, s_M \}$ be a set
of representatives of left cosets of $G_\tau$ with $s_1 = e$. In particular $| G \cdot \tau | = | S | = M$.
Let $V$ be an open subset of $\partial_e T(A)$ such that
\begin{enumerate}[label=(\roman*)]
\item $\tau \in V$,
\item \label{i:V2} $\alpha^*_{s_j}(V) \cap V = \emptyset$ for every $1 < j \le M$,
\item \label{i:V3} $| \sigma(\alpha_{s_j}(a_i)) -
\tau(\alpha_{s_j}(a_i)) | < \varepsilon$, for all $\sigma\in V$, all $j \le M$ and all $i \le k$.
\end{enumerate}
\begin{claim}
There exists $m \in \N$ such that $G \cdot \tau_n \subseteq \bigcup_{j=1}^M \alpha^*_{s_j}(V)$
for every $n > m$.
\end{claim}
Suppose this is not the case. Up to taking a subsequence, for every $n \in \N$, let $\sigma_n \in G \cdot \tau_n$ such that
$\sigma_n  \notin \bigcup_{j=1}^M \alpha^*_{s_j}(V)$. By compactness of $\partial_e T(A)$, up
to taking a subsequence, we can assume that $\sigma_n$ converges to some $\sigma \in
\partial_e T(A)$. Since $\partial_e T(A) /G$
is Hausdorff, it follows that $\sigma \in G \cdot \tau \subseteq \bigcup_{j=1}^M \alpha^*_{s_j}(V)$,
which is a contradiction. This concludes the proof of the claim.

Given $n > m$, let $\{\sigma_{1, n} \dots, \sigma_{K, n} \}$ be the intersection
$G \cdot \tau_n \cap V$. Therefore, as each $\alpha^*_{s_j}$ is a homeomorphism,
it follows that
$
G\cdot \tau_n = \bigcup_{j=1}^M \alpha^*_{s_j}(G \cdot \tau_n \cap V),
$
and that in particular $N = M \cdot K$, so $K$ does not depend on $n > m$. Finally,
for all $i \le k$ and every $n> m$
big enough so that $\tau_n \in V$, we conclude that
\begin{align*}
\lvert \tau_n^\alpha(a_i) - \tau^\alpha(a_i) \rvert & = \Big\lvert \frac{1}{M \cdot K} \sum_{j=1}^M \sum_{h = 1}^K
\sigma_{h,n}(\alpha_{s_j}(a_i)) - \frac{1}{M} \sum_{j=1}^M \tau(\alpha_{s_j}(a_i)) \Big\rvert \\
& = \frac{1}{M}  \Big\lvert \frac{1}{K} \sum_{j=1}^M \sum_{h = 1}^K  \sigma_{h,n}(\alpha_{s_j}(a_i))
- \frac{1}{K} \sum_{j=1}^M \sum_{h = 1}^K \tau(\alpha_{s_j}(a_i)) \Big\rvert \\
& \le \frac{1}{M \cdot K}  \sum_{j=1}^M \sum_{h = 1}^K \lvert \sigma_{h,n}(\alpha_{s_j}(a_i))  -
\tau(\alpha_{s_j}(a_i)) \rvert 
\ \stackrel{\mathclap{\text{\ref{i:V3}}}}{<}\ \varepsilon.\qedhere
\end{align*}
\end{proof}

We fix some notation for the next couple of propositions.

\begin{nota} \label{remark:traceaction}
Let $A$ be a unital \ca\ such that
$\partial_e T(A)$ is compact, let $G$ be a countable group,
and let $\alpha\colon G\to\Aut(A)$ be an action.
Recall that $\alpha^*$ denotes the affine action of $G$ on $T(A)$ induced by $\alpha$; see \autoref{ss2.1}.
Let $\alpha^{**}\colon G\to\Aut(C(\partial_e T(A)))$ be 
defined as $\alpha^{**}_g(f) = f \circ \alpha^*_{g^{-1}}$ for every $g \in G$ and all $f\in C(\partial_eT(A))$. 
Note that $\alpha^{\ast\ast}$ is the restriction of 
the double-dual action to $C(\partial_e(T(A)))\subseteq A^{\ast\ast}$, thus justifying the notation.

Let $\mathcal{T}\colon A \to C(\partial_e T(A))$ be the unital,
completely positive map defined as $\mathcal{T}(a)(\tau) =
\tau(a)$ for all $a\in A$ and all $\tau\in T(A)$. 
Then $\mathcal{T}\colon (A, \alpha)\to (C(\partial_e T(A), \alpha^{**})$ is equivariant, since for $g\in G$, $a\in A$ and $\tau\in\partial_eT(A)$ we have:
\[
\mathcal{T}(\alpha_g(a))(\tau) = \tau(\alpha_g(a)) = 
\mathcal{T}(a)(\alpha^*_{g^{-1}}(\tau)) = \alpha^{**}_g(\mathcal{T}(a))(\tau).
\]
\end{nota}

The following proposition is an equivariant version of
\cite[Corollary 6.8]{kirchror}.

\begin{prop} \label{prop:approximation}
Let $A$ be a separable, simple, nuclear, infinite-dimensional, \uca. Let $G$ be a countable, discrete, amenable group,  
and let $\alpha\colon G\to\Aut(A)$ be an action.
Suppose that $\partial_eT(A)$ is compact and nonempty.

Let
$\{f_k^{(j)}\colon 1 \le j \le m, 1 \le k \le s_j\}$  be an 
$\alpha^{**}$-invariant 
partition of unity in $C(\partial_e T(A))$ 
such that $f_k^{(j)} f_h^{(j)} =0$ for
all $j=1,\dots, m$ and all $k,h = 1,\dots, s_j$ with $k \not= h$.
Given $\varepsilon > 0$, a compact subset $\Omega \subseteq A$ and a finite subset
$G_0 \subseteq G$, there exist positive contractions $c_k^{(j)}\in A$, for 
$j=1,\dots,m,$ and for $k=1,\dots, s_j$, such that
\begin{enumerate}
\item $\| c_k^{(j)} b - b c_k^{(j)} \| < \varepsilon$ for all $b \in \Omega$,
\item $\sup_{\tau \in \partial_e T(A)} \lvert f_k^{(j)}(\tau) - \tau(c_k^{(j)}) \rvert < \varepsilon$,
\item $c_k^{(j)} c_h^{(j)} = 0$ for all $h = 1, \dots, s_j$ such that $h \not = k$,
\item $\big\| \alpha_g(c_k^{(j)}) - c_k^{(j)} \big\| < \varepsilon$ for all $g \in G_0$,
\item $\sum_{j=0}^m \sum_{k = 1}^{s_j} (c_k^{(j)})^2 \le 1 + \varepsilon_0$.
\end{enumerate}
\end{prop}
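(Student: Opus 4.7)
Proof plan: The statement is an equivariant upgrade of \cite[Corollary 6.8]{kirchror}, and the strategy is to apply the non-equivariant version first and then promote the output to an approximately $G$-equivariant one, exploiting the fact that the partition $\{f_k^{(j)}\}$ is already $\alpha^{**}$-invariant.

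First, I would analyze how $G$ acts on the finite index set $I=\{(k,j)\colon 1\le j\le m,\,1\le k\le s_j\}$. Since $\alpha_g^{**}$ is a $*$-automorphism of $C(\partial_e T(A))$, it sends the orthogonal projections $\{f_k^{(j)}\}$ to themselves as a set and preserves the orthogonality relations within each tower; hence there is a homomorphism $G\to\mathrm{Sym}(I)$ whose kernel $N$ is a finite-index normal subgroup. This reduces the equivariance problem to handling the finite quotient $G/N$ (for the combinatorial index action) together with a Følner-type averaging for the remaining $N$-equivariance.

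Second, I would apply \cite[Corollary 6.8]{kirchror} with the enlarged compact set $\Omega'=\bigcup_{g\in G_0}\alpha_g(\Omega)$ and a much smaller tolerance $\varepsilon'\ll \varepsilon/|G_0|$, obtaining positive contractions $\tilde c_k^{(j)}\in A$ satisfying the non-equivariant analogues of (1), (2), (3), (5) with slack $\varepsilon'$. Because $\mathcal{T}$ is equivariant (see \autoref{remark:traceaction}) and the partition is $\alpha^{**}$-invariant, for every $g\in G_0$ the family $\{\alpha_g(\tilde c_k^{(j)})\}$ is another $\varepsilon'$-lift of the same partition after relabelling by the permutation induced by $g$. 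Picking a Følner set $F\subseteq G$ that is $(G_0,\delta)$-invariant for very small $\delta$, I would define
\[
c_k^{(j)}=\frac{1}{|F|}\sum_{g\in F}\alpha_g\bigl(\tilde c_{g^{-1}\cdot(k,j)}\bigr),
\]
where $g\cdot(k,j)$ denotes the induced action of $g$ on $I$. The Følner condition then yields (4), the inclusion $\Omega'\supseteq\alpha_g(\Omega)$ yields (1), equivariance of $\mathcal{T}$ yields (2), and convexity yields (5) with small additional slack.

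The main obstacle will be recovering the \emph{exact} orthogonality condition (3), which is in general broken by averaging. The saving feature is that, within any fixed tower $j$, the individual summands $\alpha_g(\tilde c_{g^{-1}\cdot(k,j)})$ and $\alpha_g(\tilde c_{g^{-1}\cdot(h,j)})$ are \emph{exactly} orthogonal (because $\tilde c_{\cdot}$ are exactly orthogonal within each tower and $\alpha_g$ preserves this), so the cross terms in $c_k^{(j)}c_h^{(j)}$ only fail orthogonality when $F^{-1}\!\cdot(k,j)$ and $F^{-1}\!\cdot(h,j)$ land in the same tower under distinct group elements; by the $G$-action's preservation of the tower partition of $I$, these contributions can be bounded by $O(\delta)$. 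A standard functional-calculus perturbation (cutting $c_k^{(j)}$ with a suitable spectral function of $\sum_h c_h^{(j)}$, as in the proof of \cite[Corollary 6.8]{kirchror}) then restores exact orthogonality without significantly changing (1), (2), (4), (5). Choosing $\varepsilon'$ and $\delta$ small enough at the outset accommodates all accumulated errors within $\varepsilon$.
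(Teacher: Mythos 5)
Your route is genuinely different from the paper's, but it has a gap at the step you yourself flag as the main obstacle: recovering exact orthogonality after averaging. After setting $c_k^{(j)}=\frac{1}{|F|}\sum_{g\in F}\alpha_g(\tilde c_k^{(j)})$ (note that in the paper's usage ``$\alpha^{**}$-invariant'' means each $f_k^{(j)}$ is \emph{individually} fixed, so the permutation of the index set is trivial), the product $c_k^{(j)}c_h^{(j)}$ contains cross terms $\alpha_g(\tilde c_k^{(j)})\alpha_{g'}(\tilde c_h^{(j)})$ with $g\neq g'$, and there is no reason these are small in norm. The elements $\tilde c_k^{(j)}$ produced by \cite[Corollary 6.8]{kirchror} are localized near $\mathrm{supp}(f_k^{(j)})$ only in the tracial sense of condition (2); two positive contractions with disjoint tracial supports can perfectly well have a product of norm close to $1$ (already two projections of small trace in a II$_1$ factor need not be close to orthogonal), and $\alpha_g(\tilde c_k^{(j)})$ for $g\neq e$ bears no norm relation to $\tilde c_h^{(j)}$. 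So the claimed $O(\delta)$ bound on the cross terms is unjustified, and the functional-calculus perturbation you invoke only restores exact orthogonality from \emph{approximate} orthogonality in norm, which you do not have. A secondary, fixable issue: the enlarged set should be $\Omega'=\bigcup_{g\in F}\alpha_{g^{-1}}(\Omega)$ over the F\o lner set $F$ (chosen first, depending only on $G_0$ and $\delta$), not over $G_0$.

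For comparison, the paper avoids averaging entirely. It uses the equivariant trace-evaluation map $\mathcal{T}_\U\colon A_\U\to C(\partial_eT(A))_\U$ and the structure of its multiplicative domain (\cite[Proposition 6.6]{kirchror}) to produce an \emph{exact} injective homomorphism $\Phi$ from the $C^*$-algebra generated by the $f_k^{(j)}$ into $(A^\U\cap A')^{\alpha^\U}$; exact orthogonality within each tower is automatic because $\Phi$ is multiplicative. It then lifts $\Phi$ to an equivariant completely positive contractive \emph{order zero} map into $(A_\U\cap A')^{\alpha_\U}$ via \autoref{cor:LiftAlongKappa}, which rests on $J_A$ being an equivariant $\sigma$-ideal (\autoref{prop:JAwSigmaId}); order zero maps preserve orthogonality exactly, and pairwise orthogonal positive contractions in $A_\U$ lift to pairwise orthogonal positive contractions in $\ell^\I(A)$. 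Exactness of condition (3) is thus maintained at every stage rather than repaired at the end. If you want to salvage an averaging argument, the averaging has to happen before the passage back to honest elements of $A$ — essentially what the equivariant $\sigma$-ideal lemma packages for you — rather than after.
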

\begin{proof}
Let $\mathcal{T}\colon A \to C(\partial_e T(A))$ be the unital, completely positive equivariant
map from \autoref{remark:traceaction}.
By \cite[Proposition 6.6]{kirchror}, when $\partial_e T(A)$ is closed, the induced map
$\mathcal{T}_\U \colon A_\U \to C(\partial_e T(A))_\U$ maps the unit ball of $A_\U$ onto
the unit ball of $C(\partial_e T(A))_\U$. Moreover,
denoting by 
$\text{Mult}(\mathcal{T}_\U)$ the multiplicative domain of $\mathcal{T}_\U$, we have
\[J_A \subseteq \text{Mult}(\mathcal{T}_\U) \subseteq J_A +
(A_\U \cap A'),\] 
and the restriction $\mathcal{T}_\U\colon\text{Mult}(\mathcal{T}_\U) \to C(\partial_e T(A))_\U$ is a surjective homomorphism whose kernel is $J_A$ (\cite[Proposition 6.6.vi]{kirchror}).
Moreover, the unital completely positive map 
$\mathcal{T}_\U\colon (A_\U, \alpha_\U)\to (C(\partial_e T(A))_\U, \alpha^{**}_\U)$ is equivariant by
\autoref{remark:traceaction}.

Let $C$ be the ($\alpha^{**}_\U$-invariant) $C^*$-subalgebra of $C(\partial_e T(A))_\U$ 
generated by the family
$\{f_k^{(j)}\}_{1 \le j \le m, 1 \le k \le s_j}$.
By the previous observations, there is an injective homomorphism $\Phi\colon C \to (A^\U \cap A')^{\alpha^\U}$ satisfying
$\tau(\Phi(f)) = f(\tau)$
for $\tau \in \partial_e T(A)$ and $f \in C$.
Regarding $\Phi$ as an equivariant map $(C,\id_C)\to (A^\U \cap A',\alpha^\U)$, by \autoref{cor:LiftAlongKappa} there exists a completely positive, contractive,
order zero map $\Psi$
such that the following diagram commutes:
\begin{align*}
\xymatrix{ && (A_\U\cap A')^{\alpha_\U}\ar[d]^-{\kappa}\\
C\ar[urr]^-{\Psi}\ar[rr]_-{\Phi} && (A^\U\cap A')^{\alpha^\U}.}
\end{align*}
The positive contractions $\Psi(f_k^{(j)})$, for $1 \le j \le m$ and $1 \le k \le s_j$, are pairwise orthogonal, and hence can be 
lifted to pairwise orthogonal positive contractions
$(d_{k,n}^{(j)})_{n \in \N} \in \ell^\I(A)$, for 
$1 \le j \le m$ and $1 \le k \le s_j$.
We can thus find $n \in \N$ big enough so that setting $c_k^{(j)} = d_{k,n}^{(j)}$,
for all $j=1,\dots, m$ and $k=1, \dots, s_j$, gives the required elements.
\end{proof}

The following equivariant analogue of \cite[Proposition 7.4, Lemma 7.5]{kirchror}
extends
\cite[Corollary 3.2]{liaoZm} and \cite[Proposition 3.3]{liao} to all amenable 
groups, while at the 
same time relaxing the assumptions on the induced action
$G\curvearrowright T(A)$.
\begin{prop} \label{prop:Gammabyhand}
Let $A$ be a separable, simple, nuclear, infinite-dimensional, \uca,
let $G$ be a countable, discrete, amenable group,  
and let $\alpha\colon G\to\Aut(A)$ be an action.
Suppose that $\partial_eT(A)$ is compact, that $\dim(\partial_eT(A))<\I$,
that the orbits of the induced action of $G$ on $\partial_eT(A)$ are
finite with bounded cardinality, and that the orbit space $\partial_eT(A)/G$
is Hausdorff.
Then, for every $d \in \N$, there exists a unital homomorphism $\psi\colon M_d \to (A^\U \cap A')^{\alpha^\U}$.
\end{prop}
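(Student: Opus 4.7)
The plan is to combine the local existence of matrix units in the GNS von Neumann completions of invariant traces with a $G$-equivariant partition of unity argument on $\partial_e T(A)$, in the spirit of \cite[Proposition 7.4, Lemma 7.5]{kirchror}. The hypotheses on $\partial_eT(A)$ and on the $G$-action on it are needed precisely to make the gluing procedure work without appealing to CPoU. By countable saturation of $A^\U$, it suffices to construct, for every $\varepsilon>0$, every finite $F\subseteq A$, and every finite $G_0\subseteq G$, elements $v_{r,s}\in A$ ($r,s=1,\ldots,d$) that approximately satisfy the $M_d$ matrix unit relations in $\|\cdot\|_{2,u}$, approximately commute with $F$, and are approximately $G_0$-invariant.

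For each $\tau\in\partial_eT(A)$, the averaged trace $\tau^\alpha\in T(A)^\alpha$ satisfies $\mathcal{M}_{\tau^\alpha}\cong\bigoplus_{\sigma\in G\cdot\tau}\mathcal{M}_\sigma$, which is a separably representable hyperfinite type II$_1$ von Neumann algebra (since $A$ is simple, nuclear and infinite-dimensional, hence has no finite-dimensional quotients), with $\alpha^{\tau^\alpha}$ permuting the summands via the $G$-action on the orbit. By \autoref{prop:FinDirSumRMcDuff} there is a unital homomorphism $\psi_\tau\colon M_d\to(\mathcal{M}_{\tau^\alpha}^\U\cap\mathcal{M}_{\tau^\alpha}')^{(\alpha^{\tau^\alpha})^\U}$. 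Representing $\psi_\tau(e_{r,s})$ as a bounded sequence in $\mathcal{M}_{\tau^\alpha}$ and applying Kaplansky density, one obtains elements $x^\tau_{r,s}\in A$ that realize the matrix unit relations approximately in $\|\cdot\|_{2,\tau^\alpha}$, commute with $F$ in $\|\cdot\|_{2,\tau^\alpha}$, and are $G_0$-invariant in $\|\cdot\|_{2,\tau^\alpha}$. By \autoref{lma:taualpha}, the averaging map $\sigma\mapsto\sigma^\alpha$ is continuous on $\partial_eT(A)$, so these approximations remain valid in $\|\cdot\|_{2,\sigma^\alpha}$ for $\sigma$ in an open neighborhood $U_\tau$ of $\tau$ in $\partial_eT(A)$.

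Next, set $Y=\partial_eT(A)/G$, which is compact Hausdorff of finite covering dimension (inheriting finite-dimensionality from $\partial_eT(A)$, using that the orbits have uniformly bounded size). The images $q(U_\tau)$ cover $Y$; by the definition of covering dimension, refine this cover into $m+1$ colors whose sets are pairwise disjoint within each color, with $m=\dim Y$. Let $\{f^{(j)}_k\}$ be a subordinate partition of unity in $C(Y)$ and pull back via $q$ to obtain an $\alpha^{**}$-invariant partition of unity $\{\tilde f^{(j)}_k\}$ in $C(\partial_eT(A))$ satisfying $\tilde f^{(j)}_k\tilde f^{(j)}_h=0$ for $k\neq h$. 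Apply \autoref{prop:approximation} to lift these to pairwise orthogonal (within each color), approximately central, approximately $G_0$-invariant positive contractions $c^{(j)}_k\in A$ with $\tau(c^{(j)}_k)$ close to $\tilde f^{(j)}_k(\tau)$ uniformly in $\tau\in\partial_eT(A)$. For each $(j,k)$ fix a representative $\tau^{(j)}_k$ in the corresponding piece, let $x^{(j,k)}_{r,s}=x^{\tau^{(j)}_k}_{r,s}$, and set $v_{r,s}=\sum_{j,k}(c^{(j)}_k)^{1/2}x^{(j,k)}_{r,s}(c^{(j)}_k)^{1/2}$.

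The main technical obstacle is checking that the $v_{r,s}$ approximately satisfy the matrix unit relations in $\|\cdot\|_{2,u}$: on any extreme trace $\tau$, only summands whose support contains $\tau$ contribute significantly, within each color the $(c^{(j)}_k)^{1/2}$ are pairwise orthogonal, and across colors the computation reduces, after commuting $c^{(j)}_k$ with $x^{(j,k)}_{r,s}$ up to small error, to weighted sums $\sum_{j,k}\tilde f^{(j)}_k(\tau)$ times the local matrix unit relations, which are satisfied at $\tau$ by construction (using the continuity of $\sigma\mapsto\sigma^\alpha$ and the fineness of the refinement). Approximate centrality, $G_0$-invariance, and unitality follow from the corresponding properties of the $c^{(j)}_k$ and $x^{(j,k)}_{r,s}$, together with $\sum_{j,k}c^{(j)}_k\approx 1$. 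A standard reindexation/saturation argument (\autoref{saturation}) then promotes these approximate relations to exact matrix unit relations for an equivariant unital embedding $M_d\to(A^\U\cap A')^{\alpha^\U}$, as required.
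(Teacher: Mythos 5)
Your overall architecture matches the paper's: local solutions at averaged traces via \autoref{prop:FinDirSumRMcDuff}, continuity of $\tau\mapsto\tau^\alpha$ (\autoref{lma:taualpha}) to spread them over invariant neighborhoods, a colored refinement using $\dim(\partial_eT(A)/G)=\dim(\partial_eT(A))=m$, an invariant partition of unity lifted by \autoref{prop:approximation}, and a final gluing. However, the final gluing step as you describe it has a genuine gap.

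The formula $v_{r,s}=\sum_{j,k}(c^{(j)}_k)^{1/2}x^{(j,k)}_{r,s}(c^{(j)}_k)^{1/2}$ does not produce approximate matrix units. Within a single color the $c^{(j)}_k$ are pairwise orthogonal, so those cross terms vanish; but across distinct colors $j\neq j'$ the supports overlap, and $\sum_j c^{(j)}\le 1$ forces overlaps whenever $m\geq 1$. On an overlap, $v_{r,s}v_{s,t}$ contains terms of the form $c^{(j)}_k c^{(j')}_{k'}\,x^{(j,k)}_{r,s}x^{(j',k')}_{s,t}$ (after commuting), and the product of matrix units coming from two unrelated approximate embeddings of $M_d$ into $\M_\tau$ has no reason to be close to any matrix unit. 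Concretely, if at a trace $\tau$ two colors are active with weights $\lambda$ and $1-\lambda$ and local systems $\{e_{rs}\}$, $\{f_{rs}\}$, your $v_{rs}$ behaves like $\lambda e_{rs}+(1-\lambda)f_{rs}$, and $v_{rs}v_{st}-v_{rt}$ picks up the term $\lambda(1-\lambda)(e_{rs}f_{st}+f_{rs}e_{st}-e_{rt}-f_{rt})$, which is not small in general. Your claim that the computation ``reduces to weighted sums of the local matrix unit relations'' only accounts for the diagonal terms $(j,k)=(j',k')$.

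The paper circumvents exactly this by \emph{not} merging the colors into one family of elements. It forms, for each color $j$, a single cpc map $\psi^{(j)}(a)=\sum_k c^{(j)}_k\varphi^{(j)}_k(a)c^{(j)}_k$, which is approximately order zero in $\|\cdot\|_{2,u}$ precisely because within a color the supports are disjoint; it arranges in the inductive construction of the covering system that the local lifts for different colors approximately commute (each new family is built to commute with the images of all previous ones); and it then invokes \cite[Lemma 7.6]{kirchror}, which converts $m+1$ order zero maps $M_d\to (A^\U\cap A')^{\alpha^\U}$ with commuting ranges and $\sum_j\psi^{(j)}(1)=1$ into a single unital homomorphism. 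Both the cross-color commutation of the local lifts and this last lemma are essential, and both are missing from your argument. To repair your proof you would need to add these two ingredients, at which point it becomes the paper's proof.
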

\begin{proof}

Fix $d\ge 1$ and
let $\tau \in T(A)^\alpha$. By the assumptions on $A$, the von Neumann algebra $\M_\tau$
is a separably representable, hyperfinite, and type II$_1$, thus 
by \autoref{prop:FinDirSumRMcDuff} there is a unital
homomorphism
\[
\theta_\tau\colon M_d \to (\M^\U_\tau \cap \M')^{(\alpha^\tau)^\U}.
\]
The rest of the proof is divided into two claims,
respectively inspired by Proposition~7.4 and Lemma~7.5 in~\cite{kirchror}. 
\begin{claim} \label{claim:approx1}
Let $\ep>0$, let $m\in\N$, let $G_0\subseteq G$ be finite and let
$\Omega \subseteq A$ be compact. Then there exist an open cover $\mathcal{O}$ of $\partial_eT(A)$,
and families $\Phi^{(j)}=\{\varphi_1^{(j)},\ldots,\varphi_{r_j}^{(j)}\}$, for $j=1,\ldots,m$, consisting of unital completely
positive contractive maps $\varphi_k^{(j)}\colon M_d\to A$, for 
$k=1,\ldots,r_j$, such that all open sets $V \in \mathcal{O}$ are 
$\alpha^*$-invariant, and for every
$j=1,\ldots,m$, and $k \in \{1,\ldots,r_j \}$ we have
\begin{enumerate}[label=(1.\alph*)]
\item \label{item:1a} $\big\| \varphi_k^{(j)}(a)b-b\varphi_k^{(j)}(a) 
\big\|<\ep\|a\|$ for all $a\in M_d$ and for all $b\in \Omega$,
\item $\big\| \varphi_k^{(j)}(a)\varphi^{(i)}_h(b)-\varphi^{(i)}_h(b)\varphi_k^{(j)}(a) \big\|<\ep\|a\|\|b\|$ for all $i=1,\ldots,m$ with $i \neq j$, all $h=1,\ldots, r_i$, and for all $a,b\in M_d$,
\item \label{item:1d} $\big\| \alpha_g(\varphi_k^{(j)}(a))-\varphi_k^{(j)}(a) \big\|<\ep\|a\|$ for all $g\in G_0$,
and for all $a\in M_d$,
\item \label{item:1c} for every $V \in \mathcal{O}$ and every $j=1,\ldots,m$ there is
$k \in \{1,\ldots,r_j \}$ such that
\[
\sup_{\tau\in V}\Big\| \varphi_k^{(j)}(a^*a)-\varphi^{(j)}_k(a)^*\varphi_k^{(j)}(a) \Big\|_{2,\tau}<\ep\|a\|^2,
\]
for all $a\in M_d$.
\end{enumerate}
We call the tuple $(\mathcal{O}; \Phi^{(1)},\ldots,\Phi^{(m)})$ an \emph{$(\varepsilon, \Omega, G_0)$-commuting covering system}, in analogy with Definition~7.1 in~\cite{kirchror}.
\end{claim}
To prove the claim, fix $\Omega \subseteq A$ and $G_0 \subseteq G$ as in the statement.
Let $\tau \in \partial_e T(A)$ and set
\[
\tau^\alpha = \frac{1}{|G\cdot \tau|} \sum_{\sigma \in G \cdot \tau} \sigma \in T(A)^\alpha.
\]
By \autoref{lma:SurjRelCommEq} and by the the Choi-Effros Lifting Theorem 
\cite{choieffros},
there exists a unital, completely positive, contractive map $\tilde \theta$
such that the following diagram commutes:
\begin{align*}
\xymatrix{ && (A_\U\cap A')^{\alpha_\U}\ar[d]^-{\kappa_\tau}\\
M_d\ar[urr]^-{\tilde \theta}\ar[rr]_-{\theta_{\tau^{\alpha}}} && 
(\M_{\tau^\alpha}^\U\cap \M_{\tau^\alpha}')^{(\alpha^{\tau})^\U}.}
\end{align*}

Applying the Choi-Effros Lifting Theorem again, we find unital, completely positive maps $\tilde \theta_n \colon
M_d \to A$, for $n\in\N$, such that $(\widetilde{\theta}_n)_{n\in\N}\colon M_d\to \ell^{\infty}(A)$ lifts $\tilde \theta$.

Let $M$ be an upper bound for the cardinality of the orbits of the $G$-action $\alpha^*$
on $\partial_e T(A)$.
By choosing a map far enough in the sequence, 
we claim that there exists a unital completely positive
map $\varphi_\tau\colon M_d\to A$ satisfying
\begin{enumerate}[label=(\alph*)]
\item \label{item:local1} $\|\varphi_\tau(a)b-b\varphi_\tau(a)\|<\ep
\|a\|$ for all $a\in M_d$ and all $b\in F$,
\item \label{item:local2} $\|\alpha_g(\varphi_\tau(a))-\varphi_\tau(a)\|<\ep\|a\|$ for all $g\in G_0$ and all $a\in M_d$,
\item \label{item:local3} $\|\varphi_\tau(a^*a)-\varphi_\tau(a)^*\varphi_\tau(a)\|_{2,\tau^\alpha}<\ep\|a\|^2 M^{1/2}$ for all $a\in M_d$.
\end{enumerate}

Conditions \ref{item:local1} and \ref{item:local2} are immediate, since $G_0$ and $F$ are finite, and 
the unit ball of $M_d$ is compact.
To justify
condition \ref{item:local3}, note that $\tilde \theta(a^*a) - \tilde \theta(a)^* \tilde \theta(a)$
belongs to $J_{\tau^\alpha}$ for all $a \in M_d$, as $\tilde \theta$ is itself a lift of the homomorphism
$\theta_{\tau^\alpha}$.

By compactness of the unit ball of $M_d$ and \autoref{lma:taualpha},
we can find an
open set $V'_\tau$ of $\partial_e T(A)$ containing $\tau$
such the inequality in \ref{item:local3} holds also when substituting $\| \cdot \|_{2, \tau^\alpha}$
with $\|\cdot\|_{2,\sigma^\alpha}$ for all $\sigma\in V'_\tau$.
Set $V_\tau = \bigcup_{ g \in G} \alpha^*_g(V'_\tau)$, and note that again for every
$\sigma \in V_\tau$ the inequalities in item \ref{item:local3} hold
with respect to the $\|\cdot\|_{2,\sigma^\alpha}$-norm. Arguing as in \autoref{prop:ideal},
we see that $\sigma \le M\sigma^\alpha$ for every $\sigma \in \partial_eT(A)$,
which in turn implies that
\[
\|\varphi_\tau(a^*a)-\varphi_\tau(a)^*\varphi_\tau(a)\|_{2,\sigma}<\ep\|a\|^2,
\]
for all $a\in M_d$ and all $\sigma \in V_\tau$.

We finish the proof of the claim by induction on $m$. When $m=1$, we cover $\partial_e T(A)$ by the open sets $V_\tau$ obtained
in the previous paragraph and, by compactness of $\partial_ eT(A)$, we can find
an integer $r_1\in\N$ and traces $\tau_1,\ldots,\tau_{r_1}\in \partial_e T(A)$, such that
$\mathcal{O}=\{V_{\tau_1},\ldots,V_{\tau_{r_1}}\}$ is a cover of $\partial_e T(A)$.
Then $\mathcal{O}$ and
$\Phi^{(1)}=\{\varphi_{\tau_1},\ldots,\varphi_{\tau_{r_1}}\}$ satisfy the desired properties.

Assume that we have found an open, $\alpha^*$-invariant cover $\mathcal{O}'$ and families $\Phi^{(j)}$ for $j=1,\ldots,m-1$,
satisfying the conditions in the statement.
Let $M_d^1$ denote the unit ball of $M_d$. 
For every $\tau \in \partial_e T(A)$, find a map $\varphi_\tau\colon M_d\to A$ and an open, $\alpha^*$-invariant neighborhood $W_\tau$ of $\tau$ as in the first part of the proof
for the finite set 
\[
\Omega\cup \bigcup\limits_{j=1}^{m-1}\bigcup\limits_{k=1}^{r_j}\varphi^{(j)}_k(M_d^1)\subseteq A
\]
instead of $\Omega$.
Find an integer $r_m \in\N$ and traces $\tau^{(m)}_1,\ldots,\tau^{(m)}_{r_m}\in \partial_e T(A)$, such that
$\{W_{\tau^{(m)}_1},\ldots,W_{\tau^{(m)}_{r_m}}\}$ is an $\alpha^*$-invariant cover of $\partial_e T(A)$. Let $\mathcal{O}$ be the
family of $\alpha^*$-invariant open sets of the form $V\cap W_{\tau^{(m)}_k}$, for $k=1,\ldots, r_m$ and $V \in \mathcal{O}'$,
and set
$\Phi^{(m)}:=\{\varphi_{\tau^{(m)}_1},\ldots,\varphi_{\tau_{r_m}^{(m)}}\}$. Then
$\mathcal{O}$ and $\{ \Phi^{(j)}\}_{1 \le j \le m}$ satisfy the desired properties.

\begin{claim} \label{claim:glue2}
Let $\ep>0$, let $m=\dim(\partial_e T(A))$, let $G_0\subseteq G$ be finite and let
$\Omega \subseteq A$ be compact. 
Set $m=\dim(\partial_eT(A))$.
There exist
completely positive contractive maps $\psi^{(0)},\ldots,\psi^{(m)}\colon M_d\to A$ satisfying
\begin{enumerate}[label=(2.\alph*)]
\item \label{item:2a} $\|\psi^{(j)}(a)b-b\psi^{(j)}(a)\|<\ep$ for all $j=0,\ldots,m$, for all $a\in M_d^1$ and all $b\in \Omega$,
\item \label{item:2b} $\|\psi^{(j)}(a)\psi^{(k)}(b)-\psi^{(k)}(b)\psi^{(j)}(a)\|<\ep$ for all $j,k=0,\ldots,m$ with $j\neq k$, and for all $a,b\in M_d^1$,
\item \label{item:2d} $\|\alpha_g(\psi^{(j)}(a))-\psi^{(j)}(a) \|<\ep$ for all $g\in G_0$, for all $j=0,\ldots,m$, and for all $a\in M_d^1$,
\item \label{item:2c} $\|\psi^{(j)}(1)\psi^{(j)}(a^*a)-\psi^{(j)}(a)^*\psi^{(j)}(a)\|_{2,u}<\ep$ for all $j=0,\ldots,m$, and for all $a\in M_d^1$,
\item \label{item:2e} $\sup_{\tau \in T(A)} \tau \big ( \sum_{j=0}^m \psi^{(j)}(1) - 1 \big ) < \varepsilon$.
\end{enumerate}
\end{claim}
Let $(\mathcal{O}; \Phi^{(1)},\ldots,\Phi^{(m)})$ be an $(\varepsilon/2, \Omega, G_0)$-commuting covering system given by Claim \ref{claim:approx1}.
Since the orbit space $\partial_e T(A) /G$ is Hausdorff (in 
addition to compact and second countable),
it is metrizable.
Moreover since the quotient map  $\pi\colon \partial_eT(A) \to 
\partial_eT(A)/G$ is 
open and all orbits are finite, $\pi$
preserves the topological dimension (\cite[Proposition 2.16]{pears}).
In particular, $\dim(\partial_eT(A)/G)=\dim(\partial
_eT(A))=m$. 
Find a refinement $\mathcal{P}$ of $\pi(\mathcal{O})$ witnessing $\text{dim}(\partial_eT(A)/G) = m$,
and let $\mathcal{O}'$ be the collection of preimages of the elements of $\mathcal{P}$.
Every open set in $\mathcal{O}'$ is then $\alpha^*$-invariant, being a preimage of a set via the map $\pi$.
Since the preimages of disjoint sets are themselves disjoint, we can decompose $\mathcal{O}'$
into finite subsets $\mathcal{O}'_0,\ldots, \mathcal{O}'_m$ such that
$\mathcal{O}'_j=\{V_1^{(j)},\ldots, V_{s_j}^{(j)}\}$ consists of pairwise disjoint
open subsets of $\partial_e T(A)$ which are moreover
$\alpha^*$-invariant. It is clear that $\mathcal{O}'$ is a refinement of $\mathcal{O}$, since
the latter is composed of invariant open sets.

Let $\{\tilde f_k^{(j)}\colon j=0,\ldots,m, k=1,\ldots, s_j\} \subseteq C(\partial_e T(A))/G$ be a partition of unity
of $\partial_e T(A)/G$ with $\supp(\tilde f_k^{(j)})\subseteq\pi( V_k^{(j)})$ for
$j=0,\ldots,m$ and $k=1,\ldots,s_j$. For every $j=0,\ldots,m$ and $k=1,\ldots, s_j$, define
\[
f_k^{(j)}= \tilde f_k^{(j)} \circ \pi \in C(\partial_eT(A)).
\]
It is immediate that each $f_k^{(j)}$ is $\alpha^{**}$-invariant. Moreover, the family $\{ f_k^{(j)}\colon j=0,\ldots,m, k=1,\ldots, s_j\} \subseteq C(\partial_e T(A))$
is also a partition of unity of $\partial_e T(A)$ with $\supp( f_k^{(j)})\subseteq V_k^{(j)} = \pi^{-1}(
\pi(V_k^{(j)}))$ for
$j=0,\ldots,m$ and $k=1,\ldots,s_j$, since all elements in $\mathcal{O}'$ are $\alpha^*$-invariant.
For a fixed $j$, the functions $ f_1^{(j)},\ldots, f_{s_j}^{(j)}$ are
pairwise orthogonal, since they have disjoint supports.

For every $j\in \{0,\ldots,m\}$ and $k\in \{1,\ldots,s_j\}$,
there is a unital completely positive map $\varphi_k^{(j)}\colon M_d\to A$ belonging to $\Phi^{(j)}$ such that conditions \ref{item:1a}-\ref{item:1c} in Claim \ref{claim:approx1} hold. Set
\begin{equation} \label{epsilon0}
K = \max \Big\{ 8 \cdot \max_{0 \le j\le m} s_j^2,  4 \cdot \sum_{j=0}^m s_j 
\Big\} \ \ ,  \ \ \varepsilon_0 = \frac{\varepsilon^2}{2K},
\end{equation} 
and \[
\Omega_0 = \Omega \cup \{ \varphi_k^{(j)}(a) \colon a \in M_d^1, 0 \le j \le m, 1 \le k \le s_j \}.
\]
By \autoref{prop:approximation}, there are positive contractions $ c_k^{(j)}\in A$, for $j = 0, \dots, m$ and $k=1, \dots,
s_j$, satisfying the following for all $j= 0,\dots m$ and $k=1, \dots,s_j$:
\begin{enumerate}[label=(\roman*)]
\item \label{item:7.5i}$\| b c_k^{(j)} - b c_k^{(j)} \| < \varepsilon_0$ for all for all $b \in \Omega_0$,
\item $\| c_k^{(j)}  c_\ell ^{(i)} - c_\ell ^{(i)}c_k^{(j)} \| < \varepsilon_0$ for all $i = 0,\dots, m$ and all $\ell  = 1,\dots, s_i$.
\item \label{funct:i3} $\sup_{\tau \in \partial_e T(A)} \lvert f_k^{(j)}(\tau) - \tau(c_k^{(j)}) \rvert < \varepsilon_0^{1/2}$,
\item \label{item:7.5iv} $c_k^{(j)} c^{(j)}_\ell  = 0$ for all $\ell  = 1,\dots, s_j$ such that $\ell  \not= k$,
\item \label{item:7.5v} $\| \alpha_g(c^{(j)}_k) - c^{(j)}_k \| < \varepsilon_0$ for all $g \in G_0$.
\item \label{item:7.5vi} $\sum_{j=0}^m \sum_{k = 1}^{s_j} (c_k^{(j)})^2 \le 1 + \varepsilon_0$.
\end{enumerate}


Fix $j \in \{0,\dots, m \}$, and define a linear map $\psi^{(j)}\colon M_d\to A$ by
\[
\psi^{(j)}(a)=\sum_{k=1}^{s_j} c_k^{(j)}\varphi^{(j)}_k(a) c_k^{(j)}
\]
for all $a\in M_d$. As all $c_k^{(j)}$ are mutually orthogonal positive contractions,
it follows that $\psi^{(j)}$ is completely positive and contractive.

It remains to show that $\psi^{(0)},\ldots,\psi^{(m)}$ satisfy the conditions of the
claim. The verification of conditions \ref{item:2a} and \ref{item:2b} is the same as verification
of conditions (i) and (iii) in the proof of \cite[Lemma 7.5]{kirchror}.
To verify condition \ref{item:2d},
fix $g\in G_0$, an index $j\in \{0,\ldots,m\}$, and a contraction $a\in M_d$.
We have
\begin{align*}
\|\alpha_g(\psi^{(j)}(a))-\psi^{(j)}(a)\| 
\ &\stackrel{\mathclap{\text{\ref{item:7.5v}}}}{\le} \
\left\|\sum_{k=1}^{s_j} c_k^{(j)}\left(\alpha_g(\varphi^{(j)}_k(a))-\varphi^{(j)}_k(a)\right)c_k^{(j)}\right\| +
2s_j \varepsilon_0 \\
 & \stackrel{\mathclap{\text{\ref{item:7.5iv}}}}{\le} \ \max_{1\le k \le s_j}
\left\| c_k^{(j)}\left(\alpha_g(\varphi^{(j)}_k(a))-\varphi^{(j)}_k(a)\right)c_k^{(j)}\right\| + \frac{\varepsilon}{2} \\
 &\le \ \max_{1\le k \le s_j} \left\|\alpha_g(\varphi^{(j)}_k(a))-\varphi^{(j)}_k(a)  \right\| + \frac{\varepsilon}{2}\stackrel{\text{\ref{item:1d}}}{<}\ep.
\end{align*}

Item \ref{item:2c} corresponds to condition (iv) in \cite[Lemma 7.5]{kirchror}, and can be
inferred as follows. 
Fix $j=0,\ldots,m$ and $a \in M_d^1$. For $k=1,\ldots,s_j$, 
set 
\[
T_k= (c_k^{(j)})^2 \varphi_k^{(j)}(a^*a)
-\varphi_k^{(j)}(a)^*(c_k^{(j)})^2\varphi_k^{(j)}(a),\]
and note that $\|T_k\|\leq 2$.
Fix $\tau \in \partial_e T(A)$. Then
\begin{align*}
 \| \psi^{(j)}(1)\psi^{(j)}(a^*a)-\psi^{(j)}(a)^*\psi^{(j)}(a)\|_{2, \tau} \
 &\stackrel{\mathclap{\text{\ref{item:7.5iv}}}}{=}  \ \Big\| \sum_{k=1}^{s_j}c_k^{(j)} T_k c_k^{(j)} \Big\|_{2,\tau}\\
 &\stackrel{\mathclap{\text{\ref{item:7.5v}}}}{=} \
 \max_{k=1,\ldots,s_j}\big\|c_k^{(j)} T_k c_k^{(j)} \big\|_{2,\tau}.
\end{align*}
By the above computation and since $T(A)$ is convex, 
it is enough to prove
that $\big\|c_k^{(j)} T_k c_k^{(j)} \big\|_{2,\tau}<\varepsilon$ for every $k=1,\ldots,s_j$,
Fix $k=1,\ldots,s_j$.
If $\tau \notin V_k^{(j)}$, then
\[
\big\|
c_k^{(j)} T_k c_k^{(j)} \big\|_{2,\tau} \le \|T_k\| \| c^{(j),2}_k \|_{2,\tau}
 \leq 2\tau\big((c_k^{(j)})^4\big)^{1/2} \le
2\tau(c_k^{(j)})^{1/2} \ \stackrel{\mathclap{\text{\ref{funct:i3}}}}{<}  \ 2\varepsilon_0<\varepsilon.
\]
as desired. 
Assume instead that $\tau \in V_k^{(j)}$. Set 
$D_k^{(j)} := \varphi_k^{(j)}(a^*a) - \varphi_k^{(j)}(a)^*
\varphi_k^{(j)}(a)$, which is positive by
the Schwarz inequality for completely positive maps (\cite[Corollary 2.8]{choi}). Then
\begin{align*}
\| c_k^{(j)} T_k c_k^{(j)} \|_{2, \tau} &\stackrel{\mathclap{\text{\ref{item:7.5i}}}}{\le}
\big\|c_k^{(j),2}D_k^{(j)} c_k^{(j),2} \big\|_{2,\tau} + 3\varepsilon_0 \\
&\le \big\| (D_k^{(j)})^{1/2} c_k^{(j),2} (D_k^{(j)})^{1/2} \big\|_{2,\tau}+ 3\varepsilon_0 \\
&\le \big\| D_k^{(j)} \big\|_{2,\tau} + 3\varepsilon_0 
\ \ \stackrel{\mathclap{\text{\ref{item:1c}}}}{<} \ \ \frac{\varepsilon}{2} +3\varepsilon_0 < \varepsilon,
\end{align*}
which concludes the proof of \ref{item:2c}.
Item \ref{item:2e} is verified in the same way as condition~(v) from \cite[Lemma 7.5]{kirchror}, and we omit the proof.
This proves the claim.

By iterating Claim \ref{claim:glue2}
for larger and larger $\Omega \subseteq A$, $G_0 \subseteq G$, and for smaller and smaller $\varepsilon > 0$,
we obtain $m = \dim(\partial_e T(A))$ completely positive, contractive, order zero maps
$\psi^{(0)}, \dots, \psi^{(m)}\colon M_d \to (A^\U \cap A')^{\alpha^\U}$ with commuting images.
Moreover, by condition \ref{item:7.5vi} on the $c_k^{(j)}$, it follows that
$\sum_{j=0}^m \psi^{(j)} (1) \le 1$, which in combination with \ref{item:2e} from Claim \ref{claim:glue2}
 yields $\sum_{j=0}^m \psi^{(j)} (1) = 1$. By \cite[Lemma 7.6]{kirchror} this gives a unital
homomorphism $\psi\colon M_d \to (A^\U \cap A')^{\alpha^\U}$, as desired.
\end{proof}

For the reader's convenience, we reproduce the statement of Theorem~\autoref{thmC} here.

\begin{thm} \label{thm:equiZstab}
	Let $A$ be a separable, simple, nuclear, $\mathcal{Z}$-stable \uca\ with 
	non-empty trace space.
	Let $G$ be a countable, discrete, amenable group,  
	and let $\alpha\colon G\to\Aut(A)$ be an action.
	Suppose that $\partial_eT(A)$ is compact, that $\dim(\partial_eT(A))<\I$,
that the orbits of the induced action of $G$ on $\partial_eT(A)$ are
finite with uniformly bounded cardinality, and that the orbit space $\partial_eT(A)/G$
is Hausdorff.
	Then $\alpha$ is cocycle conjugate to
	$\alpha\otimes\id_{\mathcal{Z}}$.
\end{thm}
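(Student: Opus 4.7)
The plan is to reduce \autoref{thm:equiZstab} to the existence of a single unital $\ast$-embedding $M_2 \hookrightarrow (A^\U \cap A')^{\alpha^\U}$ and then to repeat, verbatim, the argument of the implication $(3) \Rightarrow (4)$ in \autoref{thm:mainCPoU} in order to upgrade such an embedding to cocycle conjugacy with $\alpha \otimes \id_{\mathcal{Z}}$. The point is that the only place in the proof of $(3) \Rightarrow (4)$ of \autoref{thm:mainCPoU} where the uniform bound on the orbits of $\alpha^*$ on $T(A)$ (as opposed to on $\partial_e T(A)$) is actually needed is to guarantee, via \autoref{thm:mainCPoU-parts-1-3}, the existence of such an embedding. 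Here this input is instead supplied directly by \autoref{prop:Gammabyhand}, which applies precisely under the hypotheses of \autoref{thm:equiZstab}.

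More explicitly, \autoref{prop:Gammabyhand} yields a unital homomorphism $\Psi \colon M_2 \to (A^\U \cap A')^{\alpha^\U}$, which we regard as an equivariant homomorphism $(M_2, \id_{M_2}) \to (A^\U \cap A', \alpha^\U)$. By \autoref{cor:LiftAlongKappa}, there is an equivariant completely positive contractive order zero lift $\Phi \colon M_2 \to (A_\U \cap A')^{\alpha_\U}$ with $\kappa \circ \Phi = \Psi$. Setting $c_1 := \Phi(e_{1,1})$ and $c_2 := \Phi(e_{1,2})$, one has $c_2 c_2^* = c_1^2$, $c_1 c_2^* = c_2 c_1^* = 0$, and $\tau(c_1^m) = 1/2$ for every $\tau \in T_\U(A)$ and every $m \geq 1$, since $\Psi(e_{1,1})$ is a projection of trace $1/2$ in $A^\U$ and $\ker(\kappa) = J_A$ is annihilated by all limit traces.

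Since $A$ is separable, simple, unital, nuclear and $\mathcal{Z}$-stable, $(A, \alpha)$ has equivariant property (SI) by \cite[Theorem B]{equiSI}. This provides a contraction $s \in (A_\U \cap A')^{\alpha_\U}$ satisfying $s^* s = 1 - \sum_{j=1}^{2} c_j^* c_j$ and $c_1 s = s$. Invoking \autoref{thm:univprop} (with the trivial group), the elements $c_1, c_2, s$ give rise to a unital homomorphism $I_{2,3} \to (A_\U \cap A')^{\alpha_\U}$. Iterating \autoref{lma:reindexation-uniform} produces a unital homomorphism $I_{2,3}^{\otimes \infty} \to (A_\U \cap A')^{\alpha_\U}$. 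Since $\mathcal{Z}$ embeds unitally into $I_{2,3}^{\otimes \infty}$ by \cite[Theorem 1.1]{Dadarlat-Toms}, we obtain a unital homomorphism $\mathcal{Z} \to (A_\U \cap A')^{\alpha_\U}$, and the desired cocycle conjugacy $\alpha \cong_{\mathrm{cc}} \alpha \otimes \id_{\mathcal{Z}}$ follows from \cite[Theorem 2.6]{szabo:strself}.

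The genuine difficulty of \autoref{thm:equiZstab} is therefore concentrated in \autoref{prop:Gammabyhand}, whose proof is the technical heart of this section: it proceeds via a careful ``local to global'' argument that glues the fiberwise embeddings $\theta_\tau \colon M_d \to (\mathcal{M}_\tau^\U \cap \mathcal{M}_\tau')^{(\alpha^\tau)^\U}$ (furnished by \autoref{prop:FinDirSumRMcDuff}) into a single unital homomorphism $M_d \to (A^\U \cap A')^{\alpha^\U}$. The topological hypotheses on $\partial_e T(A)$ (compactness, finite covering dimension, and Hausdorff orbit space) enter in constructing suitable $\alpha^*$-invariant refinements of open covers, which substitute for the use of CPoU that is unavailable in this setting. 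Once the $M_2$-embedding is in hand, the remainder of the proof is entirely mechanical.
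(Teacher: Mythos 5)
Your proof is correct and follows essentially the same route as the paper: the paper's own proof consists of invoking \autoref{prop:Gammabyhand} to get the matrix embeddings and then citing the implication \eqref{gamma.cpou:3} $\Rightarrow$ \eqref{gamma.cpou:4} of \autoref{thm:mainCPoU} as a black box, whereas you unwind that implication explicitly (lift via \autoref{cor:LiftAlongKappa}, equivariant property (SI), \autoref{thm:univprop}, $I_{2,3}^{\otimes\infty}$, Dadarlat--Toms, Szab\'o). If anything your version is slightly more careful, since the statement of \autoref{thm:mainCPoU} formally assumes uniformly bounded orbits on all of $T(A)$ rather than only on $\partial_e T(A)$, and you correctly observe that this hypothesis plays no role in the proof of \eqref{gamma.cpou:3} $\Rightarrow$ \eqref{gamma.cpou:4}.
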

\begin{proof}
By \autoref{prop:Gammabyhand}, for every $d \in \N$ there exists a unital homomorphism
\[
\varphi\colon M_d \to (A^\U \cap A')^{\alpha^\U}.
\]
The conclusion follows by the implication
\eqref{gamma.cpou:3} $\Rightarrow$ \eqref{gamma.cpou:4}
in \autoref{thm:mainCPoU}
\end{proof}

We record the following immediate 
combination of \autoref{thm:equiZstab} and 
\autoref{thm:mainRokDim}.
\begin{cor} \label{cor:last}
Let $A$ be a separable, simple, nuclear, $\mathcal{Z}$-stable \uca,
let $G$ be a countable, discrete, amenable group,  
and let $\alpha\colon G\to\Aut(A)$ be an action.
Suppose that $\partial_eT(A)$ is compact, that $\dim(\partial_eT(A))<\I$,
that the orbits of the induced action of $G$ on $\partial_eT(A)$ are
finite with uniformly bounded cardinality, and that the orbit space $\partial_eT(A)/G$
is Hausdorff. Then the following are equivalent:
\be
\item $\alpha$ is strongly outer.
\item  $\alpha$ has the weak tracial Rokhlin property.
\item when $G$ is residually finite, $\alpha$ has finite 
Rokhlin dimension. 
\item when $G$ is residually finite, we have $\dimRok(\alpha)\leq 2$.  
\ee 
\end{cor}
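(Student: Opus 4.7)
The plan is to directly combine \autoref{thm:equiZstab} and \autoref{thm:mainRokDim}. The hypotheses of the corollary are exactly those of \autoref{thm:equiZstab}, so I would first apply it to obtain a cocycle conjugacy $(A,\alpha) \cong_{\mathrm{cc}} (A \otimes \mathcal{Z}, \alpha \otimes \id_{\mathcal{Z}})$. This reduces the problem to proving the equivalences for $\alpha \otimes \id_{\mathcal{Z}}$, which is exactly what \autoref{thm:mainRokDim} delivers.

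Next I would observe that each of the four properties --- strong outerness, the weak tracial Rokhlin property, finite Rokhlin dimension, and $\dimRok \le 2$ --- is invariant under cocycle conjugacy, so the conclusions obtained for $\alpha \otimes \id_{\mathcal{Z}}$ transfer back to $\alpha$. Indeed, strong outerness is unaffected by unitary cocycle perturbations, since these induce inner automorphisms on each $\pi_\tau(A)''$. The weak tracial Rokhlin property (\autoref{df:wtRp}) and Rokhlin dimension (\autoref{def:frd}) are defined via positive contractions in the central sequence algebra $A_\U \cap A'$, on which any cocycle represented by a sequence of unitaries in $A$ acts trivially. Given the canonical identifications $T(A \otimes \mathcal{Z}) = T(A)$ and $\partial_e T(A \otimes \mathcal{Z}) = \partial_e T(A)$, all four equivalences then follow at once by invoking \autoref{thm:mainRokDim}.

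The one technical point to settle is the compatibility of hypotheses: \autoref{thm:mainRokDim} requires the induced $G$-action on $T(A)$ to have finite orbits with uniformly bounded cardinality, whereas the corollary only posits this for $\partial_e T(A)$. The transfer of the uniform bound from the extreme boundary to the whole simplex is the main (and essentially the only) obstacle: since $T(A)$ is a Bauer simplex by compactness of $\partial_e T(A)$, and since the orbit space $\partial_e T(A)/G$ is Hausdorff, the orbit structure on $\partial_e T(A)$ should rigidify sufficiently to propagate the uniform bound to arbitrary traces. Verifying this last claim is what I would expect to be the most delicate part of writing out the proof in detail.
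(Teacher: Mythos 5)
Your strategy is the paper's: the corollary is obtained by applying \autoref{thm:equiZstab} to get $(A,\alpha)\cong_{\mathrm{cc}}(A\otimes\mathcal{Z},\alpha\otimes\id_{\mathcal{Z}})$ and then invoking \autoref{thm:mainRokDim}; the cocycle-conjugacy invariance of the four properties that you spell out is exactly what the proof of \autoref{thm:mainRokDim} already exploits when it replaces $\alpha$ by $\alpha\otimes\id_{\mathcal{Z}}$, and your justification (cocycles with values in $\mathcal{U}(A)$ act trivially on $A_\U\cap A'$; inner perturbations do not affect outerness in $\pi_\tau(A)''$) is correct.

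The problem is the claim you defer to the end. You are right to flag that \autoref{thm:mainRokDim} is stated with the orbit bound on all of $T(A)$ while the corollary only assumes it on $\partial_eT(A)$, but the reconciliation you propose --- that compactness of $\partial_eT(A)$ and Hausdorffness of $\partial_eT(A)/G$ force the uniform bound to propagate to arbitrary traces --- is false. Let $G=\bigoplus_{n\in\N}\Z/2\Z$ act on the one-point compactification $X$ of $\coprod_{n\in\N}\{a_n,b_n\}$, the $n$-th generator swapping $a_n$ and $b_n$ and fixing everything else: all orbits have size at most $2$, $X$ is compact and zero-dimensional, $X/G$ is Hausdorff, yet the affine extension of the action to probability measures on $X$ gives the measure $\sum_n 2^{-n}\delta_{a_n}$ an infinite orbit. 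So for a Bauer simplex with this boundary the hypothesis of \autoref{thm:mainRokDim} simply fails, and it cannot be verified the way you suggest. What actually makes the combination legitimate is that the cited results use only two consequences of the orbit hypothesis: the equivalence of $\|\cdot\|_{2,u}$ with $\|\cdot\|_{2,T(A)^\alpha}$ (hence $J_A=J_{T(A)^\alpha}$, as in \autoref{prop:ideal}), and the fact that every $\tau\in T(A)$ is dominated by $M$ times an invariant trace. Both follow from the boundary hypothesis alone: $\tau\mapsto\tau(a^*a)$ is affine, so the supremum defining $\|\cdot\|_{2,u}$ is attained on the closed set $\partial_eT(A)$, and for a general $\tau$ with barycentric measure $\mu$ one obtains the dominating invariant trace as the barycenter of the pushforward of $\mu$ under the averaging map $\sigma\mapsto\sigma^\alpha$, which is continuous by \autoref{lma:taualpha}. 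Substituting these two consequences for ``bounded orbits on $T(A)$'' throughout is routine, but it is the step that genuinely has to be carried out; as you have formulated it, the step would fail.
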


The difference between the above corollary and
\autoref{thm:mainRokDim} is that here we do not 
assume that $\alpha$ absorbs $\id_{\mathcal{Z}}$;
since $\partial_eT(A)$ is compact finite dimensional
and $\partial_e T(A)/G$ is Hausdorff,
this assumption follows from \autoref{thm:equiZstab}.


\newcommand{\etalchar}[1]{$^{#1}$}
\providecommand{\bysame}{\leavevmode\hbox to3em{\hrulefill}\thinspace}
\providecommand{\MR}{\relax\ifhmode\unskip\space\fi MR }
\providecommand{\MRhref}[2]{%
  \href{http://www.ams.org/mathscinet-getitem?mr=#1}{#2}
}
\providecommand{\href}[2]{#2}

\end{document}